\newfont{\bb}{msbm10 at 12pt}
\def\R{\hbox{\bb R}}
\newcommand{\p}{\partial}
\newcommand{\dd}{{\rm d}}
\newcommand{\bd}{\begin{definition}}                
\newcommand{\ed}{\end{definition}}                  
\newcommand{\bc}{\begin{corollary}}                 
\newcommand{\ec}{\end{corollary}}                   
\newcommand{\bl}{\begin{lemma}}                     
\newcommand{\el}{\end{lemma}}                       
\newcommand{\bp}{\begin{proposition}}            
\newcommand{\ep}{\end{proposition}}                
\newcommand{\bere}{\begin{remark}}                  
\newcommand{\ere}{\end{remark}}                     
\newcommand{\bt}{\begin{theorem}}
\newcommand{\et}{\end{theorem}}
\newcommand{\be}{\begin{equation}}
\newcommand{\ee}{\end{equation}}
\newcommand{\bit}{\begin{itemize}}
\newcommand{\eit}{\end{itemize}}
\newtheorem{theorem}{Theorem}[section]
\newtheorem{corollary}[theorem]{Corollary}
\newtheorem{lemma}[theorem]{Lemma}
\newtheorem{proposition}[theorem]{Proposition}
\theoremstyle{definition}
\newtheorem{definition}[theorem]{Definition}
\theoremstyle{remark}
\newtheorem{remark}[theorem]{Remark}
\newtheorem{example}[theorem]{Example}
\DeclareMathOperator*{\supp}{supp}
\DeclareMathOperator*{\diam}{diam}
\DeclareMathOperator*{\dist}{dist}
\begin{document}
%

\title{Lorentzian metric spaces and their Gromov-Hausdorff convergence}

\author{E. Minguzzi\footnote{Dipartimento di Matematica e Informatica ``U. Dini'', Universit\`a degli Studi di Firenze,  Via
S. Marta 3,  I-50139 Firenze, Italy. E-mail:
ettore.minguzzi@unifi.it,
\newline ORCID:0000-0002-8293-3802}
\ \ and \  \ S. Suhr
\footnote{Fakult\"at f\"ur Mathematik, Ruhr-Universit\"at Bochum, Universit\"atsstr. 150, 44780 Bochum, Germany. E-mail: Stefan.Suhr@ruhr-uni-bochum.de,
\newline ORCID:0000-0001-6787-9396
\newline Stefan Suhr is supported by the SFB/TRR 191 ``Symplectic Structures in Geometry, Algebra and Dynamics'', funded by the Deutsche Forschungsgemeinschaft. }
}

\date{}

\maketitle

\begin{abstract}
\noindent We present an abstract approach to Lorentzian Gromov-Hausdorff distance and convergence, and an alternative approach to Lorentzian length spaces that does not use auxiliary ``positive signature'' metrics or other unobserved fields. We begin  by defining a notion of (abstract) bounded Lorentzian-metric space which is sufficiently general to comprise compact causally convex subsets of globally hyperbolic spacetimes and causets. We define the Gromov-Hausdorff distance and show that two bounded Lorentzian-metric spaces at zero GH distance are indeed both isometric and homeomorphic. Then we show how to define from the Lorentzian distance, beside topology,  the causal relation and the causal curves for these spaces, obtaining useful limit curve theorems. Next, we  define Lorentzian (length) prelength spaces  via suitable (maximal) chronal connectedness properties. These definitions are proved to be stable under GH limits. Furthermore, we define bounds on sectional curvature for our Lorentzian length spaces and prove that they are also stable  under GH limits. We conclude with  a (pre)compactness theorem.
\end{abstract}

\noindent Key Words: {Lorentzian metric geometry, metric convergence, causality, isometric mappings}


\tableofcontents

\section{Introduction}
\label{bpo}

This paper is devoted to the development of a Lorentzian-signature version of the by now classical theory of metric geometry, as developed, for instance, in the  book by Burago et al.\ \cite{burago01}. This theory comprises results on sectional curvature bounds via comparison geometry, as obtained by the A.D. Alexandrov school, and results on Ricci bounds from below as obtained by Gromov.

Our work fits well in the current trend in low regularity spacetime (i.e.\ Lorentzian) geometry. Although  there were contributions in non-regular Lorentzian geometry in the last two decades, it is only in the last few years that mathematicians have started getting some systematic results.

It must be said that the investigation of Lorentzian analogs to the Gromov-Hausdorff convergence has likely been one of the least explored. Among the most relevant papers we find  those by
Noldus who introduced and studied a Gromov-Hausdorff distance for compact globally hyperbolic {\em manifolds} with spacelike boundary \cite{noldus04,noldus04b} and later went on, in a joint work with Bombelli \cite{bombelli04}, to explore the non-manifold case.

Recently, M\"uller  reconsidered some constructions by Noldus, including his strong metric, in a broad categorical analysis of the Gromov-Hausdorff convergence problem  \cite{muller19,muller22}.
In \cite{muller22b} he studied the Gromov-Hausdorff metric for compact Lorentzian pre-length spaces and for partially ordered sets, still within a categorical framework. Some of the results therein contained are closely related to parts of the theory
presented here, but were obtained independently.

In the context of Lorentzian length spaces \`a la Kunzinger and S\"amann \cite{kunzinger18}, Cavaletti and Mondino \cite{cavalletti20,cavalletti22} defined a notion of {\em measured Gromov-Hausdorff convergence} of measured
Lorentzian geodesic spaces and proved a stability result for the timelike curvature condition. Similar results were recently obtained in \cite{braun22}.

Sormani and Wenger \cite{sormani11}  followed a rather different route introducing the notion of {\em intrinsic flat distance} which is a kind of analog to the Gromov-Hausdorff distance when convergence of sets in the Hausdorff-distance sense is replaced by {\em Federer-Fleming’s flat convergence}. This type of convergence is used to study sequences of Lorentzian manifolds foliated by spacelike manifolds with non-negative scalar curvature.

Following a similar `positive signature' approach Sormani and Vega \cite{sormani16} show that in a cosmological spacetime it is possible to introduce a (so called {\em null}) distance by
taking advantage of Andersson et al.\  {\em cosmological time function} \cite{andersson98}.
Their distance is well adapted to express convergence of cosmological spacetimes, a point of view taken up by Allen and Burtscher   \cite{allen19} where they show that in the class of warped product (smooth) spacetimes, uniform convergence of warping functions can be related to  Gromov-Hausdorff and Sormani-Wenger intrinsic flat convergence  (see also \cite{sakovich22,burtscher22} for related causality results). Kunzinger and Steinbauer in \cite{kunzinger22} take up again the idea of a null distance, but in the broader framework of Lorentzian-length spaces and show that in certain warped product Lorentzian length spaces the Gromov-Hausdorff convergence interacts well with synthetic curvature bounds.  It is understood that in these null distance approaches, the Gromov-Hausdorff convergence used is the traditional positive-signature one.

Our approach is closer in spirit to Noldus' work, but starts with the identification of a convenient notion of (bounded) Lorentzian-metric space. This notion is more general than the subsequently introduced notion of (pre)length space.
In this sense our work sets the stage for a complete Lorentzian analog of metric geometry. Possibly, it is  this general framework that sets apart our work with respect to previous approaches. Also we define a notion of Gromov-Hausdorff convergence and show that all our relevant definitions, from the mentioned (pre)length spaces, to the sectional curvature bounds are stable under GH limits. In fact, it is this stability criteria that helped us in selecting what we believe to be a most convenient notion of (pre)length space.

Although this work presents results for just the timelike-diameter bounded spaces, we also explored the non-bounded case, obtaining some  results that we leave for  a next  work.

Since we shall be introducing new definitions for Lorentzian (pre)length spaces, we wish to comment on  the differences with the approach by Kunzinger and S\"amann \cite{kunzinger18}. There a prelength space is a 5-tuple $(X,d, \ll, \le, \rho)$, where $X$ is the set, $d$ the Lorentzian distance, $\ll$ the chronological relation, $\le$ the causal relation, and $\rho$ a metric, all these elements satisfying  some  compatibility conditions. Causal curves are defined by imposing the locally Lipschitz property with respect to $\rho$, see\cite[Def.\ 2.18]{kunzinger18}.  As a consequence, some causality properties such as non-total imprisonment depend on $\rho$.

In our work we derive the topology, the chronological relation and the causal relation from just $(X,d)$, which is what makes the GH stability property possible. Moreover, our causal curves are not defined imposing the locally Lipschitz property with respect to some metric. Actually, there is a metric that can be defined from $d$, this is the distinction or Noldus metric, but in simple smooth examples this choice shows to be untenable as the usual causal curves are non-rectifiable with respect to it \cite[Thm.\ 6]{noldus04b} \cite[Thm.\ 3]{muller19}.

In fact, in our work we do not really feel the need to restrict the class of causal curves with some kind of rectifiability property with respect to some metric. We are able to obtain standard limit curve theorems by parametrizing causal curves (or better isocausal curves) via {\em time functions constructed from the distance function}, as this type of function has within it information that behaves well under GH limits.


Our definition of Lorentzian length space is different from that by Kunzinger and S\"amann and more work will be required to establish if the two notions really coincide in some cases. We believe that many successes of the KS-framework could  be replicated in our approach (particularly if we assume the existence of convex neighborhoods, see Def.\ \ref{vmqp}) but our setting has some relevant mathematical and physical advantages. On the mathematical side we have a theory which is better behaved under Gromov-Hausdorff convergence, and is more closely related to the `smooth limit' of Lorentzian geometry, not needing the auxiliary metric $\rho$. On the related physical side, the independence from $\rho$ is very desirable as, while the Lorentzian distance in the smooth limit is indeed represented by a physical field - the Lorentzian metric $g$ -, the ingredient $\rho$ has no smooth counterpart, as we do not find in the fundamental physical fields of the Standard Model or of General Relativity a Riemannian metric $h$.

Finally, we mention that our work contains a (pre)compactness result for certain families of bounded Lorentzian metric spaces with uniform bounds on the timelike diameter and on the number of certain $\epsilon$-nets. Unfortunately, we could not establish a clear connection with Ricci bounds introduced via comparison geometry. It seems that the notion of $\epsilon$-net is too much adapted to the notion of distinction-metric ball, which is, unfortunately, a non-local concept. More work and new ideas will be needed to produce a precompactness theorem involving bounds on the Ricci curvature.

\subsection{Basic definitions}

The whole work will be devoted to the study of the following object:
\begin{definition}\label{D1}
A {\it bounded Lorentzian-metric space} $(X,d)$, is a set $X$ endowed with a
map $d: X\times X \to [0,\infty)$ such that
\begin{itemize}
\item[(i)] For every $x,y,z\in X$  with  $d(x,y)>0$, $d(y,z)>0$ we have
    \[
    d(x,z)\ge d(x,y) +d(y,z).
    \]
\item[(ii)] There is a topology ${T}$ on $X$ for which  $d$ is continuous in the product topology ${T}\times {T}$ and for every $\epsilon>0$ the sets
\[
\{(p,q): d(p,q)\ge \epsilon\}
\]
are compact with respect to the product topology ${T}\times {T}$.

\item[(iii)] $d$ distinguishes points, i.e.\ for every pair $x,y$, $x\ne y$ there is $z$ such that $d(x,z)\ne d(y,z)$ or $d(z,x) \ne d(z,y)$.
\end{itemize}
\end{definition}

 We let $\mathscr{T}$ denote the intersection of the family of the topologies in (ii).
We write $q\in I^+(p)$, $(p,q)\in I$, or $q\gg p$ if $d(p,q)>0$, and say that $q$ is in the {\it chronological future} of $p$, and dually in the past case. By the reverse triangle inequality (i),
$p\ll q$ and $q\ll r$ implies $p\ll r$.

\begin{remark}\label{R1} $\empty$
\begin{enumerate}
\item
It follows from the finiteness of $d$ and the reverse triangle inequality in (i) that $d(x,x)=0$ for all $x\in X$. This property
reads $p\not\ll p$, and we refer to it as {\it chronology}.
\item
By (iii) there is just one point, if it exists, denoted $i^0$ and called {\it spacelike boundary}, such that $d(i^0,p)=d(p,i^0)=0$ for every $p\in X$.
Any point $q$ different from $i^0$ has another point $z$ in its chronological past or future.
\item Condition (ii) is very reasonable as it demands two properties that are  natural for  a topology related to $d$. We  will prove below
 that all the topologies $T$ that satisfy  (ii)  induce the same topology on $X\backslash \{i^0\}$,  i.e. they can only differ on the neighborhood system of $i^0$. We shall prove that $\mathscr{T}$ satisfies the same properties in (ii) of $T$, but it has the smallest possible neighborhood system for $i^0$.  The topology $\mathscr{T}$
    will be referred as {\em the topology} of the bounded Lorentzian metric space. Indeed, we shall find a subbasis that is completely determined by $d$. Observe that on $T$ the condition ``$d$ is continuous'' implies  that there must be enough open sets in ${T}$, while the condition on the compactness of the sets $\{d\ge \epsilon\}$ implies that there are not too many open sets (as coarsening the topology makes it more easy to accomplish compactness of a certain set). The condition thus balances the family of open sets by means of the function $d$.
\item If $X$ is assigned a topology $T$ and on $X\times X$ is assigned the product topology $T\times T$, then for each $p\in X$ the maps  $x\mapsto (p,x)$ and $x \mapsto (x,p)$  are homeomorphisms onto their images. If, additionally, $d$ is continuous in the topology $T\times T$, then the compositions $d_p:=d(p, \cdot)$, $d^p=d(\cdot, p)$, are $T$-continuous.  Thus these properties hold for any $T$ in (ii). We conclude that $d_p$ and $d^p$ are continuous for $\mathscr{T}$. We
 shall prove only later that, more strongly, $d$ is continuous in the product topology  $\mathscr{T}\times \mathscr{T}$. The fact that the sets $\{d \ge \epsilon\}$ are compact in $\mathscr{T}\times \mathscr{T}$ is clear because  $\mathscr{T}\times \mathscr{T}$ is coarser than any $T\times T$.
\item By 1.\ if $\vert X\vert=1$ then the only point in $X$ is $i^0$. If $\vert X\vert>1$ then $X$ contains at least two points $p,q$ with $p\ll q$, ($X$ has at least three points if there is $i^0\in X$) thus the set $\{d\ge \epsilon\} \subset X\times X$, is not only compact, as by assumption (ii), but also non-empty, thus $d$ reaches a positive maximum on $X\times X$. In any case we have a bound on $d$ which justifies the adjective {\it bounded} in the definition.
\item We let the set consisting of only one point be included into the definition. The purpose is to have a well defined Gromov-Hausdorff limit for ``collapsing'' spaces.
\end{enumerate}
\end{remark}

Figure \ref{ksz} provides a simple example of bounded Lorentzian metric space. Observe that a strip of Minkowski 1+1 spacetime (with parallel spacelike geodesics as boundary) is not a bounded Lorentzian metric space.

\begin{figure}[ht]
\centering
\includegraphics[width=8cm]{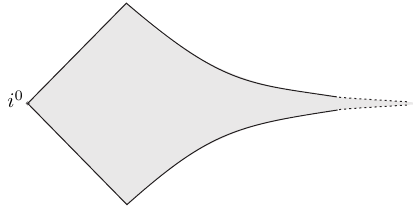}
\caption{This  subset of Minkowski 1+1 spacetime, with the induced Lorentzian distance, provides an example of non-compact bounded Lorentzian metric space $(X,d)$.  It is understood that the boundary of the gray region belongs to $X$, save for the  point $i^0$ that  might or might not belong to $X$.} \label{ksz}
\end{figure}

\begin{remark}[Necessity/uniqueness of the new structure]
When it comes to introducing new objects, it is natural to ask whether the new definitions are based on arbitrary choices or whether they are essentially unique. In other words: what are the chances that a better definition of `Lorentzian metric space' could be found? Inspection of our definition shows that properties (i) and (iii) are rather  weak and would certainly be shared by any other possible approach.
As for property (ii), this is also extremely weak. Observe that our goal is to describe low regularity versions of ``globally hyperbolic'' spacetimes, boundedness of the Lorentzian distance and global hyperbolicity being contained in the request that the sets $\{d \ge \epsilon\}$ are compact. The continuity and finiteness of  the Lorentzian distance are standard consequences of global hyperbolicity in the smooth setting \cite{beem96,minguzzi08e} and it is very desirable to preserve them under low regularity. For instance, in our study, the continuity of $d$ will be essential in order to construct (continuous) time functions, with which our causal curves are parametrized. It might be very difficult to define the parametrization of curves without a continuous function $d$.

In short, we expect that any other approach differing from our own should have the following features: (1) it is not a truly Lorentzian metric approach, as other ingredients are needed (an example is the theory of length spaces by Kunzinger-S\"amann,  where an auxiliary metric is needed), (2) it does not really describe a low regularity version of globally hyperbolic spacetime as, say, the Lorentzian distance $d$ is not continuous.

Last, the limitation to the bounded case is quite mild as the very nature of spacetime lies in its description over compact subsets, nonlocal issues being more of a mathematical nature. In this sense our formulation in terms
of a Lorentzian metric, without the need of auxiliary ``positive signature'' metrics or other unobserved fields, seems to be a key contribution to non regular spacetime geometry.

\end{remark}

\subsection{Topology}

In what follows we denote $\mathring{X}=X\backslash \{i^0\}$, and assigned a topology on $X$,  we denote with $\mathring{{T}}$ the topology induced on $\mathring{X}$. Of course, if $i^0\notin X$ then $\mathring{X}=X$ and $\mathring{{T}}={T}$.

We observed above that the map
\begin{align*}
d_p\colon X&\to \mathbb{R}\\
q&\mapsto d_p(q):=d(p,q)
\end{align*}
 is $T$-continuous. Similarly, $q\mapsto\, d^r(q):=d(q,r)$ is $T$-continuous. Thus the sets of the form
\begin{equation} \label{vos}
\{q: a<d(p,q)<b\}\cap  \{q: c<d(q,r)<e\}
\end{equation}
for any $p,r\in X$, and $a,b,c,e\in \mathbb{R}\cup \{-\infty,\infty\}$ are open in ${T}$ and hence in $\mathscr{T}$.
They form a subbasis for the topologies $\mathring{T}$, $\mathring{\mathscr{T}}$, that therefore coincide. The proof will require some work, cf.\  Corollary  \ref{vjw}.

\begin{proposition} \label{bjw}
Let $T$ be any topology that satisfies (ii).
$T$, $\mathscr{T}$, $\mathring{{T}}$ and $\mathring{\mathscr{T}}$   are  Hausdorff. $\mathring{{T}}$, $\mathring{\mathscr{T}}$ are  locally-compact and $\sigma$-compact.
\end{proposition}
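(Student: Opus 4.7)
My plan is to handle the four Hausdorff claims together, then address local compactness and $\sigma$-compactness in turn, using in both cases the same underlying compactness from (ii) (and its $\mathscr{T}$-version in item~4 of Remark~\ref{R1}).

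For the Hausdorff property, given distinct $x,y\in X$, condition (iii) supplies a $z$ for which one of $d^z$ or $d_z$ takes different values at $x$ and $y$; since this map is continuous for any topology $T$ meeting (ii), preimages of disjoint open intervals in $\mathbb{R}$ separate $x$ and $y$ in $T$. The two separating open sets so produced have the form~\eqref{vos}, hence also lie in $\mathscr{T}$, so the same argument settles both the $T$- and the $\mathscr{T}$-Hausdorff assertions simultaneously. The claims for $\mathring{T}$ and $\mathring{\mathscr{T}}$ then follow at once since a subspace of a Hausdorff space is Hausdorff.

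For local compactness, fix $q\in\mathring{X}$. By item~2 of Remark~\ref{R1} there exists $z$ with, say, $d(q,z)=2\epsilon>0$; the case $d(z,q)>0$ is handled symmetrically via $d_z$. Let $U:=\{p:d(p,z)>\epsilon\}$ and $C:=\{p:d(p,z)\ge\epsilon\}$; then $U$ is $T$-open and contains $q$, and the $T$-closure of $U$ is contained in $C$. The crucial technical step is to prove that $C$ is $T$-compact: the set $K_\epsilon:=\{(p,p'):d(p,p')\ge\epsilon\}$ is $T\times T$-compact by~(ii), and intersecting it with the $T\times T$-closed slice $X\times\{z\}$ (closed because $\{z\}$ is closed in the already-Hausdorff $T$) yields a compact set whose first-coordinate projection is precisely $C$. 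Since $d(i^0,z)=0<\epsilon$, we have $C\subset\mathring{X}$, so $q$ admits a precompact neighborhood in $\mathring{T}$. The same proof transfers verbatim to $\mathring{\mathscr{T}}$: $U$ is already in the subbasis~\eqref{vos}, $C$ is the complement of a subbasis set, and the $\mathscr{T}\times\mathscr{T}$-compactness of $K_\epsilon$ is item~4 of Remark~\ref{R1}.

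For $\sigma$-compactness, consider $K_n:=\{(p,p'):d(p,p')\ge 1/n\}$, compact in $T\times T$ (and in $\mathscr{T}\times\mathscr{T}$) for every $n\ge 1$. By item~2 of Remark~\ref{R1}, every point of $\mathring{X}$ lies in $\pi_1(K_n)\cup\pi_2(K_n)$ for some $n$, so
\[
\mathring{X}=\bigcup_{n\ge 1}\bigl(\pi_1(K_n)\cup\pi_2(K_n)\bigr).
\]
Continuity of the coordinate projections turns each $\pi_i(K_n)$ into a compact subset of $(X,T)$ (resp.\ $(X,\mathscr{T})$), proving $\sigma$-compactness of both $\mathring{T}$ and $\mathring{\mathscr{T}}$. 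The only non-routine point in the whole argument is the compactness of the slice $C$ in the local-compactness paragraph, which I expect to be the main obstacle to state cleanly; once the Hausdorff property is established and the slice $X\times\{z\}$ is closed, this compactness falls out of the hypothesis in (ii).
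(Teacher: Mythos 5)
Your proposal is correct and follows essentially the same route as the paper: separation via the continuous maps $d_z$, $d^z$ furnished by (iii); local compactness from the compact sets $\pi_i(\{d\ge\epsilon\}\cap(\text{slice}))$, using that points are closed once Hausdorffness is established; and $\sigma$-compactness from the exhaustion $\bigcup_n(\pi_1(K_n)\cup\pi_2(K_n))$. The only differences are cosmetic (you use the time-dual slice $X\times\{z\}$ where the paper uses $\pi_1^{-1}(r)$, and you spell out the $\mathscr{T}$-versions and the inclusion $C\subset\mathring{X}$ a bit more explicitly).
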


\begin{proof}
{\it Hausdorff property:} We already know that $d_p$ and $d^r$ are continuous. Let $x\ne y$. Since $d$ distinguishes points we can find $z$ such that $d(z,x):=a\ne b:= d(z,y)$ (the other case being analogous).
Set $m=(a+b)/2$. Then only one among $x$ and $y$ belongs to the open set $\{r: d(z,r)< m\}=d_z^{-1}((-\infty,m))$, the other belonging to $\{r: d(z,r)> m\}=d_z^{-1}((m,\infty))$, thus $x$ and $y$
are separated by open sets.

{\it $\sigma$-compactness:} The sets $K_n=\{d\ge 1/n\}$ are compact in $X\times X$. Thus $\{\pi_1(K_n)\cup \pi_2(K_n)\}_n$ is a covering by compact sets as every point has a point in the future or past with distance larger than $1/n$ for sufficiently large $n$.  Here $\pi_{1,2}\colon X\times X\to X$ denote the canonical projections onto the first and second factor
respectively.

{\it Local compactness:} If $p$ is a point that admits a point $r$ in its chronological past, $\{q: d(r,q) \ge \epsilon\}=\pi_2(\{d\ge \epsilon \}\cap \pi_1^{-1}(r))$ (the topology is Hausdorff so $T_1$ hence points are closed sets)
is a compact neighborhood of it for some $\epsilon$ as it contains $\{q: d(r,q) > \epsilon\}$ which is open. The case in which $p$ admits a point in its
chronological future is analogous.
\end{proof}

\begin{proposition} \label{bix}
Suppose that $i^0\in X$. Let ${T}$ be a topology that satisfies (ii). Then there is a coarser topology $T'$ that satisfies (ii) such that $\mathring{T}'=\mathring{T}$ and such that  $(X,{T}')$ is compact. Moreover, a subbasis for the $T'$-neighborhood system of $i^0$ is provided by the complements of the compact sets of the form $\{d_p\ge b\}$, $\{d^p\ge b\}$ with $b>0$, $b\in \mathbb{R}$, $p\in X$. It is the smallest possible neighborhood system of $i^0$ for a topology that satisfies (ii).
\end{proposition}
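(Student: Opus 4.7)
The plan is to construct $T'$ as the Alexandroff one-point compactification of the locally compact Hausdorff space $(\mathring{X},\mathring{T})$, whose required properties are furnished by Proposition \ref{bjw}. Explicitly, declare a subset $U \subseteq X$ to be $T'$-open iff either $i^0 \notin U$ and $U \in \mathring{T}$, or $i^0 \in U$ and $X \setminus U$ is a $\mathring{T}$-compact subset of $\mathring{X}$. Standard facts about one-point compactifications then yield that $(X, T')$ is compact Hausdorff and that $\mathring{T}' = \mathring{T}$. The inclusion $T' \subseteq T$ follows because $\mathring{X}$ is $T$-open (since $\{i^0\}$ is $T$-closed by Hausdorffness), so $\mathring{T}$-open subsets of $\mathring{X}$ are already $T$-open, and because a $\mathring{T}$-compact subset of $\mathring{X}$ is also $T$-compact, hence $T$-closed.

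Next one checks (ii) for $T'$. Compactness of $\{d \ge \epsilon\}$ in $T' \times T'$ is immediate since this set is $T \times T$-compact and the identity $(X \times X, T \times T) \to (X \times X, T' \times T')$ is continuous. For continuity of $d$ with respect to $T' \times T'$, the argument splits by the location of the base point. At $(p,q) \in \mathring{X} \times \mathring{X}$, continuity is local and $\mathring{X} \times \mathring{X}$ carries the same subspace topology from $T \times T$ and from $T' \times T'$, so it follows from the assumed $T \times T$-continuity. At a base point involving $i^0$, say $(i^0, q)$, one has $d(i^0, q) = 0$, so only upper semicontinuity needs checking. Given $\alpha > 0$, the set $K_\alpha := \{d \ge \alpha\}$ is $T' \times T'$-compact and therefore $T' \times T'$-closed, $T' \times T'$ being Hausdorff, and $(i^0, q) \notin K_\alpha$. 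The open complement $(X \times X) \setminus K_\alpha$ contains a basic rectangular neighborhood $U \times V$ of $(i^0, q)$, on which $d < \alpha$. The symmetric points $(p, i^0)$ and $(i^0, i^0)$ are handled analogously.

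For the subbasis claim, the proof of Proposition \ref{bjw} furnishes, for every point of $\mathring{X}$, a compact neighborhood of the form $\{d_p \ge \epsilon\}$ or $\{d^p \ge \epsilon\}$ with $\epsilon > 0$. Any $\mathring{T}$-compact $K \subseteq \mathring{X}$ is covered by the interiors of finitely many such compacts, so it is contained in a finite union of them; taking complements exhibits the corresponding basic $T'$-neighborhood $X \setminus K$ of $i^0$ as a superset of a finite intersection of sets of the advertised form, which is the desired subbasis property.

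The minimality assertion is proved as follows. For any topology $T''$ satisfying (ii), and for $p \in X$, $b > 0$, the set $\{d_p \ge b\}$ equals the image under the continuous projection $\pi_2$ of $\{d \ge b\} \cap (\{p\} \times X)$, a $T'' \times T''$-closed subset of the $T'' \times T''$-compact set $\{d \ge b\}$. Hence $\{d_p \ge b\}$ is $T''$-compact, and therefore $T''$-closed by Hausdorffness; since $d_p(i^0) = 0 < b$, its complement is a $T''$-open neighborhood of $i^0$. The same holds for $\{d^p \ge b\}$, so every element of the proposed subbasis is $T''$-open; combined with the previous paragraph, every $T'$-neighborhood of $i^0$ contains a finite intersection of such sets and is therefore a $T''$-neighborhood of $i^0$. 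The main obstacle throughout this plan is the $T' \times T'$-continuity of $d$ at points involving $i^0$: since the topology has been coarsened, continuity does not transfer automatically from $T \times T$ and must instead be extracted from the Hausdorff-compact separation argument above.
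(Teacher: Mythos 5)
Your proof is correct, but it reaches the conclusion by a genuinely different route than the paper. You construct $T'$ abstractly as the Alexandroff one-point compactification of $(\mathring{X},\mathring{T})$, so that compactness, Hausdorffness and $\mathring{T}'=\mathring{T}$ come for free, and you then verify the continuity of $d$ at pairs involving $i^0$ by a Hausdorff--compact separation argument: $\{d\ge\alpha\}$ stays compact in the coarser product topology, hence is closed there, and its open complement yields a rectangular neighborhood of $(i^0,q)$ on which $d<\alpha$. The paper instead \emph{defines} the neighborhood filter of $i^0$ by the advertised subbasis $\mathcal{F}$, proves continuity by covering the compact set $\pi_1(\{d\ge\epsilon\})$ with sets $\{d_{\tilde r}>\delta\}$, $\{d^{\tilde r}>\delta\}$ (using that each of its points is distinguished from $i^0$) and exhibiting an explicit element of $\mathcal{F}$ disjoint from it, and then checks compactness of $(X,T')$ by hand. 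Interestingly, the same finite-covering argument appears in both proofs but in different roles: the paper uses it to establish continuity, while you use it only to identify the subbasis a posteriori (showing every complement of a $\mathring{T}$-compact set contains a finite intersection of the advertised sets). Your approach buys brevity by outsourcing the topological bookkeeping to standard one-point-compactification facts and by exploiting Hausdorffness of $T'$, which the paper never invokes; the paper's approach is more self-contained and makes the subbasis the definition rather than a consequence, which is what Corollary \ref{vjw} and Remark \ref{bix-2} subsequently lean on. One small point worth making explicit in your subbasis paragraph: each advertised set $X\setminus\{d_p\ge b\}$ must itself be a $T'$-neighborhood of $i^0$, i.e.\ $\{d_p\ge b\}=\pi_2(\pi_1^{-1}(p)\cap\{d\ge b\})$ must be a compact subset of $\mathring{X}$; this is exactly what your minimality paragraph proves for an arbitrary $T''$ satisfying (ii), so applying it with $T''=T$ closes the loop, but the reader has to notice that.
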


We stress that compactness of $X$ does not imply that $i^0\in X$. Some causets will provide a counterexample, see Section \ref{cau}.
\begin{proof}
Let $T$ be a topology that satisfies (ii). By Eq.\ (\ref{vos}) sets of the form
\[
\{q: d(p,q)<b\}\cap  \{q: d(q,r)<e\}=X\backslash \{d_p\ge b\}\cap X\backslash \{d^r\ge e\}
\]
with $p,r\in X$, $b,e\in \mathbb{R}$, $b,e> 0$, belong to the  open neighborhood system of $i^0$ (note that $\{d_p\ge b\}=\pi_2(\pi_1^{-1}(p)\cap \{d\ge b\})$ is the projection of the intersection of a closed set and a compact set, hence compact, and similarly for $\{d^r\ge e\}$). The open neighborhood system of $i^0$ in $T$ contains at least the family $\mathcal{F}$ of finite intersections of sets of the above form.

If $T$ had other neighborhoods of $i^0$ that do not contain elements of $\mathcal{F}$ then it is possible to reduce the neighborhoods of $i^0$ to those of the above system while preserving continuity of $d$. This is obvious for any pair $(p,q)$ with $p,q\ne i^0$, since we are not altering the neighborhood system of  points different from $i^0$.

Let $\epsilon>0$. The compact set $B=\pi_1(\{d\ge \epsilon\})$ consists of points $p$ that admit some $r$ such that $d(p,r)\ge \epsilon$, thus $p$ is distinguished from $i^0$ and there is some $\tilde r\in X$ and $\delta>0$, such that $p\in \{d^{\tilde r} > \delta\}$ or $p\in  \{d_{\tilde r} > \delta\}$.
This means that $B$ can be covered with sets of this form. Pass to a finite subcovering $\{O_i\}$ where $O_i=\{d^{\tilde r_i} > \delta_i\}$ or $O_i=\{d_{\tilde r_i} > \delta_i\}$ and let $U=\cap_i X\backslash C_i \in \mathcal{F}$ where $C_i=\{d^{\tilde r_i} \ge \delta_i\}$ or $C_i=\{d_{\tilde r_i} \ge \delta_i\}$. We have  $U\cap B=\emptyset$ and hence, for each $x\in U$, $d(x,\cdot)<\epsilon$.

Similarly, we can find $V\in \mathcal{F}$ such that for every $x\in V$, $d(\cdot,x)<\epsilon$. This shows that the neighborhood system $\mathcal{F}$ of $i^0$ is sufficient to guarantee continuity of $d$ on $\pi_1^{-1}(i^0)\cup\pi_2^{-1}(i^0)$. We let $T'$ be the topology that has the same neighborhood system of $T$ for points in $\mathring{X}$, and $\mathcal{F}$ for $i^0$.

Let us prove that $(X,T')$ is compact. Indeed, the open covering must contain an open neighborhood $O$ which contains an element of $\mathcal{F}$. Let this element of $\mathcal{F}$ have the form $\cap_i X\backslash C_i=X\backslash \cup_i C_i$ where $C_i$ are compact sets, then $O$ and the finite family of open sets covering   $\cup_i C_i$ give the searched finite covering.
\end{proof}

\begin{proposition} \label{bis}
Let $\mathcal{A}$ be a family of open subsets for a Hausdorff, locally compact topology such that (a)  $\mathcal{A}$ separates points and (b) every point admits an open neighborhood in $\mathcal A$ contained in a compact set. Then $\mathcal{A}$ is a subbasis for the topology.
\end{proposition}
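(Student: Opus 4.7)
The plan is to denote by $\mathcal{T}$ the given Hausdorff locally compact topology and by $\mathcal{T}'$ the topology on $X$ having $\mathcal{A}$ as a subbasis. Since every $A\in\mathcal{A}$ is $\mathcal{T}$-open, $\mathcal{T}'\subseteq\mathcal{T}$ is immediate, so the content is the reverse inclusion $\mathcal{T}\subseteq\mathcal{T}'$. The whole argument hinges on the classical fact that a continuous bijection from a compact space to a Hausdorff space is automatically a homeomorphism.

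The first step is to use (a) to establish that $(X,\mathcal{T}')$ is Hausdorff. For distinct $x,y$ the family $\mathcal{A}$ should provide \emph{disjoint} $\mathcal{T}'$-open sets around $x$ and $y$; in the intended applications these come from sets of the form (\ref{vos}) via the half-value splitting of a continuous $d_p$ or $d^p$, exactly as in the proof of the Hausdorff part of Proposition~\ref{bjw}. Given this, for any $\mathcal{T}$-compact $K\subseteq X$ the identity $(K,\mathcal{T}|_K)\to(K,\mathcal{T}'|_K)$ is a continuous bijection from compact Hausdorff to Hausdorff, hence a homeomorphism, so $\mathcal{T}|_K=\mathcal{T}'|_K$ on every $\mathcal{T}$-compact $K$.

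The second step globalizes via (b): given $U\in\mathcal{T}$ and $x\in U$, I would pick $A\in\mathcal{A}$ with $x\in A\subseteq K$ for some $\mathcal{T}$-compact $K$. Then $U\cap A$ is $\mathcal{T}|_K$-open, hence $\mathcal{T}'|_K$-open by the previous step, so there exists $V\in\mathcal{T}'$ with $V\cap K=U\cap A$. The intersection $V\cap A$ is then a $\mathcal{T}'$-open neighborhood of $x$ contained in $V\cap K\cap A\subseteq U$, whence $U\in\mathcal{T}'$.

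The main obstacle is the Hausdorffness of $\mathcal{T}'$: without it the compact-to-Hausdorff trick collapses (a merely $T_0$-separating $\mathcal{A}$ can easily generate a strictly coarser topology, e.g.\ the Sierpinski topology produced by $\mathcal{A}=\{\{0\}\}$ on $\{0,1\}$). Once it is secured, the rest is bookkeeping that uses (b) to lift the agreement from compact subsets to all of $X$.
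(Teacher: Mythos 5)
Your proof is correct, but it takes a genuinely different route from the paper's. The paper argues directly: given $p\in O'$, it takes $R\in\mathcal A$ with $p\in R\subset C$ compact (condition (b)), and for each $q$ in the compact set $C\setminus(O'\cap R)$ uses (a) to choose disjoint $A(q),B(q)\in\mathcal A$ with $p\in A(q)$, $q\in B(q)$; a finite subcover $\{B(q_i)\}$ then yields the explicit finite intersection $\bigcap_i A(q_i)\cap R$ as a basic neighborhood of $p$ inside $O'$. You instead first show the topology $\mathcal T'$ generated by $\mathcal A$ is Hausdorff, deduce $\mathcal T|_K=\mathcal T'|_K$ on every compact $K$ from the compact-to-Hausdorff homeomorphism lemma, and then use (b) to localize; this is a clean, modular argument, and your intermediate step (agreement of the two topologies on compacta) is of independent interest. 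Note that both proofs rest on the same reading of hypothesis (a): it must mean that distinct points lie in \emph{disjoint} members of $\mathcal A$ (as the paper's proof uses when it picks $A(q)\cap B(q)=\emptyset$), not mere $T_0$-type separation — you correctly identify this as the crux and your Sierpinski example shows the weaker reading is insufficient. The two arguments are also close under the hood, since the compact-to-Hausdorff lemma is itself proved by the same separate-points-and-take-a-finite-subcover mechanism that the paper carries out by hand; what the paper's version buys is self-containedness and an explicit subbasic neighborhood, while yours is shorter modulo the standard lemma.
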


\begin{proof}
Let $p\in X$ and let $O'\ni p$ be an open set. By (b) there are an  open neighborhood in $R\in \mathcal A$, $p\in R$, and a compact set $C\supset R$.
Then $p\in O'\cap R\subset C$. We are going to show that there is an open set $A$, finite intersection of elements in $\mathcal A$, such that $p\in A\subset O:=O'\cap R$. Indeed, for every $q\in C\backslash O$ we  can find an open neighborhood $B(q)\in \mathcal A $ of $q$ and an open neighborhood $A(q) \in \mathcal A$ of $p$ such that $A(q)\cap B(q)=\emptyset$. The open set  $A(q)\cap R$ is contained in $C$.  Let $\{B(q_i)\}$ be a finite covering of the compact set $C\backslash O$. Then  $A =\cap_i A(q_i)\cap R $ is an open neighborhood of $p$ contained in $C\cap O\subset O'$.
\end{proof}

\begin{corollary} \label{vjw}
Let ${T}$ be a topology that satisfies (ii) and let $T'$ be the topology mentioned in  Proposition \ref{bix}. The topologies
$T'$ and $\mathring{{T}}=\mathring{{T}}'$ are the initial topologies of the family of functions $\{d_p, d^p: p\in X\}$. More precisely, the sets  of the form
\begin{equation}  \label{don}
\{q: a<d(p,q)<b\}\cap  \{q: c<d(q,r)<e\}
\end{equation}
with $p,r\in X$ and $a,b,c,e,\epsilon\in \mathbb{R}\cup \{-\infty,\infty\}$ form a subbasis for the topologies.

As a consequence, $\mathscr{T}=T'$, $\mathring{\mathscr{T}}=\mathring{{T}}'=\mathring{T}$, thus $d$ is continuous in $\mathscr{T}\times \mathscr{T}$, the sets $\{d\ge \epsilon\}$ for $\epsilon>0$ are $\mathscr{T}\times \mathscr{T}$-compact (namely $\mathscr{T}$ satisfies the properties of point Def.\ \ref{D1}(ii) and hence it is the coarsest topology that satisfies those properties) and $\mathscr{T}$ is uniquely determined by $d$,  having as subbasis the elements in (\ref{don}). Moreover, if $i^0\in X$ then $(X,\mathscr{T})$ is compact.
\end{corollary}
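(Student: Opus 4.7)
The plan is to invoke Proposition \ref{bis} for the topology $T'$ produced by Proposition \ref{bix}, taking as $\mathcal{A}$ the family of sets of the form (\ref{don}) (with $p,r\in X$ and $a,b,c,d\in\mathbb{R}\cup\{-\infty,\infty\}$). Since $T'$ satisfies (ii), Remark \ref{R1}(4) ensures that $d_p$ and $d^p$ are $T'$-continuous, so every element of $\mathcal{A}$ is $T'$-open; thus $\mathcal{A}$ is a candidate subbasis and our task reduces to verifying the hypotheses of Proposition \ref{bis}.

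The needed properties split into four checks. Hausdorffness and local compactness of $T'$ are delivered by Proposition \ref{bjw} applied to $T'$ (which satisfies (ii)), with local compactness holding automatically when $i^0\in X$ because Proposition \ref{bix} makes $(X,T')$ compact, and coinciding with the local compactness of $T'=T$ when $i^0\notin X$. For (a), the separation argument already used in the Hausdorff step of Proposition \ref{bjw} gives disjoint elements of $\mathcal{A}$ separating any $x\neq y$: property (iii) yields $z$ with $a:=d(z,x)\neq d(z,y)=:b$, and then $\{q : d(z,q)<(a+b)/2\}$ and $\{q : d(z,q)>(a+b)/2\}$ belong to $\mathcal{A}$. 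For (b), any $p\in \mathring X$ admits, by Remark \ref{R1}(2), a point $r$ with $d(r,p)>0$ (or dually); setting $\epsilon:=d(r,p)/2$, the set $\{q : d(r,q)>\epsilon\}\in\mathcal{A}$ is an open neighborhood of $p$ contained in the compact set $\{q:d(r,q)\ge \epsilon\}$. At $i^0$ itself (when present), $\{q : d(p,q)<1\}\in\mathcal{A}$ is an open neighborhood (since $d(p,i^0)=0$) contained in the compact space $X$.

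Proposition \ref{bis} then yields that $\mathcal{A}$ is a subbasis for $T'$; the very same reasoning, applied to the locally compact Hausdorff topology $\mathring T$, shows that the restrictions of the sets of $\mathcal{A}$ to $\mathring X$ form a subbasis for $\mathring T=\mathring T'$, so both topologies are the initial topologies of $\{d_p,d^p:p\in X\}$. Since $\mathcal{A}$ is defined purely from $d$, the topology $T'$ is independent of the starting $T$. Because $T'\subseteq T$ for every $T$ satisfying (ii), one has $T'\subseteq\mathscr{T}$, and since $T'$ itself satisfies (ii) it participates in the intersection defining $\mathscr{T}$, yielding $\mathscr{T}\subseteq T'$. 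Hence $\mathscr{T}=T'$ and $\mathring{\mathscr{T}}=\mathring T$.

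The remaining assertions are immediate: $d$ is continuous and each $\{d\ge\epsilon\}$ is compact in $\mathscr{T}\times\mathscr{T}$ because $\mathscr{T}=T'$ satisfies (ii); uniqueness of $\mathscr{T}$ from $d$ is manifest through the subbasis (\ref{don}); and when $i^0\in X$ the compactness of $(X,\mathscr{T})$ is exactly Proposition \ref{bix}. The chief technical point I anticipate is verifying the local-compactness prerequisite of Proposition \ref{bis} at $i^0$, but this is neatly circumvented by the fact that $(X,T')$ is compact in that case, so that the trivial neighborhood $\{q:d(p,q)<1\}$ of $i^0$ already lies in a compact set.
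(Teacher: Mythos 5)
Your proposal is correct and follows essentially the same route as the paper: both verify hypotheses (a) and (b) of Proposition \ref{bis} for the family (\ref{don}) (separation via the Hausdorff argument of Proposition \ref{bjw}, compact neighborhoods via $\pi_2(\{d\ge\epsilon\}\cap\pi_1^{-1}(p))$, with the $i^0$ case handled by compactness of $(X,T')$), and then identify $T'$ with $\mathscr{T}$ by noting the subbasis lies in every topology satisfying (ii). The only difference is cosmetic — you treat $T'$ first and $\mathring T$ second, and you spell out the neighborhood of $i^0$ explicitly where the paper says the compact case is ``identical but easier.''
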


\begin{proof}

Let us first give the proof for $\mathring{{T}}$.
Let $\mathcal A$ be the family  of the form (\ref{don}). From the proof of the  Hausdorff property we  already know that they separate points.  As already observed if $o$ is a point that admits a point $p$ in its chronological past (the other case is analogous),
\[
\{q: d(p,q) \ge \epsilon\}=\pi_2(\{d\ge \epsilon \}\cap \pi_1^{-1}(p)),
\]
is a compact neighborhood of it for some rational $\epsilon>0$ as it contains $\{q\colon d(p,q) > \epsilon\}$ which is open. But the latter sets belong to $\mathcal A$, thus $\mathcal A$ satisfies both (a) and (b) in Proposition \ref{bis}, thus $\mathcal{A}$ gives a subbasis for the topology of $\mathring{{T}}$.

For ${T}'$ the proof is identical but easier since local compactness follows from compactness.

All the sets in (\ref{don}) are also open for $\mathscr{T}$, thus $T'=\mathscr{T}$, which implies that $d$ is also continuous with respect to $\mathscr{T}\times \mathscr{T}$.
\end{proof}

\begin{proposition}[One point compactification] \label{bir}
$\empty$
\begin{itemize}
\item[(a)] Let $(X,d)$  be a bounded Lorentzian metric space that contains $i^0$ and let $\mathscr{T}$ be its topology. The space $(\mathring{X}, \mathring{d})$ obtained by removing $i^0$ and considering the induced distance $\mathring{d}$ is a bounded Lorentzian metric space whose topology in the sense of point (ii) is $\mathring{\mathscr{T}}$ - the induced topology.
\item[(b)] Let $X$ be a bounded Lorentzian metric space that does not include $i^0$. Then adding an abstract element $i^0$ whose distance to  all the other elements in $X$ is defined to be zero gives a compact  bounded Lorentzian metric space $(\tilde X, \tilde d)$. Let $\mathscr{T}$, $\tilde{\mathscr{T}}$ be the respective topologies according to point (ii), then $(\tilde X,\tilde{\mathscr{T}})$ is the  standard one-point compactification of $(X,\mathscr{T})$.
\end{itemize}
\end{proposition}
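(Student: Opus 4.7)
The plan is to verify axioms (i)--(iii) of Definition \ref{D1} in both settings and then identify the resulting canonical topology via Corollary \ref{vjw}.

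For (a), axioms (i) and (iii) are immediate: restriction preserves the reverse triangle inequality, and since $d(i^0,\cdot)\equiv d(\cdot,i^0)\equiv 0$, the point $i^0$ never distinguishes two points, so any distinguishing witness for two points of $\mathring X$ already lies in $\mathring X$. For axiom (ii) equip $\mathring X$ with $\mathring{\mathscr{T}}$; because $i^0$ as an argument makes $d$ vanish, $\{\mathring d\ge\epsilon\}$ equals $\{d\ge\epsilon\}$, which is $\mathscr{T}\times\mathscr{T}$-compact and already contained in $\mathring X\times\mathring X$, hence remains compact in the subspace topology, while continuity of $\mathring d$ is inherited. The subbasis (\ref{don}) from Corollary \ref{vjw} is unchanged when recomputed with $\mathring d$ instead of $d$, so $\mathring{\mathscr{T}}$ is the canonical topology of $(\mathring X,\mathring d)$.

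For (b), I first check the axioms for $(\tilde X,\tilde d)$. Axiom (i) holds because the hypothesis $\tilde d>0$ fails vacuously whenever $i^0$ is involved; axiom (iii) follows from Remark \ref{R1}(2) applied to $X$ (which has no spacelike boundary): every $x\in X$ admits a $z\in X$ with $d(x,z)>0$ or $d(z,x)>0$, and such a $z$ distinguishes $x$ from the newly adjoined $i^0$. For axiom (ii) I equip $\tilde X$ with the one-point compactification topology $\tau$ of $(X,\mathscr{T})$, which is well defined since $X=\mathring X$ is locally compact Hausdorff by Proposition \ref{bjw}; its open neighborhoods of $i^0$ are $\tilde X\setminus K$ with $K\subset X$ compact. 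Then $\{\tilde d\ge\epsilon\}=\{d\ge\epsilon\}$ is compact in $\tau\times\tau$, and for continuity of $\tilde d$ at pairs involving $i^0$ the key point is that given $\epsilon>0$ the set $K_\epsilon:=\pi_1(\{d\ge\epsilon\})\cup\pi_2(\{d\ge\epsilon\})$ is compact in $X$, so $U_\epsilon:=\tilde X\setminus K_\epsilon$ is a $\tau$-neighborhood of $i^0$ on which $\tilde d(u,v)<\epsilon$ whenever $u\in U_\epsilon$ or $v\in U_\epsilon$.

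It remains to prove $\tau=\tilde{\mathscr{T}}$, and this is the step I expect to be the main obstacle. Since both topologies induce $\mathscr{T}$ on the open subset $X$, it suffices to compare the neighborhood systems of $i^0$. By Proposition \ref{bix} applied to $\tilde X$, a subbasis of $\tilde{\mathscr{T}}$-neighborhoods of $i^0$ consists of complements of sets $\{d_p\ge b\}$ and $\{d^p\ge b\}$ with $b>0$; each such set is compact in $X$, so its complement is automatically $\tau$-open. For the converse, given a compact $K\subset X$ and any $x\in K$, the fact that $x\ne i^0$ combined with Remark \ref{R1}(2) places $x$ in some $\{d_p>b\}$ or $\{d^p>b\}$ with $b>0$ (e.g.\ $b=d(p,x)/2$); extracting a finite subcover of $K$ and taking complements gives a subbasic $\tilde{\mathscr{T}}$-neighborhood of $i^0$ contained in $\tilde X\setminus K$. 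Thus the two neighborhood systems coincide and $\tau=\tilde{\mathscr{T}}$.
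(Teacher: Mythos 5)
Your proof is correct and follows essentially the same route as the paper: part (a) by restriction plus the identification $\mathring{T}=\mathring{\mathscr{T}}$ from Corollary \ref{vjw}, and part (b) by verifying condition (ii) for the adjoined point and then matching the neighborhood system of $i^0$ from Proposition \ref{bix} against the complements of compacta via a finite subcover of $K$ by sets $\{d_p>b\}$, $\{d^p>b\}$ — which is exactly the paper's final paragraph. The only (immaterial) difference is order of operations: you start from the one-point compactification topology $\tau$ and verify (ii) for it directly, whereas the paper starts from the topology built in Proposition \ref{bix} and then shows it coincides with the one-point compactification.
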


\begin{proof}
For the first point, let $T$ be a topology that satisfies $(ii)$ for $(X,d)$. The induced topology $\mathring{T}$  satisfies (ii) for $(\mathring{X}, \mathring{d})$, thus condition (ii) is satisfied. As the other points are clear,  $(\mathring{X}, \mathring{d})$ is a bounded Lorentzian metric space.
Actually, we proved in Corollary \ref{vjw} that  $\mathring{T}=\mathring{\mathscr{T}}$, where $\mathscr{T}$ is the topology of $(X,d)$, thus $\mathring{\mathscr{T}}$ is the topology of $(\mathring{X}, \mathring{d})$ (remember that any topology that satisfies $(ii)$ is {\em the topology} for spaces that do not contain $i^0$).

As for the second point, there is certainly a topology $\tilde T$ that makes the extended distance continuous. As shown in  Prop.\ \ref{bix} it is sufficient to add to the topology $\mathscr{T}$ the found neighborhood system for $i^0$. This topology $T$ is sufficiently coarse that the sets of the form $\{d\ge \epsilon\}$ are compact in the product topology $\tilde T\times \tilde T$. Indeed, from a covering of $\tilde T\times \tilde T$ we pass to a covering of $\mathscr{T}\times \mathscr{T}$ (as  $\{d\ge \epsilon\}$ does not intersect $\pi_1^{-1}(i^0)\cup \pi^{-1}_2(i^0)$) from here to a finite covering of $\mathscr{T}\times \mathscr{T}$ and hence to a finite covering of $\tilde T\times \tilde T$.

For the final statement, let $K\subset X$ be a compact set for $(X,\mathscr{T})$. For every point $x\in K$ we can find some points $y\in X$ such that $d(x,y)>\epsilon>0$ or $d(y,x)>\epsilon>0$. We can pass to a finite covering that includes only  open sets of the form $d^{y_i}> \epsilon_i>0$ or $d_{y_i}> \epsilon_i>0$. This means that $K$ is included in a finite union of compact sets of the form $d^{y_i}\ge \epsilon_i/2>0$ or $d_{y_i}\ge \epsilon_i/2>0$, which proves that the neighborhood system for $i^0$ generated by sets of the form $X\backslash K$ is no larger than that of Prop.\ \ref{bix} (which is also obtained by taking intersections of complements of compact sets).
\end{proof}

In the next result we shall consider the space $C^0(X,\mathbb{R})\times C^0(X,\mathbb{R})$ endowed with the topology of uniform convergence, namely that induced by the norm
\[
 \Vert (f,g)\Vert=\max\{\sup_ z\vert  f(z) \vert, \sup_ z\vert  g(z) \vert\}.
\]
Consider the (Kuratowski-type)  map
\begin{align*}
I\colon X &\to C^0(X,\mathbb{R})\times C^0(X,\mathbb{R}) \\
x& \mapsto (d_x,d^x)
\end{align*}
Since $d$ is bounded, the image of $I$ actually lies in the subspace of bounded continuous functions.

By {\em topological embedding} we mean  homeomorphism on the image.

\begin{proposition}
Let $(X,d)$ be a bounded Lorentzian metric space.
The map $I$ is a topological embedding for the topology $\mathscr{T}$ on $X$.
\end{proposition}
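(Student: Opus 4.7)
The plan is to verify three things about $I$: injectivity, continuity, and the closed-map property; together these yield the embedding claim.

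Injectivity is immediate from Definition \ref{D1}(iii): if $I(x)=I(y)$, then $d(x,z)=d(y,z)$ and $d(z,x)=d(z,y)$ for every $z\in X$, which forces $x=y$.

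For continuity, the cleanest route is first to reduce to the case where $X$ is compact. If $i^0\notin X$, invoke Proposition \ref{bir}(b) to extend to the compact bounded Lorentzian metric space $(\tilde X,\tilde d)$. Because $\tilde d(x,i^0)=\tilde d(i^0,x)=0$ for every $x$, one has
\[
\sup_{z\in X}|d_x(z)-d_y(z)|=\sup_{z\in\tilde X}|\tilde d_x(z)-\tilde d_y(z)|,
\]
and the analogous identity for the $d^\cdot$. Hence $I(x_n)\to I(x)$ in $C^0(X,\mathbb{R})^2$ iff $\tilde I(x_n)\to\tilde I(x)$ in $C^0(\tilde X,\mathbb{R})^2$, so $I$ on $X$ induces the same initial topology as the restriction $\tilde I|_X$, and it suffices to handle $\tilde I$ on $\tilde X$. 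Assuming now $X$ compact, $X\times X$ is compact Hausdorff and $d$ is jointly continuous; a standard argument then shows $x\mapsto d_x$ is continuous into $(C^0(X,\mathbb{R}),\|\cdot\|_\infty)$. Namely, if $x_n\to x$ but $d_{x_n}\not\to d_x$ uniformly, one finds $\varepsilon>0$ and $z_n$ with $|d(x_n,z_n)-d(x,z_n)|\ge\varepsilon$; passing to a subnet (or subsequence in the metrizable case) with $z_{n_k}\to z$, joint continuity gives $d(x_{n_k},z_{n_k})\to d(x,z)$ and $d(x,z_{n_k})\to d(x,z)$, contradicting the bound. The same argument applies to $x\mapsto d^x$, so $I$ is continuous.

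Finally, $I$ is a continuous injection from the compact Hausdorff space $(X,\mathscr{T})$ into the Hausdorff normed space $C^0(X,\mathbb{R})\times C^0(X,\mathbb{R})$; as such it automatically sends closed (hence compact) subsets of $X$ to compact, hence closed, subsets of the target, so $I$ is a closed map and therefore a homeomorphism onto its image. The only delicate point is the reduction to compact $X$; once that is set up, the rest is classical point-set topology using nothing beyond continuity of $d$ and the distinction property, both of which are built into Definition \ref{D1}.
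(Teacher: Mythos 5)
Your proposal is correct and follows essentially the paper's own route (its proof ``(a)''): injectivity from Definition \ref{D1}(iii), continuity of $I$, and then the compact-to-Hausdorff argument after passing to the one-point compactification of Proposition \ref{bir}, with the key observation that adjoining $i^0$ does not change the pullback sup-metric, so the embedding property transfers back to $X$. The only (immaterial) difference is in the continuity step: the paper argues directly on possibly non-compact $X$ via a finite subcover of the compact set $\pi_2(\{d\ge\varepsilon/2\})$, whereas you first reduce to the compact case and use a net-compactness contradiction --- and your use of nets rather than sequences is the right call, since metrizability of $\mathscr{T}$ is only established afterwards using this very proposition.
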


\begin{proof}
By Definition \ref{D1}(iii) $I$ distinguishes points, i.e. $I$ is injective.

Let us prove the continuity of the map $I$ at $x$.
Let $\varepsilon>0$.
According to  Corollary \ref{vjw} for every $z\in X$ there exists $V^z_1,V^z_2\in \mathscr{T}$, neighbourhoods of $x$ and $z$ respectively such that for all $(v,w)\in
V^z_1\times V^z_2$ we have
\[
|d(x,z)-d(v,w)|<\frac{\varepsilon}{2}.
\]

The set $\pi_2(d\ge \varepsilon/2)=\{w\in X|\; \exists r\in X: d(r,w)\ge \varepsilon/2\}$
is $\mathscr{T}$-compact in $X$. Therefore we can choose a finite subcover $\{V^{z_i}_2\}_{1\le i\le k}$
of $\pi_2(d\ge \varepsilon/2)$ from among $\{V^z_2\}_{z\in X}$. Let $V_x:= \cap_{i=1}^k V^{z_i}_1$, then for  $y\in V_x$ and any $w\in X$
we have
\[
|d(x,w)-d(y,w)|<\varepsilon .
\]
Indeed, if $w\notin  \pi_2(d\ge \varepsilon/2)$, then $d(x,w),d(y,w)<\varepsilon/2$, while if $w\in  \pi_2(d\ge \varepsilon/2)$, we have $w\in V^{z_i}_2$ for some $i$, and hence
\[
\vert d_x(w)-d_y(w) \vert =|d(x,w)-d(y,w)|\le |d(x,w)-d(x,z_i)|+|d(x,z_i)-d(y,w)|<\varepsilon.
\]
An analogous argument for the functions $d^x$, proves that there is an open neighborhood $U_x$ of  $x$  such that  for every $y\in U_x$ and $w\in X$,
\[
|d^x(w)-d^y(w)|<\varepsilon
\]
This shows that for $y\in V_x\cap U_x$, $\Vert I(x)-I(y)\Vert\le \varepsilon$, which, given the arbitrariness of $\epsilon$, proves continuity of $I$ at $x$.

It remains to show that $I$ is a homeomorphism onto its image. In case $i^0\in X$ this follows from the compactness of $X$ (Proposition \ref{bix}) and the target being Hausdorff.

For $i^0\notin X$ we give two proofs, (a) and (b). Proof (a) uses Prop.\ \ref{bir} while (b) is more direct.

(a). We can one-point compactify the space to $(\tilde X, \tilde d)$, then $\tilde I: \tilde X\to C^0(\tilde X, \mathbb{R})\times C^0(\tilde X, \mathbb{R})$ is a homeomorphism on the image.
Actually each point $(f,g)$ in the image is such that $f(i^0)=g(i^0)=0$, thus we can replace $C^0(\tilde X, \mathbb{R})$ with $C^0_0(\tilde X, \mathbb{R})$, the continuous functions that vanish at $i^0$.
Let $C_0^0(X, \mathbb{R})$ be the space obtained by restricting the functions in $C^0_0(\tilde X, \mathbb{R})$. The topology in $C_0^0(X, \mathbb{R})$ coincides with that induced from $C^0(X, \mathbb{R})$.

The restriction of $\tilde I$ to the open set $X=\tilde X\backslash \{i^0\}$ is then again a homeomorphism onto the image.  Since the topology in $C_0^0(X, \mathbb{R})$ coincides with that induced from $C^0(X, \mathbb{R})$, $I:X\to C^0(X, \mathbb{R})\times C^0(X, \mathbb{R})$ is a homeomorphism on the image.

(b). In case $i^0\notin X$ note that for all $\varepsilon >0$ the restriction of $I$ to
\[
X_\varepsilon:=\{x\in X|\; \exists z\in X: d(x,z)\ge \varepsilon\}\cup \{y\in X|\; \exists z\in X: d(z,y)\ge \varepsilon\}
\]
is a
homeomorphism onto its image as the  set is compact and the target is Hausdorff. Now for $V\subset X$ open and $x\in V$ choose $\varepsilon >0$ with
$\|(d_x,d^x)\|>\varepsilon$. Since $I$ is continuous we know that $V_x:=I^{-1}(B_{\varepsilon/2}(I(x)))$ is open. For every $y\in V_x$ we have $\|(d_y,d^y)\|\ge \|(d_x,d^x)\|-\|(d_y-d_x,d^y-d^x)\|>\varepsilon /2$ thus $V_x\subset X_{\varepsilon/2}$,
 and
\[
I(V\cap V_x)=I(V\cap V_x\cap X_{\varepsilon/2})=B_{\varepsilon/2}(I(x))\cap I(X_{\varepsilon/2}\cap V).
\]
As shown above $I(X_{\varepsilon/2}\cap V)$ is open in $I(X_{\varepsilon/2})$.
By the definition of the induced topology there exists an open subset $W\subset C^0(X,\mathbb{R})\times C^0(X,\mathbb{R})$  with $I(V\cap V_x)= I(X_\varepsilon/2)\cap W$.
 As $I(V\cap V_x)=I(V\cap V_x)\cap B_{\varepsilon/2}(I(x))$ we can assume w.l.o.g.\ that $W\subset B_{\varepsilon/2}(I(x))$. As $B_{\varepsilon/2}(I(x))\subset I(X_{\varepsilon/2})$ we have $B_{\varepsilon/2}(I(x))\cap I(X_{\varepsilon/2})= B_{\varepsilon/2}(I(x))\cap I(X)$ we conclude $I(V\cap V_x)=W\cap I(X)$, i.e. it is open in $I(X)$.
\end{proof}

We recall that a Polish space is a separable completely metrizable topological space.
\begin{theorem}
Let $(X,d)$ be a bounded Lorentzian metric space.
 Then $(X,\mathscr{T})$ is a Polish space.
\end{theorem}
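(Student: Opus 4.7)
The plan is to extract Polishness entirely from the Kuratowski-type topological embedding $I\colon X\to C^0(X,\mathbb{R})\times C^0(X,\mathbb{R})$ established in the previous proposition. Pulling back the uniform distance along $I$ gives a compatible metric
\[
D(x,y)=\max\bigl\{\textstyle\sup_{z\in X}\vert d(x,z)-d(y,z)\vert,\ \sup_{z\in X}\vert d(z,x)-d(z,y)\vert\bigr\}
\]
on $X$, which is finite because $d$ is bounded and separates points because $d$ distinguishes them. In particular $(X,\mathscr{T})$ is automatically \emph{metrizable}, so only separability and complete metrizability remain to be addressed.

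I would then split into two cases. If $i^0\in X$, Proposition \ref{bix} says that $(X,\mathscr{T})$ is compact, so the continuous injection $I$ into the Hausdorff Banach space $C^0(X,\mathbb{R})^2$ is a homeomorphism onto the compact subset $I(X)$. A compact subset of a metric space is a compact metric space, and compact metric spaces are automatically separable and complete in any compatible metric; transferring back via $I^{-1}$ yields that $(X,\mathscr{T})$ is Polish.

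If instead $i^0\notin X$, I would invoke Proposition \ref{bir}(b) to form the one-point compactification $(\tilde X,\tilde d)$, which is itself a bounded Lorentzian metric space containing $i^0$ and whose intrinsic topology $\tilde{\mathscr{T}}$ coincides with the topological one-point compactification of $\mathscr{T}$. The compact case just treated applies to $\tilde X$ and shows that $\tilde X$ is Polish. Since $\{i^0\}$ is closed in the Hausdorff space $\tilde X$, the set $X=\tilde X\setminus\{i^0\}$ is open in $\tilde X$, and by Alexandrov's theorem that open subsets of Polish spaces are Polish, $(X,\mathscr{T})$ is itself Polish.

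There is no substantial obstacle at this stage: the real content—metrizability via the Kuratowski embedding—has already been absorbed into the previous proposition, and the rest is the standard package consisting of ``compact metric is Polish'' together with Alexandrov's open-subspace theorem. The only care needed is the clean separation of the cases $i^0\in X$ and $i^0\notin X$, so that the compactification argument can be invoked exactly where it is needed.
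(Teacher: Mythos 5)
Your proof is correct and follows essentially the same route as the paper's: metrizability from the Kuratowski-type embedding, reduction to the compact case via a one-point compactification, and the standard fact that an open subset of a Polish space is Polish. The only (harmless) difference is that you compactify using the concrete Lorentzian one-point compactification of Proposition \ref{bir}(b) and rerun the compact-case argument there, whereas the paper works with the abstract topological one-point compactification $X^*$ and deduces its metrizability from second countability.
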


\begin{proof}
The topology in $C^0(X,\mathbb{R})\times C^0(X,\mathbb{R})$ is metric, and so is that induced on $I(X)$ and $\mathscr{T}$ (by the homeomorphism).
But $(X,\mathscr{T})$ is compact or $\sigma$-compact, and hence  a Lindel\"of metrizable space, thus second countable.
Every   second countable locally compact Hausdorff space (hence $\sigma$-compact) is actually Polish. Indeed, it has a one-point compactification that is second countable (of course there is no need to compactify if $X$ is already compact). From here the one-point compactification $X^*$ is second-countable, hence metrizable, hence completely metrizable  \cite[23C]{willard70}. But local compactness implies that $X$ is an open subset of $X^*$ which implies that $X$ is Polish \cite[Part II, Chap.\ IX, Sect.\ 6]{bourbaki66}.
\end{proof}

\begin{proposition}\label{prop(iv)}
Let $\mathcal{S}$ be a dense subset of a bounded  Lorentzian metric space $(X,d)$ (it exists as the metric space is  separable). Then $(X,d)$ satisfies the following property
\begin{itemize}
\item[(iv)] The distinguishing point $z$ in Def.\ref{D1}(iii) can be found in the countable subset $\mathcal{S}$.
\end{itemize}
\end{proposition}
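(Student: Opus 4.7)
The plan is to combine the distinguishing property (iii) with the continuity of the partial distance functions $d_x, d^x$ and the density of $\mathcal{S}$. First, existence of a countable dense subset is guaranteed by the preceding theorem, which asserts that $(X,\mathscr{T})$ is Polish (hence separable); thus fixing such an $\mathcal{S}$ is legitimate.

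Given distinct points $x,y\in X$, condition (iii) provides some $z_0\in X$ with either $d(x,z_0)\ne d(y,z_0)$ or $d(z_0,x)\ne d(z_0,y)$. I would treat the two cases symmetrically, so let me sketch only the first. Consider the function
\[
f\colon X\to \mathbb{R}, \qquad f(w):=d(x,w)-d(y,w)=d_x(w)-d_y(w).
\]
By the discussion in Remark \ref{R1}(4) (or Corollary \ref{vjw}), each of $d_x$ and $d_y$ is $\mathscr{T}$-continuous, hence $f$ is $\mathscr{T}$-continuous. Since $f(z_0)\ne 0$, the preimage $U:=f^{-1}(\mathbb{R}\setminus\{0\})$ is an open neighborhood of $z_0$ in $\mathscr{T}$.

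By density of $\mathcal{S}$ in $(X,\mathscr{T})$, there exists $z\in U\cap \mathcal{S}$, and for this $z$ we have $f(z)\ne 0$, i.e.\ $d(x,z)\ne d(y,z)$, which is the required distinguishing property with $z\in\mathcal{S}$. In the symmetric case $d(z_0,x)\ne d(z_0,y)$ one argues identically using $g(w):=d^x(w)-d^y(w)$, which is continuous because $d^x,d^y$ are continuous. There is no real obstacle here: the entire argument reduces to the continuity of the one-variable distance functions combined with density, and the main thing to verify carefully is simply that the continuity statements invoked have already been established in the preceding material.
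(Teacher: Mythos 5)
Your proof is correct and follows essentially the same route as the paper: both arguments produce an open neighborhood of the distinguishing point $z_0$ on which the separation $d(x,\cdot)\ne d(y,\cdot)$ persists (the paper via an intermediate value $a$ and the set $d_x^{-1}((-\infty,a))\cap d_y^{-1}((a,\infty))$, you via the preimage of $\mathbb{R}\setminus\{0\}$ under the continuous difference $d_x-d_y$) and then invoke density of $\mathcal{S}$. The continuity facts you cite are indeed established earlier (Remark \ref{R1}(4) and Corollary \ref{vjw}), so nothing is missing.
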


It will be referred to as property (iv) of a bounded Lorentzian metric space, but it is understood that it follows from the other three.

Since $i^0$ does not help to distinguish any pair of points, if the objective is just finding a countable set $\mathcal{S}$ that distinguishes points (dropping the dense condition) we can remove $i^0$ from $\mathcal{S}$.

We mentioned that any point $q$ different from $i^0$ has another point $z$ in its chronological past or future. This result shows that $z$ can be found in $\mathcal{S}$.

\begin{proof}
With reference to (iii), suppose that the first case applies (the other case being analogous), i.e.\ $d(x,z)\ne d(y,z)$, with $z\in X$, then we can find some $a\in \mathbb{R}$ such that $d(x,z)< a < d(y,z)$ (or similarly with $>$ in place of $<$), which implies that $z\in  d_x^{-1}((-\infty,a))\cap d_y^{-1}((a,+\infty))$ where the right-hand side is open.  Thus we can find $s\in \mathcal{S}$, such that $s\in  d_r^{-1}((-\infty,a))\cap d_y((a,+\infty))$, which implies $d(x,s)< a < d(y,s)$ and hence $d(x,s)\ne  d(y,s)$.
\end{proof}

\begin{remark} \label{vkd}
In the proof of the Hausdorff property, Prop.\ \ref{bjw}, thanks to (iv),  we can assume $z\in \mathcal{S}$ thus there is a countable subset $\mathscr{Q}\subset \mathscr{T}$ that separates points (In the proof of Prop.\ \ref{bjw}  just take $m$ close to $(a+b)/2$ and belonging to the rationals).
\end{remark}

\begin{remark}  \label{bix-2}
It is also easy to prove, by arguing as in the last paragraph of Prop.\ \ref{bir} with $y\in \mathcal{S}$, that for $i^0\in X$ a subbasis for the neighborhood system of $i^0$ is provided by the complements of the compact sets of the form $\{d_p\ge b\}$, $\{d^p\ge b\}$ with $b>0$, $b\in \mathbb{Q}$, $p\in \mathcal{S}$.
\end{remark}

\begin{corollary} \label{vju}
Let $\mathcal{S}$ be a countable set that satisfies property (iv), e.g.\ a countable dense subset.
$\mathscr T$ and $\mathring{\mathscr{T}}$ are the initial topologies of the family of functions $\{d_p, d^p: p\in \mathcal{S}\}$. More precisely, the sets  of the form
\begin{equation}  \label{don2}
\{q: a<d(p,q)<b\}\cap  \{q: c<d(q,r)<e\}
\end{equation}
with $p,r\in \mathcal{S}$ and $a,b,c,e,\epsilon\in \mathbb{Q}\cup \{-\infty,\infty\}$ form a countable subbasis for the topologies.
\end{corollary}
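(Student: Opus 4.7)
The plan is to mimic the proof of Corollary \ref{vjw}, but to check that every step can be carried out with parameters restricted to the countable set $\mathcal{S}$ and the rationals $\mathbb{Q}$. Let $\mathcal{A}$ denote the countable family of sets of the form (\ref{don2}) with $p,r\in \mathcal{S}$ and the bounds in $\mathbb{Q}\cup\{-\infty,\infty\}$. I will first show that $\mathcal{A}$ is a subbasis for $\mathring{\mathscr T}$ by verifying the hypotheses of Proposition \ref{bis}, and then extend the argument to $\mathscr T$ by handling $i^0$ separately with the aid of Remark \ref{bix-2}.

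First I would verify the separation property (a) of Proposition \ref{bis} for $\mathring{\mathscr T}$. Given two distinct points $x,y\in \mathring X$, property (iv) supplies $z\in \mathcal{S}$ with (say) $d(x,z)\ne d(y,z)$, and choosing a rational $m$ strictly between $d(x,z)$ and $d(y,z)$ puts $x$ and $y$ into disjoint sets of the form $\{q:d(q,z)<m\}$ and $\{q:d(q,z)>m\}$, both in $\mathcal A$. (This is essentially Remark \ref{vkd}.) For the local-compactness condition (b), let $o\in \mathring X$. By Remark \ref{R1}(2) the point $o$ has some $z\in X$ with $d(z,o)>0$ or $d(o,z)>0$; applying property (iv) to the pair $o,i^0$ (or directly to $z,i^0$) yields such a witness $p\in \mathcal S$. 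Pick a rational $\epsilon$ with $0<\epsilon<d(p,o)$ (the other case being symmetric). Then $\{q:d(p,q)>\epsilon\}\in \mathcal A$ is an open neighbourhood of $o$ contained in $\{q:d(p,q)\ge \epsilon\}=\pi_2(\{d\ge\epsilon\}\cap\pi_1^{-1}(p))$, which is compact by Definition \ref{D1}(ii). Proposition \ref{bis} therefore applies and $\mathcal A$ is a countable subbasis for $\mathring{\mathscr T}$.

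To upgrade the result to all of $\mathscr T$, I only need to check that the $\mathscr T$-neighbourhood system of $i^0$ (when $i^0\in X$) admits a countable subbasis drawn from $\mathcal A$. This is exactly the content of Remark \ref{bix-2}: the complements of the compact sets $\{d_p\ge b\}$ and $\{d^p\ge b\}$ with $p\in \mathcal S$ and $b\in \mathbb{Q}_{>0}$ form a subbasis for the neighbourhood system of $i^0$, and each such complement is itself a (countable union of) element(s) of $\mathcal A$, since $X\setminus \{d_p\ge b\}=\{q:d(p,q)<b\}$ and similarly for $d^p$. Combining this with the subbasis obtained for $\mathring{\mathscr T}$ yields a countable subbasis for $\mathscr T$ of the claimed form.

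The main obstacle, such as it is, lies only in the bookkeeping around $i^0$: Proposition \ref{bis} presupposes local compactness and therefore cannot be invoked at $i^0$, so one must fall back on the explicit neighbourhood basis of $i^0$ constructed in Proposition \ref{bix} and refined (to the countable setting) in Remark \ref{bix-2}. Once that is in hand, the identification of $\mathscr T$ and $\mathring{\mathscr T}$ with the initial topologies of $\{d_p,d^p:p\in \mathcal S\}$ is immediate: every $d_p$ and $d^p$ with $p\in \mathcal S$ is already $\mathscr T$-continuous, while conversely $\mathcal A$ is built entirely from preimages of rational open intervals under these functions, so the initial topology they generate has $\mathcal A$ as a subbasis and hence coincides with $\mathscr T$.
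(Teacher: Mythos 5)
Your treatment of $\mathring{\mathscr{T}}$ is essentially the paper's: verify hypotheses (a) and (b) of Proposition \ref{bis} for the family $\mathcal A$, using property (iv) plus a rational cut $m$ for separation (Remark \ref{vkd}) and a rational $\epsilon$ with the compact set $\pi_2(\{d\ge\epsilon\}\cap\pi_1^{-1}(p))$, $p\in\mathcal S$, for local compactness. Where you diverge is in the passage to $\mathscr{T}$. You assert that Proposition \ref{bis} ``presupposes local compactness and therefore cannot be invoked at $i^0$,'' but this premise is false: when $i^0\in X$ the space $(X,\mathscr{T})$ is compact (Corollary \ref{vjw}), hence locally compact, and $i^0$ does admit a neighborhood in $\mathcal A$ (e.g.\ $\{q:d(p,q)<b\}$ for suitable $p\in\mathcal S$, $b\in\mathbb Q_{>0}$) contained in the compact set $X$; the paper accordingly disposes of $\mathscr{T}$ with the one-line remark that ``the proof is identical but easier since local compactness follows from compactness.'' Your alternative --- gluing the subbasis for $\mathring{\mathscr{T}}$ to the countable neighborhood subbasis of $i^0$ from Remark \ref{bix-2} --- is nevertheless a valid route, at the cost of an extra step you leave implicit: the sets produced by Proposition \ref{bis} at a point $x\in\mathring X$ are a priori only relatively open in $\mathring X$, and one must observe that they sit inside a compact subset of $\mathring X$ (they are contained in some $\{q:d(p,q)\ge\epsilon\}$), hence avoid $i^0$ and are genuinely contained in the given $\mathscr{T}$-open set. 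Also, the complement $X\setminus\{d_p\ge b\}=\{q:d(p,q)<b\}$ is directly an element of $\mathcal A$, not merely a countable union of such. In short: the argument is correct, but the detour around $i^0$ is unnecessary and its motivation is mistaken.
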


Observe that if the subbasis is countable, then so is the basis obtained by the finite intersections  (a countable set has countably many finite subsets).

\begin{proof}
Let us first give the proof for $\mathring{\mathscr{T}}$.
Let $\mathcal A$ be the family  of the form (\ref{don2}). We just observed that they separate points. If $o$ is a point that admits a point $p\in \mathcal{S}$ in its chronological past (the other case is analogous),
\[
\{q: d(p,q) \ge \epsilon\}=\pi_2(\{d\ge \epsilon \}\cap \pi_1^{-1}(p)),
\]
is a compact neighborhood of it for some rational $\epsilon>0$ as it contains $\{q\colon d(p,q) > \epsilon\}$ which is open. But the latter sets belongs to $\mathcal A$, thus $\mathcal A$ satisfies both (a) and (b) in Proposition \ref{bis}, hence $\mathcal{A}$ is a subbasis for the topology.

For $\mathscr{T}$ the proof is identical but easier since local compactness follows from compactness.
\end{proof}

We can give a characterization of bounded Lorentzian-metric space which can also be used as a definition.

\begin{proposition} \label{vxg}
A space $(X,d)$ consisting of a set $X$, a map $d: X\times X \to [0,\infty)$, and for which there is a point $i^0$ at zero distance from any other point, is a bounded Lorentzian-metric space if and only if
\begin{itemize}
\item[(i)] For every $x,y,z\in X$ such that $d(x,y)>0$, $d(y,z)>0$ we have
    \[
    d(x,z)\ge d(x,y) +d(y,z).
    \]

\item[(ii')] There is a topology $T$ for which $d$ is continuous in $T\times T$ and  $X$ is compact.

\item[(iii)] $d$ distinguishes points, i.e.\ for every pair $x,y$, $x\ne y$ there is $z$ such that $d(x,z)\ne d(y,z)$ or $d(z,x) \ne d(z,y)$.
\end{itemize}
In this case there is a coarsest topology $\mathscr{T}$ with the property $(ii')$ (this is {\em the topology} of $(X,d)$).
\end{proposition}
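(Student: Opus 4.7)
The plan is to show that, given the presence of $i^0$ and conditions (i) and (iii), the two conditions (ii) and (ii') are equivalent, and then to identify $\mathscr{T}$ (the intersection of all topologies satisfying (ii)) as the coarsest topology satisfying (ii'). Both directions of the equivalence are essentially already contained in the earlier machinery, so the bulk of the work is bookkeeping.

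First I would prove (ii) $\Rightarrow$ (ii'). Starting from any topology $T$ on $X$ satisfying (ii), Proposition \ref{bix} produces a coarser topology $T'$ that still satisfies (ii) and for which $(X,T')$ is compact; since $T'$ satisfies (ii), $d$ is continuous in $T'\times T'$. Thus $T'$ witnesses (ii'), and in particular a bounded Lorentzian metric space containing $i^0$ always satisfies (ii').

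Next I would prove the harder-looking but actually routine direction (ii') $\Rightarrow$ (ii). Given a topology $T$ for which $d$ is $T\times T$-continuous and $(X,T)$ is compact, the product $(X\times X, T\times T)$ is compact, so for each $\epsilon>0$ the set $\{(p,q):d(p,q)\geq\epsilon\}=d^{-1}([\epsilon,\infty))$ is closed in a compact space, hence $T\times T$-compact. Thus the same $T$ witnesses (ii), and so (i), (ii), (iii) all hold, i.e.\ $(X,d)$ is a bounded Lorentzian-metric space in the sense of Definition \ref{D1}.

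Finally, for the coarsest-topology claim, $\mathscr{T}$ is the intersection of all topologies satisfying (ii). By Corollary \ref{vjw} this intersection is itself a topology satisfying (ii), and by Proposition \ref{bir} (together with the fact that $i^0\in X$) the space $(X,\mathscr{T})$ is compact; hence by the (ii)$\Rightarrow$(ii') direction above, $\mathscr{T}$ satisfies (ii'). Conversely, any topology $T$ satisfying (ii') also satisfies (ii) by the second step, so $T\supseteq\mathscr{T}$. Therefore $\mathscr{T}$ is the (unique) coarsest topology on $X$ with property (ii').

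The main, and essentially only, subtlety is to make sure that at no point in the argument I silently use (ii) when I have only been given (ii'), or vice versa; once the two equivalences are established cleanly, the coarsest-topology statement follows immediately from Corollary \ref{vjw} and Proposition \ref{bir}, since these already identify $\mathscr{T}$ as the coarsest topology witnessing (ii) and show it is compact when $i^0\in X$.
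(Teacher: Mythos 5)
Your proposal is correct and follows essentially the same route as the paper: both directions hinge on the observation that compactness of $(X\times X,T\times T)$ makes the closed sets $\{d\ge\epsilon\}$ compact (giving (ii')$\Rightarrow$(ii)), and on the earlier machinery (Proposition \ref{bix}/Corollary \ref{vjw}) for producing a compact topology witnessing (ii'). The only minor divergence is in the coarsest-topology step, where the paper reruns the subbasis argument (any $T'$ satisfying (ii') makes $d_p,d^p$ continuous, so the subbasis \eqref{don} is $T'$-open), whereas you deduce $T\supseteq\mathscr{T}$ directly from (ii')$\Rightarrow$(ii) and the definition of $\mathscr{T}$ as the intersection of all topologies satisfying (ii) — both are valid and equivalent in substance (note only that the compactness of $(X,\mathscr{T})$ you attribute to Proposition \ref{bir} is actually stated in Corollary \ref{vjw}).
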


\begin{proof}[Proof of the equivalence.]
We already proved, see  Corollary \ref{vjw}, that Definition  \ref{D1}(i)-(iii)  implies that there is a topology $T:=\mathscr{T}$ with the properties of (ii'). Any topology $T'$ that satisfies (ii') is such that $d_p$ and $d^p$ are continuous, thus the found subbasis (\ref{don}) for $\mathscr{T}$ consists of open sets for $T'$, hence $\mathscr{T}$ is the coarsest topology that satisfies (ii').

For the converse, the compactness of $X\times X$ in $T\times T$  implies that the closed subsets of the form $\{d\ge \epsilon\}$ are compact with respect to $T\times T$, thus
 Definition   \ref{D1}(ii) is satisfied.
\end{proof}

\begin{definition}
A map $f\colon X\to Y$ between bounded Lorentzian-metric spaces $(X,d_X)$ and $(Y,d_Y)$ is {\it distance preserving} if
$$d_Y(f(x),f(x'))=d_X(x,x')$$
for all $x,x'\in X$.

A bijective and distance preserving map is an {\it isometry}.
\end{definition}

Obviously $X$ includes $i^0_X$ iff $Y$ includes $i^0_Y$, in which case $f(i^0_X)=i^0_Y$.
We have

\begin{theorem} \label{vps}
Let $X$, $Y$ be bounded Lorentzian-metric spaces.
If $f:X\to Y$  is an isometry then it is a homeomorphism.
\end{theorem}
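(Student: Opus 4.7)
The plan is to reduce the theorem to the canonical characterization of the topology already obtained in Corollary \ref{vjw}: the topology $\mathscr{T}$ of a bounded Lorentzian metric space is the initial topology of the family $\{d_p, d^p : p \in X\}$, and is explicitly generated by the subbasis (\ref{don}). Since this subbasis is built purely out of the distance function, an isometry must automatically transport the subbasis on the target back to the subbasis on the source.

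First I would note that, because $f$ is a distance-preserving bijection, so is its inverse: writing any $y, y' \in Y$ as $y = f(x)$ and $y' = f(x')$, one has
\[
d_X(f^{-1}(y), f^{-1}(y')) = d_X(x,x') = d_Y(f(x), f(x')) = d_Y(y,y').
\]
Consequently it suffices to prove that $f$ is continuous; applying exactly the same argument to $f^{-1}$ will then yield continuity of $f^{-1}$ and hence the homeomorphism property.

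For continuity of $f$, the main step is to fix an arbitrary element of the subbasis (\ref{don}) of $\mathscr{T}_Y$, namely a set of the form
\[
\{q' \in Y : a < d_Y(p',q') < b\} \cap \{q' \in Y : c < d_Y(q',r') < d\},
\]
use surjectivity to write $p' = f(p)$ and $r' = f(r)$, and invoke the distance-preserving identity $d_Y(f(p),f(q)) = d_X(p,q)$ (and the analogous one in the second argument) to identify its $f$-preimage with
\[
\{q \in X : a < d_X(p,q) < b\} \cap \{q \in X : c < d_X(q,r) < d\},
\]
which is again a subbasic open set of $\mathscr{T}_X$ by Corollary \ref{vjw}. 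I do not expect a genuine obstacle: the only subtle point is that the argument must cover the case $i^0 \in X$, and this is handled uniformly because the subbasis (\ref{don}) already generates the required complement-of-compact neighborhood system of $i^0$ (for instance the sublevel set $\{q : d_X(p,q) < b\}$, which contains $i^0$ since $d_X(p,i^0)=0$, arises by taking $a=-\infty$).
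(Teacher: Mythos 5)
Your proposal is correct and follows essentially the same route as the paper's proof: both pull back the distance-defined subbasis (\ref{don})/(\ref{vos}) of $\mathscr{T}_Y$ via the surjective distance-preserving map to obtain subbasic sets of $\mathscr{T}_X$, and then apply the symmetric argument to $f^{-1}$. Your extra remark on the $i^0$ case is consistent with the paper's observation that the subbasis works ``whether $i^0_Y\in Y$ or not.''
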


\begin{proof}
Indeed, a subbasis of $\mathscr{T}_Y$ is provided by the sets in (\ref{vos}), whether $i^0_Y\in Y$ or not, which implies that their preimage have the same form with $d_Y$ replaced by $d_X$ and so are elements of a subbasis for $\mathscr{T}_X$. This proves that $f$ is continuous (using the surjectivity of $f$). The proof that $f^{-1}$ is continuous is analogous (using the surjectivity of $f^{-1}$).
\end{proof}

\begin{corollary}
Let $f:X\to Y$  be an isometry. If $X$ and $Y$ do not contain the spacelike boundary then their one-point compactifications are isometric and hence homeomorphic.  Just extend $f$ as follows $f(i^0_X):=i^0_Y$. If $X$ and $Y$ do  contain the spacelike boundary then we get an isometry and hence an homeomorphism of $\mathring{X}$ and $\mathring{Y}$, through the restriction $f\vert_{\mathring{X}}$.
\end{corollary}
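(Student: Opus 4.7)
The plan is to reduce both cases directly to Theorem \ref{vps} (isometry implies homeomorphism) together with Proposition \ref{bir} (the one-point compactification statements). The crucial preliminary observation is that a bijective distance preserving map must send the spacelike boundary to the spacelike boundary. Indeed, by property (iii) of Definition \ref{D1}, the point $i^0$ (when it exists) is uniquely characterized as the unique point $p$ satisfying $d(p,z)=d(z,p)=0$ for every $z\in X$. Since $f$ is a bijection preserving $d$, this characterization is transferred: $p=i^0_X$ if and only if $f(p)=i^0_Y$. In particular, $X$ contains $i^0_X$ if and only if $Y$ contains $i^0_Y$, and in that case $f(i^0_X)=i^0_Y$.

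For the first part, assume neither $X$ nor $Y$ contains its spacelike boundary. Define $\tilde f\colon \tilde X\to \tilde Y$ by $\tilde f|_X=f$ and $\tilde f(i^0_X):=i^0_Y$. This is a bijection because $f$ is. Distance preservation on $X\times X$ is inherited from $f$, and on any pair involving an $i^0$ both sides vanish by the very construction of $\tilde d_X$ and $\tilde d_Y$ described in Proposition \ref{bir}(b). Hence $\tilde f$ is an isometry between the bounded Lorentzian metric spaces $(\tilde X,\tilde d_X)$ and $(\tilde Y,\tilde d_Y)$, so Theorem \ref{vps} gives that it is a homeomorphism. Proposition \ref{bir}(b) then identifies $(\tilde X,\tilde{\mathscr T}_X)$ and $(\tilde Y,\tilde{\mathscr T}_Y)$ with the standard one-point compactifications of $(X,\mathscr T_X)$ and $(Y,\mathscr T_Y)$, which is what was claimed.

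For the second part, assume both $X$ and $Y$ contain their spacelike boundary. By the preliminary observation above, $f(i^0_X)=i^0_Y$, so the restriction $f|_{\mathring X}$ is a well-defined bijection onto $\mathring Y$. Proposition \ref{bir}(a) ensures that $(\mathring X,\mathring d_X)$ and $(\mathring Y,\mathring d_Y)$ are themselves bounded Lorentzian metric spaces (with topologies $\mathring{\mathscr T}_X$ and $\mathring{\mathscr T}_Y$ respectively). Since $f|_{\mathring X}$ is clearly distance preserving, it is an isometry between these two bounded Lorentzian metric spaces, and a further application of Theorem \ref{vps} yields the desired homeomorphism.

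The only slightly non-routine step is the identification of $i^0$ in purely $d$-theoretic terms, which allows us to conclude that isometries respect the spacelike boundary; once this is in hand both parts are mechanical applications of the already established results. I do not anticipate any genuine obstacle: no compactness or topological compatibility argument is needed beyond what Theorem \ref{vps} and Proposition \ref{bir} already provide.
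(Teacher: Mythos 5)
Your proposal is correct and follows essentially the same route the paper intends: the corollary's statement already encodes the argument (extend $f$ by $f(i^0_X):=i^0_Y$, or restrict to $\mathring{X}$), and the paper relies on the same observation you make explicit, namely that $i^0$ is characterized purely by $d$ (Remark \ref{R1}) so that isometries must respect it, after which Theorem \ref{vps} and Proposition \ref{bir} finish both cases. Your write-up simply supplies the routine verifications the paper leaves implicit.
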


These results clarify that it is not restrictive to work with bounded Lorentzian-metric spaces that contain $i^0$, or with those that do not contain $i^0$.

\subsection{Distance quotient} \label{quo}
Let $(X,\mathscr{T})$ be a topological space endowed with a continuous function $d\colon X\times X\to [0,\infty)$.

We define an equivalence relation ``$\sim$'' on $X$ by
$p\sim q$ if $d(r,p)=d(r,q)$ and $d(p,r)=d(q,r)$ for every $r\in X$. It is easy to check that this is indeed an equivalence relation. Let $\tilde X=X/\sim $ be the
quotient,  let $\tilde{\mathscr{T}}=\mathscr{T}/\sim $ be the quotient topology, and let $\pi: X \to \tilde X$ be the quotient projection. The function $d$ passes
to the quotient. Indeed if $p,p'$ are two representatives of a class $[p]$ and $q,q'$ are two representative of a class $[q]$ we have
\[
d(p,q)=d(p',q)=d(p',q'),
\]
so that we can define a function $\tilde d([p],[q]):=d(p,q)$.  It is easy to check that $\tilde d$ is continuous.

Thus we have obtained a quotient topological space $(\tilde X,\tilde{\mathscr{T}})$  endowed with a continuous function $\tilde d\colon \tilde X\times \tilde X\to [0,\infty)$.

Let us observe how the properties for $d$ and $\mathscr{T}$ on $X$ are related to the analogous properties on $\tilde X$.

\begin{itemize}
\item $d$ satisfies the reverse triangle inequality, as in (i), iff $\tilde d$ does.
\item $\tilde d$ distinguishes points (i.e.\ satisfies (iii)) even if $d$ does not.
\item   There is a point of $X$ (possibly non unique) at vanishing distance from all the other points iff  $\tilde X$ contains the spacelike boundary point.

\end{itemize}

The next result is not obvious and requires a proof

\begin{proposition}
Suppose that $T$ is a topology of $X$  that satisfies (ii) then the quotient topology $\tilde T$ on $\tilde X$ satisfies (ii). If additionally, $X$ satisfies  (i) then $\tilde X$ is a bounded Lorentzian metric space.
\end{proposition}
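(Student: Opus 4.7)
Property (ii) for $\tilde T$ has two parts: (a) compactness of $\{\tilde d \geq \epsilon\}$ in $\tilde T \times \tilde T$, and (b) continuity of $\tilde d$ in $\tilde T \times \tilde T$. Part (a) is immediate: since $\tilde d([p],[q]) = d(p,q)$ for any representatives, $\{\tilde d \geq \epsilon\} = (\pi \times \pi)(\{d \geq \epsilon\})$ is the continuous image of a $T \times T$-compact set, hence $\tilde T \times \tilde T$-compact.

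For part (b), I would first establish that $\tilde T$ is Hausdorff. Given $[p] \neq [q]$, the definition of $\sim$ furnishes some $r \in X$ with, say, $d(r,p) \neq d(r,q)$; picking $m$ strictly between the two values, the sets $\{s \in X : d(r,s) < m\}$ and $\{s \in X : d(r,s) > m\}$ are $T$-open by continuity of $d_r$ and $\sim$-saturated (membership depends only on a $d$-value), so their $\pi$-images are $\tilde T$-open and separate $[p]$ from $[q]$. Consequently $\tilde T \times \tilde T$ is Hausdorff, the compact $\tilde K_\epsilon := \{\tilde d \geq \epsilon\}$ is closed, and $\{\tilde d < \epsilon\}$ is open, giving the upper semi-continuity of $\tilde d$.

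For lower semi-continuity I would use a net argument exploiting the compactness of the sets $K_\epsilon = \{d \geq \epsilon\}$. Take a net $([p_\alpha],[q_\alpha]) \to ([p_0],[q_0])$ in $\tilde T \times \tilde T$ with chosen representatives. The saturated subbasic opens of the previous paragraph translate $\tilde T$-convergence into $d(r,p_\alpha) \to d(r,p_0)$ and $d(p_\alpha,r) \to d(p_0,r)$ for every $r \in X$, and likewise for $q$. Set $c = \tilde d([p_0],[q_0])$ and assume, for contradiction, that some subnet satisfies $\tilde d \leq c - \epsilon$. Taking $r = q_0$ yields $d(p_\alpha,q_0) \to c$, so eventually $(p_\alpha,q_0) \in K_{c/2}$; by compactness one extracts a $T \times T$-subnet with $p_{\alpha_\beta} \to p^*$ in $T$, and the pointwise convergence forces $p^* \sim p_0$. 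Setting now $r = p^*$ gives $d(p^*,q_\alpha) \to d(p^*,q_0) = c$, and a further extraction from $K_{c/2}$ produces $q_{\alpha_{\beta_\gamma}} \to q^* \in [q_0]$ in $T$. Then $(p_{\alpha_{\beta_\gamma}},q_{\alpha_{\beta_\gamma}}) \to (p^*,q^*)$ in $T \times T$, so by continuity $d(p_{\alpha_{\beta_\gamma}},q_{\alpha_{\beta_\gamma}}) \to d(p^*,q^*) = d(p_0,q_0) = c$, contradicting the subnet inequality. The main technical point to watch is a possibly non-Hausdorff $T$, where limits are determined only up to topological indistinguishability; but that indistinguishability is subsumed by $\sim$ (since the continuous maps $d_r,d^r$ separate $\sim$-inequivalent points), so the argument still closes.

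For the second part, if (i) holds for $X$ then $\tilde d$ inherits the reverse triangle inequality because its values are $d$-values on representatives; (ii) for $\tilde T$ was just established; and (iii) for $\tilde X$ is automatic from the very definition of $\sim$. Hence $\tilde X$ is a bounded Lorentzian metric space.
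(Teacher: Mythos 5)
Your proof is correct. The compactness half coincides with the paper's argument: $\{\tilde d\ge\epsilon\}$ is the image of the compact set $\{d\ge\epsilon\}$ under the continuous map $\pi\times\pi$. For the continuity of $\tilde d$ you take a genuinely different route. The paper disposes of it in one line: the sets $d^{-1}((a,b))$ are saturated, hence ``projectable'', therefore $\tilde d$ is continuous for $\tilde T\times\tilde T$. That step silently identifies the product of the quotient topologies with the quotient topology of the product; a saturated open subset of $X\times X$ has open image in the latter, but not obviously in the former, since the open rectangles witnessing its openness need not themselves be saturated. Your argument sidesteps this: you prove $\tilde T$ is Hausdorff via the saturated subbasic opens built from $d_r,d^r$, deduce that the compact sets $\{\tilde d\ge\epsilon\}$ are closed (hence upper semicontinuity), and establish lower semicontinuity by a net argument with successive subnet extractions inside $\{d\ge c/2\}$, observing that the ambiguity of limits in a possibly non-Hausdorff $T$ is absorbed by $\sim$. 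This is longer but genuinely proves openness of $\tilde d^{-1}((a,b))$ in the product of the quotient topologies, and it makes visible that the compactness hypothesis in (ii) is actually used for the continuity of $\tilde d$, not only for the compactness of $\{\tilde d\ge\epsilon\}$ --- something the paper's one-line argument does not display. The only points worth writing out are that lower semicontinuity at a point with $\tilde d([p_0],[q_0])=0$ is trivial because $\tilde d\ge 0$, and that openness of all sets $\{\tilde d<c\}$ and $\{\tilde d>c\}$ together yields continuity. Your final paragraph on properties (i) and (iii) matches the paper's discussion preceding the proposition.
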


\begin{proof}
Clearly every set of the form $d^{-1}((a,b))\subset X\times X$ with $a,b$ in the extended real line is projectable to $\tilde X\times \tilde X$.
Thus, $d$ is continuous for some product  topology $T\times T$ iff $\tilde d$ is continuous for the product of the quotient topology $\tilde T\times \tilde T$. Since the quotient
projection is continuous, the set $\{\tilde d\ge \epsilon\} $  is compact, as it is the $\pi\times \pi$-projection of a compact set.
\end{proof}

\section{Examples}

\subsection{Causets} \label{cau}

\begin{definition} \label{D2}
A finite set $S$ endowed with a function $d:S\times S\to [0,\infty)$ is called a {\it causet} if it satisfies points (i) and (iii) of Definition \ref{D1}.
\end{definition}

\begin{proposition} \label{bux}
Every causet is a bounded Lorentzian-metric space, where
 $\mathscr{T}$ for which $d$ is continuous is the discrete topology.
\end{proposition}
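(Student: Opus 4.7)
My plan is to verify condition (ii) of Definition \ref{D1} directly by producing a witness topology, and then invoke the earlier general results to identify $\mathscr{T}$.

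First, I would take $T$ to be the discrete topology on $S$. Since $S$ is finite, $S\times S$ is finite and the product topology $T\times T$ is also the discrete topology on $S\times S$. Any map out of a discrete space is continuous, so $d$ is automatically continuous in $T\times T$. Moreover, every subset of a finite set is compact, so in particular the sets $\{(p,q):d(p,q)\ge \epsilon\}$ are compact. Together with the assumed properties (i) and (iii), this shows that $(S,d)$ is a bounded Lorentzian-metric space.

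For the second claim, that $\mathscr{T}$ itself is the discrete topology, I would argue as follows. By Proposition \ref{bjw}, the topology $\mathscr{T}$ is Hausdorff. Any finite Hausdorff (in fact any finite $T_1$) space is discrete: for each $x\in S$, the singletons $\{y\}$ with $y\ne x$ are closed, hence their finite union $\bigcup_{y\ne x}\{y\}$ is closed, so its complement $\{x\}$ is open in $\mathscr{T}$. Therefore every singleton is $\mathscr{T}$-open, which means $\mathscr{T}$ is discrete. Since $\mathscr{T}$ is by definition the coarsest topology satisfying (ii), and the discrete topology is certainly the finest, it follows that in this case the coarsest and finest witnesses coincide, and every topology satisfying (ii) is the discrete one.

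There is no real obstacle here; the content of the proposition lies entirely in recognizing that finiteness forces compactness of $\{d\ge \epsilon\}$ for free and that the Hausdorff property of $\mathscr{T}$, established in full generality by Proposition \ref{bjw}, collapses to discreteness on a finite set. The one mild subtlety worth flagging is that the statement does not require $i^0\in S$, and indeed a causet may or may not contain a spacelike boundary point; but either way the argument above applies uniformly, since the discreteness conclusion is drawn purely from finiteness plus the Hausdorff property of $\mathscr{T}$.
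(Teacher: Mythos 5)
Your proof is correct, and for the second half it takes a genuinely different (and more economical) route than the paper. The first part coincides with the paper's: the discrete topology witnesses (ii) because $d$ is automatically continuous on a discrete product and $\{d\ge\epsilon\}$ is finite, hence compact. For the identification of $\mathscr{T}$, the paper argues directly: for each point $p$ it uses the finiteness of the value sets $D_q=\{d(q,s):s\in S\}$ and $D^q=\{d(s,q):s\in S\}$ to choose intervals $(a_q,b_q)$ and $(c_q,d_q)$ isolating $d(p,q)$ and $d(q,p)$, and then shows that the finite intersection $\bigcap_q (d^q)^{-1}((a_q,b_q))\cap\bigcap_q (d_q)^{-1}((c_q,d_q))$ is an open set equal to $\{p\}$ by the distinguishing property (iii); this shows every topology satisfying (ii) is discrete. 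You instead invoke Proposition \ref{bjw} to get that $\mathscr{T}$ is Hausdorff and observe that a finite $T_1$ space is discrete, then use that every topology satisfying (ii) refines $\mathscr{T}$. There is no circularity, since you first establish that the causet is a bounded Lorentzian-metric space before applying Proposition \ref{bjw}, and the Hausdorff argument there only needs (iii) and the $\mathscr{T}$-continuity of the maps $d_p$, $d^p$, both of which are available. Your route is shorter and reuses general machinery; the paper's is self-contained and exhibits explicitly how the subbasis of Corollary \ref{vjw} isolates points in the finite case. Both are valid proofs of the statement, including the stronger conclusion that \emph{every} topology satisfying (ii) on a causet is discrete.
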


\begin{proof}
The discrete topology satisfies (ii), indeed $d$ is continuous in the product discrete topology and for every $\epsilon$  the set  $\{d\ge \epsilon\}$ is finite by the cardinality of  $S\times S$ hence compact. This proves that every causet is a bounded Lorentzian-metric space.

Now let us prove that any topology $T$ that satisfies (ii) is the discrete topology and hence that $\mathscr{T}$ is the discrete topology.

Let $p\in S$, we want to show that $\{p\}$ is open for $T$.
We know from Section \ref{bpo} that the functions $d_r, d^r$ are $T$-continuous. Let $r\in S$ and let $D_r=\{d(r,s), s\in S\}$ and
$D^r=\{d(s,r), s\in S\}$.
     For each $q$, since $D^q$ is finite, the number $d(p,q)$ is isolated from the other different numbers in $D^q$.  Thus we   can find $a_q,b_q\in \mathbb{R}$ such that $(a_q,b_q)$ contains only $d(p,q)$ and no other element of $D^q$. Similarly, we can find  $c_q,e_q\in \mathbb{R}$ such that $(c_q,e_q)$ contains only $d(q,p)$ among the numbers in $D_q$.
    Now the open set
    \[
    \bigcap_q \, (d^q)^{-1}((a_q,b_q))\,\bigcap_q\, (d_q)^{-1} ((c_q,e_q))
    \]
    contains $p$ by construction, but it cannot contain any other point $p'\ne p$, for otherwise $d$ would not distinguish $p$ from $p'$, contradicting point (iii). Thus $\{p\}$ is an open set.
\end{proof}

We recall that a {\it directed graph } is an ordered pair $G=(S,A)$ where $S$ is a set whose elements of $S$ are called {\it vertices} and $A$ is a set of ordered pairs of vertices, so $A\subset S\times S$, each pair being called an {\it arrow}. Given an arrow $(p,q)$, we call $p$ the {\it starting point} and $q$ the {\it ending point}.

Causets can be regarded as {\it weighted directed graphs}, where we say that $(p,q) \in A$ if $d(p,q)>0$. The distance $d(p,q)$ is then the {\it  weight} of the arrow $(p,q)$. The weight cannot be arbitrary as they have to satisfy the reverse triangle inequality.

We know that causets, being bounded Lorentzian-metric space, satisfy chronology, which means that the directed graph is acyclic. Observe that every point of the directed graph is connected to some other point by an arrow, save for one point $i^0$, if present, that would be isolated.

\begin{proposition}
The finite bounded Lorentzian-metric spaces are precisely the causets.
\end{proposition}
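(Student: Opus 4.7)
The plan is to verify the two implications of the equivalence, noting that one direction is essentially a definitional unpacking and the other is exactly the content of Proposition \ref{bux}.

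For the direction ``causet $\Rightarrow$ finite bounded Lorentzian-metric space'', there is nothing new to do: a causet is by Definition \ref{D2} a finite set with a function $d$ satisfying (i) and (iii) of Definition \ref{D1}, and Proposition \ref{bux} produces the missing condition (ii) by exhibiting the discrete topology as a witness. Since causets are finite by definition, this yields a finite bounded Lorentzian-metric space.

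For the converse direction, suppose $(X,d)$ is a finite bounded Lorentzian-metric space. By Definition \ref{D1}, conditions (i) and (iii) hold, and $X$ is finite by hypothesis, so the requirements of Definition \ref{D2} are satisfied and $(X,d)$ is a causet. The existence of a topology $T$ satisfying (ii) is not needed for membership in the causet class, so this direction is immediate.

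The only conceptual point worth highlighting is that in the finite setting condition (ii) becomes automatic: given (i) and (iii) on a finite set, the discrete topology always satisfies (ii), so it is redundant in Definition \ref{D2}. There is no real obstacle; the entire substance of the proposition is carried by Proposition \ref{bux}, and the present statement simply records that the converse inclusion holds trivially by comparing definitions.
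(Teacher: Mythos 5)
Your proposal is correct and follows exactly the paper's argument: the forward inclusion is Proposition \ref{bux} plus finiteness, and the converse is immediate from comparing Definitions \ref{D1} and \ref{D2}. Nothing further is needed.
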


\begin{proof}
By Prop.\ \ref{bux} we know that every causet is a finite bounded Lorentzian-metric space. Conversely, a finite  bounded Lorentzian-metric space satisfies (i) and (iii), so it is a causet.
\end{proof}

It is convenient to define  causets via Definition \ref{D2} as it does not mention a topology.

\begin{proposition} \label{cna}
Let $S$ be a finite subset of a bounded Lorentzian-metric space endowed with the induced topology and distance. Then $S/\sim$ is a causet.
\end{proposition}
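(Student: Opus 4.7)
The plan is to reduce the statement to the distance quotient construction developed in Section \ref{quo}, together with the characterization of causets as finite bounded Lorentzian-metric spaces.

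First I would equip $S$ with the discrete topology. Since $(X,\mathscr T)$ is Hausdorff and $S$ is finite, this is exactly the topology induced from $\mathscr T$; in particular $d\vert_{S\times S}$ is continuous on $S\times S$ (as any function on a discrete space is). Condition (i) for $d\vert_{S\times S}$ is automatic, since the reverse triangle inequality is a condition involving only three points at a time and is inherited from $X$. Finally, the compactness of $\{d\vert_{S\times S}\ge\epsilon\}$ is trivial since $S\times S$ is finite. Hence $(S, d\vert_{S\times S})$ is itself a bounded Lorentzian-metric space (in fact already a causet in the sense of Definition \ref{D2}, modulo the distinction condition (iii) which may fail in $S$ because $S$ is too small to separate points that were separated in $X$).

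Next I would apply the distance quotient construction from Section \ref{quo} to $(S, d\vert_{S\times S})$ with the discrete topology. The bulleted observations there yield directly that $\tilde d$ on $S/\sim$ satisfies (i) (since $d\vert_{S\times S}$ does) and that $\tilde d$ distinguishes points, i.e.\ satisfies (iii), by the very construction of $\sim$. Moreover $S/\sim$ is finite, as a quotient of a finite set.

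At this point, being finite and satisfying (i) and (iii), $S/\sim$ is a causet by Definition \ref{D2}. No genuine obstacle arises; the only point that requires a moment of care is that the equivalence relation used to form $S/\sim$ quantifies over $r\in S$ (not over $r\in X$), so $\sim$ may be strictly coarser than the restriction to $S$ of the equivalence relation on $X$, but this is consistent with the statement of the proposition, which treats $S$ purely as a standalone space with its induced distance.
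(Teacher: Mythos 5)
Your proof is correct and follows essentially the same route as the paper's (one-line) argument: the reverse triangle inequality (i) is inherited by $S$ and passes to the quotient, the distinguishing property (iii) holds for $S/\sim$ by construction of the distance quotient, and finiteness then makes $S/\sim$ a causet by Definition \ref{D2}. The topological remarks are harmless but unnecessary, since Definition \ref{D2} makes no reference to a topology.
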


\begin{proof}
It is clear that $S/\sim$ inherits property (i) and that it satisfies (iii) as it is a distance quotient.
\end{proof}

\subsection{Subsets of globally hyperbolic spacetimes} \label{gob}

Let $\tilde X$ be a compact, causally convex subset of  a globally hyperbolic spacetime $(M,g)$. Denote with
\[
\p^0 \tilde X:=\{q\in \tilde X: \not\exists p\in \tilde X, p\ll q  \textrm{ and } \not\exists  r\in \tilde X, q\ll r \}.
\]
the {\it spacelike boundary} of $\tilde X$. Further, define $X:=\tilde{X}\setminus \partial^0 \tilde{X}$.

Let $d_X:=d\vert_{X\times X}$ where $d$ is the Lorentzian distance of $M$, and similarly let $d_{\tilde X}:=d\vert_{\tilde X\times \tilde X}$. By causal convexity $d_{\tilde X}$ is the supremum of the Lorentzian lengths of timelike connecting curves in $\tilde X$, but none of these curves can pass through $\p^0 \tilde X$, thus $d_X=d_{\tilde X}\vert_{X\times X}=d \vert_{X\times X}$ which is then the supremum of the Lorentzian lengths of the  causal connecting curves in $X$.

\begin{theorem} \label{vwm}
$(X,d_X)$ is a bounded Lorentzian-metric space that does not contain the spacelike boundary point $i^0$, and $\mathscr{T}$ is the topology induced on $X$ from the manifold topology.
\end{theorem}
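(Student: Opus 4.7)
The plan is to check the three axioms of Definition \ref{D1} for $(X,d_X)$ using the induced manifold topology $T$ on $X$, observe that no element of $X$ can play the role of $i^0$, and then invoke Corollary \ref{vjw} to identify $\mathscr{T}$ with $T$.

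Axiom (i) is the classical reverse triangle inequality for the Lorentzian distance of a globally hyperbolic spacetime, which passes verbatim to any subset. For axiom (ii) with $T$ the induced manifold topology: global hyperbolicity of $(M,g)$ gives continuity of $d$ on $M\times M$, and causal convexity of $\tilde X$ together with compactness forces $d$ to be finite on $\tilde X\times\tilde X$, so $\{d\ge\epsilon\}$ is closed in the compact space $\tilde X\times\tilde X$, hence compact. I would then note that if $d(p,q)\ge \epsilon>0$ then $p\ll q$, so $q$ has a chronological past point and $p$ has a chronological future point in $\tilde X$; consequently neither lies in $\partial^0\tilde X$, so $\{d_X\ge\epsilon\}=\{d\ge\epsilon\}$ is a compact subset of $X\times X$ in the product manifold topology.

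For axiom (iii), take $p\ne q$ in $X$. If $p\ll q$ or $q\ll p$, then $d(p,q)\ne d(p,p)=0$ (or the dual) so $z=p$ distinguishes them. Otherwise, suppose for contradiction that $d(r,p)=d(r,q)$ and $d(p,r)=d(q,r)$ for every $r\in X$; then $I^\pm(p)\cap X=I^\pm(q)\cap X$. Since $p\in X=\tilde X\setminus \partial^0\tilde X$, some $r_0\in\tilde X$ is chronologically related to $p$; say $r_0\ll p$ (the other case is dual). Along a timelike curve from $r_0$ to $p$, which lies in $\tilde X$ by causal convexity, I select $r_n\to p$ with $r_n\ll p$; each such $r_n$ has $p$ in its chronological future inside $\tilde X$, so $r_n\in X$, and then $r_n\in I^-(p)\cap X=I^-(q)\cap X$ yields $r_n\le q$. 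Closedness of $\le$ in a globally hyperbolic spacetime gives $p\le q$, and symmetrically $q\le p$, whence $p=q$ by antisymmetry of the causal relation — a contradiction. The same existence-of-chronological-neighbor argument shows that every $p\in X$ has some $s\in X$ with $d(s,p)>0$ or $d(p,s)>0$, so $i^0\notin X$.

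Finally, the manifold topology $T$ satisfies (ii) by the above, and since $i^0\notin X$ one has $\mathring X=X$, so Corollary \ref{vjw} identifies $T=\mathring T$ with the initial topology of $\{d_p,d^p:p\in X\}$, which is exactly $\mathscr{T}=\mathring{\mathscr{T}}$; hence $\mathscr{T}=T$. The one genuinely delicate step is axiom (iii) in the chronologically unrelated case, where one must combine causal convexity of $\tilde X$ with the closedness and antisymmetry of the causal relation in $(M,g)$ to produce the sequence $r_n\to p$ staying inside $X$.
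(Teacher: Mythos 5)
Your proof is correct, but it departs from the paper's in two places worth comparing. For the identification $\mathscr{T}=T$ you verify that the induced manifold topology satisfies (ii) and then quote Corollary \ref{vjw}; the paper instead gives a hands-on argument, constructing for each manifold-open $O'\ni q$ a ``diamond'' $Q=(d_X)_{p}^{-1}((-\infty,a))\cap (d_X)_{p'}^{-1}((b,\infty))$ with $q\in Q\subset O'$ to show directly that $\mathscr{T}$ is finer than $T$. Your route is shorter and is exactly what the general theory was built for; the paper's is self-contained and exhibits explicit subbasic neighborhoods. For axiom (iii) the difference is more substantive: the paper splits into the cases ``there is $p\ll x$ with $p\not\ll y$'' versus ``$y\in J^+(x)$'' (reflectivity of globally hyperbolic spacetimes) and handles the second case with the corner argument $d(z,y)>d(z,x)$ for $z\ll x\le y$, $x\ne y$; you instead assume $p,q$ are indistinguishable, deduce $I^\pm(p)\cap X=I^\pm(q)\cap X$, produce interior approximating sequences via causal convexity, and conclude $p\le q$ and $q\le p$ from closedness of $J$, finishing with antisymmetry. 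This avoids the corner/strict reverse triangle inequality entirely and uses only closedness and antisymmetry of the causal relation in $(M,g)$, which is arguably more elementary. One step you should make explicit: ``symmetrically $q\le p$'' does not follow by blindly dualizing (a \emph{future} chronological neighbor of $q$ would only re-derive $p\le q$); it works because the same $r_0\ll p$ satisfies $d(r_0,q)=d(r_0,p)>0$, hence $r_0\ll q$, so $q$ too is approachable from below inside $X$ along a timelike curve from $r_0$, giving $s_n\to q$ with $s_n\ll p$ and hence $q\le p$ in the limit. With that sentence added the argument is complete.
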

A posteriori we can identify the abstract point $i^0$ with $\partial^0 \tilde{X}$.

\begin{proof}
Let us  verify the properties of bounded Lorentzian-metric space.

\begin{itemize}
\item[(i)] This is obvious by taking the restriction of analogous equations in $M$.
\item[(ii)]
We want to prove that  the induced topology $T$ on $X$ satisfies property (ii) of Definition \ref{D1}. Since $d$ is continuous we know that $d_X$ is continuous in the induced topology.  We know that $\{(p,q)\in \tilde X\times \tilde X: d(p,q)\ge \epsilon\}$ is compact in the product of the induced topologies because it is the intersection of a closed set with the compact set $\tilde X\times \tilde X$. But the previous set coincides with  $\{(p,q)\in  X\times  X: d(p,q)\ge \epsilon\}$, which therefore it is compact.

Now let us show that $\mathscr{T}$ is finer than $T$ and hence that $\mathscr{T}=T$. Let $O$ be an induced topology open set for a point $q\in X$,  so there is $O'$ open in the manifold topology of $M$ such that $O=X\cap O'$. Without loss of generality we can assume that $O'$ is a causally convex globally hyperbolic subset  for $(M,g)$ since these open sets generate the manifold topology for $M$.

 There will be $p\in X$, $d(p,q)>0$,   (or the other way). Any timelike curve from $p$ to $q$ is contained in $X$ (by causal convexity of $\tilde X$), so we can assume, without loss of generality, that $p\in O'$ and hence $p\in O$ (otherwise follow a timelike  curve from $p$ to $q$ and choose a point sufficiently close to $q$). Actually, in the same way we can find a second point $p'\in O$ sufficiently close to $q$, $p\ll p'\ll q$ and constants $a,b>0$ such that\footnote{The numbers $a$ and $b$ can be chosen so that there is a connected component of $Q$ inside $O'$ such that $q\in Q$. To show that there is no point of $Q$ outside $O'$, suppose that $r$ is such a point, then there is a causal curve connecting $p'$ to $r$ (because $d(p',r)>b$) that passes from the region of $O'$ where $d_p>a$, which would imply $d(p,r)>a$, a contradiction with the definition of $Q$.}
    $Q:=(d_X)_{p}^{-1}((-\infty, a))\cap (d_X)_{p'}^{-1}((b,\infty)) \subset X$
     is contained in $O'$ and contains $q$, thus $\mathscr{T}$ is finer than the induced topology (remember that for every $r$, $(d_X)_{r}$ and $(d_X)^{r}$ are   continuous in $\mathscr{T}$).

\begin{figure}[ht!]
\centering
 \includegraphics[width=9cm]{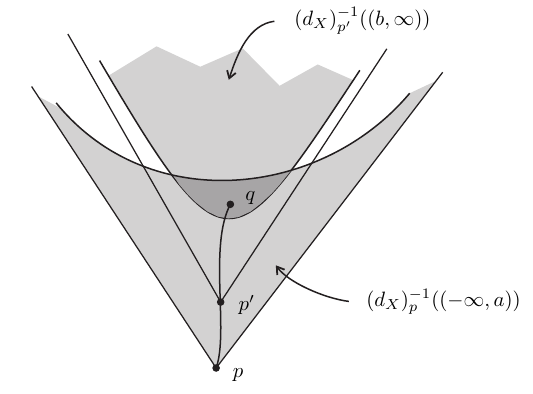}
 \caption{A step in the proof of Theorem \ref{vwm}.} \label{dyna}
\end{figure}

\item[(iii)]
   Let $x,y\in X$ with $x\ne y$. We can find $z\ll x$, $z\in X$ (or viceversa).
    By global hyperbolicity of $M$ either `there is  $p\in M$ such that $p\ll x$ but $p \not\ll y$' or $y\in J^+(x)$.
    In the latter case, $d(z,y)>d(z,x)$ because the maximizing causal curves connecting $z$ to $x$ and $x$ to $y$ have a corner at $x$. In the former case,
    we observe that $I^+(z)\cap I^{-}(x)\subset X$.
    Since $x\in I^+(p)$ and the last set is open in $M$, there will be some point $p'\in I^+(p)\cap [I^+(z)\cap I^{-}(x)]\subset X$. Now, $p'\not\ll y$, otherwise $p\ll y$, a contradiction. Thus $d(p', x)>0$ but $d(p',y)=0$. In both cases $x$ and $y$ are distinguished.
\end{itemize}
\end{proof}

\section{Distance preserving maps}

In this section we assume that the bounded Lorentzian-metric spaces do not include the spacelike boundary. Generalizations to the other cases are straightforward.

We recall that a bijection that preserves the distance is an {\it isometry}.

\begin{theorem} \label{ine}
Let $(X,d_X)$, $(Y,d_Y)$  be bounded Lorentzian-metric spaces, and let $f:X\to Y$ be distance preserving, then $f$ is injective.
\end{theorem}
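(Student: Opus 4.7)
The plan is to argue by contradiction using axiom (iii) of Definition \ref{D1}. Suppose $f$ is not injective, so there exist $x_1, x_2 \in X$ with $x_1 \ne x_2$ but $f(x_1) = f(x_2)$. By (iii), applied in $X$, we can pick a witness $z \in X$ such that
\[
d_X(x_1, z) \ne d_X(x_2, z) \quad \text{or} \quad d_X(z, x_1) \ne d_X(z, x_2).
\]

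Next, I would transport these inequalities to $Y$ via the distance-preserving hypothesis. In the first case,
\[
d_X(x_1, z) = d_Y(f(x_1), f(z)) = d_Y(f(x_2), f(z)) = d_X(x_2, z),
\]
which contradicts the choice of $z$. The second case is symmetric, giving $d_X(z, x_1) = d_X(z, x_2)$ and the same contradiction.

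This already finishes the argument; there is essentially no obstacle, since distance preservation of $f$ transfers every distance involving $x_1$ or $x_2$ with any third point into a single distance in $Y$, forcing $x_1$ and $x_2$ to be indistinguishable in $X$ in the sense of (iii). A mild subtlety is that the witness $z$ in (iii) is not required to be distinct from $x_1$ or $x_2$: if, say, $z = x_1$, then $d_X(x_1, z) = 0$ while $d_X(x_2, z) > 0$ (or analogously for the reversed arguments), and the same transport gives $d_Y(f(x_2), f(x_1)) = d_Y(f(x_1), f(x_1)) = 0$, contradicting chronology as recorded in Remark \ref{R1}(1). So the conclusion is unaffected by the precise choice of the distinguishing point, and injectivity follows.
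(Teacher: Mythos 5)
Your proof is correct and is essentially the paper's argument in contrapositive form: the paper picks the distinguishing point $z$ for $x\ne x'$ and transports the inequality $d_X(x,z)\ne d_X(x',z)$ through $f$ directly to conclude $f(x)\ne f(x')$. The side remark about $z$ coinciding with $x_1$ or $x_2$ is harmless but unnecessary, since the transport identity $d_X(x_1,z)=d_Y(f(x_1),f(z))=d_Y(f(x_2),f(z))=d_X(x_2,z)$ already yields the contradiction for every $z$.
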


\begin{proof}
Consider two distinct points $x,x'\in X$. Choose $z\in X$ such that $d_X(x,z)\ne d_X(x',z)$ (the other case being analogous). Then $d_Y(f(x),f(z))\ne d_Y(f(x'),f(z))$, which implies $f(x)\ne f(x')$.
\end{proof}

\begin{lemma} \label{azo0}
Let $(X,d)$ be a bounded Lorentzian-metric space
and let $f\colon X\to X$ be distance preserving, then $f$ is $\mathscr{T}$-continuous.
\end{lemma}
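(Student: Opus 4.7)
My plan is to reduce to the compact case and then argue by sequential compactness together with the distinguishing axiom. By Proposition \ref{bir} I may one-point compactify when $i^0 \notin X$, extending $d$ trivially at $i^0$ and extending $f$ to $\tilde f\colon \tilde X\to \tilde X$ by setting $\tilde f(i^0)=i^0$; this $\tilde f$ remains distance preserving, and since $\mathscr{T}$ is the subspace topology inherited from $\tilde{\mathscr{T}}$, continuity of $\tilde f$ will give continuity of $f$. As $(\tilde X,\tilde{\mathscr{T}})$ is compact Polish (by the theorem proved earlier in the excerpt), sequential continuity suffices: given $x_n \to x$ in $\tilde{\mathscr{T}}$, it is enough to show that every subsequential limit $y^* = \lim_k \tilde f(x_{n_k})$, which exists by compactness, equals $\tilde f(x)$.

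By Definition \ref{D1}(ii) the distance $\tilde d$ is jointly continuous, so for every $z\in \tilde X$ the distance-preserving property of $\tilde f$ yields
\[
\tilde d(y^*,\tilde f(z)) = \lim_k \tilde d(\tilde f(x_{n_k}),\tilde f(z)) = \lim_k \tilde d(x_{n_k},z) = \tilde d(x,z) = \tilde d(\tilde f(x),\tilde f(z)),
\]
and analogously $\tilde d(\tilde f(z),y^*)=\tilde d(\tilde f(z),\tilde f(x))$. In the case $y^*\in \tilde f(\tilde X)$, writing $y^*=\tilde f(w)$ and using distance preservation again, these identities reduce to $\tilde d(w,z)=\tilde d(x,z)$ and $\tilde d(z,w)=\tilde d(z,x)$ for every $z\in \tilde X$. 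Axiom \ref{D1}(iii) then forces $w=x$, hence $y^*=\tilde f(x)$, as desired.

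The step I expect to be the main obstacle is ruling out that $y^*$ lies in $\overline{\tilde f(\tilde X)}\setminus \tilde f(\tilde X)$, equivalently, showing that the image $\tilde f(\tilde X)$ is $\tilde{\mathscr{T}}$-closed in $\tilde X$. This is the Lorentzian analogue of the classical fact that a distance-preserving self-map of a compact metric space is automatically surjective; a proof should exploit the injectivity of $\tilde f$ (Theorem \ref{ine}) together with the compactness of $\tilde X$, for instance by iterating $\tilde f$ and using the non-contracting estimate $\rho(\tilde f(p),\tilde f(q))\ge \rho(p,q)$ for the metric $\rho$ pulled back via the Kuratowski embedding, to preclude points outside $\tilde f(\tilde X)$. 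Once this closure/surjectivity claim is in hand, the identification $y^*=\tilde f(x)$ follows from the previous paragraph and completes the verification of sequential continuity, hence of $\mathscr{T}$-continuity of $f$.
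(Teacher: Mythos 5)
Your reduction to the compact case and your final step are both sound: if $y^*=\tilde f(w)$ for some $w$, then indeed $\tilde d(w,z)=\tilde d(x,z)$ and $\tilde d(z,w)=\tilde d(z,x)$ for every $z$, and axiom (iii) forces $w=x$. The genuine gap is precisely the step you defer. Your identity $\tilde d(y^*,\tilde f(z))=\tilde d(\tilde f(x),\tilde f(z))$ only tests $y^*$ against points of the image, which is useless when the point distinguishing $y^*$ from $\tilde f(x)$ lies outside $\tilde f(\tilde X)$; so everything hinges on $y^*\in\tilde f(\tilde X)$, i.e.\ on closedness of the image, and this is asserted rather than proved. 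Moreover, the sketch you give risks circularity: iterating $\tilde f$ and invoking compactness most directly yields that $\tilde f(\tilde X)$ is \emph{dense}, and upgrading density to closedness requires continuity of $\tilde f$ --- the very thing being proved. The route can be rescued: one verifies $\gamma(\tilde f(p),\tilde f(q))\ge\gamma(p,q)$ for the distinction metric $\gamma$ (restrict the suprema defining $\gamma(\tilde f(p),\tilde f(q))$ to points of the form $\tilde f(w)$ and use distance preservation), and the classical theorem on non-contracting self-maps of a compact metric space (see \cite{burago01}) then shows that $\tilde f$ is a $\gamma$-isometry, hence $\gamma$-continuous and surjective; since the $\gamma$-topology coincides with $\mathscr{T}$, this already finishes the proof and renders your first two paragraphs superfluous. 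Note, though, that this imports the distinction-metric machinery from a later section of the paper (no logical circularity, but the material would have to be reordered).

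The paper's proof is structured in the opposite way: it establishes continuity \emph{without} surjectivity, and surjectivity (Theorem \ref{iba0}) is then deduced from Lemma \ref{azo0}. The key device is to transport the distinguishing point $z$ by iterating $f$: starting from $\vert d(z,f(x))-d(z,f(x_n))\vert>\varepsilon$, one passes via convergent subsequences of iterates and the identity $d(f^{k'}(z),f^{k+1}(x))=d(f^{k'-k-1}(z),x)$ (valid for $k'>k$ by distance preservation) to $\vert d(f^{m(n)}(z),x)-d(f^{m(n)}(z),x_n)\vert>\varepsilon$, and then compactness of $\{d\ge\varepsilon\}$ yields a limit point $z_\infty$ with $0=\vert d(z_\infty,x)-d(z_\infty,x)\vert\ge\varepsilon$, a contradiction. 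If you wish to keep your architecture, you must actually prove the closedness/surjectivity claim; as written, the proposal is incomplete at its decisive step.
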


\begin{proof}
Since $\mathscr{T}$ is first-countable it is sufficient to prove the sequential continuity of $f$. Let $x\in X$ and let  $\{x_n\}$ be a sequence with $x_n\to x$. First note that the sequence $\{f(x_n)\}_n$
is contained in a compact subset of $X$. Indeed there exist $z_0\in X$ and $a>0$, with $d(z_0,x)> a>0$ (the case with $d(x,z_0)> a>0$ is treated similarly) so for any sufficiently large $n$,  $d(f(z_0),f(x_n))=d(z_0,x_n)> a>0$. Since $\pi_2\big(\pi_1^{-1}( f(z_0))\cap \{d\ge a\}\big)$ is compact and contains  $f(x_n)$ we get the claim.

Suppose that the claim is false, namely that the sequence of images $\{f(x_n)\}$ does not converge to $f(x)$. Then  we can assume, by passing to a subsequence, that $\{f(x_n)\}$
converges to a point $y\neq f(x)$. Choose $z\in X$ with $d(z, y)\neq d(z, f(x))$ or $d(y,z)\neq d(f(x),z)$. We consider just the former possibility, the treatment of the latter case being completely analogous.

Choose $\varepsilon >0$ such that $\vert d(z, f(x))- d(z, y)\vert > \varepsilon$, then we have  $\vert d(z, f(x))- d(z, f(x_n))\vert > \varepsilon$ for all $n$ sufficiently large.

It is convenient to observe that for each $q\in X$ we can find $p\in X$ such that $d(p,q)>\delta>0$, (or $r\in X$ such that $d(q,r)>\delta >0$ in which case we argue analogously), which implies that for any increasing sequence $k_m$, we have $d(f^{k_m}(p),f^{k_m}(q))>\delta>0$. By the compactness of $\{d \ge \delta\}$ we can conclude that  $f^{k_m}(q)$ admits a converging subsequence.

Now, for each such $n$ as in the last but one paragraph choose a strictly increasing sequence
$\{m^n_k\}_k\subset \mathbb{N}$ such that the sequences
\[
\{f^{m^n_k}(z)\}_k,\; \{f^{m^n_k+1}(x)\}_k,\text{ and }\{f^{m^n_k+1}(x_n)\}_k
\]
converge to points $a,b,c_n$ respectively.
Then by the continuity of $d$  we have
\begin{align*}
d(a, b)&=\lim_k d(f^{m^n_k}(z),f^{m^n_k+1}(x))= \lim_k d(z,f(x))=d(z,f(x)) ,\\
d(a,c_n)&=\lim_k d(f^{m^n_k}(z),f^{m^n_k+1}(x_n))=\lim_k d(z,f(x_n))=d(z,f(x_n)).
\end{align*}
Thus
\[
\vert  d(a, b)- d(a,c_n)\vert = \vert d(z, f(x))- d(z, f(x_n))\vert > \epsilon.
\]
Again by the continuity of $d$ these inequalities must hold in suitable neighborhoods of $a,b,c_n$ and hence for sufficiently large $k,k'$ we have
\[
\vert  d(f^{m^n_{k'}}(z), f^{m^n_k+1}(x))- d(f^{m^n_{k'}}(z), f^{m^n_k+1}(x_n)) \vert > \epsilon
\]
that is  for $k'>k$
\[
\vert d(f^{m^n_{k'}-m^n_k-1}(z), x)- d(f^{m^n_{k'}-m^n_k-1}(z), x_n) \vert > \varepsilon
\]
Choose for every $n$ an exponent $m(n):=m^n_{k'}-m^n_k-1\in\mathbb{N}$ where $k,k'$, $k'>k$, are chosen sufficiently large so as to satisfy
\[
\vert d(f^{m(n)}(z), x)- d(f^{m(n)}(z), x_n) \vert > \varepsilon.
\]
Notice that since $d\ge 0$, for each $n$ we have $d(f^{m(n)}(z),x)> \varepsilon$ or $d(f^{m(n)}(z),x_n)> \varepsilon$. If there are infinitely many former
cases,  the compactness of $\{d\ge  \varepsilon\}$ tells us that there is a subsequence of  $\{(f^{m(n)}(z), x)\}_n$ that converges and hence a subsequence of
$\{f^{m(n)}(z)\}_n$ that converges. Actually, the same conclusion can be drawn if there are infinitely many latter cases, for the compactness of
$\{d\ge \varepsilon\}$ tells us that there is a subsequence of  $\{(f^{m(n)}(z), x_n)\}_n$ that converges and hence  a subsequence of
$\{f^{m(n)}(z)\}_n$ that
converges. In either case, denoting with  $z_\infty$ the point to which a  subsequence of $\{f^{m(n)}(z)\}_n$ converges we get, taking the limit of the previous
equation in display
\[
0=\vert d(z_\infty,x)- d(z_\infty,x) \vert \ge  \varepsilon.
\]
The contradiction proves that $f(x_n)\to f(x)$.
\end{proof}

\begin{theorem} \label{iba0}
Let $(X,d)$ be a bounded Lorentzian-metric space
and let $f\colon X\to X$ be distance preserving. Then $f$ is surjective.
\end{theorem}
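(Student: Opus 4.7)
The plan is to equip $X$ with the metric $\rho(x,y):=\|I(x)-I(y)\|$ coming from the Kuratowski-type embedding of the preceding proposition (so that $\rho$ induces $\mathscr{T}$) and to adapt the classical argument that a continuous non-decreasing self-map of a compact metric space is surjective. By Lemma \ref{azo0}, $f$ is $\mathscr{T}$-continuous. The non-decreasing property $\rho(f(x),f(y))\ge\rho(x,y)$ follows from distance preservation: for each $w\in X$ one has $|d(f(x),f(w))-d(f(y),f(w))|=|d(x,w)-d(y,w)|$, so the supremum defining the first coordinate of $\|I(f(x))-I(f(y))\|$ runs over $X\supset f(X)$ and is at least the analogous supremum for $\|I(x)-I(y)\|$; the same holds for the second coordinate. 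The main obstacle is that $X$ itself need not be compact (only Polish and $\sigma$-compact), so the classical proof cannot be invoked directly; the compactness of the level sets $\{d\ge\epsilon\}$ will substitute, and a dichotomy on whether $f(X)$ is $\mathscr{T}$-dense is needed to run the argument in both cases.

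Assume for contradiction that some $y_0\in X\setminus f(X)$ exists. \emph{Case 1: $\overline{f(X)}\neq X$.} Pick $r\in X\setminus\overline{f(X)}$; then $\alpha:=\rho(r,\overline{f(X)})>0$ since $\rho$ induces $\mathscr{T}$. Iterating the non-decreasing inequality $\rho(f^m(a),f^m(b))\ge\rho(a,b)$ with $a=r$ and $b=f^n(r)\in f(X)\subset\overline{f(X)}$ gives $\rho(f^m(r),f^{m+n}(r))\ge\alpha$ for all $m\ge 0$ and $n\ge 1$, so the orbit $\{f^n(r)\}_{n\ge 0}$ is $\alpha$-separated in $(X,\rho)$. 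On the other hand, $r\ne i^0$ furnishes some $q\in X$ with, say, $d(q,r)>\epsilon>0$; distance preservation places the pairs $(f^n(q),f^n(r))$ in the compact set $\{d\ge\epsilon\}$, and a convergent subsequence projects to a $\mathscr{T}$-convergent, hence $\rho$-Cauchy, subsequence of the orbit, contradicting $\alpha$-separation.

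\emph{Case 2: $\overline{f(X)}=X$.} Since $y_0\ne i^0$, pick $p\in X$ with, say, $d(p,y_0)>\eta>0$. By density choose $p'=f(p'')\in f(X)$ so close to $p$ that $d(p',y_0)>\eta/2$, and pick $x_n\in X$ with $f(x_n)\to y_0$. Then
\[
d(p'',x_n)=d(f(p''),f(x_n))=d(p',f(x_n))\longrightarrow d(p',y_0)>\eta/2,
\]
so eventually $(p'',x_n)\in\{d\ge\eta/4\}$, a compact set. A convergent subsequence $x_{n_k}\to x_\infty\in X$ combined with the $\mathscr{T}$-continuity of $f$ forces $y_0=\lim f(x_{n_k})=f(x_\infty)\in f(X)$, contradicting $y_0\notin f(X)$.
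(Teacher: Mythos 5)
Your proof is correct, and its core argument is genuinely different from the paper's. Both proofs split on whether $f(X)$ is dense, and your Case~2 is essentially the same move the paper makes in its first step (there, a sequence $f(r_n)\to p$ either yields $p\in f(X)$ via continuity of $f$, or the set $I^{\pm}(p)$ is shown to be an open set disjoint from $f(X)$ by the same $\{d\ge\epsilon\}$ compactness trick); your variant is slightly cleaner in that density lets you manufacture the witness $p''=f^{-1}(p')$ directly and conclude that $f(X)$ is closed. The real divergence is in Case~1. The paper, having produced a basic open neighbourhood $\bigcap_i (d_{r_i})^{-1}((a_i,b_i))\cap (d^{s^i})^{-1}((c_i,d_i))$ of $p$ disjoint from $f(X)$, runs a recurrence argument: it extracts convergent subsequences of $f^{k_n}(r_i)$, $f^{k_n}(p)$, $f^{k_n}(s^i)$ and uses distance preservation to force $f^{k_m-k_n}(p)$ back into that neighbourhood, contradicting $f^{k}(p)\in f(X)$ for $k\ge1$. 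You instead observe that distance preservation makes $f$ expanding for the pullback metric $\rho(x,y)=\Vert I(x)-I(y)\Vert$ (which is exactly the distinction metric $\gamma$ introduced later in the paper), so the orbit of a point $r\notin\overline{f(X)}$ is uniformly $\alpha$-separated, while the $\{d\ge\epsilon\}$ compactness forces the orbit to have a convergent, hence $\rho$-Cauchy, subsequence. This is the classical ``isometries of compact metric spaces are surjective'' argument, correctly adapted to the non-compact setting; it isolates the $\gamma$-expansion property of distance-preserving maps as a reusable fact, whereas the paper's recurrence argument stays entirely within the subbasis generated by $d$ and never needs the auxiliary metric. Both are complete; I see no gap in yours.
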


\begin{proof}
Suppose not.

We claim that under the assumption there exists a nonempty open set $O\subset X$ disjoint from $f(X)$. Let $p\in  X\backslash f(X)$.
If there exists a neighborhood of $p$ disjoint from $f(X)$ there is nothing to prove. Otherwise every neighborhood $U \ni p$ intersects $f(X)$, hence we can find $r_n\in X$, such that $f(r_n)\to p$. If there is a converging subsequence $r_{n_k}\to r\in X$, then as $f$ is continuous (by Lemma \ref{azo0})  $f(r_n)\to f(r)$ which implies $p=f(r)$, hence $p\in f(X)$, a contradiction. Thus no subsequence of $r_n$ converges.
By (iii) $I^+(p)$ or $I^-(p)$ is non-empty. We assume the former possibility, the latter case being analogous. The non-empty set $I^+(p)$ does not intersect $f(X)$, for if it had some point $f(q)\in I^+(p)$, $d(p,f(q))>\epsilon>0$, $q\in X$, then  $d(f(r_n), f(q))>\epsilon$ for sufficiently large $n$, which would imply $d(r_n, q)>\epsilon$ and hence by the compactness of $\{ d\ge \epsilon\}$ there would be some converging subsequence of $r_n$, a contradiction. As $I^+(p)$ is a non-empty open set not intersecting $f(X)$ we conclude that claim is true.

Let $O\subset X$ be an open set such that $f(X)$ is disjoint from $O$ and let $p\in O$. Choose $r_i,s^i\in X$ and $a_i,b_i,c^i,e^i\in \R\cup\{\pm\infty\}$ $(1\le i\le I$) such that
\begin{equation} \label{cqz}
p\in \bigcap_{i=1}^I (d_{r_i})^{-1}((a_i,b_i)) \cap (d^{s^i})^{-1}((c_i,e_i))
\subset O
\end{equation}
 and hence $\bigcap_{i=1}^I (d_{r_i})^{-1}((a_i,b_i)) \cap (d^{s^i})^{-1}((c_i,e_i)) \cap f(X)=\emptyset$. Since for all $k\ge 1$, $f^k(p)\in f(X)$, we have
 \begin{equation} \label{cxx}
 f^k(p)\notin  \bigcap_{i=1}^I (d_{r_i})^{-1}((a_i,b_i)) \cap (d^{s^i})^{-1}((c_i,e_i)).
 \end{equation}
There is an increasing sequence $k_n$ such that $f^{k_n}(r_i) \to \tilde r_i$, $f^{k_n}(p) \to \tilde p$, $f^{k_n}(s^i) \to \tilde s^i$,  for every $i$ (the proof was given in the 4th paragraph of the proof of Lemma \ref{azo0}).  By preservation of distance the sequence $\{ d( f^{k_n}( r_i), f^{k_n}(p))\}_n$ is constant
thus  $ d(  r_i, p)=\lim_n d( f^{k_n}( r_i), f^{k_n}(p))= d( \tilde r_i, \tilde p) $, where in the last step we used the continuity of $d$. Thus
$a_i<d( \tilde r_i, \tilde p) <b_i$. This implies that for $n$ big enough $a_i< d( f^{k_n}( r_i), \tilde p) <b_i$ (and similarly for the other inequalities involving $c_i$ and $e_i$), which implies that for $m>n$ big enough $a_i< d( f^{k_n}( r_i), f^{k_m} (p)) <b_i$, hence for  $m>n$ big enough, by the preservation of distance  $a_i< d(  r_i, f^{k_m-k_n} (p)) <b_i$.
Thus considering all inequalities we have that for $m>n$ big enough
\[
f^{k_m-k_n} (p) \in \bigcap_{i=1}^I (d_{r_i})^{-1}((a_i,b_i)) \cap (d^{s^i})^{-1}((c_i,e_i)),
\]
taking into account that $k_s$ is increasing, $k_m-k_n\ge 1$, which contradicts (\ref{cxx}).
\end{proof}

\begin{theorem} \label{cis}
Let $(X,d_X)$, $(Y,d_Y)$  be bounded Lorentzian-metric spaces, and let $f:X\to Y$ be distance preserving and surjective, then $f$ is continuous (with respect to the topologies $\mathscr{T}_X$ and $\mathscr{T}_Y$).
\end{theorem}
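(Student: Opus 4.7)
The plan is to leverage what has already been proved: by Theorem \ref{ine} the distance-preserving map $f$ is automatically injective, hence bijective. So continuity reduces to checking that $f^{-1}(V) \in \mathscr{T}_X$ for every $V$ in some chosen subbasis of $\mathscr{T}_Y$.

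The natural subbasis to pick is the one supplied by Corollary \ref{vjw}: the sets
\[
U_Y(p,r;a,b,c,d) := \{q \in Y : a < d_Y(p,q) < b\} \cap \{q \in Y : c < d_Y(q,r) < d\}
\]
as $p,r$ range over $Y$ and $a,b,c,d$ over $\mathbb{R} \cup \{-\infty,\infty\}$. Here the crucial step is to use surjectivity: every $p, r \in Y$ can be written as $p = f(p')$, $r = f(r')$ with $p', r' \in X$. The distance-preserving identity $d_Y(f(p'),f(q')) = d_X(p',q')$ then gives
\[
f^{-1}\bigl(U_Y(p,r;a,b,c,d)\bigr) = U_X(p',r';a,b,c,d),
\]
which is itself a subbasic element of $\mathscr{T}_X$, again by Corollary \ref{vjw}. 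This establishes continuity.

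I expect no real obstacle beyond invoking surjectivity at the right moment to produce the preimages $p', r'$; in fact this mirrors the continuity half of Theorem \ref{vps} and bypasses entirely the iteration/compactness machinery needed for the self-map case (Lemma \ref{azo0}). The standing hypothesis of the section removes any complication at $i^0$, but had the boundary point been present one could handle it analogously by working with the subbasis for the neighbourhood system of $i^0$ given in Proposition \ref{bix}: complements of the compact sets $\{d_p \ge b\}$ and $\{d^p \ge b\}$ pull back under $f$ to sets of the same form, since distance preservation commutes with these sublevel/superlevel conditions once $p$ is expressed as $f(p')$.
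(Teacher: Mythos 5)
Your proof is correct, but it follows a genuinely different route from the paper's own argument for Theorem \ref{cis}. The paper proves sequential continuity by contradiction: it takes $x_n\to x$ with $f(x_n)\not\to f(x)$, uses a point $x'\ll x$ and the compactness of $\{d\ge\epsilon\}$ to trap $\{f(x_n)\}$ in a compact set, extracts a limit $y\ne f(x)$, distinguishes $y$ from $f(x)$ by some $z\in Y$, and only then invokes surjectivity to write $z=f(w)$ and contradict the continuity of $d_X(\cdot,w)$. Your argument instead pulls back the subbasis of Corollary \ref{vjw} directly, using surjectivity to realize the parameters $p,r$ of a subbasic set as images $f(p'),f(r')$; the preimage computation $f^{-1}\bigl(U_Y(p,r;a,b,c,d)\bigr)=U_X(p',r';a,b,c,d)$ is exactly right (and does not even need the injectivity you import from Theorem \ref{ine}). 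This is in substance the same argument the paper uses to prove Theorem \ref{vps}, and indeed your observation exposes that Theorem \ref{cis} is an immediate corollary of Theorems \ref{ine} and \ref{vps}: a surjective distance-preserving map is injective, hence an isometry, hence a homeomorphism. What your route buys is brevity and the avoidance of any compactness or limit-point machinery; what the paper's sequential proof buys is independence from the subbasis characterization (it only uses first countability, the compactness of the sets $\{d\ge\epsilon\}$, and the distinguishing property), which keeps it closer in spirit to the neighbouring results Lemma \ref{azo0} and Theorem \ref{iba0}. Your closing remark on the $i^0$ case is also sound, since distance preservation forces $f(i^0_X)=i^0_Y$ and the complements of the sets $\{d_p\ge b\}$, $\{d^p\ge b\}$ pull back to sets of the same form.
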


\begin{proof}
 Since the topologies are first countable  it suffices to prove sequential continuity of $f$.
Suppose that $f$ is  not continuous at $x$. We can find a sequence $x_n\to x$ such that  $\{f(x_n)\}_n$ does not converge to $f(x)$. We know that there exists a point in the chronological past or in the chronological future of $x$. Let us assume the
former possibility, the latter being analogous, and let $x'\ll x$. For sufficiently large $n$  we have $x'\ll x_n$ and
\[
d_Y(f(x'), f(x_n))=d_X(x',x_n)>d_X(x',x)/2>0.
\]
This proves that $f(x_n)$ belongs to a compact set. Without loss of generality we can assume that $f(x_n)\to y\in Y$, $y\ne f(x)$. However there exists $z\in Y$
such that $d_Y(y,z)\ne d_Y(f(x),z)$ (or the other analogous equation). Thus we can find $\epsilon >0$ such that $\vert d_Y(y,z)- d_Y(f(x),z) \vert
>\epsilon$, hence for sufficiently large $n$ we have $ \vert d_Y(f(x_n),z)- d_Y(f(x),z) \vert >\epsilon$.
By surjectivity there is $w\in X$ such that $z= f(w)$. Thus for sufficiently large $n$ we obtain $ \vert d_X(x_n,w)- d_X(x,w) \vert >\epsilon$, which contradicts the
continuity of $d_X(\cdot, w)$.
\end{proof}

\begin{theorem} \label{cia}
Let $(X,d_X)$, $(Y,d_Y)$  be bounded Lorentzian-metric spaces, and let $f:X\to Y$, $g: Y \to X$ be distance preserving.
Then $f$ and $g$ are
 continuous isometries (with continuous inverses).
\end{theorem}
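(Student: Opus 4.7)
The plan is to combine Theorems \ref{ine}, \ref{iba0} and \ref{cis} via a composition trick to deduce bijectivity of $f$ and $g$, and then to feed the inverses back into Theorem \ref{cis} to get continuity of the inverses.

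First, by Theorem \ref{ine} both $f$ and $g$ are injective, since distance preserving maps between bounded Lorentzian-metric spaces are automatically injective. The next step is to observe that the compositions $g\circ f\colon X\to X$ and $f\circ g\colon Y\to Y$ are again distance preserving, because
\[
d_X(g(f(x)),g(f(x')))=d_Y(f(x),f(x'))=d_X(x,x'),
\]
and analogously for $f\circ g$. These are distance preserving self-maps of a single bounded Lorentzian-metric space, so Theorem \ref{iba0} applies and yields that both $g\circ f$ and $f\circ g$ are surjective. In particular $g(Y)\supset (g\circ f)(X)=X$ and $f(X)\supset (f\circ g)(Y)=Y$, so $g$ and $f$ are surjective. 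Combined with injectivity from step one, $f$ and $g$ are bijections, hence isometries in the sense of the definition.

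Now define the inverses $f^{-1}\colon Y\to X$ and $g^{-1}\colon X\to Y$. They are bijections, and they preserve distances: for any $y,y'\in Y$, writing $y=f(x)$, $y'=f(x')$, we have
\[
d_X(f^{-1}(y),f^{-1}(y'))=d_X(x,x')=d_Y(f(x),f(x'))=d_Y(y,y'),
\]
and similarly for $g^{-1}$. Finally, each of $f$, $g$, $f^{-1}$, $g^{-1}$ is a surjective distance preserving map between bounded Lorentzian-metric spaces, so Theorem \ref{cis} applies to each of them and yields continuity with respect to the topologies $\mathscr{T}_X$ and $\mathscr{T}_Y$.

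I expect no real obstacle here: the entire statement is a bookkeeping consequence of the earlier theorems. The only conceptual point worth highlighting is the composition trick $g\circ f$, $f\circ g$, which converts the hypothesis of having distance preserving maps in both directions into surjectivity of each map individually via the self-map result Theorem \ref{iba0}; without this observation neither $f$ nor $g$ is a priori known to be onto.
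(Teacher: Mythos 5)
Your proposal is correct and follows essentially the same route as the paper: injectivity from Theorem \ref{ine}, surjectivity via applying Theorem \ref{iba0} to the distance-preserving compositions $g\circ f$ and $f\circ g$, and continuity of the maps and their inverses from Theorem \ref{cis}. The only cosmetic difference is that you extract surjectivity of the outer map from the surjectivity of the composition directly, whereas the paper uses injectivity of the outer map to deduce surjectivity of the inner one; both are immediate and equivalent in effect.
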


\begin{proof}
By Theorem \ref{ine} $f$ and $g$ are injective.
By  Theorem \ref{iba0} $g \circ f: X\to X$ is surjective. Thus by the injectivity of $g$, $f$ is surjective. Similarly $g$ is surjective.
By Theorem \ref{cis} they are continuous bijections. The inverses are also surjective and distance preserving hence continuous.
\end{proof}

\begin{remark} \label{xof}
The statement of the previous theorem does not change if one (or both) $X$, $Y$, contain the spacelike boundary (as a consequence, one contains the spacelike boundary if so does the other). Indeed, the theorem applies to $\mathring{X}$ and $\mathring{Y}$ and the restrictions $f\vert_{\mathring{X}}$, $g\vert_{\mathring{X}}$ (note that by distance preservation they cannot have the spacelike boundary in their image) so they are indeed maps $f\vert_{\mathring{X}}: {\mathring{X}}\to {\mathring{Y}}$ and $g\vert_{\mathring{Y}}: {\mathring{Y}}\to {\mathring{X}}$ hence bijective by the previous theorem. But now $f$ is injective because if $f(i^0_X)=f(p)$ for some $p\ne i^0_X$ then $f(p)$ is in the chronological future or past of some point necessarily in $\mathring{Y}$ and hence of the form $f(q)$. By distance preservation we would get that $i^0_X$ is in the chronological future or past of $q$, a contradiction.  Thus necessarily $f(i^0_X)=i^0_Y$ since all the other points in $\mathring{Y}$ are already in the image of $f\vert_{\mathring{X}}$. From here it is immediate that $f$ is an isometry and hence a homeomorphism (Theorem \ref{vps}).
\end{remark}

\section{Lorentzian Gromov-Hausdorff distances}

\subsection{Abstract Gromov-Hausdorff semi-distance}

\begin{definition}
Let $X$ and $Y$ be sets.
A relation $R\subset X\times Y$ is a {\it correspondence} if
\begin{itemize}
\item[(i)]  for all $x\in X$ there exists $y\in Y$ with $(x,y)\in R$, and
\item[(ii)]  for all $y\in Y$ there exists $x\in X$ with $(x,y)\in R$.
\end{itemize}
\end{definition}

\begin{remark} \label{yws}
If $R\subset X\times Y$ is a correspondence so is
$$R^T:=\{(y,x):\; (x,y)\in R\}\subset Y\times X.$$
\end{remark}

A pair $(X,d_X)$ consisting of  a nonempty set $X$ and a bounded function $d_X\colon X\times X\to [0,\infty)$ is abbreviated as {\it bounded space}.

\begin{definition}
Let  $(X,d_X)$ and $(Y,d_Y)$ be bounded spaces and $R\subset X\times Y$ be a
correspondence.
The {\it distortion} of  $R$ is defined as:
\[
\textrm{dis}\, R:=\sup\{\vert d_X(x,x')-d_Y(y,y') \vert : (x,y), (x',y')\in R\}
\]
\end{definition}

\begin{remark}\label{R_symm}
We have $\textrm{dis}\, R^T=\textrm{dis}\, R$.
\end{remark}

\begin{proposition} \label{clo}
Let $R\subset X\times Y$ be a correspondence between two bounded Lorentzian metric spaces. Then $\bar R$ is a closed correspondence with the same distortion.
\end{proposition}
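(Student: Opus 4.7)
The plan is to unpack the three things that need to be verified about $\bar{R}$: that it is closed (tautological), that it is still a correspondence, and that it has the same distortion as $R$. The first two are essentially free. Closedness of $\bar{R}$ is built into the definition of topological closure. For the correspondence property, since $R\subset \bar{R}$, any $y\in Y$ witnessing $(x,y)\in R$ also witnesses $(x,y)\in \bar{R}$, and symmetrically for the second defining clause. So the whole substance of the proposition is the equality $\mathrm{dis}\,\bar{R}=\mathrm{dis}\,R$.

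One direction, $\mathrm{dis}\,\bar R\ge \mathrm{dis}\,R$, is immediate from monotonicity of the distortion under set inclusion. For the reverse inequality I would fix an arbitrary pair $(x,y),(x',y')\in \bar{R}$ and try to approximate these by pairs coming from $R$. Since $(X,\mathscr{T}_X)$ and $(Y,\mathscr{T}_Y)$ are Polish by the earlier theorem, they are in particular metrizable and first countable, hence so is the product space $X\times Y$ endowed with $(\mathscr{T}_X\times \mathscr{T}_Y)^2$. Therefore there exist sequences $\{(x_n,y_n)\}_n,\{(x_n',y_n')\}_n\subset R$ with $(x_n,y_n)\to(x,y)$ and $(x_n',y_n')\to(x',y')$.

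Now I invoke Corollary \ref{vjw}, which asserts that $d_X$ is continuous on the full product $\mathscr{T}_X\times \mathscr{T}_X$, and the analogous statement for $d_Y$. This yields
\[
d_X(x_n,x_n')\to d_X(x,x'),\qquad d_Y(y_n,y_n')\to d_Y(y,y').
\]
Since $(x_n,y_n),(x_n',y_n')\in R$, the definition of distortion gives $|d_X(x_n,x_n')-d_Y(y_n,y_n')|\le \mathrm{dis}\,R$ for every $n$, and passing to the limit yields $|d_X(x,x')-d_Y(y,y')|\le \mathrm{dis}\,R$. Taking the supremum over $(x,y),(x',y')\in \bar R$ gives $\mathrm{dis}\,\bar R\le \mathrm{dis}\,R$, completing the proof.

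There is no real obstacle here; the only point one has to be alert to is that the continuity of $d$ needed is joint continuity on the full product topology (as opposed to separate continuity in each variable, which we had trivially from the start), but this is precisely what Corollary \ref{vjw} supplies. The argument does not depend on whether $i^0$ belongs to $X$ or to $Y$.
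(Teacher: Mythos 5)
Your proposal is correct and follows essentially the same route as the paper: both reduce the statement to the inequality $\mathrm{dis}\,\bar R\le \mathrm{dis}\,R$, approximate pairs in $\bar R$ by sequences from $R$, and pass to the limit using the joint continuity of $d_X$ and $d_Y$. Your extra remarks on first countability (justifying the use of sequences) and on joint versus separate continuity are sound but do not change the argument.
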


\begin{proof}
For every $x\in X$ we can find $y\in Y$, such that $(x,y)\in R\subset \bar R$, and analogously, given $y\in Y$ we can find $x\in X$ such that $(x,y)\in R\subset \bar R$.

Let $(x,y), (x',y')\in \bar R$, then we can find $x_n\to x$, $y_n\to y$, $(x_n,y_n)\in R$, and   $x'_n
\to x'$, $y'_n\to y'$, $(x'_n,y'_n)\in R$. Thus for every $n$,
\[
\vert d_X(x_n, x'_n)-d_Y(y_n, y'_n)\vert \le \textrm{dis} R
\]
thus taking the limit and using the continuity of $d_X$ and $d_Y$,
\[
\vert d_X(x, x')-d_Y(y, y')\vert \le \textrm{dis} R
\]
which implies that $\textrm{dis} \bar R\le \textrm{dis} R$, the other direction being obvious.
\end{proof}

\begin{definition}
Let $(X,d_X)$ and $(Y,d_Y)$ be bounded spaces. The {\it Gromov-Hausdorff semi-distance} between $(X,d_X)$ and $(Y,d_Y)$ is defined as
\begin{equation} \label{idp}
d_{GH}(X,Y)=\inf\nolimits_R \textrm{dis}\, R
\end{equation}
where the infimum is taken over all correspondences $R\subset X\times Y$.
\end{definition}

Notice that for every bounded space $X$, $d_{GH}(X,X)=0$, as the correspondence given by the diagonal $\Delta\subset X\times X$ has vanishing distortion.

\begin{example}
It is obvious from the definition that any bounded space $(X,d_X)$ with $\sup\{d_X(x,y)\}<\epsilon$ has Gromov-Hausdorff semi-distance
of less than $\epsilon$ from the trivial space $(Y,d_Y)$ consisting of $Y=\{pt\}$ and $d_Y\equiv 0$.
\end{example}

The composition of two relations $R_1\subset X\times Y$ and $R_2\subset Y\times Z$ is defined as follows
\begin{equation} \label{kdc}
R_2\circ R_1=\{(x,z)\vert \exists y\in Y: (x,y)\in R_1 \textrm{ and } (y,z)\in R_2\}.
\end{equation}

\begin{lemma}\label{lemmatriangle}
Let $(X,d_X)$, $(Y,d_Y)$ and $(Z,d_Z)$ be bounded spaces. Further, let $R_1\subset X\times Y$, $R_2\subset Y\times Z$ be two correspondences. Then
we have
\[
\textrm{dis}\,(R_2 \circ R_1)\le \textrm{dis}\, R_2+\textrm{dis}\, R_1 .
\]
\end{lemma}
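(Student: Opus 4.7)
The plan is to use the triangle inequality for the absolute value, with an intermediate point in $Y$ serving as the ``pivot.''

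First I would verify that $R_2 \circ R_1$ is actually a correspondence so that $\textrm{dis}(R_2 \circ R_1)$ is defined in the usual sense: for any $x \in X$, pick $y \in Y$ with $(x,y) \in R_1$ (exists since $R_1$ is a correspondence), then pick $z \in Z$ with $(y,z) \in R_2$ (exists since $R_2$ is a correspondence); by the definition in (\ref{kdc}) this gives $(x,z) \in R_2 \circ R_1$. The symmetric argument starting from $z \in Z$ finishes the check.

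Next, take any two pairs $(x,z), (x',z') \in R_2 \circ R_1$. By definition of composition there exist $y, y' \in Y$ such that $(x,y), (x',y') \in R_1$ and $(y,z), (y',z') \in R_2$. Inserting $d_Y(y,y')$ and using the triangle inequality for $|\cdot|$,
\[
|d_X(x,x') - d_Z(z,z')| \le |d_X(x,x') - d_Y(y,y')| + |d_Y(y,y') - d_Z(z,z')|.
\]
The first summand on the right is bounded by $\textrm{dis}\, R_1$ since $(x,y), (x',y') \in R_1$, and the second is bounded by $\textrm{dis}\, R_2$ since $(y,z), (y',z') \in R_2$. Taking the supremum over all admissible pairs $(x,z), (x',z') \in R_2 \circ R_1$ yields the claimed inequality.

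There is no real obstacle: the statement is a formal consequence of the triangle inequality and the definition of composition of relations, and does not use the Lorentzian structure at all. The only mild subtlety is that the witnesses $y, y'$ are not uniquely determined by $(x,z), (x',z')$, but that is immaterial since we only need the existence of \emph{some} such witnesses to produce the bound.
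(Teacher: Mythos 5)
Your proof is correct and follows essentially the same route as the paper: insert $d_Y(y,y')$ as a pivot, apply the triangle inequality for the absolute value, and bound the two resulting terms by $\textrm{dis}\,R_1$ and $\textrm{dis}\,R_2$ before taking the supremum. The only (harmless) addition is your explicit check that $R_2\circ R_1$ is a correspondence, which the paper leaves implicit in the lemma and only notes later in the proof of the triangle inequality for $d_{GH}$.
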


\begin{proof}
By definition we have
\begin{align*}
\textrm{dis}(R_2 \circ R_1)=\sup\{\vert d_Z(z,z')-d_X(x,x') \vert:(x,z) (x',z')\in  R_2 \circ R_1\}.
\end{align*}
Note that for every $y,y'\in Y$ and in particular for $y,y'$ such that $(x,y), (x',y')\in R_1$, $(y,z), (y',z')\in R_2$ follows
\[
\vert d_Z(z,z')-d_X(x,x') \vert\le \vert d_Z(z,z')-d_Y(y,y') \vert +\vert d_Y(y,y')-d_X(x,x') \vert
\]
and thus
\begin{align*}
\textrm{dis}\,(R_2 \circ R_1) &\le \sup\{\vert d_Z(z,z')-d_Y(y,y') \vert +\vert d_Y(y,y')-d_X(x,x') \vert:\\
&\qquad (x,y), (x',y')\in R_1,(y,z), (y',z')\in R_2 \}\\
&\le \sup\{\vert d_Z(z,z')-d_Y(y,y') \vert:(y,z) (y',z')\in  R_2 \} \\
&\quad +\sup\{\vert d_Y(y,y')-d_X(x,x') \vert:(x,y) (x',y')\in  R_1\}\\
&\le \textrm{dis}\, R_2 + \textrm{dis}\, R_1.
\end{align*}
\end{proof}

\begin{proposition} \label{qkz}
The Gromov-Hausdorff semi-distance satisfies the triangle inequality. Namely, for any $(X,d_X)$, $(Y,d_Y)$ and $(Z,d_Z)$ bounded spaces, we have
\[
d_{GH}(X,Z)\le d_{GH}(X,Y)+d_{GH}(Y,Z).
\]
\end{proposition}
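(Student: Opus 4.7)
The plan is to reduce the triangle inequality to Lemma \ref{lemmatriangle}, which has already done the real work of bounding the distortion of a composition. So the proof will be essentially a three-line argument, modulo a verification.

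The first step is to take arbitrary correspondences $R_1\subset X\times Y$ and $R_2\subset Y\times Z$, and form the composition $R_2\circ R_1\subset X\times Z$ defined in (\ref{kdc}). I need to check that $R_2\circ R_1$ is itself a correspondence: given $x\in X$, by property (i) for $R_1$ there is $y\in Y$ with $(x,y)\in R_1$, and then by property (i) for $R_2$ there is $z\in Z$ with $(y,z)\in R_2$, so $(x,z)\in R_2\circ R_1$. The symmetric statement for $z\in Z$ uses property (ii) of $R_2$ and then $R_1$. This verification is entirely routine.

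The second step is to invoke Lemma \ref{lemmatriangle}, which directly gives
\[
\textrm{dis}\,(R_2\circ R_1)\le \textrm{dis}\, R_1+\textrm{dis}\, R_2.
\]
Taking the infimum over $R_1$ on the right-hand side, and then separately over $R_2$, and using that $R_2\circ R_1$ is a correspondence between $X$ and $Z$ (so its distortion is an upper bound for $d_{GH}(X,Z)$), yields
\[
d_{GH}(X,Z)\le \inf_{R_1}\textrm{dis}\, R_1+\inf_{R_2}\textrm{dis}\, R_2 = d_{GH}(X,Y)+d_{GH}(Y,Z).
\]

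There is no real obstacle here; the only subtlety is to make sure the two infima can be taken independently, which is legitimate because $R_1$ and $R_2$ range over disjoint index sets and $\textrm{dis}\,(R_2\circ R_1)$ is bounded by a sum in which each summand depends on only one of them. All compactness or topological properties of the bounded Lorentzian-metric spaces are irrelevant at this stage, since the definition of $d_{GH}$ and the composition of relations are purely set-theoretic; the proof works for arbitrary bounded spaces.
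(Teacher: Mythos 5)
Your proposal is correct and follows the same route as the paper: compose correspondences, apply Lemma \ref{lemmatriangle}, and take infima independently. The only difference is that you explicitly verify that $R_2\circ R_1$ is a correspondence, a routine step the paper leaves implicit.
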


\begin{proof}
The family of correspondences $R_{X,Z}$ between $X$ and $Z$ contains the family of those correspondences that are  obtained through composition of  correspondences
$R_{X,Y}$ and $R_{Y,Z}$. With Lemma \ref{lemmatriangle} follows
\begin{align*}
d_{GH}(X,Z)&=\inf_{R_{X,Z}} \textrm{dis}\, R_{X,Z}\le  \inf_{R_{Y,Z}, R_{X,Y} } \textrm{dis}\, (R_{Y,Z}\circ R_{X,Y})\\
&\le  \inf_{R_{Y,Z}, R_{X,Y} } (\textrm{dis}\, R_{X,Y}+\textrm{dis}\, R_{Y,Z})\\
&\le  \inf_{R_{X,Y}} \textrm{dis}\, R_{X,Y}+ \inf_{R_{Y,Z}} \textrm{dis}\, R_{Y,Z} =d_{GH}(X,Y)+d_{GH}(Y,Z).
\end{align*}
\end{proof}

In summary we have

\begin{proposition}
The Gromov-Hausdorff semi-distance between bounded spaces has the following properties:
\begin{itemize}
\item[(a)] $d_{GH}(X,Y)\ge 0$
\item[(b)] $d_{GH}(X,Y)=d_{GH}(Y,X)$
\item[(c)] $d_{GH}(X,Z)\le d_{GH}(X,Y)+d_{GH}(Y,Z)$
\end{itemize}
\end{proposition}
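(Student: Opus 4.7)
The proof plan is to observe that all three properties follow immediately from material already established in the preceding paragraphs, so the proposition is essentially a consolidation rather than a new result.

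For (a), I would simply note that the distortion of any correspondence $R \subset X \times Y$ is defined as a supremum of absolute values $|d_X(x,x') - d_Y(y,y')|$, which are all nonnegative, so $\textrm{dis}\,R \ge 0$ for every $R$. Moreover, the set of correspondences is nonempty (e.g.\ $R = X \times Y$ itself), so the infimum in (\ref{idp}) is taken over a nonempty set of nonnegative numbers and is therefore nonnegative.

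For (b), I would invoke Remark \ref{yws} and Remark \ref{R_symm}: the map $R \mapsto R^T$ is a bijection between correspondences $R \subset X \times Y$ and correspondences $R^T \subset Y \times X$, and it preserves distortion ($\textrm{dis}\,R^T = \textrm{dis}\,R$). Taking the infimum over either collection therefore yields the same value, giving $d_{GH}(X,Y) = d_{GH}(Y,X)$.

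For (c), I would simply cite Proposition \ref{qkz}, which has already established the triangle inequality using the composition rule (\ref{kdc}) and Lemma \ref{lemmatriangle}. The main (and only) step worth remarking on is that there is no real obstacle here: everything is a one-line deduction from the previous remarks, lemmas, and proposition. In particular, there is no nontriviality hidden in (a) or (b), and (c) has been fully proven already.
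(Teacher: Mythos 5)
Your proposal is correct and follows essentially the same route as the paper, which disposes of (a) as immediate from the definition, (b) via Remark \ref{R_symm}, and (c) by citing Proposition \ref{qkz}. Your only addition is the explicit observation that $X\times Y$ is itself a correspondence, so the infimum in (a) is over a nonempty set — a harmless and slightly more careful touch.
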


\begin{proof}
Point (a) is obvious from the definition. Point (b) follows from Remark \ref{R_symm}. Point (c) is Proposition \ref{qkz}.
\end{proof}

\begin{remark} \label{buy}
The addition of the spacelike boundary to two bounded Lorentzian-metric spaces not including it does not alter their  Gromov-Hausdorff semi-distance. The reason is that any correspondence can be enlarged including the element $(i^0_X, i^0_Y)$ without altering its distortion. Similarly, if $X$, $Y$ include the spacelike boundary then we can remove it from both
without altering their  Gromov-Hausdorff semi-distance (because any correspondence can see its distortion reduced letting $i^0_X$ correspond just to $i^0_Y$ and conversely, and so both can be really removed without altering the distortion). This does not work if the spacelike boundary is removed on just one of the two spaces.
\end{remark}

The next result clarifies the behavior of the Gromov-Hausdorff semi-distance with reference to the distance quotient (see Section \ref{quo}).

\begin{proposition} \label{vkx}
Let $(X,\mathscr{T})$ be a topological space endowed with a continuous and bounded function $d\colon X\times X\to [0,\infty)$, hence a bounded space.
Let $S$ be a  subset of $X$, endowed with the induced  distance. Then $d_{GH}(S,S/\!\sim)=0$. Moreover, there is a subset $\check S\subset S$, isometric to $S/\sim$, such that the distinguishing property of Definition \ref{D1}(iii) holds for $\check S$, and $d_{GH}(S,\check S)=0$.
\end{proposition}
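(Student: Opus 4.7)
The plan is to exhibit two explicit zero-distortion correspondences, using the quotient projection for the first claim and a choice function for the second. The key point is that the definition of $\sim$ is tailored precisely so that $d$ descends to $\tilde{d}$ with no loss of information.

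For $d_{GH}(S, S/\!\sim) = 0$, I would take the graph of the projection $\pi\colon S \to S/\!\sim$,
\[
R := \{(s, \pi(s)) : s \in S\} \subset S \times (S/\!\sim),
\]
which is obviously a correspondence (surjective in both arguments). By the construction in Section \ref{quo}, $\tilde{d}(\pi(s),\pi(s')) = d(s,s')$, so $R$ has zero distortion.

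For the second claim, I would use the axiom of choice to pick one representative $r_{[s]} \in [s]$ for each class and set $\check{S} := \{r_{[s]} : [s] \in S/\!\sim\} \subset S$. The map $\check{S} \to S/\!\sim$, $r \mapsto [r]$, is a bijection by construction, and since $d|_{\check{S}\times\check{S}}$ agrees with $\tilde{d}$ pulled back along this bijection, it is an isometry. To verify the distinguishing property on $\check{S}$, I would argue: given distinct $x, y \in \check{S}$ one has $[x] \neq [y]$, so by the very definition of $\sim$ some $r \in S$ satisfies $d(x,r)\neq d(y,r)$ or $d(r,x)\neq d(r,y)$; replacing $r$ by $r_{[r]} \in \check{S}$ preserves both distance profiles because $r \sim r_{[r]}$, and therefore $r_{[r]}$ distinguishes $x$ and $y$ from within $\check{S}$. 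Finally, for $d_{GH}(S, \check{S}) = 0$, I would use the correspondence
\[
R' := \{(s, r_{[s]}) : s \in S\} \subset S \times \check{S},
\]
which is a correspondence since each $r \in \check{S}$ appears as $(r,r)$, and whose distortion vanishes because $s\sim r_{[s]}$ gives $d(s,s') = d(r_{[s]}, r_{[s']})$ for all $s,s'\in S$.

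I do not anticipate any serious obstacle; the entire proposition reduces to unpacking the definitions of $\sim$, distortion, and correspondence, together with a single invocation of choice to produce the transversal $\check{S}$. The only mildly delicate step is noticing that in the distinguishing argument one must replace the distinguishing element $r\in S$ by its representative in $\check{S}$, which works precisely because $\sim$ is designed to preserve both $d(\cdot, r)$ and $d(r,\cdot)$.
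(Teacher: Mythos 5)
Your proposal is correct and follows essentially the same route as the paper: the graph of the quotient projection as a zero-distortion correspondence, and a choice of representatives for $\check S$. The only cosmetic differences are that the paper obtains $d_{GH}(S,\check S)=0$ via the triangle inequality through $S/\!\sim$ rather than your explicit correspondence $R'$, and transports the distinguishing property from $S/\!\sim$ by isometry rather than verifying it directly on $\check S$ as you do; both variants are equally valid.
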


By Proposition \ref{cna} if $X$ is  a bounded Lorentzian-metric space and $S$ is a finite subset then, by first taking the distance quotient and then the representatives, we can find $\check S \subset S$, $d_{GH}(\check S, S)=0$, which is a causet.

\begin{proof}
Consider the correspondence $R\subset S\times S/\!\sim$, $R=\{(s, [s]), s\in S\}$. Then for $(s_1, [s_1]), (s_2, [s_2]) \in R$,
\[
\vert \tilde d([s_1],[s_2])- d(s_1,s_2)\vert=0,
\]
by the definition of quotient distance $\tilde d$. Thus $\textrm{dis}\, R=0$ and hence
\[
d_{GH}(S,S/\sim)=0.
\]

Let us define $\check S$ by picking an element in each class $[s]\in S/\sim$. This establishes a bijection between the two sets (hence a natural correspondence). By the same calculation of the previous paragraph, $\check S$ and $S/\sim$ are isometric, thus $d_{GH}(S/\sim,\check S)=0$. The desired equality  $d_{GH}(S,\check S)=0$ follows from the triangle inequality for $d_{GH}$. Further, we know that $S/\sim$ satisfies the distinguishing property of Definition \ref{D1}(iii) so, by the isometry, the same holds for $\check S$.
\end{proof}

\subsection{Lorentzian Gromov-Hausdorff distance}

The following result  will be proved through several propositions in what follows.

\begin{theorem}\label{vmw}
The Gromov-Hausdorff semi-distance between bounded Lorentzian-metric spaces that contain $i^0$ has the following properties:
\begin{itemize}
\item[(a)] The Gromov-Hausdorff semi-distance is non-negative. Further, the
 spaces $(X,d_X)$ and $(Y,d_Y)$ are isometric and homeomorphic, that is, there exists an isometry (which then is also a homeomorphism) $f\colon X\to Y$, if and only if $d_{GH}(X,Y)=0$.
\item[(b)] $d_{GH}(X,Y)=d_{GH}(Y,X)$
\item[(c)] $d_{GH}(X,Z)\le d_{GH}(X,Y)+d_{GH}(Y,Z)$
\end{itemize}
The same result holds with ``contain" replaced by ``do not contain''.
\end{theorem}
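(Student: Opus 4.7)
The plan is to reduce everything to the compact case (containing $i^0$), since parts (b) and (c) are already contained in the last proposition of the previous subsection for arbitrary bounded spaces, and the nontrivial direction of (a) is ``$d_{GH}(X,Y)=0 \Rightarrow$ isometric''. The easy direction of (a) is immediate: if $f\colon X\to Y$ is an isometry then $R=\{(x,f(x)):x\in X\}$ is a correspondence with $\textrm{dis}\,R=0$, hence $d_{GH}(X,Y)=0$.

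For the hard direction of (a), assume $d_{GH}(X,Y)=0$ and pick correspondences $R_n\subset X\times Y$ with $\textrm{dis}\,R_n\to 0$. By Proposition \ref{clo} we may replace each $R_n$ by its closure without changing the distortion, so assume the $R_n$ are closed. When $i^0\in X,Y$, the spaces $X$ and $Y$ are compact (Proposition \ref{bix}) and Polish, hence so is $X\times Y$, and in particular metrizable. The hyperspace of nonempty closed subsets of $X\times Y$ with the Hausdorff distance is then compact, so a subsequence of $\{R_n\}$ converges in Hausdorff distance to some closed set $R\subset X\times Y$.

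Next I would check that $R$ is itself a correspondence of distortion zero. For the correspondence property, given $x\in X$ pick $y_n\in Y$ with $(x,y_n)\in R_n$; compactness of $Y$ gives a subsequential limit $y$, and Hausdorff convergence forces $(x,y)\in R$; the symmetric argument works for $y\in Y$. For the distortion, take $(x,y),(x',y')\in R$ and approximate by $(x_n,y_n),(x'_n,y'_n)\in R_n$ (Hausdorff convergence); continuity of $d_X,d_Y$ together with $\textrm{dis}\,R_n\to 0$ yields $d_X(x,x')=d_Y(y,y')$, and similarly with the arguments swapped.

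The final step is to deduce that $R$ is the graph of an isometry. Suppose $(x,y),(x,y')\in R$. For any $z\in Y$, by the correspondence property choose $x'\in X$ with $(x',z)\in R$; then $d_Y(y,z)=d_X(x,x')=d_Y(y',z)$ and symmetrically $d_Y(z,y)=d_Y(z,y')$. Since this holds for every $z\in Y$, the distinguishing axiom Def.\ \ref{D1}(iii) for $Y$ gives $y=y'$. Thus $R$ is the graph of a map $f\colon X\to Y$, and applying the same reasoning to $R^T$ (Remark \ref{yws}) shows $R^T$ is the graph of a map $g\colon Y\to X$; since $R^T$ is the converse relation of $R$, $f$ is a bijection with inverse $g$. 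Distance preservation is exactly the distortion-zero property, so $f$ is an isometry, and by Theorem \ref{vps} it is also a homeomorphism. For the ``do not contain'' case, one invokes Remark \ref{buy} to add $i^0$ on both sides without changing $d_{GH}$, applies the result just proved to obtain an isometry of the compactifications that necessarily sends $i^0$ to $i^0$, and restricts back. The main technical point is the combined use of Hausdorff compactness in the hyperspace of $X\times Y$ and the distinguishing axiom to upgrade a closed distortion-zero correspondence to the graph of a bijective isometry.
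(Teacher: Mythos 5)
Your proposal is correct, but it takes a genuinely different route from the paper. The paper proves the hard direction of (a) as Proposition \ref{inf}: it reduces to the case \emph{without} $i^0$ (via Remark \ref{buy}), extracts from the correspondences a sequence of $\epsilon_n$-isometries $f_n\colon X\to Y$ (Proposition \ref{boa}), obtains a pointwise limit on a countable dense subset by a Cantor diagonal argument, extends it to a distance-preserving map $f\colon X\to Y$, builds $g\colon Y\to X$ symmetrically, and then invokes Theorem \ref{cia} --- which rests on the nontrivial dynamical result (Theorem \ref{iba0}, proved by analyzing iterates $f^k$) that a distance-preserving self-map is automatically surjective. You instead reduce to the compact case \emph{with} $i^0$, pass to closed correspondences via Proposition \ref{clo}, use compactness of the hyperspace of nonempty closed subsets of the compact metrizable product $X\times Y$ to extract a Hausdorff limit $R$, verify it is a closed correspondence of distortion zero, and then use the distinguishing axiom Def.\ \ref{D1}(iii) to show $R$ and $R^T$ are graphs, hence $R$ is the graph of a bijective isometry. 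Each step of yours checks out: the limit $R$ is indeed a correspondence (subsequential limits of $(x,y_n)\in R_n$ lie in the closed set $R$), its distortion vanishes by continuity of $d_X,d_Y$, and the functionality argument via the distinguishing axiom is exactly right. What your route buys is self-containedness at this point of the paper: it bypasses the whole machinery of Theorems \ref{ine}--\ref{cia} on distance-preserving maps, at the modest cost of importing the standard Blaschke-type compactness of the hyperspace (which the paper never states). What the paper's route buys is the stronger intermediate statement --- that the mere existence of distance-preserving maps in both directions forces both to be isometries --- which is of independent interest and is used elsewhere. Your handling of the trivial direction, of (b) and (c), and of the ``do not contain'' case via Remark \ref{buy} matches the paper.
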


The next result is then an immediate consequence of the previous one \cite[Prop.\ 1.1.5]{burago01}.

\begin{corollary} \label{bof}
The Gromov-Hausdorff semi-distance $d_{GH}$ descends to the set of isometry classes of bounded Lorentzian-metric spaces that contain $i^0$ (equivalently, that do not contain $i^0$). The resulting space
is a metric space.
\end{corollary}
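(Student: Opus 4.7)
The plan is to derive the corollary as a formal consequence of Theorem \ref{vmw}, treating it exactly as one treats the classical metric Gromov--Hausdorff corollary in \cite{burago01}. First I would note that ``being isometric'' is an equivalence relation on the collection of bounded Lorentzian-metric spaces: reflexivity is the identity, transitivity is composition of isometries, and symmetry uses Theorem \ref{vps} together with the trivial fact that the inverse of a distance-preserving bijection is again distance-preserving. Denote by $\mathscr{M}$ the collection of equivalence classes (restricting to spaces whose underlying set lies in some fixed universe, in the standard way, to avoid set-theoretic issues).

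Next I would verify that $d_{GH}$ descends to a well-defined function $\bar d_{GH}\colon \mathscr{M}\times\mathscr{M}\to[0,\infty)$. Suppose $X\sim X'$ and $Y\sim Y'$. By Theorem \ref{vmw}(a) we have $d_{GH}(X,X')=0$ and $d_{GH}(Y,Y')=0$. Two applications of the triangle inequality (Theorem \ref{vmw}(c)) then yield
\[
d_{GH}(X',Y')\le d_{GH}(X',X)+d_{GH}(X,Y)+d_{GH}(Y,Y')=d_{GH}(X,Y),
\]
and the reverse inequality is obtained by swapping the primed and unprimed roles. Hence $d_{GH}(X,Y)=d_{GH}(X',Y')$, so $\bar d_{GH}([X],[Y]):=d_{GH}(X,Y)$ is well defined.

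Finally I would check the four metric axioms for $\bar d_{GH}$, each of which is essentially a one-line consequence of Theorem \ref{vmw}: non-negativity is (a), symmetry is (b), and the triangle inequality is (c), while the separation axiom $\bar d_{GH}([X],[Y])=0 \iff [X]=[Y]$ is exactly the ``if and only if'' clause in (a). The statement that one may equivalently work with classes that do or do not contain $i^0$ follows from the final sentence of Theorem \ref{vmw}, combined with Remark \ref{buy}, which shows that adding or removing the spacelike boundary does not affect Gromov--Hausdorff semi-distance.

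There is essentially no main obstacle: all the substance is contained in Theorem \ref{vmw}, and the corollary is a purely formal quotienting argument. The only point that requires a moment's care is the well-definedness on equivalence classes, and this is handled by the sandwich estimate above.
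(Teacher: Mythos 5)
Your proposal is correct and follows exactly the route the paper intends: the paper gives no separate proof, declaring the corollary an immediate consequence of Theorem \ref{vmw} with a citation to the classical argument in Burago et al., and your write-up simply fills in those routine details (well-definedness via the triangle inequality together with the zero-distance characterization of isometry classes, then the metric axioms from (a)--(c)).
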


\begin{definition}
The {\it distortion of a  map} $f: X\to Y$ is
\[
\textrm{dis} f:= \sup\{\vert d_Y(f(x),f(x'))-d_X(x,x')\vert : x,x'\in X\vert\}.
\]
\end{definition}

\begin{definition} \label{vid}
A map $f: X \to Y$ is an $\epsilon$-{\it isometry on the image} or an $\epsilon$-{\it approximation on the image} if
 $\textrm{dis} f\le \epsilon$.
\end{definition}

\begin{proposition} \label{boa}
Let $(X,d_X)$ and $(Y,d_Y)$ be bounded Lorentzian-metric spaces, and let $R$ be a correspondence. If
$d_{GH}(X,Y)<\epsilon$, then there exists an   $\epsilon$-isometry on the image  $f: X\to Y$.
\end{proposition}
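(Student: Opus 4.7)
The plan is to unpack the definition of the Gromov-Hausdorff semi-distance and read off a map $f$ directly from a correspondence with small distortion. Since $d_{GH}(X,Y) = \inf_R \operatorname{dis} R < \epsilon$, there exists a correspondence $R \subset X \times Y$ with $\operatorname{dis} R < \epsilon$ (one can take the $R$ mentioned in the statement to be such a witness, or simply select one from the infimum).

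First I would construct $f$ pointwise. For each $x \in X$, the correspondence property guarantees at least one $y \in Y$ with $(x,y) \in R$; using the axiom of choice, select one such $y$ and set $f(x) := y$. This produces a (not necessarily continuous, not necessarily injective) map $f \colon X \to Y$ whose graph is contained in $R$.

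Next I would estimate the distortion of $f$. For any $x, x' \in X$ we have $(x, f(x)), (x', f(x')) \in R$, and so by definition of the distortion of a correspondence,
\[
\bigl| d_Y(f(x), f(x')) - d_X(x, x') \bigr| \le \operatorname{dis} R < \epsilon.
\]
Taking the supremum over all pairs $x, x' \in X$ yields $\operatorname{dis} f \le \operatorname{dis} R \le \epsilon$, so by Definition \ref{vid} $f$ is an $\epsilon$-isometry on the image.

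There is no real obstacle here: the argument is essentially a one-line selection from $R$, and the estimate is immediate from the definitions. The only subtlety worth flagging is the use of choice to produce $f$, and the fact that the inequality is preserved under the non-strict passage $\operatorname{dis} R \le \epsilon$ (which is why the conclusion is stated with a non-strict inequality in Definition \ref{vid} even though the hypothesis on $d_{GH}$ is strict).
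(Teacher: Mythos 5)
Your proof is correct and follows essentially the same route as the paper: select a correspondence $R$ with $\operatorname{dis} R<\epsilon$, define $f$ by choosing for each $x$ some $y$ with $(x,y)\in R$, and bound $\operatorname{dis} f$ by $\operatorname{dis} R$. Your remark about the non-strict inequality in Definition \ref{vid} is a reasonable observation but not an issue, since $\operatorname{dis} f\le\operatorname{dis} R<\epsilon$ already gives the required bound.
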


\begin{proof}
Let $R \subset X\times Y$ be a correspondence such that $\textrm{dis} R<\epsilon$. For each $x\in X$ let us choose some $f(x)\in Y$ such that $(x,f(x))\in R$ so as to get a map $f: X\to Y$. We  claim  that $\textrm{dis} f <\epsilon$. Indeed for $x,x'\in X$ we have
\begin{align*}
\vert d_Y(f(x),f(x'))- d_X(x,x')\vert & \le \sup_{\substack{y,y'\in R:\\
(x,y), (x',y')\in R}}  \vert d_Y(y,y')-d_X(x,x') \vert  \\
& \le \textrm{dis} R\;  \le d_{GH}(X,Y)  <\epsilon.
\end{align*}
\end{proof}

\begin{proposition} \label{inf}
Let $(X,d_X)$ and $(Y,d_Y)$ be bounded Lorentzian-metric spaces that either both contain the spacelike boundary or that do not. If $d_{GH}(X,Y)=0$ then the spaces $(X,d_X)$ and $(Y,d_Y)$ are isometric (and the isometry is a homeomorphism).
\end{proposition}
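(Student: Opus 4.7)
The plan is to reduce to the case where both $X$ and $Y$ are compact (i.e.\ contain $i^0$) and then extract an isometry as a choice function on a Hausdorff limit of approximate correspondences, after which Theorem \ref{cia} produces the isometric homeomorphism. The reduction uses Remark \ref{buy}: if $i^0\notin X$ (hence $\notin Y$), pass to the one-point compactifications $\tilde X,\tilde Y$ of Proposition \ref{bir}, which still satisfy $d_{GH}(\tilde X,\tilde Y)=0$; any isometry of the compactifications necessarily sends $i^0_X$ to $i^0_Y$ because (iii) characterizes the spacelike boundary as the unique point at zero distance from every other point, so it restricts to an isometry $X\to Y$.

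Assume from now on that $X$ and $Y$ are compact. Proposition \ref{boa} combined with Proposition \ref{clo} supplies closed correspondences $R_n\subset X\times Y$ with $\textrm{dis}\,R_n<1/n$. Because $X\times Y$ is compact and metrizable (both factors being compact Polish), Blaschke's selection theorem makes the family of non-empty closed subsets of $X\times Y$ compact in the Hausdorff metric. Extract a Hausdorff-convergent subsequence $R_{n_j}\to R$ with $R$ closed.

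The main technical step is to check that the limit $R$ is itself a correspondence with $\textrm{dis}\,R=0$. For each $x\in X$, pick $y_j$ with $(x,y_j)\in R_{n_j}$, use compactness of $Y$ to extract $y_{j_k}\to y$, and combine Hausdorff convergence with the closedness of $R$ to conclude $(x,y)\in R$; the symmetric argument covers points of $Y$. For the distortion bound, any pair $(x,y),(x',y')\in R$ lifts via Hausdorff convergence to sequences $(x_j,y_j),(x'_j,y'_j)\in R_{n_j}$ converging to them, and the bound $|d_X(x_j,x'_j)-d_Y(y_j,y'_j)|<1/n_j$ passes to the limit by joint continuity of $d_X$ and $d_Y$. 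It is precisely this step that forces the preliminary reduction, since without compactness of $Y$ (respectively $X$) one cannot lift points of $X$ (respectively $Y$) into $R$.

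Given the zero-distortion correspondence $R$, use the axiom of choice to pick $f(x)\in\{y:(x,y)\in R\}$ for every $x$ and define $f\colon X\to Y$, and analogously $g\colon Y\to X$ from $R^T$. Vanishing of $\textrm{dis}\,R$ makes both maps distance preserving, and Theorem \ref{cia} then asserts that $f$ and $g$ are continuous bijective distance-preserving maps with continuous inverses, i.e.\ isometric homeomorphisms, which is the desired conclusion.
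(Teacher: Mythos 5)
Your proof is correct, but it takes a genuinely different route from the paper's. The paper argues directly in the case where $X$ and $Y$ do \emph{not} contain the spacelike boundary (deducing the compact case from Remark \ref{buy}): it converts small-distortion correspondences into $\epsilon_n$-isometries on the image $f_n\colon X\to Y$ (Proposition \ref{boa}), runs a Cantor diagonal argument on a countable dense subset $S\subset X$ --- using the compactness of the sets $\{d\ge\epsilon\}$ to trap the sequences $f_n(x)$ in compact sets --- to obtain a pointwise limit $f\colon S\to Y$ that preserves $d$, then extends $f$ to all of $X$ by a further limit step, and finally builds $g\colon Y\to X$ symmetrically and invokes Theorem \ref{cia}. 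You instead reduce to the compact case and take a Hausdorff limit of the closed correspondences themselves (Proposition \ref{clo} plus Blaschke selection in the hyperspace of $X\times Y$), producing a single zero-distortion correspondence $R$ from which $f$ and $g$ are read off by a choice function before the same appeal to Theorem \ref{cia}. Your version is arguably cleaner in the compact setting, since the limit object is precisely the ``isometric correspondence'' one wants and no separate extension step is needed; the cost is that lifting points into the Hausdorff limit genuinely requires global compactness, which is why your preliminary reduction (correctly handled: an isometry of the one-point compactifications must send $i^0_X$ to $i^0_Y$ by surjectivity and the uniqueness of the point at zero distance from all others, Remark \ref{R1}) is indispensable, whereas the paper's pointwise argument runs directly in the non-compact case. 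One small bookkeeping point: Theorem \ref{cia} is stated under the standing assumption that the spaces do not contain the spacelike boundary, so your application of it to spaces containing $i^0$ should cite Remark \ref{xof}.
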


The fact that the isometry is a homeomorphism follows from Theorem \ref{vps} (see also Theorem \ref{cia} and Remark \ref{xof}). Observe that removing $i^0$ from a bounded Lorentzian-metric space gives a bounded Lorentzian-metric space which, however, is not in general  at zero Gromov-Hausdorff distance from the original space.

\begin{proof}
We give the proof for $X,Y$ not containing the spacelike boundary.  The other case follows from Remark \ref{buy}.
By Proposition \ref{boa} we know that there are maps $f_n: X\to Y$ which are $\epsilon_n$-isometries on the image for any sequence $\epsilon_n\to 0$. We observe that for every $x\in X$ we can find a subsequence $\{f_{n_s}(x)\}_s$
such that $f_{n_s}(x)$ converges in $Y$. Indeed, we can find $z\ll x$ (or the other inequality), hence for $n$  sufficiently large such
that $\epsilon_n<d(z,x)/2$ we have
\[
d(f_{n}(z),f_{n}(x)) \ge d(z,x)/2>0
\]
which proves that the sequence $f_{n}(x)$ is contained in a compact set for $n$ large enough.

Let $S=\{x_k\}$ be a countable dense subset of $X$.
For every $k$ we can find a subsequence $\{f_{n^k_m}(x_k)\}_m$ that converges to some point of $Y$ which we denote $f(x_k)$. Notice that
the sequence $\{n^{k+1}_m\}_m$ can be chosen to be a subsequence of $\{n^k_m\}_m$. Then by the Cantor diagonal trick,
$\{f_{n^m_m}(x)\}_m$ is a subsequence that converges to $f(x)$ for every $x\in S$. Thus, without loss of generality, we can assume that $f_n\vert_S$ converges to some function $f:S\to Y$. Since $f_n$ is an $\epsilon_n$-isometry on its image we have for $x,x'\in S$
\[
0\le \vert d(f(x),f(x'))-d(x,x')\vert=\lim_n \vert d(f_n(x),f_n(x'))-d(x,x')\vert\le \lim \epsilon_n=0,
\]
thus $f$ preserves $d$. Now we extend $f$ to $X$ as follows. If $x\in X$ choose a sequence $x_n\to x$, $x_n\in S$, such that $\lim_n f(x_n)$  exists  (this is possible by the usual compactness argument. Namely take $z\ll x$, or the other inequality, and use that $d(f(z),f(x_n))=d(z,x_n)  \ge d(z,x)/2 >0$ for large $n$) and set $f(x):=\lim_n f(x_n)$. Let $x,x'\in X$, and let $x_n, x_n'$ the just mentioned sequences defining $f(x)$ and $f(x')$. We have
\[
\vert d(f(x),f(x'))-d(x,x')\vert= \lim_n \vert d(f(x_n),f(x'_n))-d(x_n,x'_n)\vert=0,
\]
where in the last step we used the fact that $f$ preserves $d$ on $S$. We conclude that $f$ preserves $d$ on $X$.
We can construct an analogous   distance preserving map $g:  Y \to X$.
By Theorem \ref{cia} $f$ and $g$ are isometries.
\end{proof}

We are ready to prove Theorem \ref{vmw}
\begin{proof}
The non-trivial direction of (a) is Proposition \ref{inf}. For the trivial direction, the correspondence $R=\{(x,f(x)): x\in X\}$ where $f$ is the isometry, has vanishing  distortion and so $d_{GH}(X,Y)=0$. Point (b)
 follows from Remark \ref{R_symm}. Point (c) is Proposition \ref{qkz}.
 \end{proof}

\subsection{The distinction metric}

Let $(X,d)$ be a bounded Lorentzian-metric space. We assume that
$i^0\in X$, i.e. $(X,\mathscr{T})$ is compact by Corollary \ref{vjw}.

We define the following {\it distinction metric} on $X$
\[
\gamma(x,y)=\max\left(\sup_{z\in X} \vert d(x,z)-d(y,z) \vert, \ \sup_{z\in X} \vert d(z,x)-d(z,y) \vert  \right).
\]
which measures how much a point $x\in X$ can be distinguished from a point $y\in X$ by using the function $d$. The property  that $\gamma(x,y)=0$ implies $x=y$ follows from Definition \ref{D1}(iii). The triangle inequality is  straightforward. so it is a metric is the classical sense. Note that for each $z\in X$ the functions $d_z:=d(z,\cdot)$ and $d^z=d(\cdot,z)$, are
$1$-Lipschitz continuous with respect to $\gamma$.

The {\em Noldus  metric} is defined as
\[
D(x,y)=\sup_{z\in X} \vert d(z,x)+d(x,z)-d(z,y)-d(y,z)\vert
\]
 It was originally introduced in \cite[Definition 3]{noldus04} for smooth manifolds and called {\em strong metric}. It was also very much related to an earlier metric construction by Meyer \cite{meyer86}. For an observation related to the next result see also \cite[Remark on p.\ 852]{noldus04b}.
\begin{proposition}
The distinction metric coincides with  Noldus' strong metric.
\end{proposition}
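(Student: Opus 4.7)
The plan is to prove both inequalities $D(x,y)\le \gamma(x,y)$ and $\gamma(x,y)\le D(x,y)$ by exploiting a sign-decoupling that comes directly from chronology. For each $z\in X$, set
\[
\alpha(z):=d(x,z)-d(y,z),\qquad \beta(z):=d(z,x)-d(z,y),
\]
so that $\gamma(x,y)=\max\bigl(\sup_z|\alpha(z)|,\sup_z|\beta(z)|\bigr)$ and $D(x,y)=\sup_z|\alpha(z)+\beta(z)|$. The key observation I would establish first is that $\alpha(z)$ and $\beta(z)$ always have opposite signs (one possibly being zero): if $\alpha(z)>0$ then $d(x,z)>d(y,z)\ge 0$, hence $x\ll z$, so chronology (Remark \ref{R1}(1)) forces $d(z,x)=0$ and $\beta(z)=-d(z,y)\le 0$; symmetrically for $\beta(z)>0$.

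For the inequality $D(x,y)\le\gamma(x,y)$, the sign-decoupling gives at once that $|\alpha(z)+\beta(z)|\le\max(|\alpha(z)|,|\beta(z)|)$ pointwise, because when two real numbers have opposite signs their sum is sandwiched between them in absolute value. Taking the supremum in $z$ yields the bound.

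For $\gamma(x,y)\le D(x,y)$ I would show $|\alpha(z)|\le D(x,y)$ for every $z$; the bound on $|\beta(z)|$ is fully dual (swap past and future). By swapping $x$ and $y$ if necessary, assume $\alpha(z)\ge 0$. If $d(z,y)=0$, then $d(z,x)=0$ as well (by chronology, since $\alpha(z)>0$ forces $d(x,z)>0$; the case $\alpha(z)=0$ is trivial), so $\beta(z)=0$ and $|\alpha(z)+\beta(z)|=|\alpha(z)|$, giving $D(x,y)\ge|\alpha(z)|$ with the same witness $z$. If instead $d(z,y)>0$ and $\alpha(z)>0$, then $x\ll z\ll y$, so by the reverse triangle inequality
\[
d(x,y)\ge d(x,z)+d(z,y)\ge d(x,z)\ge d(x,z)-d(y,z)=\alpha(z).
\]
Here I would switch the witness point to $z'=y$: then $\alpha(y)=d(x,y)-d(y,y)=d(x,y)$ and $\beta(y)=d(y,x)-d(y,y)=0$ (chronology again, since $x\ll y$), so $D(x,y)\ge|\alpha(y)+\beta(y)|=d(x,y)\ge\alpha(z)=|\alpha(z)|$. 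Taking the supremum over $z$ finishes the proof.

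The only potentially delicate point is recognising the opposite-signs structure; a naive triangle-inequality bound inside $|\alpha+\beta|\le|\alpha|+|\beta|$ only produces $D\le 2\gamma$, which would be insufficient. Once chronology is brought to bear, both directions reduce to a short case analysis and a single clever choice of witness point ($z'\in\{x,y\}$) in the sub-case where the naive choice $z'=z$ does not already match.
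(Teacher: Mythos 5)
Your proof is correct, and it takes a genuinely different route from the paper's. The paper proves both inequalities by first using compactness of $X$ and continuity of $d$ to extract a point $w$ at which the relevant supremum is \emph{attained}, and then running a case analysis at that maximizer, ruling out the bad sign configurations by showing they would contradict the maximality of $w$ (e.g.\ that the choice $z=y$ would beat $w$). Your argument is purely pointwise: the sign-decoupling lemma ($\alpha(z)$ and $\beta(z)$ never share a strict sign, forced by chronology) immediately gives $|\alpha(z)+\beta(z)|\le\max(|\alpha(z)|,|\beta(z)|)$ for every $z$, hence $D\le\gamma$ without any compactness; and in the reverse direction, where the paper uses maximality of $w$ to exclude the configuration $x\ll z\ll y$, you instead handle it by switching the witness to $y$ and using the reverse triangle inequality. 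The underlying observation is the same one the paper exploits ("only one among $d(w,x)$ and $d(x,w)$ can be positive"), but your packaging avoids attainment of suprema altogether, which is both cleaner and slightly more general (it would survive in a setting where $d$ is merely bounded rather than continuous on a compact space). One small point of bookkeeping: your justification of the opposite-signs claim as written only rules out $\alpha(z)>0$ and $\beta(z)>0$ simultaneously; you also need to exclude both being strictly negative, since otherwise $|\alpha+\beta|$ could equal $|\alpha|+|\beta|$. This follows by exactly the same chronology argument applied after swapping $x$ and $y$ (equivalently, $\alpha(z)<0$ forces $y\ll z$, hence $d(z,y)=0$ and $\beta(z)=d(z,x)\ge0$), so it is a one-line addition, but it should be said explicitly because the $D\le\gamma$ direction genuinely depends on it.
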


\begin{proof}
We consider the case $x\ne y$ otherwise the identity $D(x,y)=0=\gamma(x,y)$ is clear.
Since $d$ is continuous and $X$ is compact, there is some $w\in X$ such that
\[
D(x,y)=d(w,x)+d(x,w)-d(w,y)-d(y,w)
\]
or
\[
D(x,y)=d(w,y)+d(y,w)-d(w,x)-d(x,w).
\]
Let us assume the former case, the latter being analogous (one is obtained from the other exchanging $x\leftrightarrow y$). Only one among $d(w,x)$ and $d(x,w)$ can be positive, and similarly only one among $d(w,y)$ and $d(y,w)$ can be positive.
 If $d(w,x)>0$ and $d(y,w)>0$ we have $y\ll w\ll x$ and therefore
\[
D(x,y)=d(w,x)-d(y,w)<d(y,x)-d(y,y),
\]
contradicting the choice of $w$. Similarly, the case $d(x,w)>0$ and $d(w,y)>0$ leads to a contradiction.

As a consequence  we have
\[
D(x,y)=d(x,w)-d(y,w)\text{ or }D(x,y)=d(w,x)-d(w,y),
\]
i.e.
\begin{align*}
D(x,y)& \le \sup_z \max\{d(z,x)-d(z,y), d(x,z)-d(y,z)\}\\
&\le \sup_x \max\{\vert d(z,x)-d(z,y)\vert, \vert d(x,z)-d(y,z)\vert\}=\gamma(x,y)
\end{align*}
The same conclusion is reached in the latter case (observe that the last expression is invariant under exchanges $x\leftrightarrow y$).

For the converse, since $d$ is continuous and $X$ is compact, there is some $w\in X$ such that (a) $\gamma(x,y)=d(x,w)-d(y,w)$, or (b) $\gamma(x,y)=d(y,w)-d(x,w)$, or (c) $\gamma(x,y)=d(w,x)-d(w,y)$, or (d) $\gamma(x,y)=d(w,y)-d(w,x)$. It is sufficient to consider case (a), the other cases following by time duality  ($d(u,v)\to d(v,u)$)  or by the symmetry $x\leftrightarrow y$.

We know that $\gamma(x,y)>0$ thus $d(x,w)>0$ and hence $d(w,x)=0$ by chronology.
If $d(y,w)=0$ we must have $d(w,y)=0$ for if $d(w,y)>0$ then $x \ll w\ll y$, and then the choice $z=y$ would contradict the fact that $w$ gives the maximal value.  If $d(y,w)>0$ then again $d(w,y)=0$ by chronology.

This shows that in case (a)
\begin{align*}
\gamma(x,y)&=d(w,x)+d(x,w)-d(w,y)-d(y,w)
 \\
&=\vert d(w,x)+d(x,w)-d(w,y)-d(y,w)\vert \le D(x,y).
\end{align*}
Since the penultimate expression is invariant by time duality ($d(u,v)\to d(v,u)$) and by exchanges $x\leftrightarrow y$, we conclude that it also holds in cases (b), (c) and (d).
\end{proof}

\begin{proposition} \label{nwq}
The Lorentzian distance $d$ is 1-Lipschitz with respect to $\gamma$.
\end{proposition}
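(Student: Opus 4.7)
The plan is to show, for any $x_1, x_2, y_1, y_2 \in X$, the inequality
\[
|d(x_1, y_1) - d(x_2, y_2)| \le \gamma(x_1, x_2) + \gamma(y_1, y_2),
\]
which is the standard meaning of $1$-Lipschitz with respect to $\gamma$ on the product space equipped with the sum metric (and implies the same with the max metric up to a factor).

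First, I would insert an intermediate pair and apply the ordinary triangle inequality for the real-valued function $d$:
\[
|d(x_1, y_1) - d(x_2, y_2)| \le |d(x_1, y_1) - d(x_2, y_1)| + |d(x_2, y_1) - d(x_2, y_2)|.
\]
Next, I would bound each of the two terms on the right directly from the definition of the distinction metric $\gamma$. Recall
\[
\gamma(a, b) = \max\Bigl(\sup_{z \in X} |d(a, z) - d(b, z)|,\ \sup_{z \in X} |d(z, a) - d(z, b)|\Bigr).
\]
For the first term, taking $z = y_1$ in the first supremum gives
\[
|d(x_1, y_1) - d(x_2, y_1)| \le \sup_{z\in X} |d(x_1, z) - d(x_2, z)| \le \gamma(x_1, x_2).
\]
For the second term, taking $z = x_2$ in the second supremum gives
\[
|d(x_2, y_1) - d(x_2, y_2)| \le \sup_{z\in X} |d(z, y_1) - d(z, y_2)| \le \gamma(y_1, y_2).
\]
Summing the two bounds yields the desired inequality.

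There is essentially no obstacle: the argument is a two-line consequence of the definition of $\gamma$ together with the triangle inequality for absolute values. The only thing to be careful about is the order of arguments in the two suprema defining $\gamma$ — the first supremum controls variation in the first slot of $d$, and the second controls variation in the second slot, which is exactly what matches the two terms produced by the splitting above.
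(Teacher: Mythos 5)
Your proof is correct and follows essentially the same route as the paper: insert the intermediate pair $(x_2,y_1)$, then bound each difference by the corresponding supremum in the definition of $\gamma$. The paper's one-line proof is exactly this decomposition, so nothing further is needed.
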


\begin{proof}

\begin{align*}
\vert d(u,v)-d(x,y) \vert\le  \vert d(u,v)-d(x,v)\vert +\vert d(x,v)-d(x,y) \vert \le \gamma(u,x)+\gamma(v,y).
\end{align*}
\end{proof}

The topology induced by $\gamma$ is called {\em $\gamma$-topology}. For Lorentzian manifolds the next result is stated in \cite[Theorem\ 8 (c)]{noldus04} without proof.
\begin{proposition}
Let $(X,d)$ be a bounded Lorentzian-metric space. The $\gamma$-topology coincides with the topology $\mathscr{T}$ of $(X,d)$. Thus $(X,\gamma)$ is a compact metric space.
\end{proposition}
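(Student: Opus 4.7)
The plan is to identify $\gamma$ as the pullback of a familiar metric under the Kuratowski-type embedding $I\colon X\to C^0(X,\mathbb{R})\times C^0(X,\mathbb{R})$, $x\mapsto (d_x,d^x)$, already shown to be a topological embedding for $\mathscr{T}$. The key observation is that, with the norm $\|(f,g)\|=\max\{\sup_z|f(z)|,\sup_z|g(z)|\}$ chosen in the earlier definition of $I$, one has
\[
\|I(x)-I(y)\|=\max\Bigl\{\sup_{z\in X}|d(x,z)-d(y,z)|,\ \sup_{z\in X}|d(z,x)-d(z,y)|\Bigr\}=\gamma(x,y).
\]
Hence $\gamma$ is nothing but the pullback under $I$ of the sup-norm distance on the image $I(X)\subset C^0(X,\mathbb{R})\times C^0(X,\mathbb{R})$.

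From here the argument is essentially a formality. Since $I\colon (X,\mathscr{T})\to (I(X),\text{subspace topology})$ is a homeomorphism by the embedding proposition, and the subspace topology on $I(X)$ is precisely the metric topology induced by the sup-norm on $C^0(X,\mathbb{R})\times C^0(X,\mathbb{R})$, pulling back through $I$ shows that $\mathscr{T}$ coincides with the metric topology induced on $X$ by $\gamma$. In other words, the $\gamma$-topology equals $\mathscr{T}$.

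Finally, because $i^0\in X$ was assumed, Corollary \ref{vjw} (equivalently the compactness half of Proposition \ref{bix}) guarantees that $(X,\mathscr{T})$ is compact; together with the equality of topologies just established, this yields that $(X,\gamma)$ is a compact metric space. I expect no genuine obstacle: the only subtle point is confirming that the particular max-norm chosen for $C^0(X,\mathbb{R})\times C^0(X,\mathbb{R})$ in the statement of the embedding proposition is exactly the one whose pullback reproduces $\gamma$, which is immediate by inspection.
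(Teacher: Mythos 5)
Your proof is correct, but it takes a genuinely different route from the paper's. The paper argues directly on the two inclusions of topologies: it first uses the $1$-Lipschitz continuity of $d$ with respect to $\gamma$ (Proposition \ref{nwq}) together with the subbasis characterization of $\mathscr{T}$ (Corollary \ref{vjw}) to show the $\gamma$-topology is finer than $\mathscr{T}$, and then runs a contradiction argument — extracting a sequence $x_n\to x$ in $\mathscr{T}$ with $\gamma(x,x_n)\ge\epsilon$, passing to a convergent subsequence of the maximizers $w_n$ by compactness, and deriving $0<\epsilon\le 0$. You instead observe that $\gamma$ is exactly the pullback under the Kuratowski-type map $I\colon x\mapsto(d_x,d^x)$ of the max-sup norm on $C^0(X,\mathbb{R})\times C^0(X,\mathbb{R})$, so that the already-proved embedding proposition immediately identifies the $\gamma$-topology with $\mathscr{T}$; this is legitimate and non-circular, since that proposition is established earlier and independently. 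Your route is shorter and makes transparent that the hard analytic content (that $\mathscr{T}$-convergence forces uniform convergence of $d_x$ and $d^x$, and conversely) was already paid for in the proof of the embedding; it also anticipates the isometry statement the paper proves later in Theorem \ref{Theo_kurat} for the $\ell^\infty$ version, where the identification requires an extra density argument because the supremum there runs only over a countable dense set, whereas in your version with the supremum over all of $X$ the identity $\Vert I(x)-I(y)\Vert=\gamma(x,y)$ is immediate. The paper's proof, by contrast, is self-contained within the distinction-metric section and does not lean on the embedding machinery. Both correctly invoke compactness of $(X,\mathscr{T})$ from the standing assumption $i^0\in X$ to conclude that $(X,\gamma)$ is a compact metric space.
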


As a consequence, $\gamma$ is $\mathscr{T}$-continuous, as every metric is continuous in the topology it induces.

\begin{proof}
By Prop.\ \ref{nwq} $d$ is continuous in the $\gamma$-topology and so are the functions $d_p, d^p$ for $p\in X$. By Cor.\ \ref{vjw} the $\gamma$-topology is finer than $\mathscr{T}$.

For the other direction, suppose by contradiction that there is  a $\gamma$-ball $\{z: \gamma(x,z)<r\}$, $x\in X$, $r>0$, such that no $\mathscr{T}$-open neighborhood of $x$ is contained in it. As $\mathscr{T}$ is first-countable we can find $x_n \to x$ (convergence in the $\mathscr{T}$ topology) such that $\gamma(x,x_n)\ge \epsilon$. By the $\mathscr{T}$-compactness of $X$
there is $w_n$ such that $\gamma(x,x_n)=\vert d(w_n, x)-d(w_n, x_n)\vert$ or  $\gamma(x,x_n)=\vert d(x, w_n)-d( x_n, w_n) \vert$. We can pass to a subsequence so that only one of the equalities holds, say the former, and $w_n\to w$, for some $w\in X$ (convergence in the $\mathscr{T}$ topology). By taking the limit,  $0<\epsilon\le  \vert d(w, x)-d(w, x)\vert = 0$, a contradiction.
\end{proof}

\begin{remark} \label{voz}
If $Y \subset X$ then $\gamma_Y\le \gamma_X\vert_{Y\times Y}$.
\end{remark}

\begin{proposition} \label{eqd}
Let $(X,d)$ be a bounded Lorentzian metric space. The diameter $\textrm{diam}^\gamma X$ for the distinction metric coincides with the diameter $\textrm{diam} X$ for the Lorentzian metric.
\end{proposition}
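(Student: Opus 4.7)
The plan is to prove the two inequalities $\textrm{diam}^\gamma X \ge \textrm{diam} X$ and $\textrm{diam}^\gamma X \le \textrm{diam} X$ separately, and both reduce to elementary one-line estimates on the defining expression for $\gamma$.

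For the inequality $\textrm{diam}^\gamma X \ge \textrm{diam} X$, I would fix $x,y\in X$ and use the distinguishing point $z:=y$ in the first supremum defining $\gamma$. Since $d(y,y)=0$ by chronology (Remark~\ref{R1}(1)), this yields
\[
\gamma(x,y)\ge |d(x,y)-d(y,y)|=d(x,y).
\]
Taking the supremum over $x,y$ immediately gives $\textrm{diam}^\gamma X\ge \textrm{diam} X$.

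For the reverse inequality, I would observe that for all $x,y,z\in X$, since $d(x,z),d(y,z)\ge 0$,
\[
|d(x,z)-d(y,z)|\le \max\{d(x,z),d(y,z)\}\le \textrm{diam} X,
\]
and likewise $|d(z,x)-d(z,y)|\le \textrm{diam} X$. Taking the suprema over $z$ and then the maximum shows $\gamma(x,y)\le \textrm{diam} X$ for all $x,y$, hence $\textrm{diam}^\gamma X\le \textrm{diam} X$.

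There is no real obstacle here: the only subtlety is noticing that the trivial choice $z=y$ (or $z=x$) in the supremum defining $\gamma$ already recovers $d(x,y)$, which makes the first direction work without any appeal to the compactness of $X$ or the existence of a maximizer for $\gamma$. The second direction is just the elementary fact that the absolute difference of two numbers in $[0,\textrm{diam} X]$ is bounded by $\textrm{diam} X$.
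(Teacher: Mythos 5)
Your proof is correct and follows essentially the same route as the paper's: the choice $z=y$ together with $d(y,y)=0$ gives $\gamma(x,y)\ge d(x,y)$, and the elementary bound on the absolute difference of numbers in $[0,\textrm{diam}\, X]$ gives the reverse inequality. The only cosmetic difference is that the paper first picks a pair $(x,y)$ realizing $\textrm{diam}\, X$ (via Remark~\ref{R1}), whereas you take the supremum over all pairs; both are fine.
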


Elsewhere the Lorentzian diameter of a bounded Lorentzian metric space $X$ is also denoted $\textrm{diam} X$. The below proof is independent of whether $i^0\in X$ or not.

\begin{proof}
By Remark \ref{R1} there are $x,y\in X$ such that $d(x,y)=\textrm{diam} X$. Now, by the expression for $\gamma(x,y)$ we have for every $z\in X$
\[
\gamma(x,y)\ge \max\{\vert d(x,z)-d(y,z)\vert, \vert d(z,x)-d(z,y)\vert \} ,
\]
thus choosing  $z=y$ we get $\gamma(x,y) \ge d(x,y)-d(y,y)=d(x,y)=\textrm{diam} X$
which proves $\textrm{diam}^\gamma X\ge \textrm{diam} X$.

For the other inequality, observe that for every $x,y,z\in X$, $\vert   d(x,z)-d(y,z)\vert \le \textrm{diam} X$ and $\vert d(z,x)-d(z,x)\vert\le \textrm{diam} X$, thus $\gamma(x,y) \le \textrm{diam} X$ which implies $\textrm{diam}^\gamma X\le \textrm{diam} X$.
\end{proof}

\subsection{Non-rectifiability via the distinction metric}

The purpose of this section is to provide some  explicit examples of the distinction metric. We shall also confirm that, already in Lorentzian manifolds, it cannot be used to parametrize causal curves as they are non-$\gamma$-rectifiable (for the result framed in the context of the Noldus metric, see \cite[Thm.\ 6]{noldus04b}  and \cite[Thm.\ 3]{muller19}).

Note that by the definition of bounded Lorentzian metric space there exists for all $x,y\in X$ a point $z_{x,y}\in X$ with
\[
\gamma(x,y)=|d(z_{x,y},x)-d(z_{x,y},y)|\text{ or }|d(x,z_{x,y})-d(y,z_{x,y})|.
\]

For Lorentzian metric spaces induced from smooth spacetimes (and more generally Lorentzian length spaces, see Section \ref{sec_lor_length_spaces}) the following observation from
\cite[Section 4]{bombelli04} provides a first idea where one has to look for the point $z_{x,y}$ for given $x,y\in X$.

\begin{proposition}[\cite{bombelli04}]\label{prop_gamma}
Let $(X,d)$ be a bounded Lorentzian metric space induced by a causally convex subset of a smooth globally hyperbolic spacetime. Let $x,y\in X$ be given. If
\[
\gamma(x,y)=|d(z_{x,y},x)-d(z_{x,y},y)|
\]
we have $z_{x,y}\in I^-(x)\triangle I^-(y)$ and $I^-(z_{x,y})\subset I^-(x)\cap I^-(y)$. If
\[
\gamma(x,y)=|d(x,z_{x,y})-d(y,z_{x,y})|
\]
we have $z_{x,y}\in I^+(x)\triangle I^+(y)$ and $I^-(z_{x,y})\subset I^+(x)\cap I^+(y)$.
\end{proposition}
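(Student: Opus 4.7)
The plan is to start with any maximiser $z$ of the supremum defining $\gamma(x,y)$ and, if it happens to lie in the intersection $I^{-}(x)\cap I^{-}(y)$, slide it along a maximising timelike geodesic to $x$ until it exits $I^{-}(y)$; at the exit point the defect $d(\cdot,x)-d(\cdot,y)$ cannot decrease, so that point still realises $\gamma(x,y)$ and enjoys the required incidence with the chronological pasts.

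First, I would dispose of the trivial case $x=y$ (where $\gamma(x,y)=0$ and the claim is vacuous) and handle the first of the two cases in the definition of $\gamma$, so that $\gamma(x,y)=d(z,x)-d(z,y)\ge 0$ for some $z\in X$; the opposite sign is reduced to this one by exchanging $x\leftrightarrow y$. Since $\gamma(x,y)>0$ forces $d(z,x)>0$, one has $z\in I^{-}(x)$, and also $x\not\ll y$, because otherwise the reverse triangle inequality would give $d(z,y)\ge d(z,x)+d(x,y)>d(z,x)$, contradicting the sign. If already $d(z,y)=0$, the first claim is immediate; otherwise $z\in I^{-}(x)\cap I^{-}(y)$ and the sliding construction is needed.

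For the sliding step I would pick a future-directed maximising timelike geodesic $\sigma\colon[0,T]\to M$ from $z$ to $x$, with $T=d(z,x)$, guaranteed by smoothness and global hyperbolicity. Since $\sigma(0)=z\in I^{-}(y)$ and $\sigma(T)=x\notin I^{-}(y)$, the exit time $\tau^{*}:=\inf\{\tau\in[0,T]\colon \sigma(\tau)\notin I^{-}(y)\}$ lies in $(0,T]$, and openness of $I^{-}(y)$ places $\sigma(\tau^{*})\in\partial I^{-}(y)$, so $d(\sigma(\tau^{*}),y)=0$ via the identification $I^{-}(y)=\{w\colon d(w,y)>0\}$ valid in the smooth globally hyperbolic setting. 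Setting $\tilde z:=\sigma(\tau^{*})$, geodesic maximality gives $d(\tilde z,x)=T-\tau^{*}$. The key estimate $\tau^{*}\le d(z,y)$ would come from applying the reverse triangle inequality $d(z,y)\ge d(z,\sigma(\tau))+d(\sigma(\tau),y)\ge\tau$ for $\tau<\tau^{*}$ and passing to the limit. This yields $d(\tilde z,x)-d(\tilde z,y)=T-\tau^{*}\ge\gamma(x,y)$, which the definition of $\gamma$ pins at equality, so $\tilde z$ is a maximiser with $\tilde z\in I^{-}(x)\setminus I^{-}(y)\subset I^{-}(x)\triangle I^{-}(y)$; one then relabels $z_{x,y}:=\tilde z$.

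For the containment $I^{-}(z_{x,y})\subset I^{-}(x)\cap I^{-}(y)$, the argument is a short reverse-triangle contradiction: given $w\ll z_{x,y}$, transitivity places $w\in I^{-}(x)$, and reverse triangle yields $d(w,x)\ge d(w,z_{x,y})+d(z_{x,y},x)>\gamma(x,y)$; if $w$ were outside $I^{-}(y)$, then $|d(w,x)-d(w,y)|=d(w,x)>\gamma(x,y)$ would violate maximality. The second case of the proposition will be handled by the time-reversed argument, whose natural conclusion reads $I^{+}(z_{x,y})\subset I^{+}(x)\cap I^{+}(y)$, the printed $I^{-}$ appearing to be a typographical slip. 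The main obstacle will be the sliding step, in particular the estimate $\tau^{*}\le d(z,y)$: this is the only place where the reverse triangle inequality genuinely constrains the geometry, and it is exactly where smoothness, the existence of maximising timelike geodesics, and the equivalence $I^{-}(y)=\{d(\cdot,y)>0\}$ all come into play.
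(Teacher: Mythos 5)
The paper does not actually prove this proposition: it is imported from \cite[Section 4]{bombelli04} and used only as a heuristic for locating the maximizer in the subsequent example, so there is no internal argument to measure yours against. Taken on its own terms, your plan is correct and uses the right mechanism: after reducing to $\gamma(x,y)=d(z,x)-d(z,y)>0$ (so $z\in I^-(x)$ and $x\not\ll y$), you slide $z$ along the maximizing timelike geodesic $\sigma$ toward $x$ to the exit point $\tilde z=\sigma(\tau^*)$ of $I^-(y)$; the reverse-triangle estimate $\tau^*\le d(z,y)$ then gives $d(\tilde z,x)-d(\tilde z,y)=d(z,x)-\tau^*\ge\gamma(x,y)$, forcing equality and placing a maximizer in $I^-(x)\setminus I^-(y)$. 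The closing containment argument for $I^-(z_{x,y})\subset I^-(x)\cap I^-(y)$ is also correct, as is your reading of the printed $I^-$ in the second case as a typo for $I^+$.

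Two points need attention. First, you must note that $\tilde z\in X$: the supremum defining $\gamma$ ranges over $X$ only, and it is exactly the causal convexity hypothesis that keeps the maximizing geodesic, hence $\sigma(\tau^*)$, inside $X$ --- say so explicitly, since this is where the hypothesis of the proposition is actually used. Second, your argument produces \emph{some} maximizer in $I^-(x)\triangle I^-(y)$ rather than showing that an arbitrarily given maximizer $z_{x,y}$ lies there: when $z\in I^-(x)\cap I^-(y)$ you only get $d(\tilde z,x)-d(\tilde z,y)\ge\gamma(x,y)$, hence equality, which does not exclude $z$ itself from realizing the supremum. To obtain the statement as literally phrased one needs the strict inequality $\tau^*<d(z,y)$, which in the smooth globally hyperbolic setting follows from the usual corner-rounding argument (a maximizing timelike segment followed by a nontrivial null segment to $y$ can be deformed to a strictly longer timelike curve), except in the degenerate situation where $y$ lies on $\sigma$ itself, which has to be excluded separately. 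Since the paper only ever uses the existence version (restricting the search for the supremum to the symmetric difference), this is a presentational rather than a substantive gap, but it should be flagged; likewise the case $x=y$ is not ``vacuous'' but simply outside the intended scope of the statement.
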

Here $\triangle$ denotes the symmetric difference.
In order to provide some intuition about the behavior of this metric we discuss a simple example derived from a causal diamond in $1+1$-dimensional Minkowski space.

\begin{example}
Consider the subset $\tilde{X}:=[0,1]\times [0,1]\subset \mathbb{R}^2$ with the Lorentzian metric $\eta((v_1,v_2),(w_1,w_2)):=\frac{1}{2}(v_1 w_2+ v_2 w_1)$. The induced Lorentzian
distance on $\tilde{X}$ is ($\tilde{x}=(x_1,x_2), \tilde{y}=(y_1,y_2)$):
\[
\tilde{d}(\tilde{x},\tilde{y})=\begin{cases}
\sqrt{(y_1-x_1)(y_2-x_2)},&\text{ if }x_1\le y_1,\; x_2\le y_2 \\
0,&\text{ otherwise}
\end{cases}
\]
Identifying $(1,0)$ with $(0,1)$, and denoting it $i^0$, yields a bounded Lorentzian metric space $(X,d)$ with the topology on $ X\backslash \{i^0\}$ being that induced from  $\mathbb{R}^2$.

For points $x=(x_1,x_2)$ and $y=(y_1,y_2)$ in $X$ define
\begin{align*}
z^1:=(\min\{x_1,y_1\},0),\;& z^2:=(1,\max\{x_2,y_2\}),\\
z^3:=(\max\{x_1,y_1\},1),\;& z^4:=(0,\min\{x_2,y_2\})
\end{align*}

We claim that the distinction distance of $x$ and $y$ is given by
\[
\gamma(x,y)=\max_{1\le i\le 4} \{d(x,z^i),d(y,z^i), d(z^i,x), d(z^i,y)\}.
\]
A similar result for strips in Minkowski space has been obtained in \cite{muller19}.

According to Proposition \ref{prop_gamma} for $z\in X$ with
\[
\gamma(x,y)=|d(x,z)-d(y,z)|
\]
we have $z\in I^+(x)\triangle I^+(y)$ and in case
\[
\gamma(x,y)=|d(z,x)-d(z,y)|
\]
we have $z\in I^-(x)\triangle I^-(y)$.

Since $\gamma$ is symmetric we can assume $x_1\le y_1$. First assume that $x_2\le y_2$, i.e.\ $y\in J^+(x)$. Further we have $z^1=(x_1,0), z^2=(1,y_2), z^3=(y_1,1)$, and
$z^4=(0,x_2)$. For $z=(z_1,z_2)\in I^-(x)\triangle I^-(y)=I^-(y)\setminus I^-(x)$ we have
\begin{align*}
d(z,y)=\sqrt{(y_1-z_1)(y_2-z_2)}& \le \max\{\sqrt{y_1(y_2-z_2)},\sqrt{(y_1-z_1)y_2}\}\\
& \le  \max\{\sqrt{y_1(y_2-x_2)},\sqrt{(y_1-x_1)y_2}\} \\
& =  \max\{d(z^1,y),d(z^4,y)\}
\end{align*}
Analogously we get
\[
d(x,z')\le \max\{d(x,z^2),d(x,z^3)\}
\]
for $z'\in I^+(x)\triangle I^+(y)=I^+(y)\setminus I^+(x)$, i.e.
\begin{align*}
\gamma(x,y)& =\max_{1\le i\le 4}\{d(z^i,y),d(x,z^i)\}\\
&= \max\{\sqrt{y_1(y_2-x_2)},\sqrt{(y_1-x_1)y_2}, \sqrt{(1-x_1)(y_2-x_2)},\sqrt{(y_1-x_1)(1-x_2)}
\}
\end{align*}
In the other case $x_2>y_2$ we have $z^1=(x_1,0), z^2=(1,x_2), z^3=(y_1,1)$, and $z^4=(0,y_2)$. It follows for $z\in I^-(x)\triangle I^-(y)$
\begin{align*}
d(z,y)=\sqrt{(y_1-z_1)(y_2-z_2)}& \le \max\{\sqrt{(y_1-z_1)y_2},\sqrt{y_1(y_2-z_2)}\} \\
& \le \sqrt{(y_1-x_1)y_2} \\
& = d(z^1,y)
\end{align*}
and
\begin{align*}
d(z,x)=\sqrt{(x_1-z_1)(x_2-z_2)}& \le \max\{\sqrt{(x_1-z_1)x_2},\sqrt{x_1(x_2-z_2)}\} \\
& \le \sqrt{x_1(x_2-y_2)} \\
& = d(z^4,y).
\end{align*}
Analogously we obtain for $z'\in I^+(x)\triangle I^+(y)$
\[
d(x,z')\le d(x,z^3)\text{ and }d(y,z')\le d(y,z^2).
\]
\end{example}

It worth noting that the distinction metric in the example behaves like a square root of the Euclidean distance for the distance approaching zero. This non-Lipschitz behaviour
is not particular to this example as the next proposition shows.

\begin{proposition}\label{prop_estim_dist}
Let $(X,d)$ be a bounded Lorentzian metric space induced by a precompact and causally convex set of a smooth globally hyperbolic spacetime $(M,g)$ (see Sec.\ \ref{gob}) . Then for every Riemannian metric $h$ on $M$
with distance function $\dist$ and every $\varepsilon >0$ there exists $C_{h,\varepsilon}>0$ such that
\[
\gamma(x,y)\ge C_{h,\varepsilon}(\dist(x,y))^{\frac{3}{4}}
\]
for all $x,y\in X$ with $\dist(x,\partial X),\dist(y,\partial X)>\varepsilon$.
\end{proposition}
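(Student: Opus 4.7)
The plan is to reduce to small $r := \dist(x,y)$ by compactness, then explicitly construct in Lorentzian normal coordinates at $x$ a witness point $z \in X$ with parameters balancing the Minkowski signal against the curvature correction. First, for any fixed $r_0 > 0$, the set $\{(x,y) \in X \times X : \dist(x,\partial X), \dist(y,\partial X) \geq \varepsilon,\ \dist(x,y) \geq r_0\}$ is compact in $X \times X$, while $\gamma$ is continuous and (by Definition \ref{D1}(iii)) vanishes only on the diagonal; hence $\gamma \geq c_0(r_0) > 0$ on that set, giving the bound trivially for $r \geq r_0$. By compactness of $X$ and smoothness of $g$, every $p \in X$ admits Lorentzian normal coordinates in an $h$-ball of uniform radius $R_0 > 0$ in which $g_{\mu\nu} = \eta_{\mu\nu} + O(|u|^2)$ with uniform constants. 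Fix $r_0 \ll R_0$ and assume $r < r_0$; in normal coordinates at $x$ write $y = (y^0, \vec y)$, with $(y^0)^2 + |\vec y|^2$ comparable to $r^2$ uniformly in $x$.

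If $|\vec y| < r/2$, then $(y^0)^2 \geq 3r^2/4$, so $y$ is timelike-related to $x$ in the Minkowski approximation with $d_\eta(x,y) \geq r/\sqrt{2}$. Since in normal coordinates at $x$ one has $d_g(x,y)^2 = -\eta_{\mu\nu} y^\mu y^\nu$ exactly (by the defining property of Lorentzian normal coordinates), $d_g(x,y) \geq r/\sqrt{2}$. The reverse triangle inequality against any $z$ with $d(z,x) > 0$ (available by the $\varepsilon$-margin) then gives $|d(z,x) - d(z,y)| \geq d(x,y) \gtrsim r \gtrsim r^{3/4}$. Otherwise $|\vec y| \geq r/2$ and, after possibly reversing time (based on the sign of $y^0$), we may assume $a := |\vec y| - y^0 \geq |\vec y| \geq r/2$. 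Set $\lambda := r^{1/2}$ and define
\[
z := \bigl(-T,\ -\lambda\, \vec y / |\vec y|\bigr),\qquad T := a + \lambda.
\]
For $r$ small, $z$ lies at $h$-distance $\lesssim r^{1/2} < \varepsilon$ from $x$, hence in $X$; $z \in I^-(x)$; and $z$ sits on the past Minkowski light cone of $y$, so $d_\eta(z,y) = 0$. A direct computation gives $d_\eta(z,x)^2 = T^2 - \lambda^2 = a(a + 2\lambda) \geq 2a\lambda \gtrsim r^{3/2}$, hence $d_\eta(z,x) \gtrsim r^{3/4}$.

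To transfer to the curved metric, $d_g(z,x) = d_\eta(z,x)$ exactly. For $d_g(z,y)$, applying the standard expansion of the Lorentzian distance in normal coordinates yields $d_g(z,y)^2 = -\eta(y-z,y-z) + O(|u|^2\,|y-z|^2) = O(T^4) = O(r^2)$, using $|u|,|y-z| \lesssim T \sim r^{1/2}$; hence $d_g(z,y) = O(r)$. Combining,
\[
\gamma(x,y) \geq |d_g(z,x) - d_g(z,y)| \geq c\, r^{3/4} - C\, r \geq \tfrac{c}{2}\, r^{3/4}
\]
for $r$ sufficiently small, as required. The principal technical obstacle is the uniform-in-$X$ chord-length estimate for $d_g$ in normal coordinates, which rests on quantitative smoothness of $g$ together with the $\varepsilon$-margin that keeps $z$ and $y$ inside a common normal chart. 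The exponent $3/4$ arises exactly as the optimal balance between the Minkowski signal $\sqrt{a\lambda}$ and the curvature correction $O(\lambda^2)$, achieved at $\lambda = r^{1/2}$.
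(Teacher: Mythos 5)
Your proof is correct in substance and is built on the same core construction as the paper's: a witness point $z$ displaced by $\lambda=\sqrt{r}$ in a timelike direction and by roughly $\lambda$ in the spatial direction of the chord, so that $z$ is (approximately) null-related to one endpoint and timelike-related to the other at scale $\sqrt{a\lambda}\gtrsim r^{3/4}$, with the curvature correction entering at order $\lambda^2\sim r\ll r^{3/4}$. The differences are worth noting. First, you center normal coordinates at $x$ and place $z$ in the past of $x$ on the (approximate) past light cone of $y$; the paper centers at $y$, places $z$ to the future of $y$, and certifies $d(x,z)=0$ \emph{exactly} by comparing against a flat metric $G_{\sqrt{r}}=g_y-C r\,T_y^*\otimes T_y^*$ with widened cones. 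That cone-comparison trick is the paper's way of avoiding exactly the step you flag as your "principal technical obstacle": it never needs a quantitative upper bound on a small but possibly nonzero Lorentzian distance, only the qualitative statement that two points are not causally related, which the widened-cone metric delivers directly from the metric expansion \eqref{eq_normal_coord}. Your route instead estimates $d_g(z,y)=O(r)$ from the world-function expansion and absorbs it into the main term; this works, but it is the more delicate of the two options because it requires identifying $d_g$ between two non-central points with the geodesic chord length uniformly over $X$ (maximizers must be shown not to leave the chart — a point both proofs ultimately need, but yours needs it quantitatively). Second, you introduce a case split ($|\vec{y}|<r/2$ versus $|\vec{y}|\ge r/2$), disposing of the nearly-timelike case via the trivial bound $\gamma(x,y)\ge d(x,y)$; the paper avoids this by the observation that any $w=aT_y+bU_y$ with $a\ge0\ge b$ and $|w|=r$ lies in the (widened-cone) future of $-rU_y$, which handles all configurations uniformly. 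Two small imprecisions on your side, neither fatal: "comparable to $r^2$" is silently used as equality when you deduce $(y^0)^2\ge 3r^2/4$ (fix the threshold by the comparability constant), and $d_g(x,y)^2=-\eta_{\mu\nu}y^\mu y^\nu$ holds as a lower bound via the radial geodesic without any claim of exactness, which is all Case 1 needs. Your closing remark that $3/4$ is the "optimal balance" is not quite right — pushing $\lambda$ toward $r^{1/3}$ would formally give exponent $2/3$ — but since any exponent strictly less than $1$ suffices for Corollary \ref{cor_non-rectif}, this is immaterial.
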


We have the immediate corollary.

\begin{corollary}\label{cor_non-rectif}
Let $(X,d)$ be a bounded Lorentzian-metric space induced by a precompact and causally convex subset of a smooth globally hyperbolic spacetime. Then every curve in the
interior of $X$ that is rectifiable w.r.t.\ to the distinction metric of $(X,d)$ is constant.
\end{corollary}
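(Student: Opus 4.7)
My plan is to derive a contradiction from the $3/4$-Hölder estimate of Proposition~\ref{prop_estim_dist}, exploiting the fact that $3/4 < 1$ so that sums of $(h\text{-distance})^{3/4}$ along finer and finer partitions of a non-constant continuous curve blow up, while the $\gamma$-length stays bounded.

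First I would set the stage. Let $c\colon[a,b]\to \mathring X$ (with $\mathring X$ identified with the interior of $\tilde X$ in $M$) be $\gamma$-rectifiable with $\gamma$-length $L<\infty$, and assume, for contradiction, that $c$ is not constant. Because $\gamma$ and $\mathscr T$ induce the same topology, which on the interior of $X$ agrees with the manifold topology (Theorem~\ref{vwm}), $c$ is continuous with respect to both $\gamma$ and the Riemannian distance $\dist$; on the compact interval $[a,b]$ it is therefore uniformly $\dist$-continuous. Pick parameters $t_0<t_1$ with $\delta:=\dist(c(t_0),c(t_1))>0$. The image $c([t_0,t_1])$ is compact inside $\mathring X$, so there exists $\varepsilon>0$ with $\dist(c(s),\partial X)>\varepsilon$ for all $s\in[t_0,t_1]$, which supplies a constant $C:=C_{h,\varepsilon}>0$ as in Proposition~\ref{prop_estim_dist}.

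Given any $\eta>0$, uniform continuity yields a partition $t_0=s_0<s_1<\dots<s_n=t_1$ with $\dist(c(s_i),c(s_{i+1}))\le\eta$ for every $i$. Applying Proposition~\ref{prop_estim_dist} to the points $c(s_i)$, and using the elementary inequality $x^{3/4}\ge\eta^{-1/4}\,x$ valid for $0\le x\le\eta$, together with the triangle inequality for $\dist$, I obtain
\[
L \;\ge\; \sum_{i=0}^{n-1}\gamma\bigl(c(s_i),c(s_{i+1})\bigr) \;\ge\; C\sum_{i=0}^{n-1}\dist\bigl(c(s_i),c(s_{i+1})\bigr)^{3/4} \;\ge\; C\,\eta^{-1/4}\sum_{i=0}^{n-1}\dist\bigl(c(s_i),c(s_{i+1})\bigr) \;\ge\; C\,\eta^{-1/4}\,\delta.
\]
Letting $\eta\to0$ the right-hand side diverges while the left-hand side stays equal to $L$, the required contradiction.

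There is no serious obstacle once Proposition~\ref{prop_estim_dist} is in hand: the argument is the standard one showing that $\alpha$-Hölder images of intervals with $\alpha<1$ cannot be rectifiable unless constant. The only points requiring a little care are ensuring that the Hölder constant is uniform along the curve (handled by the compactness of $c([a,b])$ inside $\mathring X$) and that continuity of $c$ is actually available in the Riemannian sense (handled by the coincidence of $\gamma$-topology, $\mathscr T$, and the manifold topology on the interior).
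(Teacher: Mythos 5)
Your proposal is correct and follows essentially the same route as the paper: both apply Proposition~\ref{prop_estim_dist} along a partition of small mesh and use the elementary bound $x^{3/4}\ge (\text{mesh})^{-1/4}x$ to force the $\gamma$-length to dominate an arbitrarily large multiple of a fixed positive quantity (you lower-bound the telescoped sum by $\dist(c(t_0),c(t_1))$ via the triangle inequality, while the paper lower-bounds it by the $h$-length of the curve — an immaterial difference). The extra care you take with uniformity of the Hölder constant and with continuity in the Riemannian sense is consistent with the paper's setup.
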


\begin{proof}[Proof of Corollary \ref{cor_non-rectif}]
Let $h$ be a Riemannian metric on $M$. We proceed by proving the contraposition, i.e.\ a non-constant continuous curve in the interior of $X$ is not rectifiable w.r.t.\ $\gamma$. Let $\eta\colon I\to X\setminus \partial X$ be
continuous and non-constant. We can assume that $I$ is a compact interval $[a,b]$ as rectifiability is passed on to subarcs. Let $\varepsilon>0$ be a lower bound for the distance
of $\eta$ to $\partial X$.

Let $R>0$ be given. For a partition $a=t_0<t_1<\ldots t_{n-1}<t_n=b$ with $\dist(\eta(t_k),\eta(t_{k+1}))\le R^{-4}$ we obtain
with Proposition \ref{prop_estim_dist}:
\begin{align*}
L^\gamma(\eta)\ge \sum_{k=0}^{n-1} \gamma(\eta(t_{k+1}),\eta(t_k))&\ge C_{h,\varepsilon}\sum_{k=0}^{n-1} (\dist(\eta(t_{k+1}),\eta(t_k)))^{\frac{3}{4}} \\
& \ge C_{h,\varepsilon}R\sum_{k=0}^{n-1} \dist(\eta(t_{k+1}),\eta(t_k))
\end{align*}
The supremum of the last sum taken over all possible partitions of $[a,b]$ is the length of $\eta$ relative to $h$, which is positive or infinity since $\eta$ is non-constant. Since $R$ can
be chosen arbitrarily large the supremum for the first term is always infinity. This shows that $\eta$ is not rectifiable w.r.t.\ the distinction metric $\gamma$.
\end{proof}

Before giving the formal proof of Proposition \ref{prop_estim_dist} we explain the argument for the case of Minkowski space $(M,g)=(\mathbb{R}^{m},\langle.,.\rangle_1)$. Here the
construction is purely of affine geometric nature. For curved spacetimes one has to account for changes of the Lorentzian metric in coordinates. The optimal choice here will be normal
coordinates since their deviation from the a constant inner product is minimal.

Let $X\subset \mathbb{R}^m$ be a precompact and causally convex subset. Fix the canonical euclidean inner product $\langle.,.\rangle_0$ on $\mathbb{R}^m$.
Assume that $x,y\in X$ are sufficiently close such that the ball of radius $\sqrt{|x-y|}$ centered in $y$ in contained in $X$. Choose a plane $E\subset \mathbb{R}^m$ with $e_0,x-y\in E$ and a base $\{e_0,u\}$
of $E$ orthonormal w.r.t. both $\langle.,.\rangle_1$ and the $\langle.,.\rangle_0$. We discuss only the case $x-y=ae_0+bu$ with $a\ge 0$ and $b\le 0$. The other cases being analogous
up to replacing $e_0$ and/or $u$ with $-e_0$ and/or $-u$. For $r:=|x-y|$ we have $x\in J^+(y-ru)$ and
\[
\left\langle \sqrt{r} e_0+\left(\sqrt{r}+\frac{r}{2}\right)u, \sqrt{r} e_0+\left(\sqrt{r}+\frac{r}{2}\right)u\right\rangle_1=r^{\frac{3}{4}}+\frac{r^2}{4}>0.
\]
From this we conclude $z:=y+\sqrt{r}e_0+\left(\sqrt{r}-\frac{r}{2}\right)u\notin J^+(y-ru)$ and therefore $z\notin J^+(x)$.
On the other hand we have
\[
\left\langle \sqrt{r} e_0+\left(\sqrt{r}-\frac{r}{2}\right)u, \sqrt{r} e_0+\left(\sqrt{r}-\frac{r}{2}\right)u\right\rangle_1=-r^{\frac{3}{4}}+\frac{r^2}{4}<0,
\]
i.e. $z\in J^+(y)$. It follows that
\[
\gamma(x,y)\ge  |z-y|_1-|z-x|_1=\sqrt{r^{\frac{3}{4}}-\frac{r^2}{4}}\ge \frac{1}{2} r^{\frac{3}{4}}
\]
for $r>0$ sufficiently small. This establishes the proposition for bounded Lorentzian metric spaces which appear as subsets of Minkowski space.

\begin{proof}[Proof of Proposition \ref{prop_estim_dist}]
Choose a timelike $g$-unit vector field $T\in \Gamma(TM)$. Let a Riemannian metric $h$ on $M$ and $\varepsilon>0$ be given. Using the local equivalence of
Riemannian metrics and the precompactness of $X$ in $M$ we can assume that $h$ is given by a Wick rotation of $g$ about $T$, i.e.\ $h=g+2T^*\otimes T^*$, where $T^*$
denotes $g$-dual of $T$. Choose a finite covering
 $\{U_i\}$ of $X\setminus \{i^0\}\subset M$ by $g$-convex neighbourhoods and a Lebesgue number $\delta>0$ relative to
$\dist$ for this covering. It suffices to show the estimate for points with distance less than $\delta^2$ by decreasing the constant $C_{h,\varepsilon}$.

Let $x,y\in X$ with $\dist(x,y)<\delta^2$ and $\dist(x,\partial X),\dist(y,\partial X)>\varepsilon$ be given. Let $\exp$ denote the $g$-exponential map based at $y$. In $g$-normal
coordinates around $y$ we have
\begin{equation}\label{eq_normal_coord}
(\exp^* g_z)_v= g_y+ \frac{1}{3}R(.,v,v,.) + O(|v|^3),
\end{equation}
where $v=\exp^{-1}(z)$ and $R$ denotes the curvature tensor of $g$ at $y$. From
\eqref{eq_normal_coord} it follows that there exists a constant $C<\infty$, independent of $x,y$, such that for all $\rho\in (0,\delta)$ the Lorentzian metric
\[
G_\rho:=g_y-C\rho^2\;T_y^*\otimes T_y^*
\]
has wider lightcones than $\exp^* g$ on $\exp^{-1}(B_\rho(y))$.

For convenience set $w:=\exp^{-1}(x)$. Choose a $2$-dimensional linear subspace $E$ in $TM_y$ containing $T_y$ and $w$.
Next choose a vector $U_y\in E$ such that $\{T_y,U_y\}$ form a orthonormal base of $E$. Note that this is true for
both $g$ and $h$ by our previous assumption that $h$ is given by a Wick rotation about $T$. For the coefficients $a,b\in \mathbb{R}$ of $w$ relative to the base $\{T_y,U_y\}$
we can assume w.l.o.g.\ that $a\ge 0$ and $b\le 0$.
The other cases follow by obvious modifications of the subsequent argument. Thus for $r:=|w|$ we know that $w$ lies in the $G_{\sqrt{r}}$-future of $-r U_y$ ($T_y$ serving
as time-orientation here). We conclude
\begin{align*}
G_{\sqrt{r}}&(\sqrt{r}T_y+(\sqrt{r}+r/2)U_y,\sqrt{r}T_y+(\sqrt{r}+r/2)U_y)\\
&=-(1+Cr)r+(\sqrt{r}+r/2)^2= 2\sqrt{r}r+\left(\frac{1}{4}-C\right)r^2>0
\end{align*}
for $r$ sufficiently small, i.e. $-rU_y$ and $\sqrt{r}T_y+(\sqrt{r}-\frac{r}{2})U_y$ are not causally related relative to $G_{\sqrt{r}}$. Thus $w$ and $\sqrt{r}T_y+(\sqrt{r}-\frac{r}{2})U_y$
are not causally related relative to $G_{\sqrt{r}}$. Since $g_y$ has smaller lightcones than $G_{\sqrt{r}}$ and
\[
g_y(\sqrt{r}T_y+(\sqrt{r}-\frac{r}{2})U_y,\sqrt{r}T_y+(\sqrt{r}-\frac{r}{2})U_y)=-\sqrt{r}r+\frac{r^2}{4}<0
\]
we obtain for $z:= \exp_y(\sqrt{r}T_y+(\sqrt{r}-\frac{r}{2})U_y)$:
\[
\gamma(x,y)\ge |d(y,z)-d(x,z)|=\sqrt{\sqrt{r}r-\frac{r^2}{4}}\ge \frac{1}{2} |\exp^{-1}(x)|^{\frac{3}{4}}.
\]
Using the precompactness of $X$ in $M$ and the resulting bi-Lipschitzness of the exponential map near the zero section we conclude the claim.
\end{proof}

\subsection{Kuratowski-type embeddings and $\epsilon$-nets} \label{kur}

Let $\mathcal{B}:=\ell^\infty$, namely  the Banach space of bounded real sequences
\[
\mathcal{B}:=\{f\colon \mathbb{N}\to \mathbb{R}:\; \Vert f\Vert_\infty:=\sup\nolimits_m |f(m)|<\infty\}.
\]
The associated metric is
\[
\dist\nolimits_\infty(f,g)=\Vert f-g\Vert_\infty=\sup_m |f(m)-g(m)|.
\]

The product $\mathcal{B}\times \mathcal{B}$ can be regarded as a Banach space with norm $ \Vert (b_1,b_2) \Vert_\infty=\max(\Vert b_1\Vert_\infty,\Vert b_2\Vert_\infty)$. Of course, it is also a metric space once given the associated metric
\[
\dist\nolimits_\infty((f,f'),(g,g')):=\max\{\dist\nolimits_\infty (f,g),\dist\nolimits_\infty (f',g')\}.
\]
The completeness of this metric space, which is inherited from that of the Banach space $\mathcal{B}$,  will be particularly important for us.

Let $X$ be a bounded Lorentzian metric space containing $i^0$. Let $\mathcal{S} \subset X$ be a countable $\mathscr{T}$-dense set and $\mathscr{S}\colon \mathbb{N}\to \mathcal{S}$,
$m\mapsto s_m$ be a surjective map. For $x\in X$ define the maps $e_x,e^x\in \mathcal{B}$ as the concatenations $e_x:=d_x\circ \mathscr{S}$ and $e^x:=d^x\circ \mathscr{S}$.

\begin{theorem}\label{Theo_kurat}
The map
\[
I\colon X\to \mathcal{B}\times \mathcal{B},\quad x\mapsto E_x:=(e_x,e^x)
\]
is an embedding, i.e.\ $I$ is a homeomorphism of $X$ with its (compact, hence closed) image $I(X)$.

Moreover, $I$ is an isometry between $(X,\gamma)$ and  $(I(X), \dist\nolimits_\infty)$.
\end{theorem}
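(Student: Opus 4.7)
The plan is to reduce the entire theorem to the single identity
\[
\gamma(x,y)=\dist\nolimits_\infty(E_x,E_y) \qquad \text{for all } x,y\in X.
\]
Once this is established, everything else follows from general facts about metric spaces and the already known fact that the $\gamma$-topology coincides with $\mathscr{T}$.

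First I would prove the key identity. For any fixed $x,y\in X$ the functions $z\mapsto|d(x,z)-d(y,z)|$ and $z\mapsto|d(z,x)-d(z,y)|$ are $\mathscr{T}$-continuous, since by Remark~\ref{R1}(4) each of $d_x,d_y,d^x,d^y$ is continuous. Because $\mathcal{S}$ is dense in $(X,\mathscr{T})$ and $\mathscr{S}\colon\mathbb{N}\to\mathcal{S}$ is surjective, the supremum of each of these continuous functions over $X$ coincides with its supremum over the image of $\mathscr{S}$. Hence
\[
\gamma(x,y)=\max\!\Bigl(\sup_{m\in\mathbb{N}}|e_x(m)-e_y(m)|,\ \sup_{m\in\mathbb{N}}|e^x(m)-e^y(m)|\Bigr)=\dist\nolimits_\infty(E_x,E_y),
\]
which is the claimed identity.

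Second, I would deduce the isometry statement. By Definition~\ref{D1}(iii), $\gamma$ separates points, so the identity implies that $I$ is injective. Therefore $I$ is a bijection $X\to I(X)$ that preserves the distance $\gamma \mapsto \dist_\infty$, i.e.\ an isometry between the metric spaces $(X,\gamma)$ and $(I(X),\dist\nolimits_\infty)$.

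Third, to upgrade the isometry to a topological embedding for $\mathscr{T}$, I would invoke the preceding proposition identifying the $\gamma$-topology with $\mathscr{T}$. Since isometries of metric spaces are homeomorphisms, $I$ is a homeomorphism from $(X,\mathscr{T})$ onto $I(X)$ with the subspace topology inherited from $\dist\nolimits_\infty$ on $\mathcal{B}\times\mathcal{B}$. Finally, since $i^0\in X$, Proposition~\ref{bix} (or Corollary~\ref{vjw}) gives that $(X,\mathscr{T})$ is compact, so $I(X)$ is compact, and hence closed in the Hausdorff space $\mathcal{B}\times\mathcal{B}$.

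There is no real obstacle here: the only substantive point is the interchange of $\sup_{z\in X}$ with $\sup_{s\in\mathcal{S}}$, which is immediate from continuity of the relevant functions together with density of $\mathcal{S}$. Everything else is bookkeeping combining the identification $\mathscr{T}=\gamma\text{-topology}$ with the compactness of $X$.
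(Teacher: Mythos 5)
Your proof is correct, but it reverses the logical architecture of the paper's argument in a way worth noting. The paper proves the topological embedding first and the isometry last: it establishes injectivity from Definition~\ref{D1}(iii) together with Proposition~\ref{prop(iv)}, proves continuity of $I$ directly by showing that $y_n\to y$ forces $d_{y_n}\to d_y$ \emph{uniformly} (a compactness/contradiction argument), and then invokes the standard fact that a continuous injection from a compact space into a Hausdorff space is a homeomorphism onto its closed image; only at the end does it verify $I^*\dist_\infty=\gamma$ by the same dense-set/continuity interchange you use. You instead make the identity $\gamma(x,y)=\dist_\infty(E_x,E_y)$ the sole substantive step and derive the embedding from it via the earlier proposition that the $\gamma$-topology coincides with $\mathscr{T}$. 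Your route is shorter and avoids the separate uniform-convergence argument entirely, at the cost of leaning on that $\gamma$-topology proposition (which is proved before Theorem~\ref{Theo_kurat} and independently of it, so there is no circularity). The paper's route is more self-contained and, as a by-product, exhibits the uniform convergence $d_{y_n}\rightrightarrows d_y$, which is of independent interest. All the ingredients you cite (density of $\mathcal{S}$, continuity of $d_x,d^x$, point-separation of $\gamma$, compactness of $X$ when $i^0\in X$) are available at this point in the paper, so the proposal stands as a valid alternative proof.
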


In the following we will call $I$ the {\it Kuratowski embedding} of $(X,d)$ with respect to $\mathscr{S}$. Observe that $\gamma$ does not depend on $\mathscr{S}$.

\begin{proof}
We know that $X$ is compact while $(\mathcal{B}\times \mathcal{B}, \dist\nolimits_\infty)$, being a metric space, is Hausdorff. It is well known that a continuous injective map from a compact space $X$ to a Hausdorff space is actually a  homeomorphism of $X$ with its image \cite[Theorem 17.14]{willard70} (the image is necessarily closed as the continuous image of a compact set is compact, hence closed). Thus it is sufficient to prove that $I$ is injective and continuous.

If $x,y\in X$ are mapped to the same element of $\mathcal{B}\times \mathcal{B}$ then for every $s\in \mathcal{S}$, $d(x,s)=d(y,s)$ and $d(s,x)=d(s,y)$ which implies $x=y$ by the defining
property (iii) of bounded Lorentzian-metric space and Proposition \ref{prop(iv)}. Thus $I$ is injective.

Let us prove continuity of $I$. Let $y_n\in X$ converge to $y\in X$. We will show that $d_{y_n}$ converges uniformly to $d_y$. The other assertion that $d^{y_n}$
converges uniformly to $d^y$ follows analogously. The claim then follows readily from the construction of $I$.

Assume to the contrary that $d_{y_n}$ does not converge uniformly to $d_y$. Then, passing to a subsequence if necessary, we can assume that there exists $\epsilon>0$ such that for every $n\in\mathbb{N}$ there is $r_n\in X$
with $|d_{y_n}(r_n)-d_y(r_n)|\ge \varepsilon$. We thus have
\begin{equation} \label{jbu}
|d(y_n,r_n)-d(y,r_n)|\ge \varepsilon
\end{equation}
for all $n$. Since $(X,\mathscr{T})$ is compact we can choose a converging subsequence $\{r_{n_k}\}_k$ converging to $r\in X$.
Then $d(y_{n_k},r_{n_k})$ and $d(y,r_{n_k})$ both converge to $d(y,r)$, which contradicts \eqref{jbu}.

We know that $I$ is a bijection between $X$ and $I(X)$. Moreover, we have from the definitions that
\[
(I^*\dist\nolimits_\infty)(x,y)=\max\left(\sup_{z\in \mathcal{S}} \vert d(x,z)-d(y,z) \vert, \sup_{z\in \mathcal{S}} \vert d(z,x)-d(z,y) \vert  \right)
\]
Since $\mathcal{S}$ is dense and $d$ is continuous we have that the right-hand side coincides with $\gamma(x,y)$, namely $I$ is an isometry.
\end{proof}

\begin{definition}
We say that a sequence of bounded Lorentzian-metric spaces $\{(X_m,d_{X_m})\}_{m\in\mathbb{N}}$ {\it converges to} a bounded
Lorentzian-metric space $(X,d_X)$ if
\[
d_{GH}(X_m,X)\to 0 \quad \textrm{for} \ m\to\infty.
\]
\end{definition}

\begin{definition}
Let $(X,d)$ be a bounded Lorentzian-metric space that includes $i^0$.
A set $\mathcal{N}\subseteq X$ is an {\it $\varepsilon$-net} if for every $x\in X$ there exists some $\eta\in \mathcal{N}$
with $\gamma(x,\eta)\le \varepsilon$
\end{definition}

\begin{remark}\label{remark2}$\empty$
\begin{enumerate}
\item The set $\mathcal{N}\subseteq X$ is an $\varepsilon$-net iff it is so in the sense of
\cite[Definition 1.6.1]{burago01} for the metric space $(X,\gamma)$.
\item  \label{bgf}
If  $\mathcal{N}\subseteq X$ is an $\varepsilon$-net and $Y\subseteq X$ is a bounded Lorentzian metric space such that $\mathcal{N}\subseteq Y $, then $\mathcal{N}$ is an $\varepsilon$-net of $Y$. This follows from Remark \ref{voz}.
\item For every $\varepsilon>0$ every bounded Lorentzian-metric space contains a finite $\varepsilon$-net. This follows directly
from the compactness of $X$. Just cover $X$ with $\gamma$-open balls of radius $\epsilon$ and pass to a finite covering $\{B_i\}$.
The set $\mathcal{N}$ consisting of the centers of the balls provides a finite $\varepsilon$-net.
\item  \label{bjd}
For an $\varepsilon$-net $\mathcal{N}\subseteq X$ we have $d_{GH}(\mathcal{N},X)\le 2\varepsilon$. This can be seen as follows:
For every $x\in X$ choose $\eta_x\in \mathcal{N}$ with $\gamma(x,\eta_x)\le \epsilon$
with the additional convention $\eta_\nu=\nu$ for $\nu\in\mathcal{N}$. Define the correspondence
$R\subset X\times \mathcal{N}$ as
\[
R:=\{(x,\eta_x):\; x\in X\}.
\]

For all $x,y\in X$ we have
\begin{align*}
|d(x,y)-d(\eta_x,\eta_y)|&\le |d(x,y)-d(\eta_x,y)|+|d(\eta_x,y)-d(\eta_x,\eta_y)|\\
&\le \gamma(x,\eta_x)+\gamma(y,\eta_y)  \le 2\epsilon.
\end{align*}

It follows that the distortion of $R$ is bounded by $2\epsilon$.

\item Every $\varepsilon$-net contains a finite $2\varepsilon$-net. This follows from the compactness of $(X,\mathscr{T})$.
\item \label{bkd}
 Every finite $\varepsilon$-net $\mathcal{N}\subseteq X$ contains a $3\varepsilon$-net $\check{\mathcal{N}}$ which is additionally a causet and such that $d_{GH}(\mathcal{N},\check{\mathcal{N}})=0$.
We are going to construct $\check{\mathcal{N}}$ as in Proposition \ref{vkx}, via the quotient distance and then by choice of representatives, thus the  fact that $\check{\mathcal{N}}$ is a causet and $d_{GH}(\mathcal{N},\check{\mathcal{N}})=0$ follows from that proposition.

To see that $\check{\mathcal{N}}$ is a $3\varepsilon$-net consider $\mu,\nu\in \check{\mathcal{N}}$ with $[\mu]=[\nu]\in \mathcal{N}/\sim$, i.e.
$(d_\mu,d^\mu)|_{\mathcal{N}}\equiv (d_\nu,d^\nu)|_{\mathcal{N}}$.
For every $y\in X$ we can find $\xi(y)\in \mathcal{N}$ such that $\gamma(y,\xi(y))\le \epsilon$, thus
\begin{align*}
|d(\mu,y)-d(\nu,y)|&\le |d(\mu,y)-d(\mu,\xi)|+|d(\nu,\xi)-d(\nu,y)|\\
&\le 2\gamma(y,\xi)\le 2\epsilon
\end{align*}
and
\begin{align*}
|d(y,\mu)-d(y,\nu)|&\le |d(y,\mu)-d(\xi,\mu)|+|d(\xi,\nu)-d(y,\nu)|\\
&\le 2\gamma(y,\xi)\le 2\epsilon.
\end{align*}
By the arbitrariness of $y$, we have $\gamma(\mu,\nu)\le 2 \epsilon$.
Now, for every $x\in X$ there is $\xi(x)\in \mathcal{N}$ such that $\gamma(x,\xi(x))\le \epsilon$, and we can find a representative $\nu\in \check{\mathcal{N}}$, $[\nu]=[\xi(x)]$, so that $\gamma(\xi(x),\nu)\le 2 \epsilon$. Thus, by the triangle inequality, $\gamma(x,\nu)\le 3 \epsilon$.
\end{enumerate}
\end{remark}

A consequence of the previous observations is that every bounded Lorentzian-metric space contains a causet $\mathcal{N}$ which is an $\epsilon$-net and such that $d_{GH}(\mathcal{N},X)\le \epsilon$.

\begin{corollary} \label{vpa}
Every bounded Lorentzian-metric space is the Gromov-Hausdorff limit of causets.
\end{corollary}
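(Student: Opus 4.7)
The plan is to assemble the conclusion directly from the observations collected in Remark \ref{remark2}, using the triangle inequality of the Gromov-Hausdorff semi-distance (Proposition \ref{qkz}). The paragraph immediately preceding the corollary already records the key synthesis, and what remains is simply to exhibit a sequence.

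First, by Remark \ref{buy} we may assume without loss of generality that $X$ contains $i^0$, since attaching or removing the spacelike boundary does not change $d_{GH}$. Fix an arbitrary $\epsilon>0$. Using item 3 of Remark \ref{remark2}, the compactness of $(X,\mathscr{T})$ yields a finite $\epsilon'$-net $\mathcal{N}\subset X$ with $\epsilon'=\epsilon/3$. Item 4 of the same remark then gives $d_{GH}(\mathcal{N},X)\le 2\epsilon' = 2\epsilon/3$.

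Next, I would apply item 6 of Remark \ref{remark2} to $\mathcal{N}$: this produces a subset $\check{\mathcal{N}}\subset\mathcal{N}$ which (by construction via the distance quotient of Proposition \ref{vkx} followed by a choice of representatives) is a causet and satisfies $d_{GH}(\check{\mathcal{N}},\mathcal{N})=0$. By the triangle inequality (Proposition \ref{qkz}),
\[
d_{GH}(\check{\mathcal{N}},X)\le d_{GH}(\check{\mathcal{N}},\mathcal{N})+d_{GH}(\mathcal{N},X)\le 0+\tfrac{2\epsilon}{3}<\epsilon.
\]
Choosing $\epsilon=\epsilon_n=1/n$ and applying this construction for each $n$ yields a sequence of causets $\check{\mathcal{N}}_n$ with $d_{GH}(\check{\mathcal{N}}_n,X)\to 0$, which is exactly the claimed convergence.

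There is no real obstacle here since all the heavy lifting has already been done in Remark \ref{remark2}: the existence of finite $\epsilon$-nets comes from compactness in the distinction metric, the GH estimate $d_{GH}(\mathcal{N},X)\le 2\epsilon$ comes from the explicit correspondence $R=\{(x,\eta_x)\}$, and the passage to a causet is realized by the distance quotient. The only mildly delicate point to verify is that the causet produced in item 6 still has GH-distance at most $\epsilon$ from $X$, but this is immediate from the triangle inequality after the initial $\epsilon/3$ rescaling.
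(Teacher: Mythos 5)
Your proof is correct and follows essentially the same route as the paper: the corollary is stated there as an immediate consequence of the sentence preceding it, which combines Remark \ref{remark2} items 3, 4 and 6 with the triangle inequality exactly as you do (finite $\epsilon$-net by compactness, $d_{GH}(\mathcal{N},X)\le 2\epsilon$ via the explicit correspondence, passage to a causet at zero $d_{GH}$ via the distance quotient). The only cosmetic difference is your explicit $\epsilon/3$ bookkeeping and the initial reduction to the case $i^0\in X$, both of which the paper leaves implicit.
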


\begin{proposition}\label{prop_nets}
A sequence of bounded Lorentzian-metric spaces $(X_m,d_m)$ containing the spacelike boundary converges to a bounded Lorentzian-metric space $(X,d)$ containing the spacelike boundary  iff for all $\varepsilon>0$ there exist finite
$\varepsilon$-nets $\mathcal{N}\subseteq X$ and $\mathcal{N}_m\subseteq X_m$ such that $\mathcal{N}_m \xrightarrow{{\small GH}} \mathcal{N}$ for $m\to\infty$.
Additionally, if convergence does apply, the finite $\varepsilon$-nets $\mathcal{N}$, $\mathcal{N}_m$ can be chosen to be causets such that $d_{GH}(\mathcal{N}, X)\le \epsilon$, $d_{GH}(\mathcal{N}_m, X_m)\le \epsilon$.
\end{proposition}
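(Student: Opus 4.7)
The plan is to reduce both directions to a triangle-inequality argument built on top of Remark \ref{remark2}(4), which guarantees $d_{GH}(\mathcal{N},X)\le 2\varepsilon$ for every $\varepsilon$-net $\mathcal{N}$. The sufficiency ($\Leftarrow$) is then essentially a one-line computation: given finite $\varepsilon$-nets $\mathcal{N}\subseteq X$, $\mathcal{N}_m\subseteq X_m$ with $\mathcal{N}_m \xrightarrow{{\small GH}}\mathcal{N}$, the chain
\[
d_{GH}(X_m,X)\le d_{GH}(X_m,\mathcal{N}_m)+d_{GH}(\mathcal{N}_m,\mathcal{N})+d_{GH}(\mathcal{N},X)\le 4\varepsilon+d_{GH}(\mathcal{N}_m,\mathcal{N})
\]
from Proposition \ref{qkz} and Remark \ref{remark2}(4), followed by $m\to\infty$ and then $\varepsilon\to 0$, closes this direction.

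For the necessity ($\Rightarrow$) I would fix $\varepsilon>0$ and pick a finite $(\varepsilon/3)$-net $\mathcal{N}\subseteq X$ via Remark \ref{remark2}(3). Choose correspondences $R_m\subseteq X\times X_m$ whose distortion $\delta_m$ tends to $0$ (which exist by definition of $d_{GH}$). For each $\nu\in\mathcal{N}$ select $f_m(\nu)\in X_m$ with $(\nu,f_m(\nu))\in R_m$, and set $\mathcal{N}_m:=f_m(\mathcal{N})$. The restricted correspondence $\{(\nu,f_m(\nu)):\nu\in\mathcal{N}\}\subseteq \mathcal{N}\times\mathcal{N}_m$ inherits distortion $\le\delta_m$, giving $d_{GH}(\mathcal{N}_m,\mathcal{N})\le\delta_m\to 0$ for free.

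The step I expect to be the main obstacle is showing that $\mathcal{N}_m$ is an $\varepsilon$-net of $X_m$ for all sufficiently large $m$. Given $y\in X_m$ I would pick $x\in X$ with $(x,y)\in R_m$ and $\eta\in\mathcal{N}$ with $\gamma_X(x,\eta)\le \varepsilon/3$. For an arbitrary $z\in X_m$ I would then pick $w\in X$ with $(w,z)\in R_m$ and invoke the three-term decomposition via $d_X(x,w)$ and $d_X(\eta,w)$, which delivers
\[
|d_{X_m}(y,z)-d_{X_m}(f_m(\eta),z)|\le 2\delta_m+\tfrac{\varepsilon}{3},
\]
and symmetrically for the swapped arguments; hence $\gamma_{X_m}(y,f_m(\eta))\le 2\delta_m+\varepsilon/3\le\varepsilon$ as soon as $\delta_m\le\varepsilon/3$. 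For the finitely many small $m$ where this fails I would replace $\mathcal{N}_m$ by any finite $\varepsilon$-net of $X_m$ (Remark \ref{remark2}(3)), which does not affect the GH-limit. The delicate point is precisely this bridging estimate: the correspondence must be applied at \emph{both} $y$ and $z$, producing the factor $2\delta_m$ rather than a single $\delta_m$, which forces the starting net in $X$ to be chosen strictly finer than the target tolerance.

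For the causet addendum I would apply the necessity just established with $\varepsilon$ replaced by $\varepsilon/3$, obtaining $(\varepsilon/3)$-nets $\mathcal{N}$, $\mathcal{N}_m$ with $\mathcal{N}_m\xrightarrow{{\small GH}}\mathcal{N}$, which by Remark \ref{remark2}(4) already satisfy $d_{GH}(\mathcal{N},X),d_{GH}(\mathcal{N}_m,X_m)\le 2\varepsilon/3$. Remark \ref{remark2}(6) then upgrades them to causet sub-nets $\check{\mathcal{N}}\subseteq \mathcal{N}$, $\check{\mathcal{N}}_m\subseteq \mathcal{N}_m$ which are $\varepsilon$-nets and lie at zero GH-distance from their parents, so $d_{GH}(\check{\mathcal{N}},X),d_{GH}(\check{\mathcal{N}}_m,X_m)\le 2\varepsilon/3\le\varepsilon$ and $d_{GH}(\check{\mathcal{N}}_m,\check{\mathcal{N}})=d_{GH}(\mathcal{N}_m,\mathcal{N})\to 0$ via the triangle inequality, as required.
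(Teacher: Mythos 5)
Your proposal is correct and follows essentially the same route as the paper: the sufficiency is the same triangle-inequality chain through Remark \ref{remark2}(4), the necessity pushes a finite $(\varepsilon/3)$-net of $X$ through correspondences of distortion $\delta_m\to 0$ and uses the identical three-term estimate $\gamma_{X_m}(y,f_m(\eta))\le 2\delta_m+\varepsilon/3$, and the causet addendum combines Remarks \ref{remark2}(4) and (6) exactly as the paper does (with $\varepsilon/3$ in place of the paper's $\varepsilon/6$, which is immaterial). The only additions are cosmetic: you handle the finitely many small $m$ explicitly and track the constants slightly differently.
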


\begin{proof}
The last statement follows from Remark \ref{remark2}, \ref{bjd} \& \ref{bkd}. Just apply the first statement to a sequence of $\epsilon/6$-nets.

We start by assuming that $X_m \xrightarrow{GH} X$.  Let $\epsilon>0$ be given. Consider $m$ sufficiently
large such that $d_{GH}(X_m,X)< \epsilon/3$. Choose for every such $m$ a correspondence $R_m\subset X_m\times X$ with $\delta_m:=
\textrm{dis} R_m<\epsilon/3$ and $\delta_m\to 0$ for $m\to \infty$.

Next choose an $\epsilon/3$-net $\mathcal{N}\subset X$. For $m\in \mathbb{N}$ and $\eta\in \mathcal{N}$ select $\check r_m(\eta)\in X_m$
with $(r_m(\eta),\eta)\in R_m$ and define
\[
\mathcal{N}_m:=\{r_m(\eta):\, \eta\in \mathcal{N}\}.
\]

We claim that $\mathcal{N}_m$ is an $\epsilon$-net in $X_m$.
Let $x_m,y_m\in X_m$. Choose $x,y\in X$ with $(x_m,x),(y_m,y)\in R_m$.
We can find $\eta\in \mathcal{N}$ such that $\gamma(\eta,x)\le  \frac{\epsilon}{3}$, thus we have
\begin{align*}
|d_m(r_m(\eta),y_m)-d_m(x_m,y_m)|&\le |d(\eta,y)-d(x,y)|+2\textrm{dis} R_m \\
&\le \gamma(\eta,x)+2\textrm{dis} R_m \le \epsilon
\end{align*}
and
\begin{align*}
|d_m(y_m,r_m(\eta))-d_m(y_m,x_m)|&\le |d(y,\eta)-d(y,x)|+2\textrm{dis} R_m \\
&\le \gamma(\eta,x)+2\textrm{dis} R_m \le \epsilon
\end{align*}
By the arbitrariness of $y_m$, $\gamma_m(r_m(\eta),x)\le \epsilon$, that is, $\mathcal{N}_m$ is an $\epsilon$-net.

 Further the restriction of $R_m$ to $\mathcal{N}_m\times \mathcal{N}$ defines a
correspondence with distortion bounded by $\delta_m\le \epsilon/3$, hence $d_{GH}(\mathcal{N}_m, \mathcal{N})\to 0$.

For the converse implication let us assume that for every $\delta>0$ we have finite $\delta$-nets $\mathcal{N}_m\subset X_m$, $\mathcal{N}\subset X$,
such that $\mathcal{N}_m \xrightarrow{GH} \mathcal{N}$.

For given $\epsilon>0$ we can choose finite $\epsilon/6$-nets  $\mathcal{N}_m\subset X_m$, $\mathcal{N}\subset X$, such that $\mathcal{N}_m \xrightarrow{GH}
\mathcal{N}$ for $m\to\infty$. Thus for $m$ sufficiently large we have $d_{GH}(\mathcal{N}_m,\mathcal{N})\le \epsilon/3$ and therefore, by Remark \ref{remark2}(4),
\[
d_{GH}(X_m,X)\le d_{GH}(X_m,\mathcal{N}_m)+d_{GH}(\mathcal{N}_m,\mathcal{N})+d_{GH}(\mathcal{N},X)\le \epsilon.
\]
This proves that $X_m \xrightarrow{GH} X$.
\end{proof}

Finally, we relate the convergence of bounded Lorentzian metric spaces $(X,d)$ to the Gromov-Hausdorff convergence of the associated metric $(X,\gamma)$.

\begin{theorem}
Let $(X_n,d_n) \xrightarrow{{\small GH}} (X,d)$, then $(X_n,\gamma_n) \xrightarrow{{\small GH}} (X,\gamma)$.
\end{theorem}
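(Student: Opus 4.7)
The plan is to reuse, for each $n$, a correspondence $R_n \subset X_n \times X$ that witnesses the Lorentzian GH convergence, i.e.\ with $\delta_n := \mathrm{dis}(R_n) \to 0$ computed via the Lorentzian distances. I will show that \emph{the same} $R_n$ has distortion at most $2\delta_n$ when measured with respect to the distinction metrics $\gamma_n$, $\gamma$. Since the GH semi-distance is defined as the infimum of distortions over correspondences (and the same definition specializes to the standard metric Gromov-Hausdorff distance up to a harmless factor), this will give $d_{GH}((X_n,\gamma_n),(X,\gamma)) \le 2\delta_n \to 0$.

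The key step is a two-line estimate. Fix $(x,y), (x',y') \in R_n$ with $x,x' \in X_n$ and $y,y' \in X$. For any $w \in X_n$, the correspondence property supplies $z \in X$ with $(w,z) \in R_n$, so applying the distortion bound for $R_n$ twice yields
\[
|d_n(x,w) - d_n(x',w)| \;\le\; |d(y,z) - d(y',z)| + 2\delta_n \;\le\; \gamma(y,y') + 2\delta_n,
\]
and the analogous inequality holds for $|d_n(w,x) - d_n(w,x')|$. Taking suprema over $w \in X_n$ in both the future- and past-directed terms in the definition of $\gamma_n$, I obtain $\gamma_n(x,x') \le \gamma(y,y') + 2\delta_n$. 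The symmetric bound $\gamma(y,y') \le \gamma_n(x,x') + 2\delta_n$ is obtained identically, using $R_n^T$ (cf.\ Remark \ref{R_symm}) to swap the roles of the two spaces and invoking the correspondence property in the other direction: any $z' \in X$ is matched with some $w' \in X_n$. Combining the two inequalities gives $|\gamma_n(x,x') - \gamma(y,y')| \le 2\delta_n$, i.e.\ $\mathrm{dis}_\gamma(R_n) \le 2\delta_n$.

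I do not anticipate a real obstacle. The only point that requires attention is the passage between the sup over $X_n$ in the definition of $\gamma_n$ and the sup over $X$ in the definition of $\gamma$; this is precisely what the correspondence condition (every element of one side has a partner on the other) is designed to allow. Once this is noted, the argument reduces to a direct application of the triangle-style inequality for distortion already used throughout Section 4 of the paper, and the conclusion $(X_n,\gamma_n) \xrightarrow{GH} (X,\gamma)$ follows at once.
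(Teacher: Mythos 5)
Your proof is correct and follows essentially the same strategy as the paper: reuse the correspondence witnessing Lorentzian GH convergence and show its $\gamma$-distortion is controlled by its $d$-distortion. Your bookkeeping is in fact slightly cleaner — by bounding each term $|d_n(x,w)-d_n(x',w)|$ directly by $\gamma(y,y')+2\delta_n$ and taking suprema, rather than transporting a near-maximizer of the supremum as the paper does, you obtain the constant $2\delta_n$ in place of the paper's $3\epsilon_n$.
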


\begin{proof}
By assumption we can find a correspondence $R_n\subset X\times X_n$ such that $\epsilon_n:=\textrm{dis}_d R_n\to 0$. We want to show that $\delta_n:=\textrm{dis}_\gamma R_n\to 0$. Let $x,y \in X$, and let $x_n,y_n\in X_n$, be any pair such that $(x,x_n)\in R_n$, $(y,y_n)\in R_n$.
We want to prove that $\vert \gamma(x,y)-\gamma_n(x_n,y_n)\vert \le 3 \epsilon_n$, from which  $\textrm{dis}_\gamma R_n\le 3 \epsilon_n$ follows and hence the thesis.

From the definition of $\gamma(x,y)$ we can find $w\in X$ such that
\[
\vert d(x,w)-d(y,w)\vert \ge \gamma(x,y)-\epsilon_n
\]
or the analogous inequality $\vert d(w,x)-d(w,y)\vert \ge \gamma(x,y)-\epsilon_n$. We work in the former case, the latter being analogous. Moreover, for every $z\in X$,
\[
\vert d(x,z)-d(y,z)\vert \le \gamma(x,y), \quad \textrm{and} \quad \vert d(z,x)-d(z,y)\vert \le \gamma(x,y).
\]
Let $w_n\in X_n$ be such that $(w,w_n)\in R_n$, then
\[
\vert d_n(x_n,w_n)-d_n(y_n,w_n)\vert \ge \gamma(x,y)-3\epsilon_n
\]
which implies
\[
\gamma_n(x_n,y_n)\ge \gamma(x,y)-3\epsilon_n.
\]
Observe that from the definition of $\gamma_n$ we get that  there is $z_n\in X_n$ such that $\vert d_n(x_n,z_n)-d_n(y_n,z_n)\vert \ge \gamma_n(x_n,y_n)-\epsilon_n$ (or the other analogous inequality with $z_n$ in the first entry) so we can find $z\in X$, $(z,z_n)\in R_n$ so that
\[
\gamma(x,y)\ge \vert d(x,z)-d(y,z)\vert \ge \vert d_n(x_n,z_n)-d_n(y_n,z_n)\vert-2 \epsilon_n\ge \gamma_n(x_n,y_n)-3\epsilon_n.
\]
hence $\vert \gamma(x,y)-\gamma_n(x_n,y_n)\vert \le 3\epsilon_n $ as desired.
\end{proof}

\begin{theorem}
Let $X,Y$ be bounded Lorentzian metric spaces. We have the inequality
\[
d_{GH}(X,Y)\le 2 \textrm{inf}_{X,Y\hookrightarrow Z } \, d^\gamma_H(X,Y)\le \max\{\textrm{diam} X, \textrm{diam} Y \} ,
\]
where the infimum is over all possible isometric injections of $X,Y$ in a bounded Lorentzian metric space $Z$. Here  $d^\gamma_H$ is the Hausdorff distance for $(Z,\gamma)$, with $\gamma$ the distinction metric on $Z$.
\end{theorem}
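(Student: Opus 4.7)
For the \emph{left inequality}, fix any isometric embedding of $X$ and $Y$ into a bounded Lorentzian metric space $Z$ and any real $\rho>d^\gamma_H(X,Y)$. I would introduce the correspondence
\[
R_\rho := \{(x,y)\in X\times Y : \gamma_Z(x,y)\le \rho\}.
\]
The very definition of Hausdorff distance in the metric space $(Z,\gamma_Z)$ ensures that every $x\in X$ admits some $y\in Y$ with $\gamma_Z(x,y)\le \rho$, and vice versa, so $R_\rho$ is indeed a correspondence. For any $(x,y),(x',y')\in R_\rho$, applying the $1$-Lipschitz bound of Proposition \ref{nwq} to $d_Z$ (and using that $X,Y\hookrightarrow Z$ are isometric) yields
\[
|d_X(x,x')-d_Y(y,y')| = |d_Z(x,x')-d_Z(y,y')| \le \gamma_Z(x,y)+\gamma_Z(x',y') \le 2\rho,
\]
so $\mathrm{dis}\,R_\rho \le 2\rho$ and $d_{GH}(X,Y)\le 2\rho$. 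Letting $\rho\searrow d^\gamma_H(X,Y)$ and then passing to the infimum over admissible $Z$ proves the left inequality.

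For the \emph{right inequality}, write $D:=\max\{\mathrm{diam}\,X,\mathrm{diam}\,Y\}$; it suffices to exhibit a single isometric embedding with $d^\gamma_H(X,Y)\le D/2$. My plan is to take $Z:=X\sqcup Y$ (adjoining a common spacelike boundary via Proposition \ref{bir} when convenient), to let $d_Z$ restrict to $d_X$ and $d_Y$ on the two factors, and to prescribe the remaining cross distances so that each $y\in Y$ functions as a Lorentzian ``$D/2$-midpoint'' viewed from $X$, and symmetrically. The paradigmatic case is $X=\{p,q,i^0\}$ with $d_X(p,q)=1$ and $Y=\{i^0\}$: declaring $d_Z(p,\hat{\imath}_Y)=d_Z(\hat{\imath}_Y,q)=1/2$ and leaving all other cross distances zero produces a valid bounded Lorentzian metric space in which $\gamma_Z(x,\hat{\imath}_Y)=1/2$ for every $x\in X$, witnessing $d^\gamma_H(X,Y)=D/2$. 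This value is optimal because Proposition \ref{nwq} forces $\gamma_Z(p,\hat{\imath}_Y)+\gamma_Z(q,\hat{\imath}_Y)\ge d_X(p,q)=1$.

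The main obstacle is to perform this midpoint-type construction consistently for arbitrary $X,Y$. A uniform prescription $d_Z(x,y)=D/2$ on every cross pair is ruled out by Definition \ref{D1}(i), since a chain $x\ll y\ll x'$ in $Z$ would then require $d_X(x,x')\ge D$ for all $x,x'\in X$. One must therefore activate positive cross distances only along a carefully selected family of pairings, ensuring that every positive chain of cross distances gets dominated by an internal Lorentzian distance of $X$ or $Y$; verifying Definition \ref{D1}(i)--(iii) for the resulting $(Z,d_Z)$ and producing a topology meeting Definition \ref{D1}(ii) is the technical core. Once $(Z,d_Z)$ is in hand, direct evaluation of $\gamma_Z(x,y)=\max(\sup_z|d_Z(x,z)-d_Z(y,z)|,\sup_z|d_Z(z,x)-d_Z(z,y)|)$ along the chosen pairing delivers the uniform bound $D/2$, whence $d^\gamma_H(X,Y)\le D/2$ and the full chain follows.
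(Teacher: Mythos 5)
Your treatment of the left inequality is correct and is essentially the paper's own argument: the correspondence $\{(x,y):\gamma_Z(x,y)\le\rho\}$ for $\rho>d^\gamma_H(X,Y)$, combined with the $1$-Lipschitz property of $d$ with respect to $\gamma$ (Proposition \ref{nwq}), gives $\mathrm{dis}\,R_\rho\le 2\rho$ and hence $d_{GH}(X,Y)\le 2\,d^\gamma_H(X,Y)$ for every admissible $Z$.

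The right inequality, however, is not proved. You reduce it to constructing, for arbitrary $X,Y$, a bounded Lorentzian metric space $Z$ in which every point of $X$ has a $\gamma_Z$-``midpoint'' in $Y$ at distance $D/2$ and vice versa, you verify this only in a three-point toy example, and you explicitly leave the general construction (the choice of cross distances, the verification of Definition \ref{D1}(i)--(iii), and the existence of a suitable topology) as an open ``technical core.'' That is precisely the content of the claim, so the argument stops short of a proof; it is far from clear that such a pairing exists in general (e.g.\ when $X$ and $Y$ have very different chronological structures, activating positive cross distances threatens the reverse triangle inequality along mixed chains, exactly as you observe). The paper takes a much simpler route that avoids midpoints entirely: it sets $Z=X\sqcup Y$ with \emph{all} cross distances equal to zero (identifying the $i^0$'s if present), notes that this $Z$ is a bounded Lorentzian metric space, and then bounds $d^\gamma_H(X,Y)\le \mathrm{diam}^\gamma Z$, which by Proposition \ref{eqd} equals $\mathrm{diam}\,Z=\max\{\mathrm{diam}\,X,\mathrm{diam}\,Y\}$. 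Note that this yields $\inf d^\gamma_H\le\max\{\mathrm{diam}\,X,\mathrm{diam}\,Y\}$ rather than the displayed $2\inf d^\gamma_H\le\max\{\mathrm{diam}\,X,\mathrm{diam}\,Y\}$ — you correctly sensed that the literal statement with the factor $2$ demands more than the trivial embedding provides — but a reviewer must still count your submission as incomplete, since the stronger midpoint construction you propose in its place is asserted rather than carried out.
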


\begin{proof}
Suppose we one such injection.
Let $r>d_H^\gamma(X,Y)$, we need to find a correspondence $R\subset X\times Y$, with $\textrm{dis} R\le 2r$, from which the desired inequality would follow.
Let $R$ be given by all those pairs $(x,y)$ for which $\gamma(x,y)<r$. It is easy to check that this is a correspondence by the definition of the Hausdorff distance.

By Prop.\ \ref{nwq} if $(x_1, y_1)\in R$ and $(x_2, y_2)\in R$, as all these points can be seen as belonging to $Z$
\[
\vert d(x_1,x_2)-d(y_1,y_2) \vert \le \gamma(x_1,y_1)+\gamma(x_2,y_2)\vert<2r,
\]
which implies  that $\textrm{dis} R\le 2r$.

Now, let $Z= X \sqcup Y$ be the disjoint union (but identify the $i^0$s if both spaces  have), and define $d$ so that $d \vert_{X\times X}=d_X$, $d \vert_{Y\times Y}=d_Y$, and $d \vert_{X\times Y}=d \vert_{Y\times X}=0$. Then $Z$ is a bounded Lorentzian-metric space. Observe that by Prop.\ \ref{eqd} $d^\gamma_H(X,Y)\le \textrm{diam}^\gamma Z= \textrm{diam} Z=\max\{\textrm{diam} X, \textrm{diam} Y \}$.
\end{proof}

This result is important as it allows us to construct several examples of Lorentzian Gromov-Hausdorff convergence. Consider for instance a causal diamond in Minkowski spacetime, so as to get a bounded Lorentzian metric space. Two timelike hypersurfaces on it converging to each other would converge in a Gromov-Hausdorff sense, as the Hausdorff distance for the distinction metric goes to zero (a precise determination of the distinction distance is not required for this conclusion).

\section{ Causal relations \& Lorentzian length spaces}

\subsection{The (extended) causal relation}

\begin{definition}
Let $(X,d)$ be a bounded Lorentzian metric space. The relation $J\subset X\times X$ defined by
\begin{equation} \label{bmb}
J=\{(x,y)|\; \forall p\in X: d(p,y)\ge d(p,x) \ \& \ d(x,p)\ge d(y,p)\}
\end{equation}
is called the {\em (extended) causal relation}.  We write $x\le y$ for $(x,y)\in J$ and $x<y$ for $x\le y$ and $x\ne y$. We also write $J^+(x):=\{y: x\le y\}$, $J^-(x):=\{y: y\le x\}$.
\end{definition}

\begin{remark}
This causal relation will be very useful though it is intuitively a bit too large, particularly at the boundary points.
Let $X^+\subset X$ be the subset of points that do not admit a point in the chronological future, and analogously for $X^-$, so that $i^0=X^+\cap X^-$ provided $i^0\in X$.
Observe that if $(x,y)\in X^-\times X^+$
then $(x,y)\in J$.  Therefore, as examples obtained cutting out subsets of Minkowski spacetime show, the relation $J$ is in general strictly larger than the closure of $I$, and it is also different from the smallest closed, reflexive and transitive relation containing $I$.

On the other hand, if $y\notin X^+$ and assuming every neighborhood of $y$ intersects $I^+(y)$, every pair $(x,y)\in J$ lies in the closure of $I$. The reason for this is the reverse triangle
inequality proven in Theorem \ref{tuh} below. The time reversed case holds as well.\footnote{The expression (\ref{bmb}) makes sense also in Lorentzian manifolds. In that framework it is easy to prove that in causally simple spacetimes the relation coincides with the usual causal relation, see the arguments in Theorem \ref{tuh}.}

One should not expect the  just mentioned inclusion to hold without any assumption. Indeed, this is a ``no bubbling'' property  which, already for proper cone structures makes use of the `proper' condition \cite[Theorem 2.8]{minguzzi17}.  The assumption on the non-isolatedness of $y$ in terms of chronology  is a kind of metric replacement for the proper condition \cite[Theorem 2.2]{minguzzi17}. Observe that proper cone structures can collapse to closed cone structures, and in the process we expect that the no bubbling property can be lost. In other
words, we do not  expect the mentioned inclusion to be preserved under Gromov-Hausdorff limits. Examples of spaces with this property will be given
below in the context of chronally connected spaces, see Definition \ref{def_chr_conn}.
\end{remark}

\begin{remark}
If $(x,z)$ are such that $d(x,z)=0$ and $y\ll z$ then $(x,y) \notin J$ (and dually). In particular, in a causet this shows that $J$ is not much bigger than $I$, and that, roughly speaking, away from the boundary $J$ `wraps' $I$.
\end{remark}

The following result is stronger than \cite[Thm.\ 8(d)]{noldus04}
\begin{proposition}\label{conve}
For each $x\in X$ and $r>0$ the balls $\{z: \gamma(x,z)\le r\}$, $\{z: \gamma(x,z) < r\}$
 are causally convex.
\end{proposition}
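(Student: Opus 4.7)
The plan is to unpack causal convexity directly against the definition of $\gamma$. Recall that a set $S$ is causally convex if $a,c\in S$ and $a\le b\le c$ imply $b\in S$, and that $a\le b$ (i.e.\ $(a,b)\in J$) means that for every $p\in X$ one has $d(p,b)\ge d(p,a)$ and $d(a,p)\ge d(b,p)$. Chaining this with $b\le c$ gives, for every $p\in X$,
\[
d(p,a)\le d(p,b)\le d(p,c) \quad\text{and}\quad d(c,p)\le d(b,p)\le d(a,p).
\]
So each of the two functions $p\mapsto d(p,\cdot)$ and $p\mapsto d(\cdot,p)$ is monotone along a $J$-chain.

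Now fix $x\in X$ and $r>0$, take $a,c$ in the closed ball $B:=\{z:\gamma(x,z)\le r\}$, and suppose $a\le b\le c$. For an arbitrary $p\in X$, the hypothesis $\gamma(x,a),\gamma(x,c)\le r$ yields
\[
|d(x,p)-d(a,p)|\le r,\quad |d(x,p)-d(c,p)|\le r,
\]
together with the analogous inequalities for $d(p,\cdot)$. Using the sandwich $d(c,p)\le d(b,p)\le d(a,p)$, I get
\[
d(b,p)-d(x,p)\le d(a,p)-d(x,p)\le r,\qquad d(x,p)-d(b,p)\le d(x,p)-d(c,p)\le r,
\]
hence $|d(x,p)-d(b,p)|\le r$; the sandwich $d(p,a)\le d(p,b)\le d(p,c)$ gives $|d(p,x)-d(p,b)|\le r$ by the same token. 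Taking the sup over $p$ yields $\gamma(x,b)\le r$, so $b\in B$.

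For the open ball $\{z:\gamma(x,z)<r\}$, I set $r':=\max\{\gamma(x,a),\gamma(x,c)\}<r$ and apply the closed-ball argument with $r'$ in place of $r$ to conclude $\gamma(x,b)\le r'<r$.

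I do not anticipate a real obstacle: the whole content of the proposition is that $d(\cdot,p)$ and $d(p,\cdot)$ are monotone along $J$-chains, a fact built into the very definition of $J$. The only thing to be careful about is not mixing up the direction of the inequalities in the two factors defining $\gamma$, which is why I treat the two sandwiches separately.
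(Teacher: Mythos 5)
Your proof is correct and rests on the same core observation as the paper's: monotonicity of $p\mapsto d(p,\cdot)$ and $p\mapsto d(\cdot,p)$ along a $J$-chain, which is built into the definition of $J$, combined with the endpoint bounds $\gamma(x,a),\gamma(x,c)\le r$. The only real difference is in how the supremum defining $\gamma$ is handled. The paper first reduces to the case $i^0\in X$ (so that $X$ is compact and the supremum in $\gamma(x,b)$ is attained at some point $w$) and then runs a four-case analysis according to which of the four expressions realizes $\gamma(x,b)$. You instead bound both $\vert d(x,p)-d(b,p)\vert$ and $\vert d(p,x)-d(p,b)\vert$ by $r$ for \emph{every} $p$ and take the supremum at the end; this makes the compactness reduction and the attainment of the supremum unnecessary, and collapses the four cases into two symmetric sandwich arguments. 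The inequalities you chain are literally the ones appearing in the paper's cases, so the content is the same, but your pointwise formulation is a mild simplification. The treatment of the open ball via $r':=\max\{\gamma(x,a),\gamma(x,c)\}<r$ coincides with the paper's.
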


\begin{proof}
The inclusion of $i^0$ in $X$ does not alter  $\gamma$ or $J$ on $\mathring{X}\times \mathring{X}$. Thus we can assume that $X$ contains $i^0$ and hence is compact.
We prove the version with the lesser-equal sign. Suppose that $\gamma(x,p)\le r$, $\gamma(x,q)\le r$, and let $y\in J^+(p)\cap J^-(q)$. By compactness the supremum in the definition of $\gamma(x,y)$ is realized by a point $w$, and we have four possible cases.
\begin{itemize}
\item[(1)] $\gamma(x,y)=d(w,y)-d(w,x)\le d(w,q)-d(w,x)\le \gamma(x,q)\le r$.
\item[(2)]    $\gamma(x,y)=d(w,x)-d(w,y)\le d(w,x)-d(w,p)\le \gamma(x,p)\le r$.
\item[(3)]  $\gamma(x,y)=d(y,w)-d(x,w)\le d(p,w)-d(x,w)\le \gamma(x,p)\le r$
\item[(4)]     $\gamma(x,y)=d(x,w)-d(y,w)\le d(x,w)-d(q,w)\le \gamma(x,q)\le r$.
\end{itemize}
The strict inequality sign case follows from the previous case, as there is $r'<r$ such that $p,q\in \{z: \gamma(x,z) \le  r'\}$.
\end{proof}

By considering balls of radius $\gamma(x,y)$ centered in $x$ and $y$ we get

\begin{corollary}\label{causconv}
For any pair of points $x,y\in X$ we have
\[
\gamma(x,z)\text{ and }\gamma(z,y)\le \gamma(x,y)
\]
for all $z\in J^+(x)\cap J^-(y)$.
\end{corollary}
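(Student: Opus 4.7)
The plan is to deduce the corollary directly from Proposition \ref{conve}, exploiting the hint that one should center balls of radius $\gamma(x,y)$ at the two endpoints.

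First I would consider the closed ball
\[
B_x := \{w \in X : \gamma(x,w) \le \gamma(x,y)\}.
\]
By Proposition \ref{conve}, $B_x$ is causally convex. Since $\gamma(x,x) = 0$ and trivially $\gamma(x,y) \le \gamma(x,y)$, both $x$ and $y$ lie in $B_x$. Given $z \in J^+(x) \cap J^-(y)$, causal convexity forces $z \in B_x$, i.e.\ $\gamma(x,z) \le \gamma(x,y)$.

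Second, I would repeat the argument with the ball centered at $y$:
\[
B_y := \{w \in X : \gamma(y,w) \le \gamma(x,y)\}.
\]
By the symmetry $\gamma(y,w) = \gamma(w,y)$ of the distinction metric, and again using Proposition \ref{conve}, $B_y$ is causally convex and contains both $x$ and $y$. Hence any $z \in J^+(x) \cap J^-(y)$ also lies in $B_y$, yielding $\gamma(z,y) \le \gamma(x,y)$.

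No real obstacle is expected here: the entire content of the corollary is essentially a translation of ``causal convexity of $\gamma$-balls'' into the symmetric assertion about pairs of endpoints. The only small subtlety worth checking is that Proposition \ref{conve} is stated for balls centered at an arbitrary point, which is indeed the case, so the same statement applies equally with center $x$ or $y$.
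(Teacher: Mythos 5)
Your proof is correct and follows exactly the route the paper intends: the corollary is stated there as an immediate consequence of Proposition \ref{conve} obtained ``by considering balls of radius $\gamma(x,y)$ centered in $x$ and $y$'', which is precisely your argument, including the (harmless) use of the symmetry of $\gamma$ for the ball centered at $y$.
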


We recall that for two relations $A,B\subset X\times X$ the composition $A\circ B$ is defined as in Eq.\ (\ref{kdc}).

\begin{definition}
Let $(X,d)$ be a bounded Lorentzian metric space. A continuous function $\tau \colon X\to \mathbb{R}$ with $\tau(x)<\tau(y)$ whenever $x<y$ is called a {\it time function}.
\end{definition}

\begin{theorem} \label{tuh}
The causal relation $J$ is closed, reflexive and transitive. If $(x,y)$, $(y,z)\in J$ then $d(x,y)+d(y,z)\le d(x,z)$. Moreover, $I\subset J$ and $I\circ J\cup J\circ I \subset I$.
Finally, there is a bounded time function $\tau:X\mapsto [-M,M]$, $M<\infty$,
which is 1-Lipschitz with respect to the distinction distance. In particular, $J$ is antisymmetric.
\end{theorem}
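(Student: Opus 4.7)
The plan is to verify each of the seven assertions in turn; all but the construction of the time function are quick consequences of the definitions.

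\textbf{Closedness, reflexivity, transitivity, reverse triangle inequality.} For each fixed $p\in X$, continuity of $d$ in $\mathscr{T}\times\mathscr{T}$ (Corollary~\ref{vjw}) implies that the set $\{(x,y):d(p,y)\ge d(p,x)\ \text{and}\ d(x,p)\ge d(y,p)\}$ is closed; $J$ is the intersection of these over $p\in X$, hence closed. Reflexivity is immediate, and transitivity follows by chaining the two defining inequalities. For the reverse triangle inequality along $J$, I would split on whether the two distances vanish: the case $d(x,y)=d(y,z)=0$ is trivial; if both are positive it is exactly Definition~\ref{D1}(i); if only $d(x,y)=0$ (respectively $d(y,z)=0$) then the $J$-inequality $d(x,z)\ge d(y,z)$ with $p=z$ (respectively $d(x,z)\ge d(x,y)$ with $p=x$) delivers the desired bound.

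\textbf{Inclusions $I\subset J$ and $I\circ J\cup J\circ I\subset I$.} For $(x,y)\in I$ and any $p\in X$, Definition~\ref{D1}(i) yields $d(p,y)\ge d(p,x)+d(x,y)\ge d(p,x)$ when $d(p,x)>0$, and the inequality is trivial otherwise; the symmetric check $d(x,p)\ge d(y,p)$ is identical. If $x\le y\ll z$, the defining inequality $d(x,p)\ge d(y,p)$ applied with $p=z$ gives $d(x,z)\ge d(y,z)>0$, so $(x,z)\in I$; the case $x\ll y\le z$ is analogous using $d(p,z)\ge d(p,y)$ with $p=x$.

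\textbf{Time function (the main obstacle).} I would fix a countable $\mathscr{T}$-dense subset $\mathcal{S}=\{s_n\}_{n\ge 1}\subset X$, which exists by separability, and set
\[
\tau(x):=\sum_{n=1}^{\infty} 2^{-n-1}\bigl(d(s_n,x)-d(x,s_n)\bigr).
\]
Boundedness of $d$ (Remark~\ref{R1}, item~5) gives uniform convergence of the series and the estimate $|\tau|\le \tfrac12\diam X$, so one may take $M=\tfrac12\diam X$. Each summand $x\mapsto d(s_n,x)-d(x,s_n)$ is $2$-Lipschitz with respect to $\gamma$ by the very definition of the distinction metric, so $\tau$ is $1$-Lipschitz in $\gamma$ and hence continuous. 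Monotonicity along $\le$ is built in: for $x\le y$ the defining inequalities of $J$ make every summand nonnegative, so $\tau(x)\le\tau(y)$. The real work is upgrading this to strict inequality when $x<y$: Proposition~\ref{prop(iv)} applied to $x\neq y$ yields $s_{n_0}\in\mathcal{S}$ with $d(s_{n_0},x)\neq d(s_{n_0},y)$ or $d(x,s_{n_0})\neq d(y,s_{n_0})$, and combining this with the one-sided inequalities coming from $x\le y$ forces the $n_0$-th summand to be strictly positive, so $\tau(x)<\tau(y)$. This coupling of monotonicity with the distinguishing property via $\mathcal{S}$ is the one nontrivial step of the whole proof.

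\textbf{Antisymmetry.} If $x\le y$ and $y\le x$ with $x\neq y$, the strict monotonicity of $\tau$ would give both $\tau(x)<\tau(y)$ and $\tau(y)<\tau(x)$, a contradiction; hence $J$ is antisymmetric.
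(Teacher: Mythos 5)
Your proposal is correct and follows essentially the same route as the paper: the same case analysis for the reverse triangle inequality and the inclusions, and the identical time function $\tau(x)=\tfrac12\sum_n 2^{-n}\bigl(d(s_n,x)-d(x,s_n)\bigr)$ built from a countable dense distinguishing set via Proposition~\ref{prop(iv)}, with the same Lipschitz estimate and the same strict-monotonicity argument yielding antisymmetry. The only cosmetic slip is the phrase ``every summand nonnegative,'' which should refer to the summands of the difference $\tau(y)-\tau(x)$; the intended argument is clearly the correct one.
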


It is well known that if $J$ is closed then $J^+(p)$ and $J^-(p)$ are closed for every $p\in X$ (a closed preordered space is a semiclosed preordered space, see \cite{nachbin65}). If we assume that $i^0\in X$ then by the compactness of $X$, $J^+(p)\cap J^-(q)$ is compact for every $p,q\in X$.

\begin{proof}
Reflexivity is immediate. Closedness follows from the continuity of $d$. As for transitivity, let $(x,y), (y,z)\in J$, and let $p\in X$, then
\[
d(p,y)-d(p,x)\ge 0\text{ and }d(p,z)-d(p,y)\ge 0,
\]
which summed give $d(p,z)-d(p,x)\ge 0$, and similarly for the other type of inequality, hence $(x,z)\in J$.

Let us prove the inclusion $I\subset J$. Let $(x,y)\in I$ and let $p\in X$. If $p$ is such that $d(p,x)>0$ then $d(p,y)-d(p,x)\ge d(x,y)> 0$ follows from the reverse triangle inequality for $I$. If $d(p,x)=0$, then necessarily $d(p,y)-d(p,x)\ge 0$ as $d$ is non-negative. This shows the validity of one type of inequality, the other type being similarly proved.

Let $(x,y),(y,z)\in J$. If $d(x,y)=d(y,z)=0$ then as $d(x,z)\ge 0$, the reverse triangle inequality is satisfied. If the two distances are both positive, the reverse triangle inequality follows from that for $I$. If one distance is zero and the other positive, say $d(x,y)=0$, $d(y,z)>0$, then we need only show $d(x,z)- d(y,z)\ge 0$ which follows from $(x,y)\in J$. The other case is analogous. The formula $I\circ J\cup J\circ I \subset I$ is a consequence of the reverse triangle inequality.

Let us come to the last statement. We know that the Lorentzian distance $d$ is bounded by some constant $M<\infty$ (cf.\ Remark \ref{R1}).
Recall that by  Proposition \ref{prop(iv)} $(X,\mathscr{T})$ admits a dense denumerable subset
$\mathcal{S}=\{s_n\}_{n\in\mathbb{N}}$ that distinguishes points.
Let us consider the following function
\begin{equation} \label{for}
\tau :X \to [-M,M], \quad \tau (x):=\frac{1}{2} \left[\sum_{n=1} \frac{1}{2^n} d(s_n, x)-\sum_{n=1} \frac{1}{2^n} d( x,s_n)\right].
\end{equation}
It is continuous in the topology $\mathscr{T}$ as it is a uniform limit of continuous functions.
Let  two points $x,y\in X$, $(x,y)\in J$ be given.  Then we have $d(s_n, x)\le d(s_n,y)$ and $d(x,s_n) \ge d(y,s_n)$ for each $n\in\mathbb{N}$.
 As a consequence, $\tau(x)\le \tau(y)$. But $x\ne y$ implies that  they are distinguished by some $s_k$, which means that some of the previous inequalities were strict, which implies $\tau(x)<\tau(y)$.

For the Lipschitz property  note that
\begin{align*}
\vert \tau(y)-\tau(x)\vert&= \frac{1}{2} \vert \sum_n \frac{1}{2^n} [d(s_n, y)-d(s_n, x)]-\sum_n \frac{1}{2^n} [d( y,s_n)-d(x,s_n)]\vert \\
&\le \frac{1}{2}  \sum_n \frac{1}{2^n} \vert d(s_n, y)-d(s_n, x)\vert +\frac{1}{2} \sum_n \frac{1}{2^n} \vert d( y,s_n)-d(x,s_n)\vert\\
&\le  \gamma(x,y).
\end{align*}
\end{proof}

\subsection{Lorentzian length spaces}\label{sec_lor_length_spaces}

In a bounded Lorentzian metric space $(X,d)$ we call a continuous curve $\sigma\colon [a,b]\to X$ {\it isocausal} if $\sigma(t) <  \sigma(t')$ for all $t,t'\in [a,b]$ with $t<t'$.
Further, an isocausal curve $\eta\colon [a,b]\to X$ is {\it isochronal} if $\eta(t)\ll \eta(t')$ for all $t,t'\in [a,b]$, with $t<t'$.

We remark that isocausal curves are continuous by definition thus we shall often omit this adjective.

\begin{proposition}
Let $(X,d)$ be a bounded Lorentzian metric space and $\tau\colon X\to\mathbb{R}$ be a time function on $X$. Then every isocausal curve can be reparameterized to be $\tau$-uniform,
i.e.\ for $\sigma\colon I\to X$ there exist $a<b\in \mathbb{R}$ and a monotone increasing homeomorphism $\varphi\colon [a,b]\to I$ such that $\tau(\sigma\circ\varphi(s))=s$.
\end{proposition}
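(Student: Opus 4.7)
The plan is essentially to compose $\tau$ with $\sigma$ and invert, observing that the composition is a continuous strictly monotone map on a compact interval. Write $\sigma\colon I\to X$ with $I=[c,d]$ (the earlier definition of isocausal curve fixes the domain to be a compact interval). Set $\psi:=\tau\circ\sigma\colon [c,d]\to \mathbb{R}$ and $a:=\psi(c)$, $b:=\psi(d)$.

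First I would check that $\psi$ is continuous: $\sigma$ is continuous by definition and $\tau$ is continuous since it is a time function (it is even $\gamma$-Lipschitz by Theorem \ref{tuh}). Next I would show $\psi$ is strictly increasing: for $t<t'$ in $[c,d]$ the isocausal condition gives $\sigma(t)<\sigma(t')$ in the causal order $J$, and the defining property of a time function then yields $\tau(\sigma(t))<\tau(\sigma(t'))$. In particular $a<b$ and $\psi$ maps $[c,d]$ bijectively onto $[a,b]$ (the image is an interval by the intermediate value theorem applied to the continuous $\psi$).

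A continuous strictly increasing bijection between compact intervals of $\mathbb{R}$ is automatically a homeomorphism, so $\psi\colon [c,d]\to [a,b]$ is a monotone homeomorphism. Define $\varphi:=\psi^{-1}\colon [a,b]\to [c,d]$; this is also a monotone increasing homeomorphism, and by construction $\tau(\sigma(\varphi(s)))=\psi(\varphi(s))=s$ for every $s\in[a,b]$. There is no real obstacle here beyond verifying strict monotonicity, which is immediate from the two defining properties in play.
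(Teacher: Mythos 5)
Your proof is correct and follows exactly the same route as the paper: both compose $\tau$ with $\sigma$, note that the result is continuous and strictly increasing (hence a homeomorphism onto its image $[a,b]$), and take $\varphi$ to be the inverse. The extra details you supply (strict monotonicity from the isocausal condition plus the time-function property, and the standard fact that a continuous strictly increasing bijection of compact intervals is a homeomorphism) are precisely what the paper leaves implicit.
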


\begin{proof}
Let $\sigma\colon I\to X$ be isocausal. Then the map $\tau\circ\sigma \colon I\to \mathbb{R}$ is continuous and strictly monotone increasing, i.e. a homeomorphism onto its image.
Denote with $[a,b]$ the  image of the map. Then $\varphi:= (\tau\circ\sigma)^{-1}\colon [a,b]\to I$ is the claimed homeomorphism.
\end{proof}

\begin{definition}
An isocausal curve $\sigma\colon [a,b]\to X$ is {\it maximal} (or {\it maximizing}) if
\begin{equation} \label{jji}
d(\sigma(t),\sigma(t'))+d(\sigma(t'),\sigma(t''))=d(\sigma(t),\sigma(t'')).
\end{equation}
for every $t,t',t''\in [a,b]$, $t< t'< t''$.
\end{definition}
Clearly, it is equivalent to the same property for $t\le t'\le t''$.

Note that it might be the case that  $d(\sigma(t),\sigma(t'))=0$ for some $t<t'\in[a,b]$ though $d(\sigma(a),\sigma(b))>0$.
Any restriction  $\sigma|_{[c,e]}$, $[c,e]\subset [a,b]$, $c < e$, is also maximal
by the fact that any restriction of an isocausal curve is isocausal.

In order to formulate the limit curve theorem, it is convenient to extend the interval of definition of the curves (note that the extension is not isocausal) mostly due to slight complications caused by the fact that each curve has its own domain of definition. Alternatively, we could proceed using the notion of uniform convergence introduced in
\cite[Definition 2.1]{minguzzi07c}.

For an isocausal curve
$\sigma \colon [a,b]\to X$ we set

\begin{equation} \label{gid}
\hat{\sigma}\colon \mathbb{R}\to X,\quad s\mapsto \begin{cases}
\sigma(a),& \text{ if }s< a\\
\sigma(s),& \text{ if }s\in[a,b]\\
\sigma(b),& \text{ if }s>b.
\end{cases}
\end{equation}

\begin{definition}
Let $\sigma_n:[a_n,b_n] \to X$, $\sigma: [a,b]\to X$ be  isocausal curves.
We say that $\sigma_n$ {\em converges uniformly} to $\sigma$ with respect to the metric (distance) $h$ if $a_n \to a$, $b_n\to b$, and $\hat \sigma_n$  converges uniformly to $\hat \sigma$ with respect to the metric $h$. In particular, $\sigma_n$ converges to $\sigma$ pointwisely.
\end{definition}

\begin{remark}
In a bounded Lorentzian metric space that contains $i^0$, $(X,\mathscr{T})$ is a  compact metrizable space (the topology $\mathscr{T}$ being induced by $\gamma$) and hence admits a unique uniform structure \cite[Theorem 36.19]{willard70}. Thus, although we shall prove the following theorem by using the uniformity induced by $\gamma$, there is really no need to mention it in the statement of the theorem.
\end{remark}

\begin{theorem}[Limit Curve Theorem]\label{LCT}
Let $(X,d)$ be a bounded Lorentzian metric space that includes $i^0$. Let  $\sigma_n: [a_n,b_n]\to  X$ be  a sequence of
isocausal curves parametrized with  respect to a given time function $\tau$, $\tau(\sigma(t))=t$. Suppose that $\{a_n\}$ has no accumulation point in common with $\{b_n\}$.
Then there  exists  a  isocausal curve $\sigma:[a,b]\to X$
 and a subsequence $\{\sigma_{n_k}\}_k$ that converges uniformly to $\sigma$.
If the curves $\sigma_n$ are maximizing (i.e.\ Equation \eqref{jji} holds) then so is $\sigma$.
\end{theorem}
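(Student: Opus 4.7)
I would combine compactness of $(X,\mathscr{T})$, closedness and antisymmetry of $J$ (Theorem \ref{tuh}), and strict monotonicity of the time function $\tau$ along isocausal curves, to run a ``diagonal pointwise extraction $+$ order-theoretic extension'' argument in the spirit of the classical limit curve theorems. Since $\tau$ is bounded the sequences $\{a_n\},\{b_n\}$ are bounded; pass to a subsequence with $a_n\to a$ and $b_n\to b$, and note the no-common-accumulation hypothesis forces $a<b$. Fix a countable dense set $Q\subset(a,b)$. For each $t\in Q$, $\sigma_n(t)$ is defined for all sufficiently large $n$ and lies in the compact space $X$, so a Cantor diagonal extraction yields a subsequence (still denoted $\sigma_n$) with $\sigma_n(t)\to\sigma(t)$ for every $t\in Q$. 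Continuity of $\tau$ gives $\tau(\sigma(t))=t$, and closedness of $J$ applied to the isocausal inequalities $\sigma_n(s)\le\sigma_n(t)$ for $s<t$ in $Q$ passes to $\sigma(s)\le\sigma(t)$.

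\textbf{Extension to $[a,b]$ and isocausality.} For $t\in[a,b]$ and any two subsequential limits $p,q$ of sets of the form $\{\sigma(s_k):s_k\in Q,\;s_k\to t\}$, suitably interleaving rational sequences (and using closedness of $J$) one obtains $p\le q$; since $\tau(p)=\tau(q)=t$ and $\tau$ is a time function, strict monotonicity on $J$ forces $p=q$. Thus $\sigma(t):=\lim_{s\in Q,\,s\to t}\sigma(s)$ is well-defined. The same argument applied to an arbitrary real sequence $t_j\to t$ (using compactness of $X$ to extract convergent subsequences and sandwiching by rationals above and below $t$) gives $\sigma(t_j)\to\sigma(t)$, so $\sigma$ is continuous on $[a,b]$. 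Monotonicity extends from $Q$ to $[a,b]$ by closedness of $J$, and strictness $\sigma(s)\ne\sigma(t)$ for $s\ne t$ comes for free from $\tau(\sigma(s))=s\ne t=\tau(\sigma(t))$; hence $\sigma$ is isocausal.

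\textbf{Uniform convergence and maximality.} Suppose, toward contradiction, that there exist $\varepsilon>0$, a subsequence $n_k\to\infty$, and $t_k\in\mathbb{R}$ with $\gamma(\hat\sigma_{n_k}(t_k),\hat\sigma(t_k))\ge\varepsilon$. Pass to further subsequences so that $t_k\to t^*\in[a,b]$ and $\hat\sigma_{n_k}(t_k)\to p$ in $X$. For $t^*\in(a,b)$ one has $\hat\sigma_{n_k}(t_k)=\sigma_{n_k}(t_k)$ eventually, so $\tau(p)=t^*$; the inequalities $\sigma_{n_k}(s)\le\sigma_{n_k}(t_k)\le\sigma_{n_k}(r)$ for rationals $s<t^*<r$ pass to $\sigma(s)\le p\le\sigma(r)$, and the continuity of $\sigma$ from the previous paragraph, together with antisymmetry of $J$, yields $p=\sigma(t^*)$. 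Since $\hat\sigma(t_k)\to\sigma(t^*)$ by continuity, this contradicts the $\varepsilon$-separation. The boundary cases $t^*\in\{a,b\}$ are handled identically once one observes that the same sandwich argument gives $\sigma_n(a_n)\to\sigma(a)$ and $\sigma_n(b_n)\to\sigma(b)$. Finally, if the $\sigma_n$ are maximizing, then for any $t<t'<t''$ in $(a,b)$ identity \eqref{jji} for $\sigma_n$ passes to the limit by continuity of $d$ and the convergence of $\sigma_n$ just established, giving \eqref{jji} for $\sigma$; the endpoint values follow by continuity.

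\textbf{Main obstacle.} The substantive point is the unavailability of Arzel\`a--Ascoli: by Corollary \ref{cor_non-rectif}, isocausal curves in smooth spacetimes fail to be $\gamma$-rectifiable, so one has no equicontinuity estimate $\gamma(\sigma_n(t),\sigma_n(t'))\le\omega(|t-t'|)$ uniform in $n$. Uniform convergence must instead be extracted purely from the package ``compact Polish target $+$ closed antisymmetric partial order $J$ $+$ strictly monotone continuous time function $\tau$'', which is precisely the reason the $\tau$-parametrization is built into the hypothesis.
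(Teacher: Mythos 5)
Your proposal is correct, and its skeleton (pointwise extraction on a countable dense parameter set, extension forced by the order and the time function, then uniform convergence, then maximality by continuity of $d$) matches the paper's. The one genuinely different step is how you obtain uniform convergence. The paper proves it quantitatively: it picks a rational partition $s_0<\dots<s_m$ with $\gamma(\hat\sigma(s_i),\hat\sigma(s_{i+1}))<\varepsilon$ and then invokes Corollary \ref{causconv} (causal convexity of distinction-metric balls) to bound $\gamma(\hat\sigma_n(s),\hat\sigma_n(s_i))$ by $\gamma(\hat\sigma_n(s_i),\hat\sigma_n(s_{i+1}))$ for $s\in[s_i,s_{i+1}]$, yielding an explicit $5\varepsilon$ chaining estimate — this is the monotone-function substitute for equicontinuity. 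You instead run a soft compactness argument: a putative sequence of $\varepsilon$-bad parameters $t_k\to t^*$ produces a subsequential limit $p$ of $\hat\sigma_{n_k}(t_k)$ that is sandwiched, via closedness of $J$, between $\sigma(s)$ and $\sigma(r)$ for rationals $s<t^*<r$, and continuity of the limit curve plus antisymmetry of $J$ (Theorem \ref{tuh}) force $p=\hat\sigma(t^*)$, contradicting the $\varepsilon$-separation. Your route needs only closedness and antisymmetry of $J$, the time function, and the fact that $\gamma$ metrizes $\mathscr{T}$ — it bypasses Corollary \ref{causconv} entirely — at the price of being purely qualitative; the paper's route buys an explicit uniform modulus controlled by the limit curve alone. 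Your treatment of well-definedness of the extension (uniqueness of subsequential limits via interleaving) is an equivalent reformulation of the paper's Cauchy-continuity argument, and your handling of the endpoints ($\sigma_n(a_n)\to\sigma(a)$ by a one-sided sandwich plus the equality of $\tau$-values) correctly reproduces the boundary cases $r\le a$, $r\ge b$ that the paper works through explicitly. Your closing diagnosis — that the absence of $\gamma$-rectifiability (Corollary \ref{cor_non-rectif}) is precisely why Arzel\`a--Ascoli is unavailable and the $\tau$-parametrization is essential — is exactly the point of the theorem.
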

The last result is really just a consequence of pointwise convergence.

\begin{proof}
 Set   $x_n:=\sigma_n(a_n)$, $y_n:=(\sigma_n(b_n))$. We can pass to a subsequence, denoted in the same way, so that they converge, $x_n\to x$, $y_n\to y$. But
 $\tau(x_n)=\tau(\sigma_n(a_n))=a_n$ thus in the limit $a_n\to a:=\tau(x)$, and similarly $b_n\to b:=\tau(y)$. Set $\sigma(a):=x$ and $\sigma(b):=y$.
 Observe that $a< b$ due to $a_n\le b_n$ and the accumulation condition on $\{a_n\}$ and $\{b_n\}$.

For $q\in \mathbb{Q}$  consider the sequence  $\{\hat{\sigma}_n(q)\}_n$ with $n$ sufficiently large. Since $(X,\mathscr{T})$ is compact  the sequence admits a converging subsequence. Via a Cantor diagonal argument we can pass to a subsequence so that  the sequence $\{\hat{\sigma}_n(q)\}_n$ converges to some point that we denote $\hat{\sigma}(q)$  for every $q\in \mathbb{Q}$.

Observe that $\tau(\hat{\sigma}_n(q))=q$ for $q\in (a,b)\cap\mathbb{Q}$ and $n$ sufficiently large gives  $\tau(\hat{\sigma}(q))=q$ in the limit. Moreover,
 $q<q'$, $q,q'\in (a,b)\cap \mathbb{Q}$ implies $\sigma_n(q)<\sigma_n(q')$  for $n$ sufficiently large, and since $J$ is closed we obtain $\hat{\sigma}(q)\le \hat{\sigma}(q')$
 in the limit. But $\tau$ takes different values on these two points thus $\hat{\sigma}(q)<\hat{\sigma}(q')$.

We shall prove in a moment that, by a similar reasoning, $\hat{\sigma}\colon \mathbb{Q}\to X$ is  {\it Cauchy continuous}, i.e. it maps Cauchy sequences to Cauchy sequences. Then, by a standard result in topology (recall that by Theorem \ref{Theo_kurat}, $(X,\gamma)$ is a compact metric space, hence complete),
$\hat{\sigma}$ has a unique continuous extension to $\mathbb{R}$, that we denote in the same way. Observe that by continuity of the extension $\hat \sigma$, $\tau(\hat{\sigma}(r))=r$ for every $r\in [a,b]$. Similarly, by continuity of $\hat \sigma$ and the closure of $J$, $\hat{\sigma}(r)\le \hat{\sigma}(r')$, for every $r,r'\in [a,b]$. If $r,r'\in [a,b]$ are such that $r<r'$ then as $\tau$ takes different values on $\hat{\sigma}(r)$ and $\hat{\sigma}(r')$, we get $\hat{\sigma}(r)< \hat{\sigma}(r')$, namely $\sigma:= \hat \sigma\vert_{[a,b]}$ is a  isocausal curve.

Observe that for $q\in \mathbb{Q}$, $q< a$, we have for sufficiently large $n$, $q<a_n$, hence $\hat \sigma_n(q)=x_n$, which implies taking the limit, $\hat \sigma(q)=x$. Any continuous extension $\hat \sigma$ satisfies $\hat \sigma(r)=x$ for every $r\in \mathbb{R}$, $r\le a$, and analogously for $r\ge b$. This proves that $\hat \sigma$ is indeed the extension of $\sigma$ in the sense of Eq.\ (\ref{gid}).

In order to prove the Cauchy continuity  of $\hat{\sigma}\colon \mathbb{Q}\to X$,  let us proceed by contradiction.
Suppose that there exists a Cauchy sequence $\{q_n\}\subset \mathbb{Q}$ that does not map to a $\gamma$-Cauchy sequence. Let $r\in \mathbb{R}$ be the limit of $q_n$.  There is $\varepsilon >0$
 and two subsequences $\{q_{m_k}\}_k,\{q_{n_k}\}_k\subset \{q_n\}$ with
\[
\gamma(\hat{\sigma}(q_{m_k}),\hat{\sigma}(q_{n_k}))\ge \varepsilon
\]
for all $k\in\mathbb{N}$.
Without loss of generality we can assume that $q_{m_k}<q_{n_k}$ for infinitely many $k$ and that both sequences converge with $\hat{\sigma}(q_{m_k})\to w$
and $\hat{\sigma}(q_{n_k})\to z$, respectively. We have $\gamma(w,z)\ge \varepsilon$.  Observe that $\hat{\sigma}(q_{m_k}) \le \hat{\sigma}(q_{n_k})$, hence $w\le z$, and finally $w<z$.

For $r\in (a,b)$ we have for sufficiently large $k$, $q_{m_k}, q_{n_k}\in (a,b)$.
 Since $\tau(\hat{\sigma}(q_{m_k}))=q_{m_k}\to r$, $\tau(\hat{\sigma}(q_{n_k}))=q_{n_k}\to r$ we obtain
$\tau(w)=\tau(z)=r$.
This contradicts the properties of the time function $\tau$.

If $r\le a$, we can prove that for all but a finite number of $k$, it must be $q_{n_k}>  a_n $ for sufficiently large $n$. Indeed, otherwise there are infinite values of $k$ for which $q_{m_k},q_{n_k} \le  a_n $ for infinitely many $n$, which implies  $\hat \sigma_n(q_{m_k})=\hat \sigma_n(q_{m_k})=x_n$,
and  for $n\to \infty$,  $\hat{\sigma}(q_{m_k})=\hat{\sigma}(q_{n_k})=x$, and finally for $k\to \infty$, $w=z=x$, $\gamma(x,x)\ge \epsilon>0$, a contradiction.
 However, as $k$ is such that  $q_{n_k}>  a_n $ for every sufficiently  large $n$, we have $\tau(\hat{\sigma}_n (q_{n_k}))=q_{n_k}>a_n$, and in the limit $n\to \infty$,  $\tau(\hat{\sigma}(q_{n_k}))=q_{n_k}\ge a$ and hence in the limit $k \to \infty$, $\tau(z)=r=a$ (as we are in the case $r\le a$). Now there are two cases dependent on the sequence $q_{m_k}$. If  for all but a finite number of $k$, we have  $q_{m_k}\le a_n $ for sufficiently large $n$, then $\hat \sigma_n(q_{m_k})= x_n$ and taking the limit  $n\to \infty$, $\hat \sigma(q_{m_k})=x$, and taking the limit $k\to \infty$, $w=x$,  which implies $\tau(w)=a$. If, instead, there are infinite $k$ for which $q_{m_k}> a_n$ for infinitely many $n$, then as proved above for $q_{n_k}$ we get $\tau(w)=a$.
 In any  case  $\tau(w)=\tau(z)=a$  which contradicts the properties of the time function $\tau$. The case $r\ge b$ is analogous.

Next we show that $\{\hat{\sigma}_n\}$ admits a subsequence converging uniformly to $\hat{\sigma}$.
Choose $n\in\mathbb{N}$ sufficiently large such that $a_n>a-1$ and $b_n<b+1$. Let
$\varepsilon >0$ be given. Choose $m\in \mathbb{N}$ and rational parameters $s_0<a-1<s_1<\ldots <b+1<s_m$ such that
\[
\gamma(\hat{\sigma}(s_i),\hat{\sigma}(s_{i+1}))<\varepsilon.
\]
For the subsequence constructed above we can choose $N$ so large that
\[
\gamma(\hat{\sigma}_n(s_i),\hat{\sigma}(s_i))<\varepsilon
\]
for all $n>N$ and $0\le i\le m-1$. Now for $s\in [a-1,b+1]$ and $n> N$ choose $i$ with $s\in [s_i,s_{i+1}]$. It follows with Corollary \ref{causconv}
\begin{align*}
\gamma(\hat{\sigma}_n(s),\hat{\sigma}(s))&\le \gamma(\hat{\sigma}_n(s),\hat{\sigma}_n(s_i))+\gamma(\hat{\sigma}_n(s_i),\hat{\sigma}(s_i))+
\gamma(\hat{\sigma}(s_i),\hat{\sigma}(s))\\
&\le \gamma(\hat{\sigma}_n(s_{i}),\hat{\sigma}_n(s_{i+1}))+\gamma(\hat{\sigma}_n(s_i),\hat{\sigma}(s_i))+
\gamma(\hat{\sigma}(s_i),\hat{\sigma}(s_{i+1}))\\
&
\le [\gamma(\hat{\sigma}_n(s_{i}),\hat{\sigma}(s_{i}))+\gamma(\hat{\sigma}(s_{i}),\hat{\sigma}(s_{i+1}))
+\gamma(\hat{\sigma}(s_{i+1}),\hat{\sigma}_n(s_{i+1}))]+2\varepsilon
 \\
&\le 3\varepsilon +2\varepsilon=5\varepsilon,
\end{align*}
i.e. the chosen subsequence of $\{\hat{\sigma}_n\}_n$ converges uniformly to $\hat{\sigma}$ on $[a-1,b+1]$. Since for all $n$ sufficiently large all curves are constant
on the complement of $[a-1,b+1]$ we see that the convergence is uniform on all of $\mathbb{R}$.

The equations
\[
d(\sigma_n(t),\sigma_n(t'))+d(\sigma_n(t'),\sigma_n(t''))=d(\sigma_n(t),\sigma_n(t''))
\]
clearly pass in the limit to Equation \eqref{jji} for $\sigma$.
\end{proof}

\begin{definition}\label{def_chr_conn}
A bounded Lorentzian metric space $(X,d)$ is a {\it (bounded) Lorentzian prelength space} if for every pair $x,y\in X$ with $x\ll y$ there exists an isocausal curve $\sigma: [0,1]\to X$
from $x$ to $y$.
\end{definition}

We refer to the above property as {\em chronal connecteness by isocausal curves}.
\begin{proposition} \label{pby}
Let $(X,d)$ be a bounded Lorentzian prelength space that includes $i^0$. Then for every $(x,y)\in \bar I$, $x \ne y$,  there exists a  isocausal curve
$\sigma: [a,b]\to X$ connecting $x$ to $y$.
\end{proposition}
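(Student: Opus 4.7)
The plan is to combine the chronal-connectedness hypothesis with the Limit Curve Theorem (Theorem \ref{LCT}), using a time function to parametrize approximating curves so that the limit has the correct endpoints.

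First I would choose sequences $x_n,y_n\in X$ with $x_n\ll y_n$ and $x_n\to x$, $y_n\to y$ (which exist since $(x,y)\in\bar I$). By the prelength hypothesis each pair $(x_n,y_n)$ is joined by an isocausal curve $\tilde{\sigma}_n\colon [0,1]\to X$. Pick a bounded 1-Lipschitz time function $\tau\colon X\to[-M,M]$ produced in Theorem \ref{tuh} and reparametrize each $\tilde{\sigma}_n$ $\tau$-uniformly (as in the proposition preceding Definition 5.2), obtaining isocausal curves $\sigma_n\colon[a_n,b_n]\to X$ with $a_n=\tau(x_n)$, $b_n=\tau(y_n)$, and $\tau(\sigma_n(t))=t$.

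Next I would verify the separation hypothesis of the Limit Curve Theorem. Since $J$ is closed (Theorem \ref{tuh}) and contains $I$, we have $\bar I\subset J$, so $(x,y)\in J$. The hypothesis $x\ne y$ then gives $x<y$, and the time-function property yields $\tau(x)<\tau(y)$. Because $\tau$ is continuous, $a_n\to\tau(x)$ and $b_n\to\tau(y)$, so $\{a_n\}$ and $\{b_n\}$ have disjoint sets of accumulation points. Theorem \ref{LCT} therefore produces a subsequence (still denoted $\sigma_n$) converging uniformly to a continuous isocausal curve $\sigma\colon[\tau(x),\tau(y)]\to X$.

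Finally I would check that $\sigma$ has the right endpoints: from the proof of Theorem \ref{LCT}, $\sigma(\tau(x))=\lim x_n=x$ and $\sigma(\tau(y))=\lim y_n=y$, so setting $a:=\tau(x)$, $b:=\tau(y)$ gives the desired connecting isocausal curve. The main (mild) obstacle is justifying $\tau(x)\neq\tau(y)$, which is what ensures the domains $[a_n,b_n]$ do not collapse in the limit and that the LCT applies; this is handled by the observation $\bar I\subset J$ together with antisymmetry of $J$ encoded in the strict monotonicity of $\tau$.
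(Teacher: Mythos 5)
Your proposal is correct and follows essentially the same route as the paper's proof: approximate $(x,y)$ by chronologically related pairs, connect them by isocausal curves via the prelength hypothesis, parametrize by a time function, use $\bar I\subset J$ (closedness of $J$) and $x\ne y$ to get $\tau(x)<\tau(y)$, and apply the Limit Curve Theorem. The only difference is that you spell out the endpoint verification and the reparametrization step, which the paper leaves implicit.
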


\begin{proof}
Let $\tau$ be a time function and let $(x_n,y_n)\to (x,y)$, $(x_n,y_n)\in I$.
Since $J$ is closed, $\bar I\subset J$, thus $x<y$, which implies $\tau(x)<\tau(y)$. Let $\sigma_n:[a_n,b_n]\to X$ be  isocausal curves connecting $x_n$ to $y_n$ and parametrized with the time function.
The assumptions of the limit curve theorem is satisfied as $a_n\to \tau(x)$, $b_n\to \tau(y)$, which differ.
\end{proof}

\begin{definition}
Let $(X,d)$ be a bounded Lorentzian prelength space. The {\it restricted causal relation} $\check J\subset J$ is defined as the set of pairs of points connected by an isocausal curve
or coincident.
\end{definition}

\begin{corollary}
Let $(X,d)$ be a bounded Lorentzian prelength space. The restricted causal relation $\check J$  is reflexive, transitive, antisymmetric and closed.  Moreover, $I\subset \check{J}$ and $I\circ \check{J}\cup \check{J}\circ I \subset I$.
\end{corollary}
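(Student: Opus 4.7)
Several of the properties reduce immediately to results already in hand. Reflexivity is the ``coincident'' clause in the definition of $\check J$. Antisymmetry and the absorption identity $I\circ\check J\cup\check J\circ I\subset I$ follow from $\check J\subset J$ together with the corresponding statements for $J$ in Theorem~\ref{tuh}. The inclusion $I\subset\check J$ is precisely the chronal-connectedness axiom built into the definition of a Lorentzian prelength space.

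For transitivity the plan is to concatenate. Given isocausal curves $\sigma_1\colon[0,1]\to X$ from $x$ to $y$ and $\sigma_2\colon[0,1]\to X$ from $y$ to $z$, I set $\sigma\colon[0,2]\to X$ by $\sigma|_{[0,1]}=\sigma_1$ and $\sigma(s)=\sigma_2(s-1)$ for $s\in[1,2]$; continuity at $s=1$ is automatic. The only point requiring care is strict isocausality across the junction: for $s<1<t$, isocausality of each piece gives $\sigma(s)<y$ and $y<\sigma(t)$, so transitivity of $J$ (Theorem~\ref{tuh}) yields $\sigma(s)\le\sigma(t)$; equality would produce $\sigma(s)<y\le\sigma(s)$, contradicting antisymmetry of $J$. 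Coincident cases are trivial.

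Closedness is the only substantial item and the plan is to invoke the Limit Curve Theorem. Suppose $(x_n,y_n)\in\check J$ with $(x_n,y_n)\to(x,y)$. If $x=y$ reflexivity finishes the case; otherwise, since $J$ is closed and $\check J\subset J$, I obtain $x<y$. For $n$ large one has $x_n\ne y_n$, so isocausal curves $\sigma_n$ joining them exist. I reparametrize each by the time function $\tau$ of Theorem~\ref{tuh}, yielding $\sigma_n\colon[\tau(x_n),\tau(y_n)]\to X$ with $\tau\circ\sigma_n=\mathrm{id}$. The endpoint sequences converge to the distinct values $\tau(x)<\tau(y)$, so have no common accumulation point, matching the hypothesis of Theorem~\ref{LCT}. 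A subsequential uniform limit $\sigma\colon[\tau(x),\tau(y)]\to X$ is then an isocausal curve from $x$ to $y$, placing $(x,y)$ in $\check J$.

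The main obstacle is that Theorem~\ref{LCT} is stated for bounded Lorentzian metric spaces that contain $i^0$. When $i^0\notin X$ I would first pass to the one-point compactification $(\tilde X,\tilde d)$ of Proposition~\ref{bir}, noting that $J$ on $X\times X$ and the notion of isocausal curve in $X$ are unaffected by adjoining $i^0$, and then apply the Limit Curve Theorem in $\tilde X$. What remains is to verify that the resulting limit curve does not visit $i^0$ in its interior: a visit $\sigma(t_0)=i^0$ would force $x<i^0<y$ in $\tilde X$, which via the defining inequalities of $J$ constrains $x$ to have no chronological past and $y$ no chronological future in $X$, a rigid endpoint configuration that can be excluded along the approximating sequence (using points witnessing $x<y$ in the original chronological relation). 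Everything else is routine bookkeeping.
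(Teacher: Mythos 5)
Your treatment of reflexivity, antisymmetry, the absorption property, $I\subset\check J$, transitivity by concatenation (with the correct use of antisymmetry of $J$ to get strictness across the junction), and closedness via reparametrization by the time function of Theorem~\ref{tuh} followed by the Limit Curve Theorem is exactly the route the paper takes: its entire stated justification is that ``the closure of $\check J$ follows from the same limit curve argument used in the proof of Proposition~\ref{pby}'', with the remaining properties left implicit. So for the case $i^0\in X$ your proof is complete and coincides with the paper's.

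The one place where your argument has a real gap is the case $i^0\notin X$ -- a case you were right to notice, since both Proposition~\ref{pby} and Theorem~\ref{LCT} carry the hypothesis $i^0\in X$ while the Corollary does not. Passing to the one-point compactification is the natural move, and your computation that an interior visit $\sigma(t_0)=i^0$ forces $I^-(\sigma(s))=\emptyset$ for $s<t_0$ and $I^+(\sigma(t))=\emptyset$ for $t>t_0$ is correct (since $p\le i^0$ unwinds to $d(q,p)=0$ for all $q$, and dually). But the final step -- that this ``rigid endpoint configuration can be excluded along the approximating sequence using points witnessing $x<y$'' -- is not a proof. The relation $x<y$ is defined by the inequalities in \eqref{bmb} and carries no witness point, and the configuration $I^-(x)=\emptyset$, $I^+(y)=\emptyset$ is not by itself contradictory (think of the two tips of a causal diamond). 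What you would actually need is an argument that the approximating curves $\sigma_n$ cannot pass through points $z_n$ with $\gamma_{\tilde X}(z_n,i^0)\to 0$, i.e.\ through ``necks'' collapsing to spacelike infinity, and nothing in your sketch rules this out. To be fair, the paper itself is silent on this case; but as written your proof of closedness is only valid under the additional hypothesis $i^0\in X$, and the claimed reduction of the general case to it is unsubstantiated.
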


The closure of $\check J$ follows from the same limit curve argument  used in the proof of Proposition\ \ref{pby}.

The relation $\check J$ is meant to cure the fact that, as previously observed, $J$ is too large, particularly at boundary points.

\begin{definition}\label{def_max}
A bounded Lorentzian prelength space $(X,d)$ is a {\em bounded Lorentzian length space} if for every pair $x,y\in X$ with $x\ll y$ there exists a maximal  isocausal curve connecting $x$ and $y$.
\end{definition}

We refer to the above property as {\em maximal chronal connectedness by isocausal curves}.

Observe that if an isocausal curve $\sigma\colon [0,1]\to X$ connects two points $x,y$ such that $d(x,y)=0$, then  $d(\sigma(t),\sigma(t'))=0$
for every $t,t'\in [0,1]$. In particular it is maximizing in the sense of  Equation \eqref{jji}. Thus any two distinct $\check J$-related points in a bounded Lorentzian length space are connected by a maximizing isocausal curve.

\begin{theorem} \label{bjq}
Let $\{(X_n,d_n)\}_n$ be a sequence of bounded Lorentzian length spaces containing $i^0$ converging in the Gromov-Hausdorff sense to a bounded Lorentzian metric space $(X,d)$
containing $i^0$. Then $(X,d)$ is a  bounded Lorentzian length space. The same statement holds for {\em prelength} replacing {\em length}.
\end{theorem}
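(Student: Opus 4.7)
The plan is to prove the Lorentzian length case; the prelength case is identical with the word ``maximizing'' dropped throughout and the identity~\eqref{jji} omitted. Given $x \ll y$ in $X$, I would first use correspondences $R_n \subset X_n \times X$ with $\epsilon_n := \textrm{dis}\, R_n \to 0$ to pick $x_n, y_n \in X_n$ with $(x_n, x), (y_n, y) \in R_n$, so that $d_n(x_n, y_n) \ge d(x,y) - \epsilon_n > 0$ for $n$ large; the length property of $X_n$ then yields a maximizing isocausal curve $\sigma_n$ in $X_n$ from $x_n$ to $y_n$. The remainder constructs a limit curve in $X$ by a Cantor-diagonal extraction over rationals together with a Cauchy-continuity argument modelled on the Limit Curve Theorem~\ref{LCT}.

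The key subtlety is that the $\sigma_n$ live in different spaces, so their parametrizations must be made compatible with a single parametrization of the limit. To do this I would fix a countable $\mathscr{T}$-dense set $\mathcal{S} = \{s_k\} \subset X$ and let $\tau \colon X \to [-M, M]$ be the time function~\eqref{for} built from $\mathcal{S}$ and $d$ as in Theorem~\ref{tuh}. For each $n$, pick $s_k^n \in X_n$ with $(s_k^n, s_k) \in R_n$ via $R_n$, and define $\bar\tau_n \colon X_n \to \mathbb{R}$ by the very same formula with $d_n$ and the $s_k^n$. Then $\bar\tau_n$ is continuous, monotone non-decreasing along $\le_n$-chains (though possibly not strict), and satisfies $|\bar\tau_n(z_n) - \tau(z)| \le 2\epsilon_n$ whenever $(z_n, z) \in R_n$; in particular $\bar\tau_n(x_n) < \bar\tau_n(y_n)$ for $n$ large. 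I would reparametrize $\sigma_n$ on $[\bar\tau_n(x_n), \bar\tau_n(y_n)]$ by selecting, for each value $s$, the earliest original parameter at which $\bar\tau_n \circ \sigma_n$ attains $s$; since $\bar\tau_n \circ \sigma_n$ is a continuous monotone surjection, this yields a well-defined, continuous, still isocausal, still maximizing curve with $\bar\tau_n \circ \sigma_n = \textrm{id}$.

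Next, for each rational $s \in (\tau(x), \tau(y))$ and $n$ large enough, I would pick $\tilde\sigma_n(s) \in X$ with $(\sigma_n(s), \tilde\sigma_n(s)) \in R_n$; $\mathscr{T}$-compactness of $X$ and a Cantor diagonal argument extract a subsequence along which $\tilde\sigma_n(s) \to \sigma(s)$ for every rational $s$, and I would set $\sigma(\tau(x)) := x$, $\sigma(\tau(y)) := y$. The distortion bound gives $\tau(\sigma(s)) = \lim_n \tau(\tilde\sigma_n(s)) = \lim_n \bar\tau_n(\sigma_n(s)) = s$. For rational $s_1 < s_2$, the causal inequalities $\sigma_n(s_1) \le_n \sigma_n(s_2)$ pass through the correspondence: for every $p \in X$, choosing $p_n$ with $(p_n, p) \in R_n$, the inequalities $d_n(p_n, \sigma_n(s_1)) \le d_n(p_n, \sigma_n(s_2))$ and its dual pass to the limit, yielding $\sigma(s_1) \le \sigma(s_2)$ in $J$; in the length case the identity~\eqref{jji} passes to the limit in the same way.

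The hard part is extending $\sigma$ continuously from rationals to all of $[\tau(x), \tau(y)]$, which I would handle by the Cauchy-continuity argument of Theorem~\ref{LCT}: a failure would produce rationals $s_k < s'_k \to r$ and limit points $w < z$ in $X$ with $\gamma(w, z) \ge \epsilon$; but $\tau(\sigma(s)) = s$ then forces $\tau(w) = \tau(z) = r$, contradicting strict monotonicity of the time function $\tau$ on $J$. Once $\sigma$ is continuously extended, $\tau \circ \sigma = \textrm{id}$ gives $\sigma(s_1) < \sigma(s_2)$ in $J$ for $s_1 < s_2$, so $\sigma$ is isocausal; the identity~\eqref{jji}, already valid on rational triples, extends to all triples by continuity of $\sigma$ and $d$. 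The principal obstacle throughout is this alignment of parametrizations across different $X_n$: the pulled-back approximate time function $\bar\tau_n$ provides the anchor without which the Cauchy-continuity contradiction has no bite.
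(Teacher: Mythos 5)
Your proposal is correct and follows essentially the same route as the paper: a pulled-back approximate time function on $X_n$ built from a correspondence-image of a dense set in $X$, selection of points on $\sigma_n$ at prescribed (rational) time values, a Cantor-diagonal limit in $X$, passage of the causal and maximality relations through the correspondence, and the Cauchy-continuity extension from Theorem~\ref{LCT}. The only blemish is your claim that the earliest-parameter reparametrization is \emph{continuous}: since $\bar\tau_n\circ\sigma_n$ is only non-decreasing, the selection map can jump across intervals where it is constant, so continuity fails in general — but this is harmless, as you (like the paper, which merely picks possibly non-unique points $z_n(a)$ with $\tau_n(z_n(a))=a$) only ever use the selected points at rational parameters, for which strict monotonicity of the selection and maximality along the original curve suffice.
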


\begin{proof}
We proceed by proving the former statement, the latter being obtained dropping the maximization condition on the  curves $\sigma_n$ introduced below.

Let $x,y\in X$ with $x\ll y$ be given. In order to construct a maximal isocausal curve $\zeta\colon [0,1]\to X$ between $x$ and $y$ we first define its values for rational parameter values
$a\in [0,1]$ and extend this later to all real numbers $\tau \in [0,1]$. The resulting map will be shown to be maximizing in the sense of Definition \ref{def_max}.

Set $\epsilon_n:=d_{GH}(X,X_n)$ and fix a correspondence $R_n\subset X\times X_n$  with $\textrm{dis}R_n \le 2\epsilon_n$ (we can assume that $\epsilon_n>0$ otherwise
the conclusion follows by isometry).
Choose $x_n,y_n\in X_n$ such that $(x,x_n), (y,y_n)\in R_n$. Since $d(x,y)>0$ we can assume that $d_n(x_n,y_n)>0$, namely $x_n\ll y_n$
(otherwise pass to a subsequence).

Let $\mathcal{S}=\{s_k\}_{k\in \mathbb{N}}$ be a dense subset of  $(X,d)$.
It distinguishes points  in  $(X,\mathscr{T})$  by  Proposition \ref{prop(iv)}.
Choose $s^n_k\in X_n$ with $(s_k,s^n_k)\in R_n$ and set $\mathcal{S}_n:=\{s^n_k\}_{k\in\mathbb{N}}$.
Notice that in general $\mathcal{S}_n$ might not distinguish points in $X_n$.

Consider the time function (the proof is the same as that provided for the function \ref{for})
\begin{equation} \label{for2}
\tau \colon X\to \R,\quad \tau (z):=\alpha\left[\sum_{k=1} \frac{1}{2^k} d(s_k, z)-\sum_{k=1} \frac{1}{2^k} d( z,s_k)\right] +\beta,
\end{equation}
where constants $\alpha>0$ and $\beta$ are chosen such that $\tau (x)=0$, $\tau (y)=1$.  By replacing $d_{s_k}$ with $(d_n)_{s^n_k}$, and $d^{s_k}$ with $(d_n)^{s^n_k}$ we obtain
a continuous function $\tau_n:X_n\to \mathbb{R}$ such that $\tau_n(p)\le \tau_n(q)$ for every $(p,q)\in J_n$. Note, however, that it is not necessarily the case that $p_n,q_n\in X_n$, $p_n<_n q_n$, implies
$\tau_n(p_n)<\tau_n(q_n)$. For every $p\in X$, $p_n\in X_n$ with $(p,p_n)\in R_n$ we have
\[
\vert \tau(p)-\tau_n(p_n) \vert \le 4 \alpha \epsilon_n,
\]
which implies $\tau_n(x_n)\to 0$, $\tau_n(y_n)\to 1$ as a special case.

Let $\sigma_n: [0,1] \to X_n$ be a maximizing isocausal  curves connecting $x_n$ to $y_n$. For each rational number $a\in (0,1)$, choose a point $z_n(a)$ on
$\sigma_n$ with
\[
\tau_n(z_n(a))=a.
\]
The choice for $z_n(a)$ might not be unique. Set $z_n(0)=x_n$, $z_n(1)=y_n$. Notice that, due to $\tau_n(x_n)\to 0$ and $\tau_n(y_n)\to 1$, for fixed $a' \in (0,1)$, the curve $\sigma_n$ intersects the slice $\tau_n^{-1}(a')$ for all $n$ sufficiently large. For  a rational number $a\in (0,1)$ choose $\zeta_n(a) \in X$ such that $(\zeta_n(a), z_n(a))\in R_n$. We set $\zeta_n(0)=x$ and $\zeta_n(1)=y$. Via a Cantor type argument, we can pass to a subsequence so that for every $a\in [0,1]$, $\zeta_n(a)$ converges to a point $\zeta(a)\in X$.  Observe that $\zeta(0)=x$ and $\zeta(1)=y$. From  $(\zeta_n(a), z_n(a))\in R_n$ we get
\[
\vert \tau(\zeta_n(a))-a   \vert=\vert \tau(\zeta_n(a))-\tau_n(z_n(a) ) \vert \le 4 \alpha \epsilon_n
\]
which, by the continuity of $\tau$, implies $\tau(\zeta(a))=a$.

Let rational numbers $a,b\in [0,1]$ with $a<b$ be given.
Both $z_n(a)$ and $z_n(b)$ belong to the image of $\sigma_n$, but it cannot be $z_n(b) \le z_n(a)$, otherwise, as $\tau_n$ is non-decreasing over isocausal curves, $b=\tau_n(z_n(b))\le \tau_n(z_n(a))=a$, which gives a contradiction. Thus $a<b$ implies  $z_n(a) < z_n(b)$.

Let $a,b,c\in [0,1]$, be rational numbers such that $a<b<c$.
The property
\[
d_n(z_n(a), z_n(b))+d_n(z_n(b),z_n(c))=d_n(z_n(a),z_n(c))
\]
implies
\[
\vert d(\zeta_n(a), \zeta_n(b))+d(\zeta_n(b),\zeta_n(c)) - d(\zeta_n(a),\zeta_n(c))  \vert \le 6 \epsilon_n.
\]
Taking the limit we obtain
\begin{equation} \label{kkj}
d(\zeta(a), \zeta(b))+d(\zeta(b),\zeta(c)) = d(\zeta(a),\zeta(c)).
\end{equation}

Let $a,b\in [0,1]$ be rational numbers with $a<b$. Next we prove that $(\zeta(a),\zeta(b))\in J$. Let $w\in X$ be given and choose $w_n\in X_n$ such that $(w,w_n)\in R_n$. Then as $(z_n(a),z_n(b))\in  J_n$ we have
\[
d_n(w_n, z_n(a)) \le d_n(w_n, z_n(b)), \qquad \textrm{and} \qquad d_n(z_n(a),w_n)\ge d_n(z_n(b), w_n)
\]
which imply
\[
d(w, \zeta_n(a))\le d(w,\zeta_n(b))+4 \epsilon_n,  \qquad \textrm{and} \qquad d(\zeta_n(a),w)\ge d(\zeta_n(b), w) -4 \epsilon_n.
\]
By taking the limit we obtain
\[
d(w, \zeta(a))\le d(w,\zeta(b)),  \qquad \textrm{and} \qquad d(\zeta(a),w)\ge d(\zeta(b), w) .
\]
which, by the arbitrariness of $w$, implies $(\zeta(a),\zeta(b)) \in J$.
Since $\tau $ is a time function and $\tau(\zeta(a))=a$ as well as $\tau(\zeta(b))=b$, we conclude $\zeta(a)<\zeta(b)$.

Next we extend $\zeta$  continuously to the whole interval $[0,1]$ as an isocausal curve.

Let us proceed by contradiction as in the proof of Theorem \ref{LCT}.
Suppose that there exists a Cauchy sequence $\{q_n\}\subset \mathbb{Q}\cap [0,1]$ that does not map to a $\gamma$-Cauchy sequence. Let $r\in \mathbb{R}$ be the limit of $q_n$.  There is $\varepsilon >0$
 and two subsequences $\{q_{m_k}\}_k,\{q_{n_k}\}_k\subset \{q_n\}$ with
\[
\gamma({\zeta}(q_{m_k}),{\zeta}(q_{n_k}))\ge \varepsilon
\]
for all $k\in\mathbb{N}$.
Without loss of generality we can assume that $q_{m_k}<q_{n_k}$ for infinitely many $k$ and that both sequences converge with ${\zeta}(q_{m_k})\to w$
and ${\zeta}(q_{n_k})\to z$, respectively. We have $\gamma(w,z)\ge \varepsilon$.  Observe that ${\zeta}(q_{m_k}) \le {\zeta}(q_{n_k})$, hence $w\le z$, and finally $w<z$.
Since $\tau({\zeta}(q_{m_k}))=q_{m_k}\to r$, $\tau({\zeta}(q_{n_k}))=q_{n_k}\to r$ we obtain
$\tau(w)=\tau(z)=r$.
This contradicts the properties of the time function $\tau$.

 All the properties of the function $\zeta$ such as $\tau(\zeta(a))=a$, for $a\in [0,1]$, its isocausality  or its maximality are readily obtained by continuity from the same properties on the restriction $\zeta\vert_{\mathbb{Q}}$.
 \end{proof}

\begin{example}
We want to exhibit an example of a bounded Lorentzian length space $(X,d)$ which contains two points $x\ll y$ such that every maximal isocausal curve $\gamma\colon I\to X$ connecting $x$ and $y$ admits no isochronal subarc, i.e. for all $s<t\in I$ there exists $s\le s'<t'\le t$ with $\gamma(s')\not\ll \gamma(t')$.

Let us consider the Minkowski spacetime $(\mathbb{R}^3,g)$, with
\[
g=-(dx+dy)(dx+2dy)+dz^2 ,
\]
where $(x,y,z)$ are linear coordinates. We define the sequence
\[
C_n:=\left\{t\in [0,1]\left|\; \exists c_1,\ldots,c_n\in \{0,2\}: \sum_{k=1}^n \frac{c_k}{3^k}\le t \le \sum_{k=1}^n \frac{c_k}{3^k}+\frac{1}{3^n}\right.\right\}
\]
and the sequences of functions $f_n,h_n\colon [0,1]\to\mathbb{R}$
\[
f_n(t):=\begin{cases}
\left(\frac{3}{2}\right)^n&,\; t\in C_n\\ 0&,\; t\notin C_n,
\end{cases}
\]
as well as
\[
h_n(t):=1-\left(\frac{2}{3}\right)^n f_n(t)=\chi_{[0,1]\setminus C_n}.
\]
Let $F_n(t):=\int_0^t f_n(\tau)d\tau$, $H_n(t):=\int_0^t h_n(\tau)d\tau$. Note that $\bigcap_n C_n$ is the $\frac{1}{3}$-Cantor set and $F_n$ converges uniformly to the Cantor function,
also known as the {\it Devil's staircase}.

Next, let us consider the curves
\[
\gamma_n\colon [0,1]\to\R^3,\quad \gamma_n(t):=(t,F_n(t),H_n(t)).
\]
The differential of $\gamma_n$ exists for all $t\in [0,1]$ except at finitely many points and the tangent vectors are $g$-causal, whenever they exist. Therefore $\{\gamma_n\}_{n\in\mathbb{N}}$ is a sequence
of casual curves between $\gamma_n(0)=(0,0,0)\in \mathbb{R}^3$ and
\[
\gamma_n(1)= \left(1,1,1-\left(\frac{2}{3}\right)^n\right).
\]
The length of $\gamma_n$ is given by
\begin{align*}
L^g(\gamma_n)
&=\int_0^1\sqrt{|g(\dot{\gamma}_n(\tau),\dot{\gamma}_n(\tau)|}d\tau\\
&=\frac{2^n}{3^n} \sqrt{\left(1+\left(\frac{3}{2}\right)^n\right)\left(1+2\left(\frac{3}{2}\right)^n\right)}\\
&=\sqrt{\left(\left(\frac{2}{3}\right)^n+1\right)\left(\left(\frac{2}{3}\right)^n+2\right)}.
\end{align*}
The lengths converge to $\sqrt{2}$ for $n\to \infty$. Note that the sequences $\{\gamma_n(t)\}_n$ are eventually constant in $n$ for an open and dense set of $t\in[0,1]$, more precisely
$\{\gamma_n(t)\}_n$ is eventually constant for parameters $t$ in complement of the $\frac{1}{3}$-Cantor set.

Therefore, the $\gamma_n(t)$ converge uniformly to a continuous curve $\gamma(t)$ connecting $(0,0,0)$ to
$(1,1,1)$.  This curve $\gamma$ and all the $\gamma_n$ are endowed with the intrinsic  Lorentzian distance (i.e.\ that obtained from the integral of the Lorentzian length on subintervals).


The images $\gamma_n([0,1])$ are not bounded Lorentzian metric spaces, since points on the null segments of $\gamma_n|_{\supp h_n}$ cannot be distinguished inside the set. In order to remedy this defect we extend
$\gamma_n([0,1])$ by adding to it the set
\[
S_n:=\bigcup_{t\in \supp h_n} \{(t,F_n(t),s)|\; s\in [0,1]\}.
\]
After suitably deleting/identifying some corners of the $S_n$'s we see that the union
\[
X_n:=\gamma_n([0,1])\cup S_n
\]
with the induced Lorentzian distance and topology from $(\mathbb{R}^3,g)$ is a bounded Lorentzian length space (while all the points in $\gamma_n$ are now distinguished, some pairs of points in $S_n$,  might be non-distinguished, but this can be remedied by passing to the usual quotient).

Since $S_n\subset S_{n+1}$ for all $n\in\mathbb{N}$ and $d_{GH}(S_m,S_n)\le \diam(S_m\setminus S_n)\le \frac{1}{3^n}$ for $m\ge n$ it is not difficult to see that the sequence $\{(X_n,d_n)\}_n$
 GH-converges to the bounded Lorentzian length space $(X,d)$, where $X:=\gamma([0,1])\cup \bigcup_n S_n$ and where $d$ is the  intrinsic  Lorentzian distance. As a consequence, $(X,d)$ is a  Lorentzian length space. Up to reparameterization the only isocausal curve connecting $(0,0,0)$ and $(1,1,1)$ in $X$ is the limit curve $\gamma$, which is maximal but has no isochronal subarc.
\end{example}

\section{Length and Curvature}

\subsection{The length functional}
This section can be skipped on first reading. Its goal is to introduce the length functional though most of the theory can be developed without this concept.
\begin{definition}
Let $(X,d)$ be a bounded Lorentzian metric space, and let $\sigma: [0,1]\to X$ be a  isocausal curve. Its {\em Lorentzian length} is
\[
L(\sigma):=\textrm{inf} \sum_{i=0}^{k-1} d(\sigma(t_i),\sigma(t_{i+1}))
\]
where the infimum is over the set of all partitions $\{t_0,t_1,t_2, \cdots, t_k\}$, $k\in \mathbb{N}$, $t_i\in [0,1]$, $t_i<t_{i+1}$, $t_0=0$, $t_k=1$.
\end{definition}

Clearly, by the reverse triangle inequality, if $x=\sigma(0)$, $y=\sigma(1)$,
\[
L(\sigma)\le d(x,y).
\]

\begin{proposition} \label{ffr}
Let $\sigma: [0,1]\to X$ be a isocausal curve with  endpoints $x$ and $y$.
We have $L(\sigma)=d(x,y)$ iff $\sigma$ is maximal.
\end{proposition}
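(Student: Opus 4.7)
The plan is to exploit the fact that for isocausal curves, partition sums are monotonically non-increasing under refinement, a consequence of the reverse triangle inequality in $J$ established in Theorem \ref{tuh}. This will squeeze any partition sum between $L(\sigma)$ and $d(x,y)$, and the rest is bookkeeping.

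First I would observe that for any partition $0=t_0<t_1<\cdots<t_k=1$, since $\sigma$ is isocausal we have $\sigma(t_i)<\sigma(t_{i+1})$, i.e.\ $(\sigma(t_i),\sigma(t_{i+1}))\in J$. By transitivity of $J$ and iterated application of the inequality $d(p,q)+d(q,r)\le d(p,r)$ for $(p,q),(q,r)\in J$ from Theorem \ref{tuh}, one gets
\[
\sum_{i=0}^{k-1} d(\sigma(t_i),\sigma(t_{i+1}))\le d(\sigma(0),\sigma(1))=d(x,y).
\]
The trivial partition $\{0,1\}$ achieves the value $d(x,y)$, so in particular $L(\sigma)\le d(x,y)$.

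For the forward implication (``$\sigma$ maximal $\Rightarrow L(\sigma)=d(x,y)$''): if $\sigma$ is maximal, then an easy induction on $k$ using the maximality identity (\ref{jji}) shows that every partition sum equals $d(x,y)$. Taking the infimum gives $L(\sigma)=d(x,y)$.

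For the converse (``$L(\sigma)=d(x,y)\Rightarrow\sigma$ maximal''): suppose $L(\sigma)=d(x,y)$. Then every partition sum is simultaneously $\ge L(\sigma)=d(x,y)$ (by the definition of infimum) and $\le d(x,y)$ (by the previous paragraph), hence equal to $d(x,y)$. Given $0\le t<t'<t''\le 1$, apply this to the partitions $P_1=\{0,t,t',t'',1\}$ and $P_2=\{0,t,t'',1\}$ (omitting any endpoint that already coincides with $0$ or $1$). Both associated sums equal $d(x,y)$, and their difference collapses to
\[
d(\sigma(t),\sigma(t'))+d(\sigma(t'),\sigma(t''))-d(\sigma(t),\sigma(t''))=0,
\]
which is exactly (\ref{jji}). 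Hence $\sigma$ is maximal.

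No real obstacle is anticipated; the only minor subtlety is the direction of the inequality (Lorentzian length is an infimum, not a supremum, and refinement decreases partition sums), so one must be careful not to confuse the signs. Edge cases where $t=0$ or $t''=1$ are handled by dropping the corresponding terms from $P_1$ and $P_2$; the cancellation still yields the same identity.
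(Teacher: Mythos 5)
Your proof is correct and follows essentially the same route as the paper: the forward direction is the same induction on partition points using the maximality identity, and the converse rests on the same squeeze — every partition sum is trapped between $L(\sigma)$ (infimum) and $d(x,y)$ (reverse triangle inequality along the partition), forcing the identity (\ref{jji}); the paper phrases this as a chain of inequalities collapsing to equalities rather than subtracting the sums of two partitions, but the content is identical.
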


\begin{proof}
 Let $\sigma\colon [0,1]\to X$ be a maximal isocausal curve. For a given partition $0=t_0<t_1<\ldots<t_{n}<t_{n}=1$ we conclude inductively
\begin{align*}
d(\sigma(0),\sigma(1))&=d(\sigma(0),\sigma(t_1))+d(\sigma(t_1),\sigma(1))\\
& \vdots \\
&=\sum_{k=0}^{n-1} d(\sigma(t_k),\sigma(t_{k+1}))
\end{align*}
Therefore $L(\sigma)=d(\sigma(0),\sigma(1))$.

Now let $\sigma\colon [0,1]\to X$ be an isocausal curve with $L(\sigma)=d(\sigma(0),\sigma(1))$.

 Let $t,t',t''\in [0,1]$, $t\le t'\le t''$.  We have
\begin{equation}
d(\sigma(t),\sigma(t'))+d(\sigma(t'),\sigma(t''))\le d(\sigma(t),\sigma(t'')).
\end{equation}
then
\begin{align*}
L(\sigma)\le& d(x,\sigma(t))+d(\sigma(t),\sigma(t'))+d(\sigma(t'),\sigma(t''))+d(\sigma(t''),y)\\
\le &d(x,\sigma(t))+d(\sigma(t),\sigma(t''))+d(\sigma(t''),y)\le d(x,y)=L(\sigma)
\end{align*}
which implies that all inequalities are equalities and hence Equation \eqref{jji} holds true.
\end{proof}

We observe that the following result does not depend on the existence of convex neighborhoods (for a similar observation in the smooth manifold context see \cite{minguzzi17})

\begin{theorem}[Upper semi-continuity of the length functional] $\empty$ \\
Let $\sigma_n: [0,1]\to X$ and $\sigma: [0,1]\to X$ be  isocausal curves and suppose that $\sigma_n\to \sigma$ pointwisely. Then
\[
\limsup L(\sigma_n)\le L(\sigma).
\]
\end{theorem}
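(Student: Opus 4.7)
The key observation is that, unlike the Riemannian case where length is a supremum over partitions (and hence naturally lower semi-continuous), here the Lorentzian length is defined as an \emph{infimum} over partitions. This immediately makes upper semi-continuity the natural property and suggests a one-line argument of the usual ``infimum test'' type.

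The plan is as follows. Fix $\varepsilon>0$. By the definition of $L(\sigma)$ as an infimum, I would choose a partition $0=t_0<t_1<\cdots<t_k=1$ of $[0,1]$ with
\[
\sum_{i=0}^{k-1} d(\sigma(t_i),\sigma(t_{i+1})) < L(\sigma)+\varepsilon.
\]
For each $n$, the same partition is a valid partition of $[0,1]$ for the isocausal curve $\sigma_n$ (since $t_i<t_{i+1}$ implies $\sigma_n(t_i)<\sigma_n(t_{i+1})$ by isocausality), so by definition of $L(\sigma_n)$ as an infimum,
\[
L(\sigma_n)\le \sum_{i=0}^{k-1} d(\sigma_n(t_i),\sigma_n(t_{i+1})).
\]

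Now pointwise convergence $\sigma_n\to\sigma$ gives $\sigma_n(t_i)\to\sigma(t_i)$ for each $i$, and by Corollary \ref{vjw} the distance $d$ is continuous on $(X,\mathscr{T})\times (X,\mathscr{T})$, so $d(\sigma_n(t_i),\sigma_n(t_{i+1}))\to d(\sigma(t_i),\sigma(t_{i+1}))$ for each $i$. Since the sum has only finitely many ($k$) terms, I can pass to the limit term by term and obtain
\[
\limsup_{n\to\infty} L(\sigma_n)\le \lim_{n\to\infty} \sum_{i=0}^{k-1} d(\sigma_n(t_i),\sigma_n(t_{i+1})) = \sum_{i=0}^{k-1} d(\sigma(t_i),\sigma(t_{i+1})) < L(\sigma)+\varepsilon.
\]
Letting $\varepsilon\downarrow 0$ gives $\limsup L(\sigma_n)\le L(\sigma)$.

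There is essentially no obstacle here: the whole point is that the infimum structure of $L$ together with continuity of $d$ under a finite-point evaluation does all the work, with no need to invoke uniform convergence, equicontinuity, or any property of the parametrization. I would make sure to remark explicitly that the partition is simultaneously admissible for every $\sigma_n$ (this uses only isocausality, not maximality), and that the finiteness of the partition is what allows the termwise passage to the limit despite the fact that $d$ is only jointly continuous rather than, say, Lipschitz.
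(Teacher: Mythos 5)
Your proposal is correct and follows essentially the same route as the paper: choose a partition nearly realizing the infimum $L(\sigma)$, note it is admissible for each $\sigma_n$ so that $L(\sigma_n)\le\sum_i d(\sigma_n(t_i),\sigma_n(t_{i+1}))$, and pass to the limit termwise using the $\mathscr{T}\times\mathscr{T}$-continuity of $d$ and the finiteness of the partition. The only cosmetic difference is that the paper inserts the intermediate identity $L(\sigma_n)=\sum_i L(\sigma_n\vert_{[t_i,t_{i+1}]})$ before bounding each summand by the endpoint distance, whereas you bound $L(\sigma_n)$ directly by the sum over the chosen partition, which is slightly cleaner.
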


\begin{proof}
We need to show that for every $\epsilon>0$, we have for sufficiently large $n$, $L(\sigma_n) \le L(\sigma)+\epsilon$.

Let $\epsilon>0$. By definition of $L(\sigma)$ there is a partition  $\{t_i, i=0,\cdots,k\}$ of $[0,1]$ such that  $\sum_i d(\sigma(t_i),\sigma(t_{i+1}))\le L(\sigma)+\frac{\epsilon}{2}$. By the continuity of $d$, as $\sigma_n(t_i)\to \sigma(t_i)$, we have for sufficiently large $n$ and for every $i$, $d(\sigma_n(t_i),\sigma_n(t_{i+1})) \le d(\sigma(t_i),\sigma(t_{i+1}))+\frac{\epsilon}{2 k}$, which implies
\begin{align*}
L(\sigma_n)&=\sum_i L(\sigma_n\vert_{[t_i,t_{i+1}]}) \le \sum_i d(\sigma_n(t_i),\sigma_n(t_{i+1})) \\ &\le \sum_i  d(\sigma(t_i),\sigma(t_{i+1})) +\frac{\epsilon}{2}\le L(\sigma)+ \epsilon.
\end{align*}
\end{proof}

We recall that the concept of Lorentzian prelength spaces is quite interesting as it is invariant under GH-limits. Let $(X,d)$ be
 a Lorentzian prelength space.  For each pair $x\ll y$ it makes sense to define the following object
\[
\check d(x,y):= \textrm{sup}\, L(\sigma)
\]
where the supremum is taken over all the isocausal curves $\sigma: [0,1]\to X$, such that $\sigma(0)=x$, $\sigma(1)=y$. Observe that $\check d(x,y)>0$ implies $x\ll y$. Moreover, since for every $\sigma$, $L(\sigma)\le d(x,y)$,  we have $\check d\le d$, in particular it is finite and bounded.

\begin{theorem}
Let $(X,d)$ be a bounded Lorentzian prelength space. The following conditions are equivalent
\begin{itemize}
\item[(i)] $ \check d=d$,
\item[(ii)] $(X,d)$ is a length space.
\end{itemize}
\end{theorem}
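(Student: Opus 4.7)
The plan is to prove the two implications separately, with the reverse implication (ii)$\Rightarrow$(i) being essentially immediate from Proposition \ref{ffr}, and the forward implication (i)$\Rightarrow$(ii) relying on an approximation argument based on the Limit Curve Theorem together with the upper semi-continuity of $L$.

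For (ii)$\Rightarrow$(i): fix $x\ll y$. By the length space assumption there exists a maximal isocausal curve $\sigma\colon [0,1]\to X$ from $x$ to $y$, and by Proposition \ref{ffr} we have $L(\sigma)=d(x,y)$. Hence $\check d(x,y)\ge L(\sigma)=d(x,y)$, while the reverse inequality $\check d\le d$ is automatic from the reverse triangle inequality. This gives $\check d(x,y)=d(x,y)$ on all of $I$, which is the full content of (i).

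For (i)$\Rightarrow$(ii): fix $x\ll y$. By (i) we can choose a sequence of continuous isocausal curves $\sigma_n\colon[0,1]\to X$ from $x$ to $y$ with $L(\sigma_n)\to d(x,y)$. Since $x\ll y$ implies $x<y$, a time function $\tau$ as in Theorem \ref{tuh} satisfies $\tau(x)<\tau(y)$; reparametrize each $\sigma_n$ to be $\tau$-uniform on $[a,b]:=[\tau(x),\tau(y)]$, which leaves the Lorentzian length unchanged. The intervals $[a_n,b_n]=[a,b]$ are constant with $a<b$, so the accumulation hypothesis of the Limit Curve Theorem \ref{LCT} is trivially satisfied, and we extract a subsequence converging uniformly to a continuous isocausal curve $\sigma\colon[a,b]\to X$ with $\sigma(a)=x$, $\sigma(b)=y$.

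It remains to show $\sigma$ is maximal. By the upper semi-continuity of the length functional (applied after a trivial affine reparametrization of all curves to $[0,1]$), $\limsup_n L(\sigma_n)\le L(\sigma)$, so $d(x,y)=\lim_n L(\sigma_n)\le L(\sigma)\le d(x,y)$, giving $L(\sigma)=d(x,y)$. Proposition \ref{ffr} then yields that $\sigma$ is maximal, which is precisely the length space property at the pair $(x,y)$. The only delicate point is to ensure that the limit curve has the right endpoints and connects them along an interval of positive length; the use of a time function in the parametrization forces $\sigma_n(a)=x$, $\sigma_n(b)=y$ uniformly in $n$, so pointwise convergence automatically pins the endpoints, and the ordering $\tau(x)<\tau(y)$ rules out collapse. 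If $i^0\notin X$ one applies the Limit Curve Theorem after one-point compactification (Proposition \ref{bir}), which neither affects the chronology nor the length functional on curves lying in $\mathring X$.
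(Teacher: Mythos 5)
Your proof is correct and follows essentially the same route as the paper: (ii)$\Rightarrow$(i) via Proposition \ref{ffr}, and (i)$\Rightarrow$(ii) by taking a length-maximizing sequence of isocausal curves, extracting a limit with the Limit Curve Theorem, and combining upper semi-continuity of $L$ with Proposition \ref{ffr}. Your extra care with the $\tau$-uniform reparametrization (to meet the hypotheses of Theorem \ref{LCT}) and with the case $i^0\notin X$ only fills in details the paper leaves implicit.
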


\begin{proof}
(ii) $\Rightarrow$ (i). Let $x\ll y$  be given. Since $(X,d)$ is a bounded Lorentzian length space there is a maximal  isocausal curve $\gamma$ such that $\gamma(0)=x$, $\gamma(1)=y$.  By Proposition \ref{ffr} $d(x,y)=L(\gamma)$ which implies $\check d \ge d$, and hence $\check d=d$.

(i) $\Rightarrow$ (ii). Let $x\ll y$  be given. Let $\sigma_n:[0,1] \to X$ be a sequence of  isocausal curves such that $\sigma_n(0)=x$, $\sigma_n(1)=y$, $L(\sigma_n) \to \check d(x,y)$.
By  Theorem \ref{LCT}  there is a curve $\sigma: [0,1]\to X$ such that $\sigma_n\to \sigma$ pointwisely, and by the upper semi-continuity of the length functional
\[
d(x,y)=\check d(x,y)=\lim_n L(\sigma_n)=\limsup_n L(\sigma_n)\le L(\sigma)
\]
which implies $L(\sigma)=d(x,y)$. By  Proposition \ref{ffr} $\sigma$ is maximal.
\end{proof}

\subsection{Triangle comparison}

\begin{definition}
 Let $(X,d)$ be a bounded Lorentzian length space. A {\em timelike triangle} is given by a triple  $(x,y,z)\in X\times X\times X$, with $x\ll y\ll z$.
\end{definition}

We denote $a=d(x,y)$, $b=d(y,z)$, $c=d(x,z)$, thus, $a,b>0$ and by the reverse triangle inequality, $c \ge a+b>0$.

A point $p$ is said to be on the side $xy$, if $d(x,p)+d(p,y)=d(x,y)$, in which case we define $\alpha\in [0,1]$ such that $d(x,p)=\alpha d(x,y)$. Similarly, $p$ is said to be on the side $yz$, if $d(y,p)+d(p,z)=d(y,z)$, in which case we define $\beta\in [0,1]$ such that $d(y,p)=\beta d(y,z)$. Finally,  point $p$ is said to be on the side $xz$, if $d(x,p)+d(p,z)=d(x,z)$, in which case we define $\gamma\in [0,1]$ such that $d(x,p)=\gamma d(x,z)$. A point that belongs to one of the sides is said to belong to the perimenter of the timelike triangle.

Let us consider an abstract triangle $\Delta$ of vertices $A,B,C$ on a 2-dimensional affine space and parametrize each of the sides $AB, BC, AC$, with the parameters $\alpha,\beta,\gamma\in [0,1]$. Each choice of point on the triangle $\Delta$ determines the values of one  parameter, and hence, (recall that, by the length space property, $(X,d)$ is maximally chronally connected and $d$ is continuous) there is some point on the perimeter of the timelike triangle   in $X$, with a corresponding value of parameter (e.g.\ if it belongs to the side $xy$, $d(x,p)=\alpha d(x,y)$). Note that there could be more than one such point.

We recall that a set valued map $x\mapsto F(x)\subset \mathbb{R}^l$ defined on some open set  $O\subset\mathbb{R}^k$ is said to be  {\em upper semi-continuous} if for every $x\in O$ and for every neighborhood  $U\supset F(x)$ we can find a neighborhood $N \ni x$ such that $F(N):=\cup_{x\in N} F(x)\subset U$, cf.\ \cite{aubin84}.

\begin{definition}
Let $O\subset \mathbb{R}^3$ be an open set and let   $F: O \to \Delta^2 \times \mathbb{R}$ be an upper semi-continuous map such that $F(x)$ is a closed set for every $x$.
We say that the sectional curvature is {\em $(O,F)$-bounded} if for every timelike triangle such that $(a,b,c)\in O$ and for every pair $(d_1,d_2)\in \Delta^2$ we can find $p,q$ in the perimeter of the timelike triangle corresponding to $d_1,d_2$ respectively, such that $(d_1,d_2, d(p,q))\in F(a,b,c)$.
\end{definition}

More specific choices for $(O,F)$ will be considered below. The open set $O$ plays the role of {\em triangle size bound} while $F$ that of the bound on sectional curvature. Their form is  not important for the following result.

 Theorem \ref{bjq} tell us that bounded Lorentzian length spaces are preserved under Gromov-Hausdorff limits. Further, we have

\begin{theorem}
Let $(X_n,d_n)$, $(X,d)$, be bounded Lorentzian length spaces with $X_n \xrightarrow{{\small GH}} X$. If $(X_n,d_n)$ have $(O,F)$-bounded sectional curvature, then so has $(X,d)$.
\end{theorem}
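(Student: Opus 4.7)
The plan is to reduce the problem to the approximating spaces $X_n$, invoke the hypothesis there, and then take a limit using the correspondences witnessing the GH convergence together with compactness of $X$. Note first that by Theorem~\ref{bjq}, $(X,d)$ is already known to be a bounded Lorentzian length space, so the notion of $(O,F)$-bounded sectional curvature is meaningful on it. Fix a timelike triangle $x\ll y\ll z$ in $X$ with side lengths $(a,b,c)\in O$, and a pair $(d_1,d_2)\in\Delta^2$; the goal is to exhibit perimeter points $p,q$ of this triangle corresponding to $d_1,d_2$ with $(d_1,d_2,d(p,q))\in F$. Choose correspondences $R_n\subset X_n\times X$ with $\varepsilon_n:=\textrm{dis}\,R_n\to 0$ and pick $x_n,y_n,z_n\in X_n$ with $(x_n,x),(y_n,y),(z_n,z)\in R_n$.

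Since $a,b,c>0$ and $|d_n(x_n,y_n)-a|\le\varepsilon_n$ (and similarly for $b$ and $c$), for all sufficiently large $n$ the triple $(x_n,y_n,z_n)$ is a genuine timelike triangle in $X_n$ with side lengths $(a_n,b_n,c_n)\to(a,b,c)$; openness of $O$ then places $(a_n,b_n,c_n)\in O$ eventually. Applying the $(O,F)$-bound in $X_n$ produces perimeter points $p_n,q_n$ corresponding to $(d_1,d_2)$ with $(d_1,d_2,d_n(p_n,q_n))\in F$. Lift them to $X$ by choosing $\tilde p_n,\tilde q_n\in X$ with $(p_n,\tilde p_n),(q_n,\tilde q_n)\in R_n$; compactness of $(X,\mathscr{T})$ together with a finite pigeonhole over the three possible side assignments gives a subsequence along which $\tilde p_n\to p$, $\tilde q_n\to q$, and the sides on which $p_n$ and $q_n$ sit are the same for all $n$.

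To close the argument, the perimeter and parameter conditions must transfer to $X$. For instance, if $p_n$ lies on side $x_ny_n$ with $d_n(x_n,p_n)=d_1\,a_n$ and $d_n(x_n,p_n)+d_n(p_n,y_n)=a_n$, the distortion estimates $|d(u,\tilde p_n)-d_n(u_n,p_n)|\le\varepsilon_n$ for $u_n\in\{x_n,y_n\}$ combined with continuity of $d$ and $a_n\to a$ yield in the limit $d(x,p)=d_1\,d(x,y)$ and $d(x,p)+d(p,y)=d(x,y)$, so $p$ sits on side $xy$ of $(x,y,z)$ with parameter $d_1$. The same argument places $q$; meanwhile $d_n(p_n,q_n)\to d(p,q)$ by the same estimate plus continuity of $d$, and closedness of $F$ gives $(d_1,d_2,d(p,q))\in F$. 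The only genuine obstacle is the side-assignment bookkeeping, handled by the finite pigeonhole; otherwise the proof amounts to a routine application of continuity of $d$, openness of $O$, closedness of $F$, and the distortion bounds inherent in GH convergence.
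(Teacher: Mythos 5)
Your proposal is correct and follows essentially the same route as the paper's proof: transfer the triangle to $X_n$ via the correspondence, invoke the $(O,F)$-bound there, lift the perimeter points back, extract a convergent subsequence, and pass the parameter/perimeter identities and the membership in $F$ to the limit using the distortion bounds, continuity of $d$, openness of $O$, and closedness of $F$. The only difference is that you make explicit the pigeonhole over side assignments, which the paper handles implicitly with ``the other cases are treated similarly.''
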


\begin{proof}
Let us set $\epsilon_n:= 2 d_{GH}(X,X_n)>0$ (if one of them is zero the conclusion is immediate by isometry), so that $\epsilon_n\to 0$, and let us fix a correspondence $R_n\subset X\times X_n$  with $\textrm{dis}R_n \le \epsilon_n$.

Let $(x,y,z)$ be a timelike triangle in $X$ such that $(a,b,c)\in O$, and let $(d_1,d_2)\in \Delta$.

Let $x_n,y_n,z_n\in X_n$ be such that $(x,x_n), (y,y_n), (z,z_n) \in R_n $. Observe that  $a_n:=d_n(x_n,y_n)\to d(x,y)$, $b_n:=d_n(y_n,z_n)\to d(y,z)$, $c_n:=d_n(x_n,z_n)\to d(x,z)$, so that for sufficiently large $n$ the three distances are positive and  $(a_n,b_n,c_n)\in O$.

Let $p_n,q_n\in X_n$ be in the perimeter of the timelike triangle of vertices $(x_n,y_n,z_n)$, chosen so as to correspond to the parameters determined by the choice $d_1,d_2\in \Delta$.
By assumption there is one such choice such that
\[
(d_1,d_2, d_n(p,q))\in F(a_n,b_n,c_n).
\]
Let $\tilde p_n,\tilde q_n\in X$ be chosen so that $(\tilde p_n, p_n)\in R_n$, and $(\tilde q_n,q_n)\in R_n$, and let us pass to  a subsequence (denoted in the same way) such that $\tilde p_n\to p$, $\tilde q_n\to q$. We want to show that $p,q$ correspond to $d_1,d_2\in \Delta$, respectively. For instance, if $d_1$ belongs to the side parametrized by $\alpha$, we have

\[
\vert d(x,\tilde p_n)-\alpha d_n(x_n,y_n)\vert=\vert d(x,\tilde p_n)-d_n(x_n, p_n)\vert \le  \epsilon_n
\]
which implies $d(x,p)=\alpha d(x,y)$. Additionally,  from $d_n(x_n,p_n)+d_n(p_n,y_n)=d_n(x_n,y_n)$
\[
\vert d(x,\tilde p_n)+d(\tilde p_n,y)-d(x,y) \vert\le 3 \epsilon_n
\]
which in the limit gives $d(x,p)+d(p,y)=d(x,y)$. The other cases are treated similarly.

Finally, $(d_1,d_2, d_n(p_n,q_n)) \to (d_1,d_2, d(p,q))$, and since $F$ is upper semi-continuous \cite[Prop.\ 2, p.\ 41]{aubin84},  we have $(d_1,d_2, d(p,q))\in F(a,b,c)$. This shows that for each choice $d_1,d_2$ we can find $(p,q)$ as desired.
\end{proof}

We define the sectional curvature on a smooth spacetime $(M,g)$, where $g$ has signature $(-,+, \dots, +)$ and $R(X,Y)Z=\nabla_X\nabla_Y Z-\nabla_Y\nabla_X Z -\nabla_{[X,Y]} Z$, through the formula
\[
K(X\wedge Y):= - \frac{g(R(X,Y)Y, X)}{g(X,X)g(Y,Y)-g(X,Y)^2}.
\]
This definition differs by a sign  with that adopted in \cite{harris82,alexander08}. It is called timelike (spacelike) sectional curvature whenever $\textrm{span}(X,Y)$ is a timelike (resp. spacelike) plane.
It should be observed that if there is $k\in \mathbb{R}$ such that  $K\ge k$,  over every timelike plane containing $X$, then $\textrm{Ric}(X)\ge n k g(X,X)$ where $n+1$  is the spacetime dimension. Thus a lower bound on the timelike sectional curvature implies a lower bound for Ricci in the timelike direction (this is a type of strong energy condition which appears in singularity theorems), and similarly for upper bounds (with the convention in \cite{harris82,alexander08} a lower bounds leads to an upper bound and viceversa).

We say that the sectional curvature is lower bounded, and write $K\ge k$, if the following inequality holds
\[
g(R(X,Y)Y, X) \ge k [g(X,Y)^2-g(X,X)g(Y,Y)]
\]
on any 2-plane regardless  of whether $\textrm{span}(X,Y)$  is timelike or spacelike. This condition implies that the timelike sectional curvature is lower bounded by $k$.

The 2-dimensional Lorentzian manifold of constant sectional curvature $k\in \mathbb{R}$ will be
denoted $M_k$. More precisely (again, positive and negative constant sectional curvature cases are switched with respect to \cite{harris82,alexander08}), $M_0$  is the Minkowski
$1+1$ spacetime $(M^{1,1},\eta)$,  $M_{-q^2}$ is the timelike submanifold
\[
\{x: \ \eta(x,x)=1/q^2\}\subset (M^{2,1}, \eta)
\]
endowed with the induced metric (i.e.\ the 1+1 de Sitter spacetime), while $M_{q^2}$ is the submanifold
\[
\{x: \ x^2-y^2-z^2=-1/q^2\}\subset (\mathbb{R}^{3}, \dd x^2-\dd y^2-\dd z^2)
\]
endowed with the induced metric  (i.e.\ the 1+1 anti-de Sitter spacetime) \cite{harris82,hawking73}.

We recall that a timelike triangle of given sides $(a,b,c)$ is realized as a timelike triangle in a model space accordingly to the next realizability lemma \cite{harris82} \cite[Lemma 2.1]{alexander08} \cite[Lemma 4.6]{kunzinger18}

\begin{proposition}
For $k\in \mathbb{R}$, let $O_k\subset \mathbb{R}_+^3$ be the open subset determined by the inequalities
\[
a+b<c< \frac{\pi}{\sqrt{k}},
\]
(where it is understood that $1/\sqrt{k}=\infty$ for $k\le 0$). Then for every $(a,b,c)\in O_k$ there is a timelike triangle in $M_k$ with sides $(a,b,c)$ (i.e. $x,y,z\in M_k$, $x\ll y\ll z$, and  $a=d(x,y)$, $b=d(y,z)$, $c=d(x,z)$).
\end{proposition}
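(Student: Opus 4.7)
My plan is to construct the triangle by fixing two vertices at timelike separation $a$ and varying the second geodesic segment through a one-parameter family of hyperbolic angles, using the Lorentzian law of cosines in $M_k$ together with the intermediate value theorem.

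Fix a basepoint $x\in M_k$ and a future-directed timelike unit vector $u\in T_x M_k$. Set $y:=\exp_x(au)$, so that $x\ll y$ with $d(x,y)=a$. Let $u'\in T_y M_k$ be the parallel transport of $u$ along the radial geodesic from $x$ to $y$, and let $e\in T_y M_k$ be a spacelike unit vector orthogonal to $u'$. For $\theta\ge 0$ set
\[
v(\theta):=\cosh(\theta)\,u'+\sinh(\theta)\,e\in T_y M_k,
\]
a future-directed timelike unit vector making hyperbolic angle $\theta$ with $u'$. Finally set $z(\theta):=\exp_y\bigl(b v(\theta)\bigr)$, so that $y\ll z(\theta)$ with $d(y,z(\theta))=b$.

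A direct computation yields the Lorentzian law of cosines specific to $M_k$. For $k=0$,
\[
d(x,z(\theta))^2=a^2+b^2+2ab\cosh\theta.
\]
For $k=-q^2<0$, working in the hyperboloid model $\{\eta(\xi,\xi)=1/q^2\}\subset (M^{2,1},\eta)$ and using that its timelike geodesics are the intersections with timelike $2$-planes through the origin, one obtains
\[
\cosh\bigl(q\,d(x,z(\theta))\bigr)=\cosh(qa)\cosh(qb)+\sinh(qa)\sinh(qb)\cosh\theta.
\]
For $k=q^2>0$, the analogous computation in the anti-de Sitter embedding of the statement gives
\[
\cos\bigl(q\,d(x,z(\theta))\bigr)=\cos(qa)\cos(qb)-\sin(qa)\sin(qb)\cosh\theta.
\]
In the first two cases the right-hand side is continuous and strictly increasing in $\theta$, so $d(x,z(\theta))$ sweeps continuously over $[a+b,\infty)$. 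For $k>0$, the right-hand side is continuous and strictly decreasing in $\theta$, starting at $\cos(q(a+b))$ and reaching $-1$ at some finite $\theta_{\max}$, so that $d(x,z(\theta))$ sweeps continuously over $[a+b,\pi/q)$. In every case the hypothesis $(a,b,c)\in O_k$ places $c$ in the image, and the intermediate value theorem produces $\theta^\ast$ with $d(x,z(\theta^\ast))=c$. The triple $(x,y,z(\theta^\ast))$ is then the desired timelike triangle.

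The main obstacle I anticipate is not the trigonometric identities themselves but verifying that the construction really produces a \emph{timelike} triangle in the causal sense: one must check that $z(\theta)\in I^+(x)$ holds throughout the relevant $\theta$-range and that the quantity appearing as $d(x,z(\theta))$ truly equals the Lorentzian distance (i.e.\ is realized by an unbroken timelike geodesic from $x$ to $z(\theta)$) rather than merely the pseudo-length of some connecting curve. This reduces to an inspection of the causal structure in each model. In the anti-de Sitter case one also has to ensure that the assumption $c<\pi/\sqrt{k}$, together with $a,b<c$, keeps every side strictly below the conjugate-point threshold $\pi/q$, so that the exponential map behaves as expected on the causal diamond $J^+(x)\cap J^-(z(\theta^\ast))$ and the distance formula above is valid.
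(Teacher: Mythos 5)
The paper does not actually prove this proposition: it is recalled as a known realizability lemma with citations to Harris, to Alexander--Bishop (Lemma 2.1), and to Kunzinger--S\"amann (Lemma 4.6), so there is no in-paper argument to compare against. Your hinge construction --- fix $x\ll y$ at distance $a$, swing the second side of length $b$ through hyperbolic angles $\theta$ at $y$, and apply the intermediate value theorem to the Lorentzian law of cosines --- is essentially the standard proof from those references, and your three law-of-cosines formulas are correct for the paper's models and sign conventions (at $\theta=0$ each reduces to the degenerate value $c=a+b$, and monotonicity in $\theta$ gives exactly the ranges $[a+b,\infty)$ for $k\le 0$ and $[a+b,\pi/\sqrt{k})$ for $k>0$). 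What your sketch buys is a self-contained argument; what remains to be nailed down is precisely the issue you flag at the end, and in the anti-de Sitter case it is slightly more delicate than ``staying below the conjugate-point threshold'': the hyperboloid $\{x^2-y^2-z^2=-1/q^2\}$ as literally defined in the paper contains closed timelike curves, so on that model the Lorentzian distance is not computed by a single geodesic arc (indeed $d\equiv\infty$ if one allows curves to wind). One must pass to the universal cover, or work in a causally convex normal neighbourhood, before asserting that $\frac{1}{q}\arccos\bigl(\cos(qa)\cos(qb)-\sin(qa)\sin(qb)\cosh\theta\bigr)$ equals $d(x,z(\theta))$; once that is done, the bound $a,b<c<\pi/q$ guarantees all three connecting geodesics are maximizing, and your IVT argument closes the proof. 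For $k\le 0$ the verification that $z(\theta)\in I^{+}(x)$ and that the connecting geodesic maximizes is immediate (sum of future timelike vectors in Minkowski; absence of conjugate points along timelike geodesics in de Sitter), as you indicate.
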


Under the conditions of the theorem the realizing triangle on $M_k$ is actually unique up to isometries and the vertices are connected by unique maximizing timelike geodesics.

Let us consider a realizing triangle of sides $a,b,c$ on $M_k$.
Let $d_1,d_2\in \Delta$ and let $\bar p,\bar q$ be corresponding points on the realizing triangle.
On $M_k$ there is a continuous function $H_k:\mathbb{R}^3\times \Delta^2 \to \mathbb{R}$,  such that $d_{M_k}(\bar p,\bar q)=H_k(a,b,c,d_1,d_2)$ for every triangle. The expression will not be relevant here.

\begin{definition}
Let $k\in \mathbb{R}$. We say that the bounded Lorentzian length space $(X,d)$ has  sectional curvature  bounded from below by $k$, if it is $(O_k,F_k)$-bounded where
\[
F_k(a,b,c) =\{(d_1,d_2, r): \quad r\le  H_k(a,b,c,d_1,d_2)\}.
\]
or, equivalently, for any timelike triangle and for any choice of parameters $d_1,d_2\in \Delta$ we can find two points $p,q$ on the timelike triangle with such parameters and such that   $d(p,q)\le d_{M_k}(\bar p,\bar q)$ where $\bar p,\bar q$ are corresponding points on the comparison triangle on $M_k$.

The upper bounded version is obtained replacing $\le$ with $\ge$.
\end{definition}

The upper semi-continuity of $F_k$ follows from the (upper-semi)continuity of $H_k$.

If we know that the vertices of the timelike triangle are connected by unique maximal isocausal curves, then after "equivalently" we can replace: for any timelike triangle and for any two points $p,q$ on the timelike triangle we have   $d(p,q)\le d_{M_k}(\bar p,\bar q)$ where $\bar p,\bar q$ are corresponding points on the comparison triangle on $M_k$. This will be possible within convex neighborhoods (see below).

Intuitively, under a lower bound on the sectional curvature the timelike triangle should be slender than the comparison triangle, while under an upper bound it should be fatter than the comparison triangle.

Of course, the bound on sectional curvature introduced here is meant to be the low regularity version of the bound on sectional curvature of smooth spacetimes, while at the same time be preserved under $GH$ limits.

The fact that a lower bound on the sectional curvature leads indeed to the type of inequality used in the above definition  was proved, at least for triangles contained in normal neighborhoods, by Alexander and Bishop \cite{alexander08} (actually they prove a stronger result because they used a signed distance that captures information on non-causally related events), see also Harris \cite{harris82}.

Our definition has similarities with  that used in \cite{kunzinger18}, but our geodesic triangle is just a triple of points, not of curves and we demand the existence of some pair $p,q$ with specific properties while other pair choices with equal parameters (roughly, same distances from the vertices) might not satisfy those properties.

\begin{definition} \label{vmqp}
We say that the bounded Lorentzian length space $(X,d)$ {\em admits convex neighborhoods}, if every point $p\in X$ admits some  neighborhood $U$ such that $(U, d\vert_U)$ is a bounded Lorentzian length space and
  for every pair of  chronologically related points in $U$ there is just one maximal curve which, furthermore, is isochronal and entirely contained in $U$.
\end{definition}

\begin{proposition}
In a bounded Lorentzian length space that admits convex neighborhoods  a maximal isocausal curve connecting two chronologically related points is actually isochronal (i.e.\ maximal curves have definite causal character).
\end{proposition}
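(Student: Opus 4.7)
The plan is to reduce isochronality to strict monotonicity of a single real-valued function along $\sigma$, and then obstruct any would-be constancy interval by invoking the convex neighborhood hypothesis at its boundary. Let $\sigma\colon[0,1]\to X$ be a maximal isocausal curve with $x:=\sigma(0)\ll\sigma(1)=:y$, and set $f(t):=d(x,\sigma(t))$. Applying the maximality relation~\eqref{jji} to the triple $(0,s,t)$ yields the additivity
\[
d(\sigma(s),\sigma(t))=f(t)-f(s),\qquad 0\le s\le t\le 1,
\]
which shows that $f$ is continuous and non-decreasing with $f(0)=0$ and $f(1)=d(x,y)>0$, and that isochronality of $\sigma$ is equivalent to $f$ being strictly increasing.

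Suppose by contradiction that $f$ is not strictly increasing; then there exist $s<t$ with $f(s)=f(t)=:v$, and because $f$ is continuous and non-decreasing the level set $f^{-1}(v)$ is a closed interval $[s_0,t_0]$ with $s_0<t_0$, maximal with respect to the constancy $f\equiv v$. The identity $f(0)=0<d(x,y)=f(1)$ excludes the case $s_0=0$ and $t_0=1$ simultaneously. I will treat the subcase $t_0<1$, the subcase $s_0>0$ being entirely analogous upon centering the subsequent argument at $\sigma(s_0)$ rather than at $\sigma(t_0)$. Pick a convex neighborhood $U$ of $\sigma(t_0)$ and, using continuity of $\sigma$, choose $\delta>0$ so small that $\sigma([t_0-\delta,t_0+\delta])\subset U$ and $\delta<\min\{t_0-s_0,\,1-t_0\}$.

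Then $t_0-\delta\in[s_0,t_0)$ and $t_0+\delta\in(t_0,1]$, and the maximality of the level interval gives $f(t_0+\delta)>v=f(t_0-\delta)$. By the additivity above this means
\[
d(\sigma(t_0-\delta),\sigma(t_0+\delta))=f(t_0+\delta)-f(t_0-\delta)>0,
\]
i.e.\ $\sigma(t_0-\delta)\ll\sigma(t_0+\delta)$ in $U$. The restriction $\sigma|_{[t_0-\delta,t_0+\delta]}$ is a continuous isocausal curve lying in $U$, and it still satisfies~\eqref{jji}, hence it is maximal in the bounded Lorentzian length space $(U,d|_U)$. The convex neighborhood hypothesis forces it to be the unique maximal curve between its endpoints in $U$ and to be isochronal, so in particular $\sigma(t_0-\delta)\ll\sigma(t_0)$, giving $f(t_0)-f(t_0-\delta)>0$. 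Since both values equal $v$ this is the desired contradiction. The only real delicacy is choosing $\delta$ small enough to sit inside the constancy interval on the left while poking outside it on the right, so that chronology inside $U$ is available to trigger the convex-neighborhood conclusion.
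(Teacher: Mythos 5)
Your proof is correct and follows essentially the same route as the paper's: both arguments locate a maximal subinterval on which the curve fails to be chronological, place a convex neighborhood at an endpoint of that subinterval, and derive a contradiction from the fact that the restricted curve is maximal and joins chronologically related points inside the neighborhood yet is not isochronal. Your reformulation via the monotone continuous function $f(t)=d(\sigma(0),\sigma(t))$ is merely a cleaner bookkeeping of the paper's ``largest closed interval with $d(\sigma(a),\sigma(b))=0$''.
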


\begin{proof}
By contradiction suppose that the maximal curve $\sigma:[0,1]\to X$, admits two points $\sigma(t_1)$, $\sigma(t_2)$, $t_1<t_2$, such that $d(\sigma(t_1),\sigma(t_2))=0$. By continuity of $d$ there is a largest non-empty closed interval $[a,b]\subset [0,1]$ such that $d(\sigma(a),\sigma(b))=0$. Since the inclusion  $[a,b]\subset [0,1]$ is proper, we might assume, without loss of generality $b<1$. Let $q=\sigma(b)$, and let $U_q$ be a convex neighborhood. We can find $s<b$ sufficiently close to $b$ such that $p:=\sigma(s)\in U_q$, and $t>b$ sufficiently close to $b$ such that $r:=\sigma(t)\in U_q$. By the maximality of $\sigma$, $d(p,q)=0$ and $d(p,r)=d(q,r)$ is the length of $\sigma\vert_{[s,t]}$. However, this curve is not isochronal, thus, by the property of convex neighborhoods, it is not actually maximal, which implies that $d(p,q)>L(\sigma\vert_{[s,t]})=d(p,q)$, a contradiction.
\end{proof}

\begin{example}
There exists bounded Lorentzian length spaces for which maximal isocausal curves do not have definite causal character.
Let us consider a compact coordinate box of Minkowski 2+1 spacetime and a timelike plane  $\Sigma$ on it. Let $A$ and $B$ be two timelike segments in the box transverse to $\Sigma$ where the final point of $A$, $p$, belongs to $\Sigma$, and the starting points of $B$, $q$, belongs to $\Sigma$. Moreover, let $q \in E_\Sigma^+(p)$ accordingly to the causality of $\Sigma$. Let $M=\Sigma\cup A\cup B$, and let its Lorentzian distance $d$ be given as the supremum of the Lorentzian lengths of the piecewise smooth causal connecting curves in $M$ (so its restriction to $\Sigma$ is the usual Lorentzian distance for the Lorentzian submanifold $\Sigma$). This is a Lorentzian prelength space but for $a\in A\backslash\{p\}$, $b\in B\backslash\{q\}$, we have $d(a,b)>0$, and the maximal isocausal curve passes through $p$ and $q$ where $d(p,q)=0$.
\end{example}

\begin{example} The existence of convex neighborhoods is not preserved under GH limits.
Let us consider $M_r$, $r>0$, a subset of $1+1$ Minkowski spacetime contained in the set $t^{-1}([-1,1])\cap x^{-1}([-1,1])$, where we additionally remove the sets $\{p: t(p)\ge 0, x(p)<0\}$, and $\{p: t(p)\le 0, t(p)> r x(p)\}$.  Let the Lorentzian distance $d_r$ be  the supremum of the Lorentzian lengths of the piecewise smooth causal connecting curves in $M_r$.

We have that $(M_r,d_r)$  is a bounded Lorentzian length space for $k\ge 1$ which admits convex neighborhoods iff $r>1$. For $r=1$ maximal isocausal curves do not have definite causal character. Furthermore, $M_{1+1/k}$, $k\to \infty$, Gromov-Hausdorff converges to $M$, which shows that the existence of convex neighborhoods is not preserved under GH limits.
\end{example}

\begin{definition}
We say that  on $(X,d)$ maximal isocausal curves {\em branch in the future} if we can find two maximal isocausal curves $\sigma:[0,u]\to X$ and $\gamma:[0,v]\to X$ whose images coincide over two non-empty closed intervals, i.e.\ $\sigma([0,b])=\gamma([0,c])$, $0<b<u$, $0<c<v$, that cannot be extended to the right while preserving the same property. Then $\sigma(b)$ $(\gamma(c)$) is a {\em  future branching point} for $\sigma$ (resp.\ $\gamma$).  We say that $(X,d)$ is {\em non-branching to the future } if no maximal isocausal curve branches to the future. Similar definitions hold in the past case.
\end{definition}
Observe that if $\sigma([0,b'])=\gamma([0,c'])$, $\sigma(u)\ne \sigma(v)$, then there will largest closed sets $[0,b]$, $[0,c]$, $u<b$, $c<v$, for which this identity holds and that cannot further extended to the right.

Note that our non-branching property is somewhat stronger than \cite[Def. 4.10]{kunzinger18}. Therein  non-branching spaces might admit curves that after being coincident on an interval are again coincident on a sequence of points approaching the edge of the interval.

The proof of the next result follows ideas in \cite[Thm. 4.12]{kunzinger18}, but adapted to our notions.
\begin{proposition}
Let $(X,d)$ be a bounded Lorentzian length space that admits convex neighborhoods and  suppose that the sectional curvature is bounded from below by $k\in \mathbb{R}$.
Maximal isochronal curves do not branch.
\end{proposition}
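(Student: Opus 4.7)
The plan is to argue by contradiction, using the lower curvature bound on a carefully chosen shrinking family of timelike triangles to force an inequality incompatible with the reverse triangle inequality.

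Suppose, for contradiction, that there exist two maximal isochronal curves $\sigma\colon[0,u]\to X$ and $\tilde\sigma\colon[0,v]\to X$, both $d$-arclength parametrized (so $d(\sigma(s),\sigma(t))=t-s$ for $s\le t$, and similarly for $\tilde\sigma$), which coincide on $[0,t_0]$ and branch to the future at $q=\sigma(t_0)=\tilde\sigma(t_0)$. Fix a convex neighborhood $U_q$ of $q$ and a small $\delta>0$ so that $x:=\sigma(t_0+\delta)\in U_q$. By the branching hypothesis, pick $\epsilon_n\downarrow 0$ with $y_n:=\tilde\sigma(t_0+\epsilon_n)\ne\sigma(t_0+\epsilon_n)$; set $p_n:=\sigma(t_0-\epsilon_n)$. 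Since $q\ll x$ and $d$ is continuous, $d(y_n,x)\to d(q,x)=\delta>0$, so $y_n\ll x$ for $n$ large and $T_n=(p_n,y_n,x)$ is a timelike triangle with sides $a_n=2\epsilon_n$, $c_n=\delta+\epsilon_n$, $b_n=d(y_n,x)$. The reverse triangle inequality yields $b_n\le\delta-\epsilon_n$, and strictly so, because the unique maximal from $p_n$ to $x$ in $U_q$ is $\sigma|_{[t_0-\epsilon_n,t_0+\delta]}$, which does not pass through $y_n$. Therefore $T_n\in O_k$ for $\delta$ small and $n$ large.

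The point $q$ lies on the side $p_ny_n$ as the midpoint of the unique maximal $\tilde\sigma|_{[t_0-\epsilon_n,t_0+\epsilon_n]}$. Applying the lower curvature bound with $d_1\in\Delta$ equal to this midpoint and $d_2\in\Delta$ the vertex corresponding to $x$, the corresponding points in $X$ are $q$ and $x$ themselves, so $d(q,x)=\delta\le d_{M_k}(\bar q,\bar x)$. An explicit coordinate computation in Minkowski space for the realization triangle of sides $(2\epsilon_n,b_n,\delta+\epsilon_n)$ gives
\[
d_{M_0}(\bar q,\bar x)^2=\tfrac12\bigl[(\delta+\epsilon_n)^2-2\epsilon_n^2+b_n^2\bigr],
\]
so $2\delta^2\le(\delta+\epsilon_n)^2-2\epsilon_n^2+b_n^2$ simplifies to $(\delta-\epsilon_n)^2\le b_n^2$, i.e.\ $b_n\ge\delta-\epsilon_n$, in direct conflict with the strict inequality established above. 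For $k\ne 0$ the $M_k$-distance differs from $d_{M_0}$ by a term of order $|k|(\delta+\epsilon_n)^4$, which by taking $\delta$ small enough is dominated by the Minkowski deficit; hence the contradiction persists. The past-branching case follows by time-reversal symmetry.

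The main obstacle lies in the last step for $k\ne 0$: the Minkowski deficit $(\delta-\epsilon_n)^2-b_n^2$ tends to zero as $\epsilon_n\to 0$, and one must verify that it is of high enough order (roughly $\delta\epsilon_n$ times a positive factor measuring how $\tilde\sigma$ separates from $\sigma$ at $q$) to dominate the $O(|k|\delta^4)$ curvature correction. In a smooth spacetime setting this quantitative separation follows from Proposition~\ref{prop_estim_dist}, while in the abstract Lorentzian length space framework one may extract a uniform lower bound on the deficit from the Lipschitzness of $d$ with respect to the distinction metric $\gamma$ combined with $\gamma(y_n,\sigma(t_0+\epsilon_n))>0$, ensured by the branching hypothesis.
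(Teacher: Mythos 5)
Your overall strategy is sound and close in spirit to the paper's: contradict branching by building a timelike triangle whose reverse triangle inequality is \emph{strict} (via uniqueness of maximal curves in a convex neighborhood), placing the branch point on one side of the triangle, and playing the lower curvature comparison against that strictness. Your verification that the relevant comparison points are forced to be $q$ and $x$ (uniqueness of the midpoint on the side $p_ny_n$ inside $U_q$) is also essentially the same observation the paper makes.

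The genuine gap is the one you yourself flag: the conclusion is only established for $k=0$. Your explicit Minkowski computation turns the comparison $d(q,x)\le d_{M_0}(\bar q,\bar x)$ into $b_n\ge \delta-\epsilon_n$, contradicting $b_n<\delta-\epsilon_n$; but for $k\ne 0$ you are left needing the (vanishing, uncontrolled) Minkowski deficit $(\delta-\epsilon_n)^2-b_n^2$ to dominate an $O(|k|\delta^4)$ correction, and neither the Lipschitz property of $d$ with respect to $\gamma$ nor $\gamma(y_n,\sigma(t_0+\epsilon_n))>0$ gives a quantitative lower bound on that deficit of the required order. As written, the argument does not close for $k\ne0$, and no amount of shrinking $\delta$ or $\epsilon_n$ helps, since deficit and correction both degenerate. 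The repair, however, requires no estimates at all and is exactly what the paper does: work with a single fixed non-degenerate triangle and argue \emph{softly in the model space}. Since $b_n<\delta-\epsilon_n$ strictly, the comparison triangle in $M_k$ is non-degenerate, so $\bar q$ (interior point of the side $\bar p_n\bar y_n$) does not lie on the unique maximizing geodesic from $\bar p_n$ to $\bar x$; the concatenation through $\bar q$ has a corner and hence
\[
\epsilon_n+d_{M_k}(\bar q,\bar x)=d_{M_k}(\bar p_n,\bar q)+d_{M_k}(\bar q,\bar x)<d_{M_k}(\bar p_n,\bar x)=\delta+\epsilon_n ,
\]
so $d_{M_k}(\bar q,\bar x)<\delta=d(q,x)$, directly contradicting the comparison inequality $d(q,x)\le d_{M_k}(\bar q,\bar x)$ for every $k$. (The paper's triangle is arranged slightly differently --- vertices $x\ll y\ll z$ with $y,z$ on the two different branches, the branch point appearing on both sides $xy$ and $xz$ --- but the corner argument in $M_k$ is the same and is what replaces your computation.) A second, minor point worth tightening in either approach: when one of the two comparison points is a vertex, the defining relations $d(w,x)=0$, $d(y_n,w)=d(y_n,x)$ do not by themselves force $w=x$, so the claim that the vertex ``corresponds to itself'' deserves a word.
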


One could just impose some form of local sectional curvature boundedness, the proof argument taking place  within a convex neighborhood.
Observe that these local conditions likely do not pass to $GH$-limits as the existence of convex neighborhoods does not.

\begin{proof}
We prove that future branching is impossible, the other case being analogous. By contradiction, suppose that $\sigma$ branches to the future at the parameter $b$, and let us keep the notation introduced in the above definition.
Let $q=\sigma(b)$ and let $U_q$ be a convex neighborhood. We can find $s<b$ sufficiently close to $b$ that $x:=\sigma(s)\in U_q$ and $\sigma([s,b])\subset U_q$. We can find $t>b$ sufficiently close to $b$ that $z:=\sigma(t)\in U_q$, $\sigma([b,t])\subset U_q$, and $\sigma(t)\notin \gamma([0,1])$. Since $\sigma(b)\ll \sigma(t)$, and since for some $c$, $\gamma(c)=\sigma(b)$, we have $\gamma(c)\ll \sigma(t)$.
Let $e> c$ be such that $y:=\gamma(e)\ll \sigma(t)$, $\gamma([c,e])\subset U_q$ and $\gamma(e)\notin \sigma([0,1])$. We consider the triangle $(x,y,z)$. Since $\sigma$ is maximal $L(\sigma\vert_{[s,t]})=d(x,z)$. Similarly, since $\gamma$ is maximal $L(\gamma\vert_{[f,e]})=d(x,y)$, where $\gamma(f)=\sigma(s)$. It cannot be $d(y,z)=d(x,z)-d(x,y)$ otherwise the isochronal curve obtained concatenating $\gamma\vert_{[f,e]}$ with the maximal isochronal curve connecting $y$ to $z$ would give a maximal isochronal curve connecting $x$ to $z$ but different from $ \sigma\vert_{[s,t]}$ as passing from $y$, contradicting the uniqueness of maximal isochronal curves in convex neighborhoods. Thus the triangle is associated with a triangle inequality with the strict sign, $d(x,z)>d(x,y)+d(y,z)$. The realizing triangle in $M_k$ is therefore non-degenerate (sides are not aligned). Observe that the point $\sigma(b)$ is the only point with distances $d(x,\sigma(b))$, $d(\sigma(b),z)$ from $x$ and $z$ (by the uniqueness of maximal isochronal curves implied by the convex neighborhood). Let $\bar \sigma(b)$ the corresponding point on the side $\bar x \bar z$ on the realizing triangle on $M_k$. Similarly, let $\bar \gamma(c)$ be the point that corresponds to $\gamma(c)$ on the side $\bar x \bar y$. Note that
$d_{M_k}(\bar \gamma(c), \bar z) <d_{M_k}(\bar \sigma(b),\bar z)$ otherwise one could go from $\bar x$ to $\bar z$ with a curve of length
\[
d_{M_k}(\bar x, \bar \gamma(c))+d_{M_k}(\bar \gamma(c), \bar z)\ge d_{M_k}(\bar x, \bar \sigma(b))+d_{M_k}(\bar \sigma(b), \bar z)=d_{M_k}(\bar x, \bar z),
\]
passing through $\bar\gamma(c)$ and, due to the corner at $\bar \gamma(c)$, (due to the fact that the realizing triangle in non-degenerate) we would actually have that this timelike curve could be deformed to a longer timelike curve,  a contradiction.

Finally, since the timelike curvature is bounded from below by $k$ (used in the penultimate step)
\[
d(\sigma(b),z)=d_{M_k}(\bar \sigma(b), \bar z) > d_{M_k}(\bar \gamma(c), \bar z)\ge d(\gamma(c), z)=d(\sigma(b),z),
\]
a contradiction.
\end{proof}

\begin{remark}
It can be observed that in the above proof we are comparing $d(w,z)$ with $d_{M_k}(\bar w, \bar z)$ where $w$ is a point on the side $xy$ and $\bar w$ the corresponding point on the side $\bar x\bar y$. In the smooth case this type of comparison result follows just from the boundedness on the timelike sectional curvature due to Harris' results \cite{harris82}, with no need for conditions on spacelike plane as in Alexander and Bishop \cite{alexander08}. This means that we could have weakened the conditions in the definition of sectional curvature bound for Lorentzian length spaces imposing the comparison distance inequality for pairs $(w,z)$ placed as above, while still being able to obtain the above non-branching result.
\end{remark}

\section{Compactness}

\subsection{Uniformly totally bounded families}

As in the case of metric spaces, we will give criteria for sets of  bounded Lorentzian-metric spaces to be precompact with respect to the
Gromov-Hausdorff semi-distance.

\begin{remark}
In the next proofs we shall use the semi-distance $d_{GH}$ between bounded spaces $(X,d)$  whose distance $d$  satisfies the reverse triangle inequality as in
Definition \ref{D1}.
\end{remark}

For a bounded Lorentzian-metric space we define   $\textrm{diam} \, X:=\max_{X\times X} d$. We already observed  that it positive and  finite.

\begin{definition} \label{osj}
Let $D>0$, $\alpha:=\{\alpha_k\}_{k\in\mathbb{N}}\subset (0,\infty)$ be a decreasing sequence  with $\alpha_k\to0$ and $\beta:=\{\beta_k\}_{k\in\mathbb{N}}\subset (0,\infty)$
be an increasing sequence with $\beta_k\to \infty$.
The class $\mathfrak{X}=\mathfrak{X}(D, \alpha,\beta)$ of bounded Lorentzian-metric spaces consists of all those $X$ such that
\begin{enumerate}
\item $\textrm{diam} \, X\le D$ for all $X\in \mathfrak{X}$,
\item every $X\in \mathfrak{X}$ contains for each $k$ an $\alpha_k$-net consisting of no more than $\beta_k$ points,
\item $i^0\in X$.
\end{enumerate}
We say that $\mathfrak{X}$ is {\it uniformly totally bounded} with respect to $(D, \alpha,\beta)$.
\end{definition}

Observe that if $\alpha'_i=\alpha_{k_i}$, $\beta'_i=\beta_{k_i}$, are subsequences, then $\mathfrak{X}(D, \alpha,\beta)\subset\mathfrak{X}(D, \alpha',\beta')$.

\begin{remark}
If $\mathfrak{X}$ is uniformly totally bounded with respect to $(D, \alpha,\beta)$ the family also satisfies the following conditions which are closer to the notion in metric
geometry:
\begin{itemize}
\item[(1)] There is a constant $D$ such that $\textrm{diam} \, X\le D$ for all $X\in \mathfrak{X}$.
\item[(2)] For every $\epsilon>0$ there exists a natural number $N=N(\epsilon)$ such that every $X\in \mathfrak{X}$ contains an $\epsilon$-net consisting of no more than $N$ points.
\end{itemize}
\end{remark}

Our objective is to prove that $(\mathfrak{X}/\!\sim,d_{GH})$ is a compact metric space, where $\sim$ denotes identification by isometry. This property is equivalent to sequential compactness \cite[17G]{willard70}.

As a consequence, we expect $(\mathfrak{X}/\!\sim,d_{GH})$ to be separable. The remaining results of this section prove this property which also follows from the proof of sequential compactness  provided in the next section.

\begin{definition}
Causets and $\epsilon$-nets are said to be {\it rational} if the distance function takes only rational values.
\end{definition}

\begin{proposition} \label{zsv}
The family of rational causets is denumerable. Moreover, it is  Gromov-Hausdorff dense in the family of causets.
\end{proposition}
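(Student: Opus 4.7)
The denumerability statement is purely combinatorial: a rational causet, up to isometry, is specified by its cardinality $n=|S|\in\mathbb{N}$ together with a rational distance matrix $\{1,\ldots,n\}^2\to\mathbb{Q}_{\ge 0}$. For each fixed $n$ there are countably many such matrices; the union over $n\in\mathbb{N}$, further intersected with the constraints (i) and (iii) of Definition \ref{D1}, remains countable.

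For the density statement my plan is, given a causet $(S,d)$ and $\varepsilon>0$, to produce a single explicit rational causet at GH-distance at most $\varepsilon$ from $(S,d)$ by perturbing $d$ via the floor function. Choosing $N\in\mathbb{N}$ large (to be specified), I will set
\[
d_N(x,y):=\frac{\lfloor Nd(x,y)\rfloor}{N}\in\mathbb{Q}_{\ge 0}.
\]
Then $\|d-d_N\|_\infty\le 1/N$, and the diagonal correspondence $R=\{(s,s):s\in S\}\subset S\times S$ has distortion at most $1/N$, so $d_{GH}((S,d),(S,d_N))\le 1/N$. What remains is to verify that $(S,d_N)$ is itself a causet, i.e.\ satisfies Definition \ref{D1}(i) and (iii).

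Checking (i): if $d_N(x,y),d_N(y,z)>0$ then $d(x,y),d(y,z)>0$, so the reverse triangle inequality for $d$ gives $d(x,z)\ge d(x,y)+d(y,z)$, and the subadditivity $\lfloor a\rfloor+\lfloor b\rfloor\le\lfloor a+b\rfloor$ combined with the monotonicity of $\lfloor\cdot\rfloor$ yields
\[
d_N(x,y)+d_N(y,z)\le\frac{\lfloor Nd(x,y)+Nd(y,z)\rfloor}{N}\le\frac{\lfloor Nd(x,z)\rfloor}{N}=d_N(x,z).
\]
Checking (iii): for each pair $x\ne y$ in $S$ fix a distinguishing witness $z=z(x,y)$ (necessarily $z\ne i^0$, since $i^0$ never distinguishes) and the corresponding quantity $\eta_{x,y}:=|d(x,z)-d(y,z)|$ or $|d(z,x)-d(z,y)|$, whichever is positive; set $\eta:=\min_{x\ne y}\eta_{x,y}>0$, the minimum of finitely many strictly positive numbers. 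The elementary fact that $|a-b|>1$ implies $\lfloor a\rfloor\ne\lfloor b\rfloor$ shows that the same witnesses still distinguish under $d_N$ as soon as $N>1/\eta$. Taking $N>\max(1/\varepsilon,\,1/\eta,\,1/\min\{d(x,y):d(x,y)>0\})$ (the last entry forcing $d_N>0$ exactly where $d>0$, so that the chronological relation is preserved) finishes the argument.

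The only substantive obstacle is that (i) may be \emph{tight} in a causet—one can perfectly well have $d(x,z)=d(x,y)+d(y,z)$—so a generic perturbation destroys it. Rounding down via the floor function is the decisive choice: its subadditivity handles the tight case automatically, which is why other natural perturbations (nearest-rational rounding, multiplicative rescaling) either fail outright or require significant extra work.
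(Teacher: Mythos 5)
Your proof is correct, and the density argument takes a genuinely different route from the paper's. The paper perturbs each positive distance \emph{upward} by $\delta t_{ij}^2$, where $t_{ij}$ is the maximal number of links in a chain from $x_i$ to $x_j$; since a chain from $x$ to $y$ concatenated with one from $y$ to $z$ gives a chain from $x$ to $z$, the squares force every instance of the reverse triangle inequality to become \emph{strict}, after which conditions (i) and (iii) are open in $\mathbb{R}^k$ and one may move to a nearby rational point. You instead round \emph{downward} all at once via $d_N=\lfloor Nd\rfloor/N$ and observe that the superadditivity $\lfloor a\rfloor+\lfloor b\rfloor\le\lfloor a+b\rfloor$ together with monotonicity of the floor preserves the reverse triangle inequality even in the tight case $d(x,z)=d(x,y)+d(y,z)$ --- precisely the obstacle the paper's two-step perturbation is designed to circumvent. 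Your treatment of (iii) via the threshold $N>1/\eta$ and the fact that $|a-b|>1$ forces $\lfloor a\rfloor\ne\lfloor b\rfloor$ is sound, and the diagonal correspondence gives distortion at most $1/N$. What each approach buys: yours is a one-shot explicit construction with a clean quantitative bound and no appeal to openness of the constraint set; the paper's is slightly more roundabout but its ``make the inequalities strict, then perturb'' template is reused later (in the separability proof for uniformly totally bounded families, where distances are decreased via $-\delta\sqrt{t_{ij}}$ so as to preserve $\alpha_j$-nets), so it earns its keep there. Two cosmetic remarks: the inequality you invoke is usually called superadditivity of the floor rather than subadditivity, and the condition $N>1/\min\{d(x,y):d(x,y)>0\}$ is not needed for the statement (condition (i) only constrains pairs with $d_N>0$, and the GH bound holds regardless), though it is harmless and preserves the chronological relation exactly.
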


\begin{proof}
Let us prove the latter statement.
Let $n$ be the (finite) cardinality of the causet $S=\{x_1,\cdots, x_n\}$, then the $n(n-1)/2$ numbers $d(x_i,x_j)$, for $i,j=1,\cdots, n$, completely describe the causet. They are subject to the constraints (i) and (iii) of Def.\ \ref{D1}. Each causet of cardinality $n$ can then be represented by a point in $\mathbb{R}^{n(n-1)/2}$. Let $k\le n(n-1)/2$ be the number of distances different from zero. We are going to consider the space $\mathbb{R}^k$ as we are not going to perturb distances that are zero.

 Definition \ref{D1}(iii) is an open condition.  Indeed, the distinction metric $\gamma$ takes a minimal positive value $\alpha$ among pairs of distinct points. If the distances $d(x_i,x_j)$ are perturbed by less than half this value, (iii) still holds. Let $\epsilon>0$. We shall perturb each positive $d(x_i,x_j)$ by less than $\textrm{min}(\alpha/2, \epsilon)$. Observe that the inequalities  in Definition \ref{D1}(i) involve only those distances which are positive.

We first perturb it so as to satisfy all inequalities (i) in a strict sense.
Each distance $d(x_i,x_j)>0$ can be thought as a link connecting $x_i$ to $x_j$. We can move from $x_i$ to $x_j$ by a chain with a maximal number of links (observe that each chain has a finite number of elements due to chronology, i.e.\ boundedness of $d$, and that by the same reason no chain passes twice from the same point). If $t_{ij}$ is the maximal length of the chain we replace $d(x_i,x_j)$ with $d(x_i,x_j)+\delta t_{ij}^2$, where $\delta\le \textrm{min}(\alpha/2,\epsilon)/[n(n-1)/2]^2$. In this way all inequalities  (i) are satisfied in a strict sense. Those inequalities give now an open condition on $\mathbb{R}^k$, thus taking into account that condition (iii) is also open we can indeed find  a point in $\mathbb{Q}^k$ arbitrary close to our values. In conclusion, the positive distances can be perturbed to become rational while preserving properties (i) and (iii). Moreover, the perturbation of each distance is chosen to be less that $\epsilon$. As a consequence the newly obtained rational causet $\tilde S$ satisfies $d_{GH}(S,\tilde S)\le \epsilon$.

For the former statement, observe that $\mathbb{Q}^{n(n-1)/2}$ has the cardinality of $\mathbb{N}$ and the cardinality of $\mathbb{N}$ copies of $\mathbb{N}$ is that of $\mathbb{N}$, which proves the claim.
\end{proof}

Let $\mathfrak{B}$ be the set of bounded Lorentzian metric spaces that contain $i^0$. We have (a similar result holds for spaces that do not contain $i^0$)
\begin{proposition}
 $(\mathfrak{B}/\!\sim, d_{GH})$ is a separable metric space.
\end{proposition}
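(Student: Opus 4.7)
The plan is to exhibit a countable $d_{GH}$-dense subset of $\mathfrak{B}/\!\sim$; together with Corollary \ref{bof} (which gives the metric space structure) this will yield separability. The natural candidate, given the machinery already developed, is the family of isometry classes of rational causets (with $i^0$ adjoined so that they sit inside $\mathfrak{B}$).

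The argument proceeds in three short steps. First, by the former assertion of Proposition \ref{zsv}, the collection of rational causets is countable. Second, given any $X\in\mathfrak{B}$ and any $\epsilon>0$, Corollary \ref{vpa} supplies a causet $\mathcal{N}$ with $d_{GH}(\mathcal{N},X)\le \epsilon/2$; indeed, this is how $\epsilon$-nets that are causets were constructed via Remark \ref{remark2}(\ref{bjd}) and (\ref{bkd}). Third, the latter assertion of Proposition \ref{zsv} provides a rational causet $\tilde{\mathcal{N}}$ with $d_{GH}(\mathcal{N},\tilde{\mathcal{N}})\le \epsilon/2$. Combining these by the triangle inequality of Proposition \ref{qkz} yields
\[
d_{GH}(\tilde{\mathcal{N}},X)\le d_{GH}(\tilde{\mathcal{N}},\mathcal{N})+d_{GH}(\mathcal{N},X)\le \epsilon,
\]
so rational causets are $d_{GH}$-dense. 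To stay inside $\mathfrak{B}$ I invoke Remark \ref{buy}: adjoining $i^0$ to a rational causet does not alter the Gromov-Hausdorff semi-distance to any other space, so one may always assume without loss of generality that the approximating causets contain $i^0$, and the countable family of rational causets with $i^0$ is still countable and still dense.

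I do not anticipate a genuine obstacle here, since essentially all the nontrivial content was absorbed into Corollaries \ref{bof} and \ref{vpa} and Proposition \ref{zsv}. The only subtle point to watch is the role of $i^0$: the rational perturbation in Proposition \ref{zsv} is described for causets regarded abstractly via Definition \ref{D2}, and one must check that inserting or removing the isolated point $i^0$ is compatible with the membership in $\mathfrak{B}$ used in the statement. This is precisely what Remark \ref{buy} guarantees, so the proof reduces to concatenating the three statements above.
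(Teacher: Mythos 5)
Your proof is correct and follows essentially the same route as the paper's: Corollary \ref{bof} for the metric structure, then density of the (countable) rational causets obtained by combining Corollary \ref{vpa} with Proposition \ref{zsv} via the triangle inequality. The paper states this in two lines; your additional bookkeeping about adjoining $i^0$ (so the approximating causets lie in $\mathfrak{B}$, justified by enlarging a correspondence by the pair $(i^0_X,i^0_Y)$ as in Remark \ref{buy}) is a detail the paper leaves implicit, and it is handled correctly.
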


\begin{proof}
We already know from Corollary\ \ref{bof} that  $(\mathfrak{B}/\!\sim, d_{GH})$ is a metric space.  By Proposition\ \ref{zsv} and   Corollary\  \ref{vpa}  the rational causets are dense in $\mathfrak{B}$ w.r.t.\ the Gromov-Hausdorff topology, hence $(\mathfrak{B}/\!\sim, d_{GH})$ is separable.
\end{proof}

\begin{proposition}
 For a uniformly totally bounded family $\mathfrak{X}$ the semi-metric space $(\mathfrak{X}, d_{GH})$ is separable.
\end{proposition}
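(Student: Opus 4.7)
The strategy is to leverage the uniform bound $\beta_k$ on the size of $\alpha_k$-nets to reduce approximation of members of $\mathfrak{X}$ to approximation by a countable pool of rational causets of controlled cardinality, and then to select representatives inside $\mathfrak{X}$ itself.

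First, for each $k$ and each $X \in \mathfrak{X}$, choose an $\alpha_k$-net $\mathcal{N}_k(X) \subseteq X$ with $|\mathcal{N}_k(X)| \le \beta_k$, and pass to its causet quotient $\check{\mathcal{N}}_k(X)$ as in Remark \ref{remark2}(\ref{bkd}). Combining Remark \ref{remark2}(\ref{bjd}) with the identity $d_{GH}(\mathcal{N}_k(X), \check{\mathcal{N}}_k(X)) = 0$ from Remark \ref{remark2}(\ref{bkd}) yields
\[
d_{GH}(X, \check{\mathcal{N}}_k(X)) \le 2\alpha_k ,
\]
and $\check{\mathcal{N}}_k(X)$ is a causet with at most $\beta_k$ points. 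By Proposition \ref{zsv} applied with error threshold $\alpha_k$, one obtains a rational causet $C_k(X)$ of the same cardinality as $\check{\mathcal{N}}_k(X)$ with $d_{GH}(\check{\mathcal{N}}_k(X), C_k(X)) < \alpha_k$, hence $d_{GH}(X, C_k(X)) < 3\alpha_k$.

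Let $\mathcal{R}_k$ be the set of rational causets with at most $\beta_k$ points. Each $\mathcal{R}_k$ is countable (being parametrized by a subset of $\bigcup_{n \le \beta_k} \mathbb{Q}^{n(n-1)/2}$ cut out by rational constraints), and so is $\mathcal{R} := \bigcup_k \mathcal{R}_k$. By the previous paragraph, for every $X \in \mathfrak{X}$ and every $\varepsilon > 0$ one can pick $k$ with $3\alpha_k < \varepsilon$ and find an element of $\mathcal{R}$ within $\varepsilon$ of $X$ in $d_{GH}$.

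It remains to extract from this a dense subset of $\mathfrak{X}$ itself, since $\mathcal{R}$ need not be contained in $\mathfrak{X}$. For each pair $(C, n) \in \mathcal{R} \times \mathbb{N}$ for which the set $\{Y \in \mathfrak{X} : d_{GH}(Y, C) < 1/n\}$ is non-empty, pick one representative $X_{C,n}$; let $\mathcal{D} \subset \mathfrak{X}$ be the resulting countable set. Given any $Y \in \mathfrak{X}$ and $\varepsilon > 0$, choose $C \in \mathcal{R}$ with $d_{GH}(Y, C) < \varepsilon/2$ and an integer $n > 2/\varepsilon$; then $Y$ itself witnesses non-emptiness of the relevant ball, so $X_{C,n}$ is defined and
\[
d_{GH}(Y, X_{C,n}) \le d_{GH}(Y, C) + d_{GH}(C, X_{C,n}) < \varepsilon ,
\]
proving $\mathcal{D}$ is $d_{GH}$-dense in $\mathfrak{X}$. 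The principal (mild) obstacle is precisely this last step: the uniform totally bounded condition gives rational-causet approximations only from outside $\mathfrak{X}$, and density from within must be recovered by the choice argument.
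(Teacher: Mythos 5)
Your argument is correct in substance, and it differs from the paper's in an instructive way. The paper disposes of the literal statement in one line: since $\mathfrak{X}\subset\mathfrak{B}$ and $(\mathfrak{B}/\!\sim,d_{GH})$ has just been shown to be a separable metric space (hence second countable), the induced topology on $\mathfrak{X}$ is second countable and therefore separable. It then spends the rest of its proof on something stronger: producing a countable dense set consisting of rational causets that themselves belong to $\mathfrak{X}(D,\alpha,\beta)$. That refinement is the delicate part, because the rationalizing perturbation of Proposition \ref{zsv} must be redone so as to preserve an $\alpha_j$-net of at most $\beta_j$ points for every $j$; the paper achieves this by first decreasing all positive distances (which can only shrink the distinction metric and hence keeps every net a net) before passing to rational values. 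You sidestep this entirely: your countable approximating family $\mathcal{R}$ of rational causets lives outside $\mathfrak{X}$, and you recover a dense subset of $\mathfrak{X}$ itself by the standard selection argument (one representative in each nonempty ball $\{Y\in\mathfrak{X}: d_{GH}(Y,C)<1/n\}$). This is a valid and more elementary route to the stated proposition --- it needs only the triangle inequality for $d_{GH}$ --- though it yields a non-explicit dense set and does not give the paper's stronger conclusion that rational causets lying in $\mathfrak{X}$ are dense. One small slip in your last step: you fix $C$ with $d_{GH}(Y,C)<\varepsilon/2$ and then take $n>2/\varepsilon$, but for $Y$ to witness nonemptiness of the ball of radius $1/n$ around $C$ you need $d_{GH}(Y,C)<1/n$, which does not follow from what you have. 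Reverse the order of quantifiers: fix $n$ with $1/n<\varepsilon/2$ first, then choose $k$ with $3\alpha_k<1/n$ and set $C=C_k(Y)$; the rest of your computation goes through unchanged.
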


\begin{proof}
Since $\mathfrak{X}\subset \mathfrak{B}$, the metric topology of $(\mathfrak{X}, d_{GH})$ coincides with the topology induced from the metric topology of the semi-metric space $(\mathfrak{B}, d_{GH})$. As the latter is second-countable so is the topology of $(\mathfrak{X}, d_{GH})$, which is thus separable.

As an alternative argument note that, by  Remark \ref{remark2}, point \ref{bjd}, $d_{GH}(\tilde{S}_k, X)\le 2\alpha_k$ as $\tilde S_k$ is  an  $\alpha_k$-net of $X$ as it contains the $\alpha_k$-net $S_k$ of $X$.
 Further, by  Remark \ref{remark2}, point \ref{bkd}  and Proposition \ref{vkx}, we can find a subset $\check S_k$ of $\tilde S_k$ which is a causet, hence a bounded Lorentzian metric space,  with $d_{GH}(\check S_k,\tilde S_k)=0$. It is still true that $\check S_k$ admits for every $j$ an $\alpha_j$-net  in  $\check S_k$ which counts no more than $\beta_j$ points (they are obtained from Proposition \ref{vkx} applied to  the analogous sets for $\tilde S_k$).
 This means that $\check S_k$ belongs to $\mathfrak{X}(D, \alpha, \beta)$.
Now we can decrease all positive distances so as to preserve the $\alpha_j$-nets, via a redefinition of the following form $d(x_i,x_j)-\delta \sqrt{t_{ij}}$, $\delta>0$, with $t_{ij}$ interpreted as  in Proposition \ref{zsv} and $\delta$ sufficiently small. This operation brings the inequality of the type of Definition \ref{D1}(i) to a strict form, and all distances can be replaced by a rational value by preserving conditions Definition \ref{D1}(i),(iii) and the $\alpha_j$-nets. In conclusion, as $\alpha_k\to 0$, we find a rational causet in $\mathfrak{X}(D, \alpha, \beta)$ at arbitrarily small Gromov-Hausdorff distance from $X$, which proves the desired result as the rational causets are countable.
\end{proof}

\subsection{Compactness of uniformly totally bounded families}

\begin{theorem}\label{thm_precomp}
 For any uniformly totally bounded family $\mathfrak{X}$ the metric space $(\mathfrak{X}/\!\sim, d_{GH})$ is compact.
\end{theorem}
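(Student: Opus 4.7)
The plan is to adapt Gromov's precompactness theorem to our setting by extracting a GH-convergent subsequence through a Cantor diagonal argument on finite $\alpha_k$-nets, and then invoking Proposition \ref{prop_nets} to upgrade net-convergence to convergence of the ambient spaces. The limit space will be built as a metric completion of a directed union of causets obtained as $d$-GH limits of the nets.

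Given a sequence $\{X_m\}\subset \mathfrak{X}$, I would first choose for each $m,k$ a causet $\alpha_k$-net $\mathcal{N}_m^k\subset X_m$ with $|\mathcal{N}_m^k|\le \beta_k$ and $i^0\in \mathcal{N}_m^k$ (by Remark \ref{remark2}(\ref{bkd}), after harmless adjustments of the constants), and form the nested family $\mathcal{M}_m^k:=\bigcup_{j\le k}\mathcal{N}_m^j$, which is an $\alpha_k$-net of cardinality at most $B_k:=\beta_1+\cdots+\beta_k$. After labeling the points of $\mathcal{M}_m^k$ persistently under inclusion (padding by repetition up to exactly $B_k$ labels), and passing to a first subsequence so that the labels belonging to $\mathcal{N}_m^k$ form a fixed subset $L_k\subset\{1,\ldots,B_k\}$ for each $k$ (pigeonhole, since there are only finitely many such subsets), the labeled $B_k\times B_k$ distance matrices lie in the compact cube $[0,D]^{B_k\times B_k}$. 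A Cantor diagonal argument then produces a subsequence along which all matrices converge as $m\to\infty$ for every $k$ simultaneously, and the limit matrices satisfy the reverse triangle inequality as a closed condition whenever the two hypothesis distances are positive.

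The limit matrices define a bounded function $d_\infty$ on a countable labeled set $Y^{\circ}$; passing to the distance-quotient of Proposition \ref{vkx} yields a bounded space $(Y,d)$ satisfying Definition \ref{D1}(i) and (iii). I would then define $\widetilde{X}$ as the metric completion of $(Y,\gamma_Y)$, to which $d$ extends by $\gamma$-Lipschitz continuity (Proposition \ref{nwq}); the space is $\gamma$-compact because $Y$ possesses finite nets of arbitrarily small radius, and the axioms of a bounded Lorentzian-metric space are readily verified. The candidate $\alpha_k$-net of $\widetilde{X}$ is the image $Z_k$ of the labels $L_k$ under quotient and completion, which satisfies $|Z_k|\le \beta_k$; once $Z_k$ is shown to be an $\alpha_k$-net, Proposition \ref{prop_nets} applied to $\mathcal{N}_m^k\to Z_k$ gives $X_m\to \widetilde{X}$ in GH and $\widetilde{X}\in \mathfrak{X}(D,\alpha,\beta)$, closing the argument.

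The hard step will be proving that $Z_k$ is genuinely an $\alpha_k$-net of $\widetilde{X}$, because the distinction metric is ambient-dependent and the bound $\gamma_{X_m}\le \alpha_k$ does not transfer directly to $\gamma_{\widetilde{X}}$. My plan here is: for each labeled $p_{\infty,j,i}\in Y$, pigeonhole in the finite set $L_k$ selects a label $\phi$ for which $\gamma_{X_m}(p_{m,j,i},p_{m,k,\phi})\le \alpha_k$ infinitely often; the monotonicity $\gamma_{\mathcal{M}_m^{j''}}\le \gamma_{X_m}$ (Remark \ref{voz}), together with the $d$-GH-continuity of $\gamma$, then propagates this bound to $\gamma_{Y_{j''}}(p_{\infty,j,i},p_{\infty,k,\phi})\le \alpha_k$ for all $j''\ge \max(j,k)$. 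Since $\gamma_Y$ is the monotone supremum of the $\gamma_{Y_{j''}}$'s, this yields $\gamma_Y(p_{\infty,j,i},p_{\infty,k,\phi})\le \alpha_k$; density of $Y$ in $\widetilde{X}$ extends the bound to $\gamma_{\widetilde{X}}$, and a final pigeonhole on the finite set $Z_k$ upgrades the conclusion to arbitrary $\widetilde{x}\in \widetilde{X}$ via $\gamma$-approximation from $Y$. This label-pigeonhole bookkeeping, coupled with the ambient dependence of $\gamma$, is the delicate point that distinguishes the argument from the classical metric Gromov compactness proof.
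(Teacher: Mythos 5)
Your proposal is correct and follows essentially the same route as the paper: a Cantor diagonal extraction of limiting distances on a persistently labelled union of $\alpha_k$-nets, construction of the limit space as the completion of the resulting countable set with respect to its distinction metric (the paper realizes this same completion concretely as the closure of the Kuratowski image in $\ell^\infty\times\ell^\infty$, where $\gamma_Y$ is exactly the pulled-back sup norm), verification of Definition \ref{D1}(i)--(iii), the key lemma that the labelled subsets remain $\alpha_k$-nets in the limit, and finally Proposition \ref{prop_nets}. Your pigeonhole argument for the net lemma and the paper's contradiction argument (a witnessing coordinate $r_i$ pulled back to $X_m$ for large $m$) are two phrasings of the same estimate, so the approaches coincide in substance.
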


\begin{remark}
Due to the equivalence between the Lorentzian diameter and the distinction-metric diameter cf.\ Prop.\ \ref{eqd}, a  uniformly totally bounded family in our sense is uniformly totally bounded in the classical sense \cite{burago01}. Thus it is possible to use the classical precompactness theorem \cite[Thm. 7.4.15]{burago01} to infer that a subsequence  converges to a metric space. However, by using this strategy one cannot conclude that the limit is a bounded Lorentzian metric space.
\end{remark}

The remainder  of this section  is devoted to the proof of the sequential compactness of $\mathfrak{X}$.

 In this proof environment we shall also obtain some useful lem\-mas/the\-o\-rems/co\-rol\-laries.

Let $\mathfrak{X}$ be uniformly totally bounded with respect to $(D,\alpha,\beta)$, see Definition \ref{osj}.  Further, let  $\{X_m\}\subseteq \mathfrak{X}$ be a
sequence. We know that $(X_m,\mathscr{T}_m)$ is compact. For each positive integer $k$, $X_m$ admits an $\alpha_k$-net $S_m^{(k)}$  of at most $\beta_k$ points. By adding
points arbitrarily to each net we can assume, without loss of generality, that each $\alpha_k$-net consists of exactly $\beta_k$ points. Set $N_k:=\sum_{s=1}^k \beta_s$.

Let us order the (distinct) elements of $S_m^{(k)}$ from $N_{k-1}+1$ to $N_k$ in some (arbitrary) way, so that we can denote $S_m^{(k)}=\{x_{i,m}, i=N_{k-1}+1,
\cdots, N_k\}$.

The disjoint union
\[
\mathscr{S}_m:=\{x_{i,m}\}_i
\]
runs over elements that cover the set $S_m:= \cup_k  S_m^{(k)}$. The set $S_m$ being a union of $\alpha_k$-nets, where $(X_m,\gamma_m)$ is a metric space, is necessarily dense in $X_m$ as $\mathscr{T}_m$ is the topology induced by $\gamma_m$.

The next construction is obtained considering  the limit of the Kuratowski embeddings $X_m\to \mathcal{B}\times \mathcal{B}$ with respect to $\mathscr{S}_m$.

For each pair of positive integers $(i,j)$ we consider the sequence
\[
\{d(x_{i,m},x_{j,m})\}_m
\]
where all numbers belong to the compact set $[0,D]$. We can pass to a
subsequence of $X_m$ so that the sequence converges to some number which we denote $d_{i,j}\le D$. By a Cantor diagonal procedure, since the pairs  $(i,j)$ are countable, we can assume, without loss of generality, to have defined all numbers $d_{i,j}$ in this fashion.

This induces functions $E_i:=(e_i,e^i)\colon \mathbb{N}\to [0,D]$ via $e_i(j):=d_{i,j}$ and $e^i(j):=d_{j,i}$.
It can happen that $E_i=E_j$ for some $i\ne j$. Define
\[
S_\infty:=\bigcup_i\{E_i\}\subseteq \mathcal{B}\times \mathcal{B}
\]
and
\[
X:= \overline{S_\infty}
\]
to be the closure of $S_\infty$ in $(\mathcal{B}\times \mathcal{B},\dist\nolimits_\infty)$. A function $d\colon X\times X\to [0,D]$ is defined as
\[
d(E,F):= \lim_{r\to \infty} d_{i_r,j_r},
\]
where $E_{i_r}\to E$ and $E_{j_r}\to F$ for $r\to \infty$.

\begin{lemma}\label{lem_indep}
The function $d$ is well defined, i.e. the limit of $d_{i_r,j_r}$ exists and is independent of the sequences.
\end{lemma}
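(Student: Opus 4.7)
The plan is to exploit the structural relationship between $d_{i,j}$ and the embedding maps $E_i = (e_i, e^i)$: by construction $e_i(j) = d_{i,j}$ and $e^j(i) = d_{i,j}$ as well. This lets us control differences of $d_{i,j}$'s by the sup-norm distances between $E_i$'s, which is precisely the kind of quantity we have good information about since we are assuming convergence $E_{i_r}\to E$ and $E_{j_r}\to F$ in $(\mathcal{B}\times\mathcal{B},\mathrm{dist}_\infty)$.

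First I would establish the key inequality: for any indices $i,i',j,j'$,
\[
|d_{i,j} - d_{i',j'}| \le |e_i(j)-e_{i'}(j)| + |e^{j}(i') - e^{j'}(i')|
\le \|E_i - E_{i'}\|_\infty + \|E_j - E_{j'}\|_\infty.
\]
The first step here is just the triangle inequality after inserting $\pm d_{i',j}$, and the second uses the obvious pointwise estimate $|e_i(j)-e_{i'}(j)| \le \|e_i-e_{i'}\|_\infty \le \|E_i-E_{i'}\|_\infty$, and analogously for the $e^j$-components.

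Next, given any sequences $\{i_r\}, \{j_r\}$ with $E_{i_r}\to E$ and $E_{j_r}\to F$, both $\{E_{i_r}\}_r$ and $\{E_{j_r}\}_r$ are Cauchy in $\mathcal{B}\times\mathcal{B}$, so the inequality applied with $(i,j) = (i_r,j_r)$ and $(i',j') = (i_s,j_s)$ shows that $\{d_{i_r,j_r}\}_r$ is Cauchy in $[0,D]\subset\mathbb{R}$, hence convergent. For independence from the choice of approximating sequences, if $E_{i'_r}\to E$ and $E_{j'_r}\to F$ is another pair of approximating sequences, the same inequality applied to $(i_r,j_r)$ and $(i'_r,j'_r)$ yields
\[
|d_{i_r,j_r} - d_{i'_r,j'_r}| \le \|E_{i_r}-E_{i'_r}\|_\infty + \|E_{j_r}-E_{j'_r}\|_\infty \xrightarrow{r\to\infty} 0,
\]
so the two limits coincide.

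There is essentially no obstacle here; the only subtlety worth flagging is purely notational, namely to keep straight that $d_{i,j}$ is encoded \emph{twice} in the functions $E_\bullet$ — once as $e_i(j)$ and once as $e^{j}(i)$ — and that controlling it from the first-index side uses $\|E_i-E_{i'}\|_\infty$ while controlling it from the second-index side uses $\|E_j-E_{j'}\|_\infty$. Once this bookkeeping is set up, the lemma follows in a few lines and produces the bonus estimate $|d(E,F) - d_{i,j}| \le \|E_i-E\|_\infty + \|E_j-F\|_\infty$, which will presumably be useful later when proving continuity of $d$ on $X\times X$.
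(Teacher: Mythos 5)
Your proof is correct and follows essentially the same route as the paper's: insert the cross term $d_{i',j}$ (the paper uses $d_{i_r,j_r'}$, a symmetric variant), recognize the two resulting differences as evaluations of $e_\bullet$ and $e^\bullet$ at a fixed argument, and bound by the sup-norms $\Vert E_i-E_{i'}\Vert_\infty+\Vert E_j-E_{j'}\Vert_\infty$. The only difference is that you make the existence part explicit via the Cauchy criterion, which the paper leaves implicit.
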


\begin{proof}
Let $E_{i_r'}=(e_{i_r'},e^{i_r'})$ and $E_{j_r'}=(e_{j_r'},e^{j_r'})$ be other sequences converging to $E$ and $F$ respectively. Then we have
\begin{align*}
|d_{i_r',j_r'}-d_{i_r,j_r}|&\le |d_{i_r',j_r'}-d_{i_r,j_r'}|+|d_{i_r,j_r'}-d_{i_r,j_r}|\\
&=|e_{i_r'}(j_r')-e_{i_r}(j_r')|+|e^{j_r'}(i_r)-e^{j_r}(i_r)|\\
&\le \|E_{i_r'}-E_{i_r}\|_\infty +\|E_{j_r'}-E_{j_r}\|_\infty\\
&\to 0
\end{align*}
for $r\to \infty$.
\end{proof}

\begin{remark} \label{rem_comp_1}
Note that every $F\in X$ can be written $F=(f,f')$ for suitable $f,f': \mathbb{N}\to [0,D]$. We have $d(F,E_i)=f(i)$ and $d(E_i,F)=f'(i)$. Let us prove the former equation, the latter being analogous. Let $E_{j_r}\to F$, then
\[
d(F,E_i)=\lim_{r\to \infty} d_{j_r, i}= \lim_{r\to \infty} e_{j_r}(i) =f(i).
\]
\end{remark}

\begin{lemma}\label{lem_comp_1}
The function $d$ satisfies the reverse triangle inequality, i.e.
\[
d(E,F)+d(F,G)\le d(E,G)
\]
if $d(E,F),d(F,G)>0$.
\end{lemma}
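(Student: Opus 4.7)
The plan is to reduce the reverse triangle inequality on $X$ to the reverse triangle inequality on the approximating spaces $X_m$ via a careful double-limit argument. Choose sequences of indices $\{i_r\}, \{j_r\}, \{k_r\} \subset \mathbb{N}$ such that $E_{i_r}\to E$, $E_{j_r}\to F$, $E_{k_r}\to G$ in $(\mathcal{B}\times\mathcal{B},\dist_\infty)$ as $r\to\infty$. By Lemma \ref{lem_indep}, we have
\[
d(E,F)=\lim_{r\to\infty} d_{i_r,j_r}, \quad d(F,G)=\lim_{r\to\infty} d_{j_r,k_r}, \quad d(E,G)=\lim_{r\to\infty} d_{i_r,k_r}.
\]
Since $d(E,F)>0$ and $d(F,G)>0$, for all sufficiently large $r$ both $d_{i_r,j_r}$ and $d_{j_r,k_r}$ are strictly positive.

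Next, I would use the definition $d_{i,j}=\lim_{m\to\infty} d(x_{i,m},x_{j,m})$ (along the subsequence of $\{X_m\}$ fixed by the Cantor diagonal procedure). For each such fixed $r$, pick $m$ so large that $d(x_{i_r,m},x_{j_r,m})>0$ and $d(x_{j_r,m},x_{k_r,m})>0$. Because $X_m$ is a bounded Lorentzian metric space, property (i) of Definition \ref{D1} applied in $X_m$ yields
\[
d(x_{i_r,m},x_{j_r,m})+d(x_{j_r,m},x_{k_r,m})\le d(x_{i_r,m},x_{k_r,m}).
\]
Letting $m\to\infty$ (with $r$ fixed) gives
\[
d_{i_r,j_r}+d_{j_r,k_r}\le d_{i_r,k_r}.
\]

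Finally, I would let $r\to\infty$ in the previous inequality and invoke the convergences recorded at the outset to conclude $d(E,F)+d(F,G)\le d(E,G)$, as desired. The main (minor) obstacle is the bookkeeping of the two independent limits: we need the positivity of $d(E,F)$ and $d(F,G)$ to survive the passage from the abstract limits to the concrete distances $d(x_{i_r,m},x_{j_r,m})$ in $X_m$, which is precisely what forces us to first fix $r$ large (so that $d_{i_r,j_r},d_{j_r,k_r}>0$) and only afterwards take $m$ large, rather than the other way around.
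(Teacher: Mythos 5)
Your proposal is correct and follows essentially the same route as the paper's proof: choose approximating sequences $E_{i_r},E_{j_r},E_{k_r}$ in $S_\infty$, use the positivity of $d(E,F)$ and $d(F,G)$ to get $d_{i_r,j_r},d_{j_r,k_r}>0$ for large $r$, apply the reverse triangle inequality in $X_m$ for $m$ large (with $r$ fixed first, exactly as the paper implicitly does), and then pass to the limits in $m$ and $r$ in that order. Your explicit remark about the order of the two limits is a correct and welcome clarification of a point the paper leaves tacit.
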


\begin{proof}
Choose sequences $E_{i_r}\to E$, $E_{j_r}\to F$ and $E_{k_r}\to G$. Since $d(E,F)$ as well as $d(F,G)$ are positive we have $d(E_{i_r},E_{j_r}),d(E_{j_r},E_{k_r})>0$ for almost all
$r\in\mathbb{N}$. By the reverse triangle inequality in the spaces $X_m$ and the definition of $d_{i,j}$ we have
\begin{align*}
d(E_{i_r},E_{j_r})+d(E_{j_r},E_{k_r})&= d_{i_r,j_r}+d_{j_r,k_r} \\
&=\lim\nolimits_m d(x_{i_r,m},x_{j_r,m})+ \lim\nolimits_m  d(x_{j_r,m},x_{k_r,m})\\
 &=  \lim\nolimits_m [d(x_{i_r,m},x_{j_r,m})+ d(x_{j_r,m},x_{k_r,m})] \\
&\le \lim\nolimits_m d(x_{i_r,m},x_{k_r,m})=d_{i_r,k_r} =d(E_{i_r},E_{k_r})
\end{align*}
for almost all $r\in \mathbb{N}$. The claim now follows by taking the limit $r\to \infty$.
\end{proof}

\begin{lemma}\label{lem_comp_2}
The function $d\colon X\times X\to [0,D]$ is continuous in the product topology, where the topology on $X$ is that induced from $(\mathcal{B}\times \mathcal{B},\dist_\infty)$.
\end{lemma}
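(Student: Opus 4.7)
The plan is to establish the quantitative Lipschitz bound
\[
|d(E',F') - d(E,F)| \le \dist\nolimits_\infty(E',E) + \dist\nolimits_\infty(F',F)
\]
for all $E,E',F,F' \in X$, from which continuity of $d\colon X\times X\to [0,D]$ in the product topology is immediate: whenever $E^{(n)}\to E$ and $F^{(n)}\to F$ in $X$, the right-hand side tends to zero and squeezes the left-hand side to zero as well.

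The key observation is that this Lipschitz estimate is already present, in disguise, inside the proof of Lemma \ref{lem_indep}. Recalling from the construction that $E_i=(e_i,e^i)$ with $d_{i,j}=e_i(j)=e^j(i)$, the same two-step triangle split used there yields, for any indices $i,i',j,j' \in \mathbb{N}$,
\[
|d_{i',j'}-d_{i,j}| \le |e_{i'}(j')-e_{i}(j')| + |e^{j'}(i)-e^{j}(i)| \le \|E_{i'}-E_i\|_\infty + \|E_{j'}-E_j\|_\infty.
\]
First I would record this explicitly. Then I would promote it to arbitrary points of $X$ by invoking the density of $S_\infty=\{E_i\}$ in $X=\overline{S_\infty}$: for given $E,E',F,F' \in X$ pick sequences $E_{i_r}\to E$, $E_{i_r'}\to E'$, $E_{j_r}\to F$, $E_{j_r'}\to F'$ in $(\mathcal{B}\times\mathcal{B},\dist_\infty)$, apply the index-level bound, and pass to the limit $r\to\infty$ using the definition $d(E,F)=\lim_r d_{i_r,j_r}$ (well-defined by Lemma \ref{lem_indep}) together with the $\dist_\infty$-continuity of the norm to get $\|E_{i_r'}-E_{i_r}\|_\infty \to \dist_\infty(E',E)$ and similarly on the other factor.

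There is no substantive obstacle to overcome — the proof is essentially a rereading of the estimate already performed for Lemma \ref{lem_indep} as a quantitative statement about $d$ rather than as a tool for establishing its well-definedness. The only subtlety to guard against is the possible temptation to take sequences in $X$ rather than in $S_\infty$; the approximating sequences exist in $S_\infty$ precisely because $X$ was defined as the $\dist_\infty$-closure of $S_\infty$ in $\mathcal{B}\times\mathcal{B}$, which is exactly what makes the index-level estimate applicable.
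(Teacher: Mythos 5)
Your proof is correct, and it organizes the argument differently from the paper. The paper's proof is purely sequential: given $F_n\to F$ and $G_n\to G$ in $X$, it approximates each $F_n$ and $G_n$ by elements of $S_\infty$ to within $1/n$, extracts a diagonal pair of sequences in $S_\infty$ converging to $F$ and $G$, and then invokes Lemma \ref{lem_indep} to conclude $d(F_n,G_n)\to d(F,G)$. You instead first promote the index-level estimate $|d_{i',j'}-d_{i,j}|\le \|E_{i'}-E_i\|_\infty+\|E_{j'}-E_j\|_\infty$ (which is exactly the computation inside the proof of Lemma \ref{lem_indep}) to the quantitative Lipschitz bound $|d(E',F')-d(E,F)|\le \dist_\infty(E',E)+\dist_\infty(F',F)$ on all of $X\times X$, by approximating each of the four points by sequences in $S_\infty$ and passing to the limit; continuity is then immediate. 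Your passage to the limit is sound: the left-hand side converges by the well-definedness of $d$ (Lemma \ref{lem_indep}) and the right-hand side by continuity of the norm, and you correctly insist that the approximating sequences live in $S_\infty$, which is what makes the index-level estimate applicable. What your route buys is a strictly stronger conclusion --- $d$ is Lipschitz for $\dist_\infty$, not merely continuous --- which is moreover consistent with the rest of the paper (Proposition \ref{nwq} shows $d$ is $1$-Lipschitz for the distinction metric $\gamma$, and Theorem \ref{Theo_kurat} identifies $\gamma$ with $\dist_\infty$ under the Kuratowski embedding). The paper's diagonal argument is marginally shorter but yields only the qualitative statement.
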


\begin{proof}
Let $F_{n}$ and $G_{n}$ be sequences in $X$ converging to $F$ and $G$ respectively. For every $n$ choose sequences
$\{E_{i_{r}^n}\}_r$ and $\{E_{j_{r}^n}\}_r\subseteq S_\infty$ converging to $F_{n}$ and $G_{n}$ respectively.

Next choose for every $n\in \mathbb{N}$ a $r_n\in \mathbb{N}$ with
\[
|d_{i_{r_n}^n,j_{r_n}^n}-d(F_{n},G_{n})|<\frac{1}{n}, \quad  \dist\nolimits_{\infty}(E_{i_{r_n}^n},F_n),\, \dist\nolimits_{\infty}(E_{j_{r_n}^n},G_n) <\frac{1}{n}.
\]
This induces sequences $\{E_{i_{r_n}^n}\}_n$ and $\{E_{j_{r_n}^n}\}_n\subseteq S_\infty$ converging to $F$ and $G$,
respectively. By Lemma \ref{lem_indep} follows
\[
d(E_{i_{r_n}^n},E_{j_{r_n}^n})=d_{i_{r_n}^n,j_{r_n}^n}\to d(F,G)
\]
and therefore $d(F_{n},G_{n})\to d(F,G)$.
\end{proof}

\begin{lemma}
The set $S_\infty^{(k)}:=\bigcup\{E_j: N_{k-1}+1\le j\le N_k \}  \subset S_\infty$ is  an $\alpha_k$-net of $X$.
\end{lemma}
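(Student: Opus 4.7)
The goal is to show that every $F\in X=\overline{S_\infty}$ lies within $\gamma_X$-distance $\alpha_k$ of some $E_j$ with $N_{k-1}+1\le j\le N_k$. Since $S_\infty$ is dense in $X$ and $d$ is continuous (Lemma \ref{lem_comp_2}), the inclusion $X\hookrightarrow \mathcal{B}\times\mathcal{B}$ realises the Kuratowski embedding relative to $S_\infty$ (cf.\ Remark \ref{rem_comp_1} and Theorem \ref{Theo_kurat}), hence $\gamma_X(F,G)=\dist_\infty(F,G)$ for all $F,G\in X$. So it suffices to bound $\dist_\infty$.

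Fix $F\in X$ and choose $E_{i_r}\to F$ in $S_\infty$. For each fixed $r$ and each $m$, because $S_m^{(k)}$ is an $\alpha_k$-net in $X_m$, there is an index $j(r,m)\in\{N_{k-1}+1,\ldots,N_k\}$ with
\[
|d(x_{i_r,m},x_{l,m})-d(x_{j(r,m),m},x_{l,m})|\le \alpha_k,\quad |d(x_{l,m},x_{i_r,m})-d(x_{l,m},x_{j(r,m),m})|\le \alpha_k
\]
for every $l\in\mathbb{N}$. Since the range of $j(r,\cdot)$ is finite (it has at most $\beta_k$ values), pigeonhole yields $j_r\in\{N_{k-1}+1,\ldots,N_k\}$ with $j(r,m)=j_r$ for $m$ in an infinite set $M_r$. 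Passing to the limit $m\to\infty$ along $M_r$ inside the already-extracted subsequence on which $d(x_{i,m},x_{j,m})\to d_{i,j}$ gives $|d_{i_r,l}-d_{j_r,l}|\le \alpha_k$ and $|d_{l,i_r}-d_{l,j_r}|\le \alpha_k$ for every $l$, i.e.\ $\dist_\infty(E_{i_r},E_{j_r})\le\alpha_k$.

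Now I apply pigeonhole once more: since each $j_r$ lies in the finite set $\{N_{k-1}+1,\ldots,N_k\}$, there exists a single $j^*$ in this set and an infinite $R\subset\mathbb{N}$ with $j_r=j^*$ for all $r\in R$. For these $r$ we have $\dist_\infty(E_{i_r},E_{j^*})\le \alpha_k$; letting $r\to\infty$ through $R$, the continuity of $\dist_\infty$ together with $E_{i_r}\to F$ yields $\dist_\infty(F,E_{j^*})\le \alpha_k$, hence $\gamma_X(F,E_{j^*})\le\alpha_k$ with $E_{j^*}\in S_\infty^{(k)}$.

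The only delicate point is the bookkeeping for the two nested pigeonhole extractions: the first one (in $m$) must be performed inside the diagonal subsequence that defines $X$, so that the key limiting identity $d(x_{i,m},x_{j,m})\to d_{i,j}$ survives along the chosen sub-subsequence $M_r$; the second one (in $r$) relies crucially on the uniform bound $\beta_k$ on the net size, which prevents the index $j_r$ from escaping to infinity. Everything else reduces to the definition of $d$ on $X$ and the density of $S_\infty$.
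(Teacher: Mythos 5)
Your proof is correct, but it runs in the opposite direction from the paper's. The paper argues by contradiction: if some point of $X$ were at $\dist_\infty$-distance greater than $\alpha_k$ from every $E_i$ with $N_{k-1}+1\le i\le N_k$, then by density of $S_\infty$ one may take that point to be some $E_l\in S_\infty$, pick for each of the finitely many $i$ a single coordinate $r_i$ witnessing $\|E_l-E_i\|_\infty>\alpha_k$, and then transfer these finitely many \emph{strict} inequalities back to $X_m$ for one sufficiently large $m$, contradicting the net property of $S_m^{(k)}$. You instead argue directly, transporting the net property of $S_m^{(k)}$ \emph{forward} to the limit; this forces you to control how the chosen net point varies, which you handle with two pigeonhole extractions (in $m$, to stabilize $j(r,m)$ for each fixed approximant $E_{i_r}$, and then in $r$, to stabilize $j_r$ as $E_{i_r}\to F$). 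Both arguments rest on exactly the same two ingredients — the uniform cardinality bound $\beta_k$ on the nets and the convergence $d(x_{i,m},x_{j,m})\to d_{i,j}$ along the diagonal subsequence — and your bookkeeping is sound (in particular, limits along the infinite subsets $M_r$ agree with the limit along the full subsequence, so the non-strict inequalities survive). The paper's contradiction route is shorter precisely because it never needs to track the $m$-dependence of the net point; your route is longer but constructive, actually exhibiting the net point $E_{j^*}$, and it makes explicit where the bound $\beta_k$ enters. You also state explicitly the identification $\gamma_X=\dist_\infty$ on $X$ (via the Kuratowski-type embedding and Remark \ref{rem_comp_1}), which the paper uses only tacitly; making it explicit is a small but genuine improvement in rigor, since the net condition is phrased in terms of $\gamma$ while the construction lives in $(\mathcal{B}\times\mathcal{B},\dist_\infty)$.
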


\begin{proof}
Assume the claim is false. Then, since $S_\infty$ is dense in $X$, there exists $E_l\in S_\infty$ with $\|E_l-E_i\|_\infty> \alpha_k $ for all
$i=N_{k-1}+1,\ldots,N_k$. Choose $r_i\in
\mathbb{N}$ with $|E_l(r_i)-E_i(r_i)|>\alpha_k $. Then follows that
\[
|d_{l,r_i}-d_{i,r_i}|>\alpha_k \text{ or }|d_{r_i,l}-d_{r_i,i}|>\alpha_k .
\]
The same is true for
\[
|d(x_{l,m},x_{r_i,m})-d(x_{i,m},x_{r_i,m})|\text{ or }|d(x_{r_i,m},x_{l,m})-d(x_{r_i,m},x_{i,m})|
\]
for $m$ sufficiently large (note that $l$ is fixed while $i$ takes only a finite number of possible values), thus contradicting
that $S_m^{(k)}$  is an  $\alpha_k$-net in $X_m$.
\end{proof}

\begin{corollary}\label{cor_comp_1}
$X$ with the topology induced from $(\mathcal{B}\times \mathcal{B},\dist_\infty)$ is compact.
\end{corollary}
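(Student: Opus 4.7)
The plan is to show that $X$, viewed as a metric subspace of $(\mathcal{B}\times \mathcal{B}, \dist_\infty)$, is both complete and totally bounded; by the classical characterization of compactness in metric spaces this will suffice.

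For completeness, I would observe that $\mathcal{B} = \ell^\infty$ is a Banach space, so $(\mathcal{B}\times \mathcal{B}, \dist_\infty)$ is a complete metric space. Since $X$ was defined as the closure $\overline{S_\infty}$ in $\mathcal{B}\times \mathcal{B}$, it is closed in a complete metric space and therefore itself complete.

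For total boundedness, given $\varepsilon > 0$ I would pick $k$ with $\alpha_k < \varepsilon$, which is possible because $\alpha_k \to 0$. The preceding lemma guarantees that $S_\infty^{(k)}$ is an $\alpha_k$-net of $X$, and by construction it contains at most $\beta_k$ elements. Thus $S_\infty^{(k)}$ is a finite $\varepsilon$-net of $X$, establishing total boundedness.

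I do not foresee any real obstacle here: the construction has been set up precisely so that the $\alpha_k$-net property of $S_\infty^{(k)}$ (already proved in the lemma just above) together with the completeness inherited from the ambient Banach space yield compactness immediately. The only point worth checking carefully is that the notion of $\alpha_k$-net used in the preceding lemma refers to the $\dist_\infty$-metric on $X \subset \mathcal{B}\times \mathcal{B}$, which is indeed the metric relative to which we must verify total boundedness; but this is consistent with the construction of $X$ and $d$ above.
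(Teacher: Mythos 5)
Your proof is correct and uses the same ingredients as the paper's: the finite $\alpha_k$-nets $S_\infty^{(k)}$ give total boundedness, and closedness of $X=\overline{S_\infty}$ in the complete space $(\mathcal{B}\times\mathcal{B},\dist_\infty)$ gives completeness. The paper simply unrolls the standard ``totally bounded $+$ complete $\Rightarrow$ compact'' argument explicitly (extracting a Cauchy subsequence by a Cantor diagonal over the nets), whereas you cite the classical characterization directly; the two are the same proof in substance.
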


\begin{proof}
The topology induced on $X$ from $(\mathcal{B}\times \mathcal{B},\dist_\infty)$ coincides with the metric topology of $(X,\dist_\infty\vert_{X\times X})$. A subset of a metric space is compact iff it is sequentially compact.

Let us consider a sequence $P_i\in X$. For each $k$,  $X$ has a finite $\alpha_k $-net, thus we can pass to a subsequence so that the subsequence enters and remains in  a $\alpha_k $-closed ball for sufficiently large $i$. Via a Cantor diagonal argument the same can be assumed for every $k$, in particular the subsequence is Cauchy and so converges as  $(\mathcal{B}\times \mathcal{B},\dist_\infty)$
is complete. As $X$ is closed, the limit belongs to $X$. This shows that $X$ is compact.
\end{proof}

\begin{theorem}
The space $(X,d)$ is a bounded Lorentzian-metric space.
\end{theorem}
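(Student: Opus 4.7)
The plan is to verify, via the characterization in Proposition \ref{vxg}, that $(X,d)$ satisfies the reverse triangle inequality, carries a topology making $d$ continuous and $X$ compact, distinguishes points, and possesses a spacelike boundary point $i^0$. Three of these ingredients are already in hand: the topology on $X$ induced from $(\mathcal{B}\times\mathcal{B},\dist_\infty)$ makes $X$ compact by Corollary \ref{cor_comp_1}, makes $d$ continuous by Lemma \ref{lem_comp_2}, and the reverse triangle inequality is Lemma \ref{lem_comp_1}. So only (iii) and the existence of $i^0$ need to be addressed.

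For the distinguishing property I would argue directly from Remark \ref{rem_comp_1}. Let $E,F\in X$ with $E\neq F$, and write $E=(e,e')$, $F=(f,f')$ as elements of $\mathcal{B}\times\mathcal{B}$. Since $E\neq F$ there exists $j\in\mathbb{N}$ with $e(j)\neq f(j)$ or $e'(j)\neq f'(j)$. In the first case Remark \ref{rem_comp_1} gives $d(E,E_j)=e(j)\neq f(j)=d(F,E_j)$; in the second, $d(E_j,E)=e'(j)\neq f'(j)=d(E_j,F)$. Either way $E_j\in S_\infty\subseteq X$ witnesses property (iii) of Definition \ref{D1}.

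For the $i^0$ point I would make one harmless bookkeeping adjustment at the outset of the construction: arrange the enumeration $\mathscr{S}_m$ so that $x_{1,m}=i^0_m$ for every $m$. This is legitimate because each $X_m\in\mathfrak{X}$ contains $i^0_m$ by Definition \ref{osj}(3), and adjoining $i^0_m$ to the first $\alpha_1$-net $S_m^{(1)}$ preserves its netting property (at worst the cardinalities $\beta_k$ increase by one, which does not affect the class $\mathfrak{X}$). Since $d(i^0_m,x_{j,m})=d(x_{j,m},i^0_m)=0$ for all $j,m$, we obtain $d_{1,j}=d_{j,1}=0$ for every $j$, hence $E_1=(0,0)\in S_\infty\subseteq X$. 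By Remark \ref{rem_comp_1} applied to arbitrary $F\in X$, $d(E_1,F)=0=d(F,E_1)$, so $E_1$ is a point at zero distance from all of $X$, fulfilling the hypothesis of Proposition \ref{vxg}. Uniqueness of such a point then follows from (iii) already verified.

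The main obstacle, which is really only a notational nuisance, is making sure the whole construction is compatible with the $i^0$-adjustment and that property (iii) is not compromised by the many identifications $E_i=E_j$ that may occur among the $S_\infty$-labels; the coordinate-separation argument above handles this cleanly because it refers only to the image points in $\mathcal{B}\times\mathcal{B}$, not to the indices. Once these checks are in place, Proposition \ref{vxg} applies and delivers the conclusion that $(X,d)$ is a bounded Lorentzian-metric space, with the topology on $X$ coinciding with its $\mathscr{T}$.
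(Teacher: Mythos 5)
Your verification of the three core properties is essentially the paper's own argument: the reverse triangle inequality is quoted from Lemma \ref{lem_comp_1}, compactness and continuity from Corollary \ref{cor_comp_1} and Lemma \ref{lem_comp_2}, and the distinguishing property is exactly the coordinate-separation argument via Remark \ref{rem_comp_1} (the paper phrases it contrapositively, concluding $f=g$, $f'=g'$ from non-distinction, but it is the same computation). The one genuine divergence is your treatment of $i^0$. The paper does not route through Proposition \ref{vxg}; it checks Definition \ref{D1} directly, whose condition (ii) it verifies by noting that the sets $\{d\ge\epsilon\}$ are closed subsets of the compact $X\times X$, so no spacelike boundary point is needed at this stage. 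The existence of $i^0$ in the limit is then established only later, inside the proof of Theorem \ref{thm_precomp}, from the Gromov--Hausdorff approximation: points $O_m\in X$ with $\max[d(O_m,P),d(P,O_m)]\le 2\epsilon$ are extracted and a limit point serves as $i^0$. Your alternative --- forcing $x_{1,m}=i^0_m$ into the nets so that $E_1=(0,0)$ survives into $S_\infty$ --- is legitimate (enlarging $\beta_1$ by one only enlarges the class $\mathfrak{X}$, which is harmless for the compactness claim, and $f(1)=f'(1)=0$ for every $F=(f,f')\in X$ since these are limits of the vanishing first coordinates of the $E_{j_r}$). It buys a more self-contained construction that front-loads the $i^0$ verification, at the cost of making the theorem's proof depend on a hypothesis (membership of $i^0$) that Definition \ref{D1} does not actually require; the paper's choice keeps this theorem minimal and defers the boundary point to where it is needed, namely to conclude that the limit lies in the same uniformly totally bounded family.
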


\begin{proof}
We check the characterization of bounded Lorentzian-metric space of Definition \ref{D1}.

(i). The triangle inequality follows from Lemma \ref{lem_comp_1}.

(ii'). Observe that if $X$ is endowed with the topology induced from
$(\mathcal{B}\times \mathcal{B},\dist_\infty)$, then by   Corollary \ref{cor_comp_1}  $X$ is compact and by  Lemma \ref{lem_comp_2} $d:X\times X \to [0,\infty)$ is continuous in the product topology on $X\times X$. Finally,  the sets   $\{d\ge \epsilon\}$ are compact, as they are closed subsets of the compact set $X\times X$.

(iii).  Suppose that $F,G\in X$ are not distinguished. For every $i$ we have in particular, $d(F,E_i)=d(G,E_i)$ and $d(E_i,F)=d(E_i,G)$, thus from  Remark \ref{rem_comp_1} we conclude  that $f=g$ and $f'=g'$,
 where $F=(f,f')$ and $G=(g,g')$, hence  $F= G$.
\end{proof}

\begin{proof}[Proof of Theorem \ref{thm_precomp}]
Finally, Theorem \ref{thm_precomp}  follows from Proposition \ref{prop_nets}. Indeed, for each $k$ the result $S^{(k)}_M\xrightarrow{{\small GH}}   S^k_\infty$ follows from
$d(E_{i}, E_j)=d_{i,j}=\lim_m d(x_{i,m},x_{j,m})$, for $N_{k-1}+1\le i,j\le N_k$. Since the possible pairs $(i,j)$ are finite in number, for every $\epsilon>0$ we can find $m$ sufficiently
large such that the correspondence
\[
R_m:=\{(x_{i,m}, E_i): N_{k-1}+1\le i\le N_k\} \subset S^{(k)}_m\times S^{(k)}_\infty
\]
has distortion less than $\epsilon$.

Since $i^0_m\in X_m$, by taking $m$ such that $d_{GH}(X_m,X)\le \epsilon$ we infer that there is a point $O_m\in X$ such that $\textrm{max} [d(O_m,P), d(P,O_m)]\le 2 \epsilon$ for every
other point $P$. Since $X$ is compact and $d$ is continuous, consideration of a limit point $O$ of $O_m$  easily shows that $X$ includes the spacelike boundary point.
\end{proof}

\section*{Acknowledgements}
This work started in the Autumn of 2019, the authors benefitting of the hospitality of some institutions: the Ruhr-Universit\"at Bochum, the Universit\`a degli Studi di Firenze, and the Centro de Giorgi  in Pisa (under the program research in pairs). Ettore Minguzzi is partially supported by  MIUR PRIN 2022 project 2022JJ8KER - ``Contemporary perspectives on geometry and gravity''. Stefan Suhr is partially supported by the Deutsche Forschungsgemeinschaft (DFG, German Research Foundation) -- Project-ID 281071066 -- TRR 191.

\section*{Declarations}

\begin{itemize}
\item {\bf Data availability}. All data generated or analyzed during this study are included in this published article.
\item {\bf Conflict of interest}. The authors state that there is no conflict of interest.
\item {\bf Authors' contributions}. The authors contributed equally to this work.
\end{itemize}


\end{document}